\def\DefineSymbol#1#2{\newcommand{#1}{{\mathrm {#2}}}}
\def\DefineCategory#1#2{\newcommand{#1}{{\mathrm {#2}}}}
\theoremstyle{plain}
	\newtheorem{theorem}{Theorem}[section]
	\newtheorem{lemma}[theorem]{Lemma}
	\newtheorem{proposition}[theorem]{Proposition}
	\newtheorem{corollary}[theorem]{Corollary}
\theoremstyle{definition}
	\newtheorem{definition}[theorem]{Definition}
	\newtheorem{lemma and definition}[theorem]{Lemma and Definition}
	\newtheorem{example}[theorem]{Example}
	\newtheorem{notation}[theorem]{Notation}
	\newtheorem{setting}[theorem]{Setting}
\theoremstyle{remark}
	\newtheorem{remark}[theorem]{Remark}
	\numberwithin{equation}{section}
\DefineSymbol{\pr}{pr}
\DefineSymbol{\id}{id}
\DefineSymbol{\const}{const}
\DefineSymbol{\op}{op}
\DefineSymbol{\diag}{diag}
\DefineSymbol{\proet}{pro\acute{e}t}
\DefineSymbol{\cond}{cond}
\DefineSymbol{\cont}{cont}
\DefineSymbol{\conti}{conti}
\DefineSymbol{\Cond}{Cond}
\DefineSymbol{\dCond}{dCond}
\DefineSymbol{\disc}{disc}
\DefineSymbol{\Tot}{Tot}
\DefineSymbol{\triv}{triv}
\DefineSymbol{\Bun}{Bun}
\DefineSymbol{\adic}{adic}
\DefineSymbol{\rig}{rig}
\DefineSymbol{\nc}{nc}
\DefineSymbol{\nuc}{nuc}
\DefineSymbol{\cpt}{cpt}
\DefineSymbol{\an}{an}
\DefineSymbol{\alg}{alg}
\DefineSymbol{\la}{la}
\DefineSymbol{\rla}{rla}
\DefineSymbol{\dep}{dep}
\DefineSymbol{\red}{red}
\DefineSymbol{\ex}{ex}
\DeclareMathOperator{\Hom}{Hom}
\DeclareMathOperator{\Map}{Map}
\DeclareMathOperator{\Fun}{Fun}
\DeclareMathOperator{\Ker}{Ker}
\DeclareMathOperator{\Coker}{Coker}
\DeclareMathOperator{\Aut}{Aut}
\DeclareMathOperator{\Gal}{Gal}
\DeclareMathOperator{\Eq}{Eq}
\DeclareMathOperator{\fib}{fib}
\DeclareMathOperator{\Tr}{Tr}
\DeclareMathOperator{\PSh}{PSh}
\DeclareMathOperator{\Shv}{Shv}
\DeclareMathOperator{\AnSpec}{AnSpec}
\DeclareMathOperator{\Spa}{Spa}
\DeclareMathOperator{\Kos}{Kos}
\DeclareMathOperator{\VB}{VB}
\DeclareMathOperator{\SD}{SD}
\DeclareMathOperator{\PD}{PD}
\DeclareMathOperator{\intHom}{\underline{Hom}}
\newcommand{\heart}{\heartsuit}
\newcommand{\hotimes}{\mathbin{\hat{\otimes}}}
\newcommand{\mapsfrom}{\mathrel{\reflectbox{\ensuremath{\mapsto}}}}
\DefineCategory{\Set}{Set}
\DefineCategory{\Ab}{Ab}
\DefineCategory{\Ring}{Ring}
\DefineCategory{\Mod}{Mod}
\DefineCategory{\LMod}{LMod}
\DefineCategory{\Rep}{Rep}
\DefineCategory{\coMod}{coMod}
\DefineCategory{\Alg}{Alg}
\DefineCategory{\Ch}{Ch}
\DefineCategory{\Mon}{Mon}
\DefineCategory{\CMon}{CMon}
\DefineCategory{\PCoh}{PCoh}
\DefineCategory{\Perf}{Perf}
\DefineCategory{\Vect}{Vect}
\DefineCategory{\FP}{FP}
\DefineCategory{\cAff}{cAff}
\DefineCategory{\Aff}{Aff}
\DefineCategory{\AffRing}{AffRing}
\DefineCategory{\AnRing}{AnRing}
\DefineCategory{\AnAdic}{AnAdic}
\DefineCategory{\AffAnAdic}{AffAnAdic}
\DefineCategory{\Ani}{Ani}
\DefineCategory{\CAlg}{CAlg}
\DefineCategory{\Corr}{Corr}
\DefineCategory{\Comm}{Comm}
\renewcommand{\Pr}{\mathrm{Pr}}
\newcommand{\Cat}{\mathcal{C}at}
\renewcommand{\Bbb}{\mathbb{B}}
\newcommand{\Ebb}{\mathbb{E}}
\newcommand{\Gbb}{\mathbb{G}}
\newcommand{\Hbb}{\mathbb{H}}
\newcommand{\Qbb}{\mathbb{Q}}
\newcommand{\Zbb}{\mathbb{Z}}
\newcommand{\Acal}{\mathcal{A}}
\newcommand{\Bcal}{\mathcal{B}}
\newcommand{\Ccal}{\mathcal{C}}
\newcommand{\Dcal}{\mathcal{D}}
\newcommand{\Ecal}{\mathcal{E}}
\newcommand{\Fcal}{\mathcal{F}}
\newcommand{\Kcal}{\mathcal{K}}
\newcommand{\Lcal}{\mathcal{L}}
\newcommand{\Mcal}{\mathcal{M}}
\newcommand{\Ncal}{\mathcal{N}}
\newcommand{\Ocal}{\mathcal{O}}
\newcommand{\Rcal}{\mathcal{R}}
\newcommand{\Ucal}{\mathcal{U}}
\newcommand{\Xfrak}{\mathfrak{X}}
\newcommand{\gfrak}{\mathfrak{g}}
\renewcommand{\tilde}{\widetilde}
\renewcommand{\hat}{\widehat}
\renewcommand{\bar}{\overline}
\begin{document}

\title[Finiteness and duality of cohomology of $(\varphi,\Gamma)$-modules]{Finiteness and duality of cohomology of $(\varphi,\Gamma)$-modules and the 6-functor formalism of locally analytic representations}
\author{Yutaro Mikami}
\date{\today}
\address{Graduate School of Mathematical Sciences, University of Tokyo, 3-8-1 Komaba, Meguro-ku, Tokyo 153-8914, Japan}
\email{y-mikmi@g.ecc.u-tokyo.ac.jp}
\subjclass{}
\begin{abstract}
Finiteness and duality of the cohomology of families of $(\varphi,\Gamma)$-modules were proved by Kedlaya-Pottharst-Xiao.
In this paper, we study solid locally analytic representations introduced by Rodrigues Jacinto-Rodr\'{\i}guez Camargo in terms of analytic stacks and 6-functor formalisms, which are developed by Clausen-Scholze, Heyer-Mann, respectively.
By using this, we provide a generalization of the result of Kedlaya-Pottharst-Xiao, giving a new proof for cases already proved there.
\end{abstract}
\maketitle

\setcounter{tocdepth}{1}

\tableofcontents

\section*{Introduction}
\subsection{Background}
Let $K$ be a finite extension of $\Qbb_p$, and $G_K$ be the absolute Galois group of $K$.
The theory of \textit{$(\varphi,\Gamma_K)$-modules} is one of the most powerful tools for the study of $p$-adic representations of $G_K$.
In \cite{Fontaine90}, Fontaine introduced \textit{\'{e}tale $(\varphi,\Gamma_K)$-modules}, and he proved that the category of \'{e}tale $(\varphi,\Gamma_K)$-modules is equivalent to the category of $p$-adic representations of $G_K$.
Berger introduced \textit{$(\varphi,\Gamma_K)$-modules over the Robba ring $\Rcal_K$} in \cite{Ber02} building on works of Cherbonnier-Colmez and Kedlaya \cite{CC98, Ked04}.
The category of $(\varphi,\Gamma_K)$-modules over the Robba ring contains the category of \'{e}tale $(\varphi,\Gamma_K)$-modules (or equivalently the category of $p$-adic representations of $G_K$) as a full subcategory.
It plays an important role in Colmez's proof of the \textit{(locally analytic) $p$-adic local Langlands correspondence for $GL_2(\Qbb_p)$} (\cite{Col10,Col16}), which is the correspondence between unitary Banach representations (resp. locally analytic representations) of $GL_2(\Qbb_p)$ and \'{e}tale $(\varphi,\Gamma_{\Qbb_p})$-modules of rank $2$ (resp. $(\varphi,\Gamma_{\Qbb_p})$-modules over the Robba ring $\Rcal_{\Qbb_p}$ of rank $2$).

A generalization of Galois cohomology of $p$-adic Galois representations to $(\varphi,\Gamma_K)$-modules was studied by Herr and Liu in \cite{Herr98, Liu08}, and Liu proved finiteness, the Tate local duality, and the Euler characteristic formula for cohomology of $(\varphi,\Gamma_K)$-modules.
In \cite{KPX14}, Kedlaya-Pottharst-Xiao extended this result to families of $(\varphi,\Gamma_K)$-modules, that is, $(\varphi,\Gamma_K)$-modules over ``$\Rcal_{K,A}=\Rcal_{K}\otimes_{\Qbb_p} A$'' for an affinoid $\Qbb_p$-algebra $A$.
For example, these results are used in the study of eigenvarieties.

Recently, Emerton-Gee-Hellmann proposed a \textit{categorical $p$-adic local Langlands correspondence} in \cite{EGH23}.
Roughly speaking, they conjecture that there exists a nice functor 
$$\Rep^{\la}_{\square}(GL_n(K))\to \Dcal(\Xfrak_{K,n}),$$ 
where $\Rep^{\la}_{\square}(GL_n(K))$ is the stable $\infty$-category of locally analytic representations of $GL_n(K)$, and $\Dcal(\Xfrak_{K,n})$ is the stable $\infty$-category of solid quasi-coherent sheaves on \textit{the analytic Emerton-Gee stack} $\Xfrak_{K,n}$, which is a rigid analytic moduli stack of $(\varphi,\Gamma_K)$-modules over $\Rcal_K$ of rank $n$.
More precisely, we need to restrict both categories to subcategories satisfying (conjectural) suitable conditions, but we will ignore this here.
It is natural to ask whether $\Xfrak_{K,n}$ is a rigid analytic Artin stack.
As explained in \cite{EGH23}, for every point $x\in \Xfrak_{K,n}(L)$ where $L$ is a finite extension of $\Qbb_p$, there is a smooth morphism from a rigid analytic space to $\Xfrak_{K,n}$ whose image contains $x$.
However, unlike the case of algebraic varieties, rigid analytic varieties contain many closed points other than $x\in \Xfrak_{K,n}(L)$.
Therefore, considering a neighborhood of $x\in \Xfrak_{K,n}(L)$ alone is insufficient to prove that $\Xfrak_{K,n}$ is a rigid analytic Artin stack, and we need to consider a neighborhood of a closed point $x$ such that $k(x)$ is not finite over $\Qbb_p$.

Moreover, there is another problem: The conjectural functor 
$$\Rep^{\la}_{\square}(GL_n(K))\to \Dcal(\Xfrak_{K,n})$$ 
is not expected to be fully faithful (\cite[Remark 6.2.8 (c)]{EGH23}).
As suggested in \cite[Theorem 4.4.4]{RJRC23}, this problem is expected to be overcome by considering a modification of $\Xfrak_{K,n}$.
To give a moduli interpretation of this modification, it will be necessary to consider families of $(\varphi,\Gamma_K)$-modules over algebras which are not affinoid $\Qbb_p$-algebras (\cite{Mikami24}).

\subsection{Statements of the main result}
In this paper, we use the theory of solid $\Dcal$-stacks introduced by Clausen-Scholze and Rodr\'{\i}guez Camargo (\cite{RC24}).
There is a 6-functor formalism on solid quasi-coherent sheaves on solid $\Dcal$-stacks, which is a powerful tool for proving finiteness and duality.
Moreover, this theory allows us to treat various coefficient rings, such as Banach $\Qbb_p$-algebras and algebraic-affinoid $\Qbb_{p,\square}$-algebras considered in \cite{Mikami24}.
These are the reasons why we use the theory of solid $\Dcal$-stacks.

We prove finiteness and duality of the cohomology of families of $(\varphi,\Gamma_K)$-modules over general algebras which are not necessarily affinoid $\Qbb_p$-algebras in this paper.
First, we explain how to formulate this problem.
We set $K_{\infty}=K(\zeta_{p^{\infty}})$, and let $X_{K_{\infty}}=\left(\Spa W(\Ocal_{\hat{K}_{\infty}^{\flat}})\setminus\{p[p^{\flat}]=0\}\right)/\varphi^{\Zbb}$ denote the Fargues-Fontaine curve.
The Fargues-Fontaine curve $X_{K_{\infty}}$ admits a natural action of $\Gamma_K=\Gal(K_{\infty}/K)$, and therefore, we can define $X_K^{\la}=X_{K_{\infty}}^{\la}$ by taking $\Gamma_K$-locally analytic vectors.
For a precise definition, see Definition \ref{defn:locally analytic Fargues-Fontaine curve}.
In \cite{Mikami24}, for a (condensed) $\Qbb_{p,\square}$-algebra $A$ that is either a Banach $\Qbb_p$-algebra or an algebraic-affinoid $\Qbb_{p,\square}$-algebra, the author defined $(\varphi,\Gamma_K)$-modules over $B_{K,\infty,A}$ and their cohomology as a generalization of families of $(\varphi,\Gamma_K)$-modules over affinoid $\Qbb_p$-algebras and their cohomology, where $B_{K,\infty,A}$ is a family of rings of analytic functions on affinoid open subspaces of $X_{K,A_{\square}}^{\la}=X_{K}^{\la}\times \AnSpec A_{\square}$.
We note that a $(\varphi,\Gamma_K)$-module $\Mcal$ over $B_{K,\infty,A}$ can be regarded as a solid quasi-coherent sheaf on the solid $\Dcal$-stack $X_{K,A_{\square}}^{\la}/\Gamma_K^{\la}$.
Then we have the following proposition:

\begin{proposition}[{Proposition \ref{prop:comparison cohomology}}]
	For a $(\varphi,\Gamma_K)$-module $\Mcal$ over $B_{K,\infty,A}$, the $(\varphi,\Gamma_K)$-cohomology $\Gamma_{\varphi,\Gamma_K}(\Mcal)\in \Dcal(A_{\square})$ defined in \cite[Definition 3.1]{Mikami24} is equivalent to $(g_{A_{\square}})_*\Mcal \in \Dcal(A_{\square})$, where $g_{A_{\square}}\colon X_{K,A_{\square}}^{\la}/\Gamma_K^{\la} \to \AnSpec A_{\square}$ is the projection.
\end{proposition}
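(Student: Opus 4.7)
The plan is to factor the projection $g_{A_{\square}}$ through the classifying quotient stack as
$$X_{K,A_{\square}}^{\la}/\Gamma_K^{\la} \xrightarrow{f} [\AnSpec A_{\square}/\Gamma_K^{\la}] \xrightarrow{h} \AnSpec A_{\square},$$
where $f$ is induced by the structure morphism $X_{K,A_{\square}}^{\la}\to \AnSpec A_{\square}$ (equivariantly for the $\Gamma_K^{\la}$-action on the source and the trivial action on the target) and $h$ is the canonical quotient. By functoriality of pushforward in the 6-functor formalism on solid $\Dcal$-stacks one has $(g_{A_{\square}})_*\Mcal \simeq h_*f_*\Mcal$, so the proposition reduces to identifying $f_*$ with cohomology on the locally analytic Fargues--Fontaine curve (remembering the $\Gamma_K^{\la}$-action) and $h_*$ with locally analytic group cohomology.

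For $f_*\Mcal$ I would use a \v{C}ech--Mayer--Vietoris computation on a $\varphi$-stable affinoid cover of $X_{K,A_{\square}}^{\la}$, exploiting its presentation as the $\varphi^{\Zbb}$-quotient of the analytic locus in $\Spa W(\Ocal_{\hat{K}_{\infty}^{\flat}})$ equipped with the locally analytic structure sheaf. A two-step cover by annuli whose rings of sections are the slices defining the tower $B_{K,\infty,A}$, together with descent along $\varphi^{\Zbb}$, makes the total complex collapse to the fiber of $\varphi-1$ acting on a single slice $\Mcal_{[s,r]}$, which is precisely the $\varphi$-part of the Herr-type complex of \cite[Definition 3.1]{Mikami24}. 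For $h_*$, the stack $[\AnSpec A_{\square}/\Gamma_K^{\la}]$ is constructed in the Rodr\'{\i}guez Camargo framework of \cite{RC24} so that pushforward along $h$ is, essentially by definition, continuous locally analytic group cohomology $R\Gamma^{\la}(\Gamma_K,-)$, which matches the $\Gamma_K$-part of the definition. Composing the two identifications yields the claimed equivalence.

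The main obstacle is the cohomology computation controlling $f_*$: one has to show that the Mayer--Vietoris for a cover by two overlapping annuli really does degenerate to the fiber of $\varphi-1$ on a single slice, that the result is independent of the chosen $[s,r]$, and that it is functorial in $A$ in the solid sense. This rests on nuclearity of the transition maps between slices of $B_{K,\infty,A}$, acyclicity of coherent cohomology on each affinoid annulus, and the compatibility of the $\Gamma_K^{\la}$-refinement of the structure sheaf with these computations; the last point uses in an essential way that $\Mcal$ is assumed to be locally analytic as a $(\varphi,\Gamma_K)$-module, so that passing from the continuous to the locally analytic structure sheaf does not alter its cohomology.
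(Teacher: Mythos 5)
Your proposal matches the paper's proof in essentially all respects: the paper also factors $g_{A_\square}$ through the classifying stack (Corollary \ref{cor:cohomology explicit}(2),(3)), identifies the pushforward to $\AnSpec A_\square/\Gamma_K^{\la}$ with $\fib(\Phi_{\Mcal}-1\colon \Mcal^{[1,p]}\to\Mcal^{[1,1]})$ via the gluing/descent description of $X_K^{\la}$ along $\varphi^{\Zbb}$ (Lemma \ref{lem:quasi-coherent sheaves on FF-curve}), identifies pushforward along the classifying stack with $\Gamma(\Gamma_K,-)$ (Proposition \ref{prop:6-ff classifying stack}(3)), and then delegates the comparison of the single-slice fiber of $\Phi-1$ with the $\varinjlim\varprojlim$ appearing in the original definition to \cite[Lemma 3.5]{Mikami24}, which is exactly your "independence of the chosen slice" step. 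So the proposal is correct and follows the same route.
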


Therefore, finiteness and duality of the cohomology of families of $(\varphi,\Gamma_K)$-modules can be formulated as follows.
\begin{theorem}[{Corollary \ref{cor:X D-smooth}, Corollary \ref{cor:X weakly proper}, Theorem \ref{thm:Local Tate duality}}]\label{intro:main theorem}
	The morphism 
	$$g\colon X_{K}^{\la}/\Gamma_K^{\la} \to \AnSpec \Qbb_{p,\square}$$ 
	is weakly $\Dcal$-proper and $\Dcal$-smooth with the dualizing complex $\Ocal_{X_{K}^{\la}}\cdot \chi[2]$, where $\Ocal_{X_{K}^{\la}}\cdot \chi$ is the twist of $\Ocal_{X_{K}^{\la}}$ by the $p$-adic cyclotomic character $\chi\colon \Gamma_K\to\Zbb_p^{\times}$.
\end{theorem}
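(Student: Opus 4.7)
The plan is to factor the structure morphism as
$$X_K^{\la}/\Gamma_K^{\la} \xrightarrow{f} B\Gamma_K^{\la} \xrightarrow{h} \AnSpec\Qbb_{p,\square}$$
and to prove weak $\Dcal$-properness, $\Dcal$-smoothness, and identify the relative dualizing complex for $f$ and $h$ separately. Since all three properties are stable under composition in the Heyer--Mann 6-functor formalism on solid $\Dcal$-stacks, combining them yields the statement for $g=h\circ f$ with $\omega_g\simeq f^!\omega_h$. The dualizing sheaf assembles from the two factors, and the shift $[2]=[1]+[1]$ records the sum of the relative dimensions of the curve and of $\Gamma_K$.

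For $h\colon B\Gamma_K^{\la}\to\AnSpec\Qbb_{p,\square}$, the input is the 6-functor formalism for locally analytic representations of compact $p$-adic Lie groups developed by Rodrigues Jacinto--Rodr\'{\i}guez Camargo. Since $\Gamma_K$ is a compact $p$-adic Lie group of dimension one, one expects $h$ to be weakly $\Dcal$-proper and $\Dcal$-smooth of cohomological dimension one, with relative dualizing complex a shift of the orientation character associated to the Lie algebra of $\Gamma_K$. For $f\colon X_K^{\la}/\Gamma_K^{\la}\to B\Gamma_K^{\la}$, I would pull back along the universal torsor $\AnSpec\Qbb_{p,\square}\to B\Gamma_K^{\la}$ to reduce to showing that the bare projection $X_K^{\la}\to \AnSpec\Qbb_{p,\square}$ is weakly $\Dcal$-proper and $\Dcal$-smooth of relative dimension one, with relative dualizing sheaf $\Ocal_{X_K^{\la}}[1]$. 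This is attacked by covering $X_K^{\la}$ by affinoid open pieces corresponding to suitable annuli in the Fargues--Fontaine curve (coming from the rings $B_{K,\infty}$ in the sense of the paper), checking $\Dcal$-smoothness locally via a Koszul resolution associated to a uniformizer on each piece, and then descending along this cover.

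The main obstacle is the curve factor $f$. The locally analytic Fargues--Fontaine curve is constructed by applying the locally analytic vectors functor to the classical Fargues--Fontaine rings, which is delicate to align with the Heyer--Mann formalism: one must verify that the relative cotangent complex of each affinoid piece over $\Qbb_{p,\square}$ is a rank-one solid module in the appropriate sense, that the resulting Koszul resolution computes the relevant cohomology correctly, and that the properness of the Fargues--Fontaine curve survives the passage to locally analytic vectors, so that the projection preserves $\Dcal$-perfect objects. Once these ingredients are secured for both factors, the composition produces the dualizing complex $\Ocal_{X_K^{\la}}\cdot\chi[2]$, with the character twist $\chi$ appearing from the $\Gamma_K$-action on the orientation, equivalently the adjoint (=cyclotomic) action on the Lie algebra of $\Gamma_K$.
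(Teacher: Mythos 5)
Your reduction of $\Dcal$-smoothness to the claim that the bare projection $X_K^{\la}\to \AnSpec\Qbb_{p,\square}$ is $\Dcal$-smooth of relative dimension one with dualizing sheaf $\Ocal_{X_K^{\la}}[1]$ is where the argument breaks. The affinoid pieces of $X_K^{\la}$ are $\AnSpec(B_{K,\infty}^{I},B_{K,\infty}^{I,+})_{\square}$ with $B_{K,\infty}^{I}=\varinjlim_n B_{K,n}^{I}$ an infinite tower of finite \'{e}tale extensions of a one-dimensional smooth affinoid; such an infinite pro-finite-\'{e}tale object is not cohomologically smooth over $\Qbb_{p,\square}$ (there is no uniformizer and no Koszul resolution at infinite level), and the paper never asserts this --- indeed it explicitly leaves open even the analogous question for $X_{K_\infty}/\Gamma_K$. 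The whole point of the paper's argument is that smoothness only appears \emph{after} quotienting by $\Gamma_K^{\la}$: the ``extra'' arithmetic direction of the tower $\{B_{K,n}^I\}_n$ is cancelled against the one-dimensional group via the Tate--Sen axioms. Concretely, the paper applies the suaveness criterion of Heyer--Mann (Proposition \ref{prop:suave criterion}) to the factorizations $\AnSpec(B_{K,\infty}^I)_{\square}/\Gamma_K^{\la}\to \AnSpec(B_{K,n}^I)_{\square}/\Gbb^{h(n)\dagger}\to \AnSpec\Qbb_{p,\square}$, where each finite-level stack is smooth of total dimension $1+1=2$ (one from the affinoid, one from the dagger group), and the decisive input is the Tate--Sen vanishing $A_l^{h\dagger-\an}\simeq A_\infty^{h\dagger-\an}$ (Corollary \ref{cor:key for suave}), which makes the pushforward to finite level a coherent, $f_i$-suave object. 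Your factorization through $B\Gamma_K^{\la}$ cannot see this cancellation, because after pulling back along $*\to B\Gamma_K^{\la}$ the group has been stripped away.

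Two further points. First, your explanation of the twist is incorrect: $\Gamma_K$ is abelian, so the adjoint action on its Lie algebra is trivial, and the dualizing complex of $B\Gamma_K^{\la}\to\AnSpec\Qbb_{p,\square}$ is $\Qbb_p[1]$ with \emph{trivial} action (Lemma \ref{lem:dualizing complex smooth quotient}); the character $\chi$ enters through the $\Gamma_K$-action on $\Omega^1$ of the finite-level annuli. Second, even granting invertibility of $\omega_g$, the paper does not compute the twist geometrically: it shows abstractly that $\omega_g[-2]$ is a vector bundle and then identifies it with $\Ocal_{X_K^{\la}}\cdot\chi$ by comparing the abstract Poincar\'{e} duality (Appendix A) with Liu's classical Tate local duality for $(\varphi,\Gamma_K)$-modules. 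Your treatment of weak $\Dcal$-properness (affinoid cover of the curve, properness of each piece) is essentially the paper's Lemma \ref{lem:FF curve proper} and is fine.
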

The weakly $\Dcal$-properness leads to an equivalence $(g_{A_{\square}})_!\simeq (g_{A_{\square}})_*$, and the $\Dcal$-smoothness leads to an equivalence $g_{A_{\square}}^!A\simeq \Ocal_{X_{K,A_{\square}}^{\la}}\cdot \chi[2]$.
Therefore, we get the Tate local duality from Theorem \ref{intro:main theorem}:
\begin{align*}
	\intHom_{A}(\Gamma_{\varphi,\Gamma_K}(\Mcal),A)&\simeq \intHom_{A}((g_{A_{\square}})_!\Mcal,A)\\
	&\simeq (g_{A_{\square}})_* \intHom(\Mcal,g_{A_{\square}}^!A)\\
	&\simeq \Gamma_{\varphi,\Gamma_K}\left(\intHom(\Mcal,\Ocal_{X_{K,A_{\square}}^{\la}}\cdot \chi[2])\right).
\end{align*}

Next, we explain how to prove this theorem.
The most difficult part of the proof is to show that $g$ is $\Dcal$-smooth, so we only explain how to prove this part.
We set $\widetilde{B}_{K_{\infty}}^{I}=\Ocal_{Y_{K_{\infty}}}(Y_{K_{\infty}}^{I})$ for a closed interval $I=[p^{-k},p^{-l}]$, where $Y_{K_{\infty}}^{I}$ is the rational open subspace of $Y_{K_{\infty}}=\Spa W(\Ocal_{\hat{K}_{\infty}^{\flat}})\setminus\{p[p^{\flat}]=0\}$ defined by $|p^{p^{-l}}|\leq |[p^{\flat}]|\leq |p^{p^{-k}}|$.
Let $B_{K,\infty}^I\subset \widetilde{B}_{K_{\infty}}^{I}$ denote the subalgebra of $\Gamma_K$-locally analytic vectors.
Then $\AnSpec (B_{K,\infty}^I,B_{K,\infty}^{I+})_{\square}$ becomes ``an open subspace'' of $X_{K}^{\la}$ and such open subspaces form an open cover of $X_{K}^{\la}$.
Therefore, it is enough to show that $\AnSpec (B_{K,\infty}^I,B_{K,\infty}^{I+})_{\square}/\Gamma_K^{\la}\to \AnSpec\Qbb_{p,\square}$ is $\Dcal$-smooth.
Key ingredients of the proof are the following:
\begin{itemize}
	\item There are closed subalgebras $B_{K,0}^I \subset B_{K,1}^I \subset \cdots \subset\widetilde{B}_{K_{\infty}}^{I}$ such that $B_{K,\infty}^I=\varinjlim_{n} B_{K,n}^I$ and they satisfy the Tate-Sen conditions.
	\item The morphism $\Spa(B_{K,n}^I,B_{K,n}^{I+})\to \Spa(\Qbb_p)$ is smooth, and the morphism $\Spa(B_{K,n}^I,B_{K,n}^{I+})\to \Spa(B_{K,m}^I,B_{K,m}^{I+})$ is finite \'{e}tale for $n\geq m$ in Huber's sense.
	\item For every $n\geq 0$, there exists $h(n)\geq 0$ such that $B_{K,n}^I$ is a (static) $\Gamma_{K,h(n)}=\chi^{-1}(1+p^{h(n)}\Zbb_p)$-analytic representation.
	Conversely, for every $m\geq 0$, there exists $h^{\prime}(m)\geq 0$ such that the subalgebra of $\widetilde{B}_{K_{\infty}}^{I}$ consisting of $\Gamma_{K,m}$-analytic vectors is contained in $B_{K,h^{\prime}(m)}^I$.
\end{itemize}

We want to use \cite[Proposition D.2.9]{HM24} (Proposition \ref{prop:suave criterion}) to prove that $\AnSpec (B_{K,\infty}^I,B_{K,\infty}^{I+})_{\square}/\Gamma_K^{\la}\to \AnSpec\Qbb_{p,\square}$ is $\Dcal$-smooth.
To apply this proposition, a nice geometric interpretation of ``$\Gamma_{K,h(n)}$-analytic representations'' is necessary.
For simplicity, we assume $\chi(\Gamma_K)=1+p\Zbb_p$ and $p>2$.
We set $G=\Gamma_K$ and $G_h=\Gamma_{K,h}$.
There is a rigid analytic group $\Gbb_1=\Spa \Qbb_p\langle T/p\rangle$ such that $\Gbb_1(\Qbb_p)=1+p\Zbb_p$, which can be constructed from $\log \colon 1+p\Zbb_p \simeq p\Zbb_p$.
We set $\Gbb_h=\Gbb_1\langle T/p^h\rangle$, which is a rigid analytic subgroup.
Let $\Gbb^h$ denote the rigid analytic group $\bigsqcup_{g\in G/\Gbb_h(\Qbb_p)}g \Gbb_h$, and let $C^h(G,\Qbb_p)$ denote the ring $\Ocal(\Gbb^h)$ of analytic functions on $\Gbb^h$.
We introduce $\Gbb^{h\dagger}$, which is the overconvergent version of $\Gbb^h$, and define $\Gbb^{h\dagger}$-analytic representations.
\begin{definition}
	Let $h>1$ be a rational number.
	\begin{enumerate}
	\item Let $C^{h\dagger}(G,\Qbb_p)\coloneqq \varinjlim_{h>h^{\prime}\geq 1}C^{h^{\prime}}(G,\Qbb_p)$ denote the space of overconvergent analytic functions on $\Gbb^h$.
	\item We define a dagger group $\Gbb^{h\dagger}=\AnSpec C^{h\dagger}(G,\Qbb_p)_{\square}$.
	\item For $V\in \Dcal(\Qbb_{p,\square})$, we define the space of $\Gbb^{h\dagger}$-analytic functions with values in $V$ as 
	\begin{align*}
		C^{h\dagger}(G,V)\coloneqq C^{h\dagger}(G,\Qbb_p)_{\square} \otimes_{\Qbb_{p,\square}} V.
	\end{align*}
	If $V$ has a continuous $G$-action, then $C^{h\dagger}(G,V)$ also has a $G$-action given by $(gf)(h)=g\cdot f(g^{-1}h)$ for $f\in C^{h\dagger}(G,V)$ and $g,h \in G$.
	\item A representation $V\in \Rep_{\square}(G)=\Mod_{\Qbb_{p,\square}[G]}(\Dcal(\Qbb_{p,\square}))$ of $G$ is said to be $\Gbb^{h\dagger}$-analytic if the natural morphism 
	$$V^{h\dagger-an}= \intHom_{\Qbb_{p,\square}[G]}(\Qbb_p,C^{h\dagger}(G,V))\to V$$
	is an equivalence.
	Let $\Rep_{\square}^{h\dagger}(G)\subset \Rep_{\square}(G)$ denote the full subcategory of $\Gbb^{h\dagger}$-analytic representations.
	\end{enumerate}
\end{definition}

\begin{remark}
	Such representations are also studied in \cite{LSS23} by using classical $p$-adic analysis.
\end{remark}

In \cite{RJRC23}, Rodrigues Jacinto-Rodr\'{\i}guez Camargo defined a group object $\Gbb^{\la}$ such that $\Dcal(*/\Gbb^{\la})$ is equivalent to the $\infty$-category $\Rep_{\square}^{\la}(G)$ of solid locally analytic representations, which already appeared as $\Gamma_K^{\la}$ in the statement of Theorem \ref{intro:main theorem}.

\begin{proposition}[{Proposition \ref{prop:geometric interpretation}, Proposition \ref{prop:6-ff classifying stack}}]
There is an equivalence of symmetric monoidal $\infty$-categories
$$\Rep_{\square}^{h\dagger}(G) \simeq \Dcal(*/\Gbb^{h\dagger}).$$
Under this equivalence, the pullback along the morphism $*/\Gbb^{\la} \to */\Gbb^{h\dagger}$ is equivalent to the inclusion $\Rep_{\square}^{h\dagger}(G)\subset \Rep_{\square}^{\la}(G)$.
Moreover, the morphisms $*/\Gbb^{\la} \to */\Gbb^{h\dagger}$ and $*/\Gbb^{h\dagger}\to \AnSpec\Qbb_{p,\square}$ are weakly $\Dcal$-proper, and the morphism $*/\Gbb^{h\dagger}\to \AnSpec\Qbb_{p,\square}$ is $\Dcal$-smooth for $h$ sufficiently large.
\end{proposition}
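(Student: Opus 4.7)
The plan is to treat the three assertions in turn via descent in the 6-functor formalism. For the equivalence $\Rep_{\square}^{h\dagger}(G)\simeq \Dcal(*/\Gbb^{h\dagger})$, I apply descent along the atlas $p\colon \AnSpec\Qbb_{p,\square}\to */\Gbb^{h\dagger}$: the \v{C}ech nerve has $n$-th term $(\Gbb^{h\dagger})^{\times n}$, so after verifying the Barr--Beck hypotheses, $\Dcal(*/\Gbb^{h\dagger})$ is identified with the $\infty$-category of $C^{h\dagger}(G,\Qbb_p)$-comodules in $\Dcal(\Qbb_{p,\square})$. Unwinding the definition of $\Gbb^{h\dagger}$-analyticity makes this comodule category match $\Rep_{\square}^{h\dagger}(G)$ almost tautologically: the $G$-action on an $h\dagger$-analytic $V$ is recovered from, and encodes, the coaction via the orbit map $V\to C^{h\dagger}(G,V)$. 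The descent hypotheses ultimately reduce to the fact that $C^{h\dagger}(G,\Qbb_p)$ is nuclear over $\Qbb_{p,\square}$, so that the relevant solid tensor products commute with the totalizations appearing in the Čech resolution.

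For the compatibility statement, recall from \cite{RJRC23} that $\Gbb^{\la}$ is a directed colimit $\varinjlim_{h'} \Gbb^{h'\dagger}$ of dagger groups, encoded by $C^{\la}(G,\Qbb_p)=\varinjlim_{h'} C^{h'\dagger}(G,\Qbb_p)$. Passing to classifying stacks yields the map $*/\Gbb^{\la}\to */\Gbb^{h\dagger}$ in question, and its pullback on $\Dcal$ translates, under the equivalences of the previous paragraph, into the functor on comodules induced by the coalgebra map $C^{h\dagger}(G,\Qbb_p)\to C^{\la}(G,\Qbb_p)$. By construction this is precisely the inclusion $\Rep_{\square}^{h\dagger}(G)\hookrightarrow \Rep_{\square}^{\la}(G)$.

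Weak $\Dcal$-properness of both maps comes down to compactness of the dagger groups involved, i.e.\ to verifying $f_!\simeq f_*$ together with arbitrary base change. For $f\colon */\Gbb^{h\dagger}\to \AnSpec\Qbb_{p,\square}$, pullback along the atlas $p$ reduces the claim to the analogous statement for $\Gbb^{h\dagger}$ itself; since $\Gbb^{h\dagger}$ is a compact affine dagger analytic space, its structure morphism is weakly proper in the sense of \cite{HM24}, and the property descends to the quotient. Weak $\Dcal$-properness of $*/\Gbb^{\la}\to */\Gbb^{h\dagger}$ is handled analogously, using the colimit presentation $\Gbb^{\la}=\varinjlim_{h'} \Gbb^{h'\dagger}$ to reduce to the previous case after base change to the atlas.

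The main obstacle is $\Dcal$-smoothness of $*/\Gbb^{h\dagger}\to \AnSpec\Qbb_{p,\square}$ for $h$ large, for which I plan to invoke Proposition~\ref{prop:suave criterion}. The criterion reduces $\Dcal$-smoothness of the classifying stack of a group $H$ to $\Dcal$-smoothness of $H\to \AnSpec\Qbb_{p,\square}$ together with finiteness properties controlled by the diagonal. The geometric input is that for $h$ sufficiently large the $p$-adic logarithm identifies $\Gbb_h$ with an overconvergent closed ball in the Lie algebra of $G$, so that $\Gbb^{h\dagger}$ becomes a finite disjoint union of overconvergent balls of dimension $\dim G$; this is exactly where the assumption ``$h$ large'' enters, since below that threshold the exponential/logarithm identification fails to have the desired overconvergent radius. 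Overconvergent balls are $\Dcal$-smooth over $\AnSpec\Qbb_{p,\square}$, with dualizing sheaf a shift of the structure sheaf, and the Lie-theoretic structure furnishes the diagonal finiteness needed by the criterion. The essential subtlety — and the reason we pass to the dagger/overconvergent setting rather than work with the closed-ball group $\Gbb^{h}$ directly — is that only in the overconvergent setup do the $!$-functors interact correctly with the filtered colimits that will appear later when passing to $\Gbb^{\la}$; ensuring this compatibility is the technical heart of the argument.
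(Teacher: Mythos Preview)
Your outline for the equivalence $\Rep_{\square}^{h\dagger}(G)\simeq\Dcal(*/\Gbb^{h\dagger})$ via \v{C}ech descent and Barr--Beck is the right shape, but the claim that ``the descent hypotheses ultimately reduce to nuclearity of $C^{h\dagger}(G,\Qbb_p)$'' skips the genuinely hard step. Nuclearity (hence steadiness) handles the right-adjointability condition in Lemma~\ref{lem:Barr-Beck-Lurie descent}, but the comonadicity of the forgetful functor $\Rep_{\square}^{h\dagger}(G)\to\Dcal(\Qbb_{p,\square})$ is a separate issue: you must show that totalizations of split cosimplicial objects are preserved. The paper does this via Lemma~\ref{lem:descendable zero map}, which uses the Lazard--Serre resolution of the trivial representation to bound the ``nilpotence length'' of morphisms that vanish on underlying modules; this is what forces the relevant Tot's to stabilize at a finite stage. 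Without such an argument the Barr--Beck step is incomplete.

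Your treatment of weak $\Dcal$-properness has a genuine error of direction. Primness (and hence weak $\Dcal$-properness) is local on the \emph{target}, not the source, so ``pulling back along the atlas $p\colon *\to */\Gbb^{h\dagger}$'' does nothing for $f_h\colon */\Gbb^{h\dagger}\to *$, whose target is already a point. The paper instead shows $p$ is $\Dcal$-prim with $p_*\Qbb_p$ descendable (Proposition~\ref{prop:descendable cover}), and then applies Lemma~\ref{lem:prim descendable local}(2) to the factorization $*\xrightarrow{p}*/\Gbb^{h\dagger}\xrightarrow{f_h}*$ to conclude that $f_h$ is $\Dcal$-prim; triviality of $\delta_{f_h}$ then follows from weak properness of the diagonal.

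Your plan for $\Dcal$-smoothness is the most serious gap. Proposition~\ref{prop:suave criterion} does not say what you claim: it requires factorizations $X\to X_i\to S$ with the $X_i\to S$ already $\Dcal$-suave, and says nothing about passing from smoothness of a group $H$ to smoothness of $*/H$. Even granting that $\Gbb^{h\dagger}$ is $\Dcal$-smooth over $\Qbb_{p,\square}$, this yields smoothness of $*\to */\Gbb^{h\dagger}$, not of $*/\Gbb^{h\dagger}\to *$; the two directions are unrelated in general. The paper's argument is entirely different: it \emph{imports} the $h=\infty$ case from \cite[Proposition~6.2.4]{RC24}, where $*/\Gbb^{\la}\to *$ is already known to be $\Dcal$-smooth with dualizing complex $\wedge^d\gfrak[d]$, and then applies Lemma~\ref{lem:suave and prim composition}(2) to the composite $*/\Gbb^{\la}\xrightarrow{g}*/\Gbb^{h\dagger}\xrightarrow{f_h}*$. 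Since $g$ is weakly $\Dcal$-proper and $g_!\Qbb_p\simeq\Qbb_p$, one obtains that $\Qbb_p$ is $f_h$-suave with $\omega_{f_h}\simeq g_!\wedge^d\gfrak[d]$; the hypothesis ``$h$ large'' enters only to guarantee that $\wedge^d\gfrak[d]$ is already $h\dagger$-analytic (Lemma~\ref{lem:compact h-analytic}), so that $g_!\wedge^d\gfrak[d]=\wedge^d\gfrak[d]$ is invertible.
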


By using this proposition, we can apply \cite[Proposition D.2.9]{HM24}, and then, we find that $\AnSpec (B_{K,\infty}^I,B_{K,\infty}^{I+})_{\square}/\Gamma_K^{\la}\to \AnSpec\Qbb_{p,\square}$ is $\Dcal$-smooth.

\begin{remark}
	Finiteness and duality of the cohomology of families of $(\varphi,\Gamma_K)$-modules are also studied in \cite[Corollary 4.5.2, Remark 4.5.3]{ALM24}.
	This paper uses the 6-functor formalism on solid $\Dcal$-stacks, whereas Ansch\"{u}tz-Le Bras-Mann use the 6-functor formalism $\Dcal_{[0,\infty)}$ on small $v$-stacks.
	Moreover, this paper considers the locally analytic action of $\Gamma_K$, whereas they consider the continuous action.
\end{remark}
\begin{remark}
	It is natural to ask whether the morphism
	$$X_{K_{\infty}}/\Gamma_K \to \AnSpec \Qbb_{p,\square}$$
	is also $\Dcal$-smooth.
	The author expects that it should be true, but we will not pursue this problem in this paper.
\end{remark}

\subsection{Outline of the paper}
This paper is organized as follows.
In Section 1, we recall the basic notions of solid $\Dcal$-stacks defined in \cite{RC24}, and the basic properties of 6-functor formalisms proved in \cite{HM24}.
In Section 2, we define $\Gbb^{h\dagger}$ and $\Gbb^{h\dagger}$-analytic representations, and then we prove basic properties of them.
In the former part of Section 3, we prove  for morphisms satisfying some conditions related with the Tate-Sen axioms.
In the latter part of Section 3, we prove finiteness and duality of the cohomology of $(\varphi,\Gamma_K)$-modules.
In Appendix A, we study the Poincar\'{e} duality for proper morphisms.

\subsection{Convention}
\begin{itemize}
	\item
	All rings, including condensed ones, are assumed unital and commutative.
	\item 
	In contrast to \cite{Mikami24}, we use the symbol $-\otimes-$ to refer to a \textit{derived tensor product} and use the symbol $H^0(-\otimes-)$ or $\pi_0(-\otimes -)$ to refer to a \textit{non-derived tensor product}.
	Similarly, we adopt analogous notation for Hom and limit.
	\item
	We use the terms \textit{f-adic ring} and \textit{affinoid pair} rather than \textit{Huber ring} and \textit{Huber pair}.
	\item For a complete non-archimedean field $K$, we use the term \textit{affinoid $K$-algebra} to refer to a \textit{topological $K$-algebra topologically of finite type over $K$}.
	\item
	We denote the augmented simplex category by $\Delta_+$, which is the full subcategory of the category of totally ordered sets consisting of the totally ordered sets $[n]=\{0,\ldots,n\}$ for all $n \geq -1$, where $[-1]=\emptyset$.
	We also denote the simplex category by $\Delta$, which is the full subcategory of $\Delta_+$ consisting of $[n]$ for $n\geq 0$.

	For $n\geq 1$ and $0 \leq i \leq n$, let $d_n^i \colon [n-1]\to [n]$ denote the $i$-th coface map defined by
	$$d_n^i(j)=\begin{cases} j & (j<i)\\
		j+1 & (j\geq i).
	\end{cases}
	$$
	For $n\geq 0$ and $0 \leq i \leq n$, let $s_n^i \colon [n+1]\to [n]$ denote the $i$-th codegeneracy map defined by
	$$s_n^i(j)=\begin{cases} j & (j\leq i)\\
		j-1 & (j> i).
	\end{cases}
	$$
	\item Throughout this paper, all radii $r$ and $s$ are assumed to be rational numbers.
	\item In this paper, a 2-category refers to a bicategory rather than a strict 2-category. 
	\item An adjunction
	$$F \colon \Ccal \rightleftarrows \Dcal\colon G$$
	means that $F$ is a left adjoint functor of $G$.
\end{itemize}

\subsection{Convention and notation about condensed mathematics}
In this paper, we use condensed mathematics.
We summarize the notations and conventions related to condensed mathematics.

\begin{itemize}
	\item Throughout this paper, we fix an uncountable solid cutoff cardinal $\kappa$ as in \cite[Definition 2.9.11]{Mann22} and work with $\kappa$-condensed objects.
	Our results do not depend on the choice of $\kappa$.
	If the reader prefers to work with light condensed objects, one may simply replace ``condensed'' with ``light condensed'' throughout the paper without affecting any of the arguments.
    \item We often identify a compactly generated topological set, ring, group, etc. $X$ whose points are closed (i.e., $X$ is $T1$) with a condensed set, ring, group, etc. $\underline{X}$ associated to $X$. 
    It is justified by \cite[Proposition 1.7]{CM}.
    If there is no room for confusion, we simply write $X$ for $\underline{X}$.
    \item 
    In contrast to \cite{Mann22}, we use the term \textit{ring} to refer to an \textit{ordinary ring} (not a condensed animated ring). 
    Sometimes we use the term \textit{discrete ring} (resp. \textit{discrete animated ring}) to refer to an ordinary ring (resp. animated ring) in order to emphasize that it is not a condensed one. We also use the term \textit{static ring} (resp. \textit{static analytic ring})  to refer to an ordinary ring (resp. analytic ring) in order to emphasize that it is not an animated one.
    \item We use the terms ``analytic animated ring" and ``uncompleted analytic animated ring" according to \cite{Mann22}.
    \item For an uncompleted analytic animated ring $\Acal$, we denote the underlying condensed animated ring of $\Acal$ by $\underline{\Acal}$.
    \item For an uncompleted analytic animated ring $\Acal$, an object $M\in \Dcal(\underline{\Acal})$ is said to be \textit{$\Acal$-complete} if it lies in $\Dcal(\Acal)$.    
    \item Let $\Zbb_{\square}$ and $\Zbb_{p,\square}$ denote the analytic rings defined in \cite[Example 7.3]{CM}. 
    Moreover, for a usual ring $A$, let $(A,A)_{\square}$ denote the analytic ring defined in \cite[Definition 2.9.1]{Mann22}.
    We note that the analytic ring structure of $\Zbb_{p,\square}$ is induced from $\Zbb_{\square}$.
    \item For a condensed animated ring $A$ and for a morphism of usual rings $B\to \pi_0A(\ast)$, let $(A,B)_{\square}$ denote the condensed animated ring $A$ with the induced analytic ring structure from $(B,B)_{\square}$.
    For details, see \cite[Definition 2.1.5]{RC24}.
    When $B=\Zbb$, then we simply write $A_{\square}$ for $(A,\Zbb)_{\square}$.
    \item For animated $\Qbb_{p,\square}$-algebras $R$ and $A$, let $R_A$ denote the condensed animated $A$-algebra $R \otimes_{\Qbb_{p,\square}}A$.
\end{itemize}

\subsection*{Acknowledgements}
The author is grateful to Yoichi Mieda for his support during the studies of the author.
In addition, the author is grateful to Lucas Mann for his answers in questions about the 6-functor formalism.
This paper originated from discussions with Juan Esteban Rodr\'{\i}guez Camargo at the ``Workshop on Shimura varieties, representation theory and related topics'' held in October 2024 in Tokyo, and the author would like to thank him.
This work was supported by JSPS KAKENHI Grant Number JP23KJ0693.


\section{Solid $\Dcal$-stacks and 6-functor formalisms}
In this section, we briefly recall some results of \cite{RC24} and \cite{HM24}.
For details, see loc. cit.
\subsection{Solid $\Dcal$-stacks}
In this subsection, we define solid $\Dcal$-stacks, and construct a 6-functor formalism on them according to \cite[3.2]{RC24}.

Let $\AnRing_{\Zbb_{\square}}$ denote the $\infty$-category of analytic animated rings over $\Zbb_{\square}$.

\begin{definition}[{\cite[Definition 2.6.1, Definition 2.6.6]{RC24}}]
	Let $\Acal\in \AnRing_{\Zbb_{\square}}$ be an analytic animated ring over $\Zbb_{\square}$.
	\begin{enumerate}
		\item \textit{The subring of $+$-bounded elements} is the (discrete) animated ring given by
		$$\Acal^+=\Map_{\AnRing_{\Zbb_{\square}}}((\Zbb[T],\Zbb[T])_{\square},\Acal).$$
		\item The analytic animated ring $\Acal$ over $\Zbb_{\square}$ is called a \textit{solid affinoid animated ring} if the natural morphism of analytic animated rings
		$$(\underline{\Acal},\pi_0(\Acal^+))_{\square}\to \Acal$$
		is an equivalence.
		Let $\AffRing_{\Zbb_{\square}}\subset \AnRing_{\Zbb_{\square}}$ denote the full subcategory of solid affinoid animated rings.
		Moreover, let $\Aff_{\Zbb_{\square}}$ denote the opposite category of $\AffRing_{\Zbb_{\square}}$, and we call it the $\infty$-category of \textit{solid affinoid spaces}.
		We also let $\AnSpec \Acal\in \Aff_{\Zbb_{\square}}$ denote the object corresponding to $\Acal \in \AffRing_{\Zbb_{\square}}$.
	\end{enumerate}
\end{definition}
\begin{remark}
	For a solid affinoid animated ring $\Acal$, let $\AffRing_{\Acal}$ (resp. $\Aff_{\Acal}$) denote the $\infty$-category of solid affinoid animated rings over $\Acal$ (resp. solid affinoid spaces over $\AnSpec \Acal$).
\end{remark}

\begin{definition}[{\cite[Definition 2.2.2, Proposition 2.2.4]{RC24}}]\label{def:open locale}
	A morphism 
	$$f\colon \AnSpec \Bcal \to \AnSpec \Acal$$ 
	in $\Aff_{\Zbb_{\square}}$ is called an \textit{open immersion in the associated locale} if it satisfies that the functor
	$$f^*=\Bcal\otimes_{\Acal}- \colon \Dcal(\Acal)\to \Dcal(\Bcal)$$
	admits a fully faithful left adjoint functor $f_!\colon  \Dcal(\Bcal) \to \Dcal(\Acal)$ such that the natural morphism 
	$$f_!(f^*M\otimes_{\Bcal}N)\to M\otimes_{\Acal} f_!N$$
	becomes an equivalence for every $M\in \Dcal(\Acal)$ and $N\in \Dcal(\Bcal)$ (in other words, it satisfies the projection formula).
	If there is no room for confusion, we simply call it an \textit{open immersion}.
\end{definition}

\begin{example}
Let $\Spa(B,B^+)\to \Spa(A,A^+)$ be an open immersion of (sheafy) analytic adic spaces.	
Then the morphism $\AnSpec(B,B^+)_{\square}\to \AnSpec(A,A^+)_{\square}$ is an open immersion in the associated locale by \cite[Proposition 2.3.2]{RC24}.
\end{example}

\begin{definition}\label{def:suitable dcp}
	\begin{enumerate}
		\item Let $I$ be the family of open immersions in $\Aff_{\Zbb_{\square}}$.
		\item Let $P$ be the family of morphisms in $\Aff_{\Zbb_{\square}}$ consisting of those morphisms $f\colon \AnSpec \Bcal \to \AnSpec \Acal$ such that the analytic ring structure of $\Bcal$ is induced from $\Acal$.
		\item Let $E$ be the family of morphisms in $\Aff_{\Zbb_{\square}}$ consisting of those morphisms of the form $f \circ i$ with $i\in I$ and $f\in P$.
	\end{enumerate}
\end{definition}

\begin{lemma}[{\cite[Lemma 3.2.5]{RC24}}]
	The pair $(P,I)$ is a suitable decomposition of the geometric set up $(\Aff_{\Zbb_{\square}},E)$ in the sense of \cite[Definition 3.3.2]{HM24}.
	Moreover, 
	the functor
	$$\Aff_{\Zbb_{\square}} \to \CAlg(\Pr^{L,\ex});\; \Acal \mapsto \Dcal(\Acal)$$
	extends to a 6-functor formalism
	$$\Dcal \colon \Corr(\Aff_{\Zbb_{\square}},E)\to \Pr^{L,\ex},$$
	where $\Pr^{L,\ex}$ is the $\infty$-category of presentable stable $\infty$-categories and $\CAlg(\Pr^{L,\ex})$ is the $\infty$-category of presentably symmetric monoidal stable $\infty$-categories.
\end{lemma}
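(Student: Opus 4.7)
The plan is to verify the axioms of a suitable decomposition of \cite[Definition 3.3.2]{HM24} for the pair $(P,I)$ of Definition \ref{def:suitable dcp}, and then to invoke the abstract existence theorem for 6-functor formalisms associated to such decompositions. The axioms amount to checking that $P$ and $I$ each contain all equivalences and are stable under composition and pullback (in the appropriate senses), that every $f\in E$ factors as $f=p\circ i$ with $p\in P$ and $i\in I$ (which is immediate from the definition of $E$), and that the exchange transformations in Cartesian squares mixing $P$ and $I$ are equivalences.

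Closure under composition is easy in both cases. For $P$, induced analytic structures are transitive: if $\Bcal$ has analytic structure induced from $\Acal$ and $\Ccal$ from $\Bcal$, then $\Ccal$ has analytic structure induced from $\Acal$. For $I$, the composition of two open immersions $i\colon\AnSpec\Bcal\to\AnSpec\Acal$ and $j\colon\AnSpec\Ccal\to\AnSpec\Bcal$ has left adjoint $i_!\circ j_!$, which is fully faithful and satisfies the projection formula by applying the formulas for $i$ and $j$ in turn. Pullback stability of $P$ follows at once from the fact that induced analytic structures are preserved under arbitrary base change. For $I$, given an open immersion $i\colon\AnSpec\Bcal\to\AnSpec\Acal$ and any morphism $g\colon\AnSpec\Ccal\to\AnSpec\Acal$, I would show the base change $i'\colon\AnSpec(\Bcal\otimes_{\Acal}\Ccal)\to\AnSpec\Ccal$ admits a fully faithful left adjoint $i'_!$ satisfying the projection formula; the functor $i'_!$ is produced by transporting $i_!$ along $g$, and the required fully faithfulness and projection formula reduce to those for $i$ via the symmetric monoidality of $g^*$.

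The main obstacle I expect is verifying the base change compatibility in mixed squares, namely that in a Cartesian square with $p\in P$ and $i\in I$, the natural exchange transformation $p^*i_!\to i'_!p'^*$ is an equivalence. This should reduce to the projection formula for $i$ together with the fact that $p^*$ is symmetric monoidal and colimit-preserving: both sides identify with the relevant tensor product applied to the same module, so the comparison becomes formal once the left adjoints $i_!$ and $i'_!$ are described as restrictions of the corresponding tensor products. Once all axioms are in place, the abstract construction of \cite{HM24} produces the desired 6-functor formalism $\Dcal\colon\Corr(\Aff_{\Zbb_{\square}},E)\to\Pr^{L,\ex}$, simultaneously delivering the six functors with base change along $E$-morphisms, the projection formula for $f_!$, and the presentably symmetric monoidal stable structure on each $\Dcal(\Acal)$.
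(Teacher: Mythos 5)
The paper does not prove this lemma at all: it is quoted verbatim as \cite[Lemma 3.2.5]{RC24}, so there is no internal proof to compare against. Your outline is the standard route (verify the axioms of a suitable decomposition, then invoke the abstract extension theorem of \cite{HM24}), and it is essentially the argument carried out in \cite{RC24}. Two points deserve attention, though. First, the axioms of \cite[Definition 3.3.2]{HM24} require not only the closure properties and the factorization $E=P\circ I$, but also that for every $p\in P$ the pushforward $p_*$ satisfies base change and the projection formula; your proposal never addresses this. For the present $P$ (induced analytic ring structure) it is easy — $p_*$ is the forgetful functor $\Dcal(\Bcal)\to\Dcal(\Acal)$, which is colimit-preserving, satisfies the projection formula because $M\otimes_{\Bcal}(\Bcal\otimes_{\Acal}N)\simeq M\otimes_{\Acal}N$, and commutes with arbitrary base change — but it is a required axiom and must be stated.

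Second, the transformation you single out as the ``main obstacle,'' namely $p^*i_!\to i'_!p'^*$, is just base change for $i_!$ along an arbitrary morphism; it is part of the condition on $I$ alone and follows from pullback-stability of $I$ together with the description of $i_!$ as tensoring with the idempotent object $i_!\mathbf{1}$ (this is \cite[Proposition 2.2.4]{RC24}, already invoked in Definition \ref{def:open locale}). The genuinely \emph{mixed} compatibility in \cite[Definition 3.3.2]{HM24} is the commutation of $i_!$ with $p_*$ in a Cartesian square, i.e.\ that $i_!p'_*\to p_*i'_!$ is an equivalence; this is what guarantees that $f_!\coloneqq p_*\circ i_!$ is independent of the chosen factorization. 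For our classes it again holds because $p_*$ is a forgetful functor transparent to the tensor product, but your sketch conflates it with ordinary base change for $i_!$. With these two verifications added, the argument is complete and matches the cited proof.
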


By \cite[Proposition 3.4.2]{HM24}, we can extend it to a 6-functor formalism
$$\Dcal\colon \Corr(\PSh(\Aff_{\Zbb_{\square}}),E^{\prime})\to \Pr^{L,\ex},$$
where $E^{\prime}$ is the family of morphisms in $\PSh(\Aff_{\Zbb_{\square}})$ consisting of those morphisms $f^{\prime}\colon Y^{\prime}\to X^{\prime}$ such that for every morphism $g \colon X\to X^{\prime}$ from a solid affinoid space $X \in \Aff_{\Zbb_{\square}}$, the pullback of $f^{\prime}$ along $g$ lies in $E$.

\begin{notation}
Let $(\Ccal,E)$ be a geometric setup.
\begin{itemize}
	\item We let $\Ccal_E$ denote the wide subcategory of $\Ccal$ with morphisms $E$.
	\item Let $\Dcal \colon \Corr(\Ccal,E) \to \Cat_{\infty}$ be a 6-functor formalism, where $\Cat_{\infty}$ is the $\infty$-category of $\infty$-categories.
	Then $$(f\colon Y\to X) \mapsto (f^*\colon \Dcal(X)\to \Dcal(Y))$$ defines a functor
	$$\Dcal^*\colon \Ccal^{\op} \to \Cat_{\infty}.$$
	Similarly, $$E\ni (f\colon Y\to X) \mapsto (f^!\colon \Dcal(X)\to \Dcal(Y))$$ defines a functor
	$$\Dcal^!\colon (\Ccal_E)^{\op} \to \Cat_{\infty}.$$
\end{itemize}
\end{notation}

\begin{definition}[{\cite[Definition 3.1.5]{RC24}, \cite[Definition 3.4.6]{HM24}}]\label{defn:solid space *-cover !-cover}
	Let $X\in \Aff_{\Zbb_{\square}}$ be a solid affinoid space.
	\begin{enumerate}
		\item Let $\Ucal\subset (\Aff_{\Zbb_{\square}})_{/X}$ be a sieve.
		Then it is called a \textit{universal $\Dcal^*$-cover}\footnote{More precisely, to avoid set-theoretic size issues, we should restrict the $\infty$-category $\Aff_{\Zbb_{\square}}$ to an essentially small subcategory, for instance the full subcategory consisting of $\kappa'$-compact objects for a sufficiently large regular cardinal $\kappa'$, where we note that $\Aff_{\Zbb_{\square}}$ is compactly generated $\infty$-category by \cite[Proposition 2.6.8]{RC24}.
		Since the particular choice of such an essentially small subcategory does not affect any of our arguments, we will ignore these size issues for the remainder of the paper.} if 
		$$\Dcal^*\colon \Aff_{\Zbb_{\square}}^{\op}\to \Cat_{\infty}$$ descends universally along $\Ucal$.
		\item Let $\Ucal\subset ((\Aff_{\Zbb_{\square}})_E)_{/X}$ be a sieve.
		Then it is called a \textit{small universal $\Dcal^!$-cover} if it is generated by a small family of morphisms $\{X_i\to X\}_{i}$ and 
		$$\Dcal^!\colon (\PSh(\Aff_{\Zbb_{\square}})_{E^{\prime}})^{\op}\to \Cat_{\infty}$$ descends universally along $\{X_i\to X\}_{i}$.
		\item Let $\Ucal\subset ((\Aff_{\Zbb_{\square}})_E)_{/X}$ be a sieve.
		Then it is called a \textit{$\Dcal$-cover} if it is a small universal $\Dcal^!$-cover and if it generates a sieve of $(\Aff_{\Zbb_{\square}})_{/X}$ that is both a canonical cover and a universal $\Dcal^*$-cover.
		The topology on $\Aff_{\Zbb_{\square}}$ defined by $\Dcal$-covers is called the \textit{$\Dcal$-topology}.
	\end{enumerate}
\end{definition}
\begin{remark}
	By construction, $\Dcal^*\colon \PSh(\Aff_{\Zbb_{\square}})^{\op}\to \Cat_{\infty}$ preserves all small limits.
	Therefore, for a universal $\Dcal^*$-cover $\Ucal\subset (\Aff_{\Zbb_{\square}})_{/X}$, $$\Dcal^*\colon \PSh(\Aff_{\Zbb_{\square}})^{\op}\to \Cat_{\infty}$$
	descends universally along $\Ucal$. 
\end{remark}

\begin{definition}[{\cite[Definition 3.2.7]{RC24}}]\label{def:solid D-stack 6ff}
	We call the $\infty$-category of sheaves of anima on $\Aff_{\Zbb_{\square}}$ with the $\Dcal$-topology $\Shv_{\Dcal}(\Aff_{\Zbb_{\square}})$ the $\infty$-category of \textit{solid $\Dcal$-stacks}\footnote{If the reader prefers to use analytic stacks defined in \cite[Definition 4.2.1]{ABLBRCS25}, one may simply replace ``solid $\Dcal$-stacks'' with ``analytic stacks'' throughout the paper without affecting any of the arguments.}.
	By \cite[Theorem 3.4.11]{HM24}, we can extend the 6-functor formalism 
	$$\Dcal \colon \Corr(\Aff_{\Zbb_{\square}},E)\to \Pr^{L,\ex}$$
	to
	$$\Dcal\colon \Corr(\Shv_{\Dcal}(\Aff_{\Zbb_{\square}}),\widetilde{E})\to \Pr^{L,\ex},$$
	where $\widetilde{E}$ satisfies the following:
	\begin{itemize}
		\item The functor $\Dcal^* \colon \Shv_{\Dcal}(\Aff_{\Zbb_{\square}})^{\op}\to \Pr^{L,\ex}$ preserves all small limits.
		\item The family $\widetilde{E}$ is $*$-local on the target: Let $f\colon Y\to X$ be a morphism in $\Shv_{\Dcal}(\Aff_{\Zbb_{\square}})$ whose pullback to every object in $\Aff_{\Zbb_{\square}}$ lies in $\widetilde{E}$. Then $f$ lies also in $\widetilde{E}$.
		\item The family $\widetilde{E}$ is $!$-local: Let $f\colon Y\to X$ be a morphism in $\Shv_{\Dcal}(\Aff_{\Zbb_{\square}})$ which lies, universally $\Dcal^!$-locally on the source or the target, in $\widetilde{E}$. Then $f$ lies also in $\widetilde{E}$.
		\item The family $\widetilde{E}$ is tame: Every morphism $f\colon Y\to X$ in $\widetilde{E}$ with $X\in \Aff_{\Zbb_{\square}}$ lies, universally $\Dcal^!$-locally on the source, in $E$.
	\end{itemize}
	For a solid $\Dcal$-stack $X$, we denote the monoidal unit of $\Dcal(X)$ by $\Ocal_X$.
\end{definition}

\begin{remark}
	By the uniqueness of \cite[Proposition 3.4.2]{HM24}, the 6-functor formalisms 
	$$\Dcal\colon \Corr(\PSh(\Aff_{\Zbb_{\square}}),E^{\prime})\to \Pr^{L,\ex}$$
	and
	$$\Dcal\colon \Corr(\Shv_{\Dcal}(\Aff_{\Zbb_{\square}}),\widetilde{E})\to \Pr^{L,\ex}$$
	coincide on the geometric setup $(\Shv_{\Dcal}(\Aff_{\Zbb_{\square}}),E^{\prime}\cap \Shv_{\Dcal}(\Aff_{\Zbb_{\square}}))$, which justifies the use of the same notation $\Dcal$ to denote the two 6-functor formalisms.
\end{remark}

\begin{remark}
	In the same way, we can define the $\infty$-category $\Shv_{\Dcal}(\Aff_{\Qbb_{p,\square}})$, which is equivalent to the $\infty$-category $\Shv_{\Dcal}(\Aff_{\Zbb_{\square}})_{/\AnSpec\Qbb_{p,\square}}$.
	Therefore, we will refer to objects of $\Shv_{\Dcal}(\Aff_{\Qbb_{p,\square}})$ as solid $\Dcal$-stacks over $\AnSpec\Qbb_{p,\square}$.
\end{remark}

\begin{definition}\label{defn:solid *-cover !-cover}
	Let $X\in \Shv_{\Dcal}(\Aff_{\Zbb_{\square}})$ be a solid $\Dcal$-stack.
	\begin{enumerate}
		\item Let $\Ucal\subset \Shv_{\Dcal}(\Aff_{\Zbb_{\square}})_{/X}$ be a sieve.
		Then it is called a \textit{universal $\Dcal^*$-cover} if 
		$$\Dcal^*\colon \Shv_{\Dcal}(\Aff_{\Zbb_{\square}})^{\op}\to \Cat_{\infty}$$ descends universally along $\Ucal$.
		\item Let $\Ucal\subset (\Shv_{\Dcal}(\Aff_{\Zbb_{\square}})_{\widetilde{E}})_{/X}$ be a sieve.
		Then it is called a \textit{small universal $\Dcal^!$-cover} if it is generated by a small family of morphisms $\{X_i\to X\}_{i}$ and 
		$$\Dcal^!\colon (\Shv_{\Dcal}(\Aff_{\Zbb_{\square}})_{\widetilde{E}})^{\op}\to \Cat_{\infty}$$ descends universally along $\{X_i\to X\}_{i}$.
	\end{enumerate}
\end{definition}
\begin{remark}
	Let $X\in \Aff_{\Zbb_{\square}}$ be a solid affinoid space.
	For a universal $*$-cover $\Ucal\subset (\Aff_{\Zbb_{\square}})_{/X}$ of $X$ (resp. a small universal $!$-cover $\Ucal\subset ((\Aff_{\Zbb_{\square}})_E)_{/X}$ of $X$), it generates a universal $*$-cover (resp. small universal $!$-cover) of $X$ in $\Shv_{\Dcal}(\Aff_{\Zbb_{\square}})$.
\end{remark}
\begin{remark}\label{rem:canonical cover *-cover}
	Since $\Dcal^* \colon \Shv_{\Dcal}(\Aff_{\Zbb_{\square}})^{\op}\to \Pr^{L,\ex}$ preserves all small limits, canonical covers in $\Shv_{\Dcal}(\Aff_{\Zbb_{\square}})$ are universal $\Dcal^*$-covers.
\end{remark}


\subsection{Suave and prim objects and morphisms}
First, we recall the definitions of suave and prim objects and morphisms.
Let $(\Ccal,E)$ be a geometric setup such that $\Ccal$ admits pullbacks, and $\Dcal \colon \Corr(\Ccal,E)\to \Cat_{\infty}$ be a 6-functor formalism.

\begin{definition}\label{defn:general *-cover !-cover}
	Let $X\in \Ccal$ be an object.
	\begin{enumerate}
		\item Let $\Ucal\subset \Ccal_{/X}$ be a sieve.
		Then it is called a \textit{universal $\Dcal^*$-cover} if 
		$$\Dcal^*\colon \Ccal^{\op}\to \Cat_{\infty}$$ descends universally along $\Ucal$.
		\item Let $\Ucal\subset (\Ccal_E)_{/X}$ be a sieve.
		Then it is called a \textit{small universal $\Dcal^!$-cover} if it is generated by a small family of morphisms $\{X_i\to X\}_{i}$ and 
		$$\Dcal^!\colon (\Ccal_E)^{\op}\to \Cat_{\infty}$$ descends universally along $\{X_i\to X\}_{i}$.
	\end{enumerate}
\end{definition}

\begin{remark}
	When the 6-functor formalism on $(\Ccal,E)$ is the 6-functor formalism on $(\Shv_{\Dcal}(\Aff_{\Zbb_{\square}}),\widetilde{E})$, then these notions defined in Definition \ref{defn:general *-cover !-cover} coincide with those defined in Definition \ref{defn:solid *-cover !-cover}. 
	However, when the 6-functor formalism on $(\Ccal,E)$ is the 6-functor formalism on $(\Aff_{\Zbb_{\square}},E)$, then the notion of small universal $\Dcal^!$-covers defined in Definition \ref{defn:general *-cover !-cover} does not coincide with that defined in Definition \ref{defn:solid space *-cover !-cover}. 
	In the rest of this paper, we will use the 6-functor formalism on $(\Shv_{\Dcal}(\Aff_{\Zbb_{\square}}),\widetilde{E})$ instead of that on $(\Aff_{\Zbb_{\square}},E)$, so it causes no issues.
\end{remark}

\begin{definition}
	For an object $S\in \Ccal$, we define a 2-category $\Kcal_{\Dcal,S}$ as follows:
	\begin{itemize}
		\item The objects of $\Kcal_{\Dcal,S}$ are the morphisms $X\to S$ in $E$.
		\item For objects $X,Y\in \Kcal_{\Dcal,S}$, the category of morphisms $\Fun_{\Kcal_{\Dcal,S}}(Y,X)$ is the homotopy category of $\Dcal(X\times_S Y)$.
		\item For objects $X,Y,Z\in \Kcal_{\Dcal,S}$, the composition 
		$$\Fun_{\Kcal_{\Dcal,S}}(Y,X) \times \Fun_{\Kcal_{\Dcal,S}}(Z,Y) \to \Fun_{\Kcal_{\Dcal,S}}(Z,X)$$ is given by
		$$\Dcal(X\times_S Y)\times \Dcal(Y\times_S Z)\to \Dcal(X\times_S Z);\; (M,N)\mapsto \pi_{13!}(\pi_{12}^*M\otimes \pi_{23}^*N),$$ 
		where $\pi_{ij}$ denote the projections from $X\times_S Y \times_S Z$.
	\end{itemize}
\end{definition}

\begin{remark}
	A straightforward verification that the above definition indeed defines a 2-category requires a lot of computations.
	However, in \cite{HM24,Zav23}, this is proved without such tedious computations using the theory of $(\infty,2)$-categories. (It is surprising that $(\infty,2)$-category theory can be used to prove that  $\Kcal_{\Dcal,S}$ is a 2-category!) In this paper, we do not use the fact that $\Kcal_{\Dcal,S}$ admits a structure of an $(\infty,2)$-category, so we omit the details.
\end{remark}

\begin{definition}[{\cite[Definition 4.4.1]{HM24}}]
For a morphism $f\colon X\to S$ in $E$ and an object $P\in \Dcal(X)$, $P$ is said to be \textit{$f$-suave} (resp. \textit{$f$-prim}) if $P$ is a left adjoint (resp. right adjoint) as a morphism $X\to S$ in $\Kcal_{\Dcal,S}$.
We let $\SD_f(P)$ (resp. $\PD_f(P)$) denote the associated right adjoint morphism (resp. left adjoint morphism) $S\to X$ in $\Kcal_{\Dcal,S}$.
\end{definition}

\begin{remark}
	In \cite{RC24}, $f$-suave objects (resp. $f$-prim objects) are called $f$-smooth objects (resp. $f$-proper objects).
\end{remark}

\begin{remark}\label{rem:suave and primand dualizable}
	An object $P\in \Dcal(X)$ is $\id$-suave (resp. $\id$-prim) if and only if it is dualizable in $\Dcal(X)$.
\end{remark}

\begin{definition}[{\cite[Definition 4.5.1]{HM24}}]
	A morphism $f\colon X\to S$ in $E$ is said to be \textit{$\Dcal$-suave} (resp. \textit{$\Dcal$-prim}) if the monoidal unit $\mathbf{1}_X\in \Dcal(X)$ is $f$-suave (resp. $f$-prim). 
	In this case, we write $\omega_f\coloneqq \SD_f(\mathbf{1}_X) \in \Dcal(X)$ (resp. $\delta_f\coloneqq \PD_f(\mathbf{1}_X) \in \Dcal(X)$), and we call it the \textit{dualizing complex} (resp. the \textit{codualizing complex}) of $f$.
	Moreover, the morphism $f\colon X\to S$ is said to be \textit{$\Dcal$-smooth} if it is $\Dcal$-suave and $\omega_f\in \Dcal(X)$ is invertible.
	If $\Dcal$ is clear from the context, we will drop it from the notation.
\end{definition}
\begin{remark}
	In \cite{RC24}, $\Dcal$-smooth morphisms are called cohomologically smooth morphisms.
\end{remark}

We list properties of suaveness and primness which will be needed later without proofs.

Let $f\colon X\to S$ be a morphism in $E$, and let $P\in \Dcal(X)$ be an object.
For every morphism $g\colon S^{\prime}\to S$, let $g^{\prime}\colon X^{\prime}=X\times_S S^{\prime} \to X$ and $f^{\prime}\colon X^{\prime}\to S^{\prime}$ denote the pullbacks of $g$ and $f$.
We let $\pi_i \colon X\times_S X\to X$ denote the $i$-th projection ($i=1,2$), and let $\Delta \colon X \to X\times_S X$ denote the diagonal morphism.

\begin{lemma}[{\cite[Lemma 4.4.5]{HM24}}]\label{lem:suave characterization}
	The following are equivalent:
	\begin{enumerate}
		\item The object $P$ is $f$-suave. 
		\item The natural morphism
		$$\pi_1^*\intHom(P,f^!\mathbf{1}_S)\otimes \pi_2^*P \to \intHom(\pi_1^*P,\pi_2^!P)$$
		is an equivalence.
		\item The natural morphism
		$$\pi_1^*\intHom(P,f^!\mathbf{1}_S)\otimes \pi_2^*P \to \intHom(\pi_1^*P,\pi_2^!P)$$
		becomes an equivalence after applying $\Hom(\mathbf{1}_X, \Delta^!(-))$. 
	\end{enumerate}
	In this case, we have 
	$$\SD_f(P) \simeq \intHom(P,f^!\mathbf{1}_S).$$
	Moreover, for every morphism $g\colon S^{\prime}\to S$ in $E$ and every object $M\in \Dcal(S^{\prime})$, the natural morphism
	$$g^{\prime*}\intHom(P,f^!\mathbf{1}_S)\otimes f^{\prime*}M \to \intHom(g^{\prime*}P,f^{\prime!}M)$$
	is an equivalence in $\Dcal(X\times_S S^{\prime})$.
\end{lemma}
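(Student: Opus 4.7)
The strategy is to interpret $f$-suaveness as the existence of a right adjoint to $P\colon X\to S$ as a $1$-morphism in the $2$-category $\Kcal_{\Dcal,S}$, and to propose $Q \coloneqq \intHom(P,f^!\mathbf{1}_S)\in\Dcal(X)$ as the candidate right adjoint. Such an adjunction is determined by a unit $\eta\colon \Delta_!\mathbf{1}_X\to \pi_1^*Q\otimes \pi_2^*P$ in $\Dcal(X\times_S X)$ and a counit $\varepsilon\colon f_!(P\otimes Q)\to \mathbf{1}_S$ in $\Dcal(S)$ satisfying the zigzag identities. Under the $(f_!,f^!)$-adjunction, counits correspond bijectively to morphisms $Q\to \intHom(P,f^!\mathbf{1}_S)$, so the choice $Q = \intHom(P,f^!\mathbf{1}_S)$ together with the counit corresponding to $\id_Q$ is the canonical one; in particular, this yields the formula $\SD_f(P)\simeq \intHom(P,f^!\mathbf{1}_S)$ once $(1)$ is established.

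Next I would construct the natural morphism appearing in $(2)$ by tensor--hom adjunction from a morphism $\pi_1^*f^!\mathbf{1}_S \otimes \pi_2^*P \to \pi_2^!P$, obtained by composing the base-change transformation $\pi_1^*f^! \to \pi_2^!f^*$ (the mate of proper base change for the cartesian square defining $\pi_1,\pi_2$) with the canonical map $\pi_2^!(\mathbf{1}_X) \otimes \pi_2^*P \to \pi_2^!(P)$, i.e., the mate of the projection formula for $\pi_2$. The implication $(2)\Rightarrow (3)$ is immediate.

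For the key implication $(3)\Rightarrow(1)$, I would exploit that $\Hom_{\Dcal(X)}(\mathbf{1}_X,\Delta^!M) \simeq \Map_{\Dcal(X\times_S X)}(\Delta_!\mathbf{1}_X, M)$. Combining the projection formula $\Delta_!\mathbf{1}_X\otimes \pi_1^*P \simeq \Delta_!(\Delta^*\pi_1^*P) = \Delta_!P$ with the identity $\Delta^!\pi_2^! = (\pi_2\Delta)^! = \id$, one computes
$$\Map(\Delta_!\mathbf{1}_X, \intHom(\pi_1^*P,\pi_2^!P)) \simeq \Map(\Delta_!P,\pi_2^!P) \simeq \Map(P,P) = \End_{\Dcal(X)}(P).$$
Thus $(3)$ asserts exactly that the map $\Map(\Delta_!\mathbf{1}_X, \pi_1^*Q\otimes \pi_2^*P) \to \End_{\Dcal(X)}(P)$ is an equivalence, and the preimage of $\id_P$ supplies the required unit $\eta$. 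Paired with the tautological counit $\varepsilon$ from the first paragraph, both zigzag composites reduce, via the identifications above, to $\id_P$ and $\id_{\mathbf{1}_S}$, yielding the adjunction and hence $(1)$. The reverse $(1)\Rightarrow(2)$ follows from the uniqueness of right adjoints: given the adjunction, both sides of the natural morphism represent the same functor on $\Dcal(X\times_S X)$ under $\Map(\Delta_!(-),-)$-pairings, and the natural morphism is the induced comparison, hence an equivalence.

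Finally, the base-change statement follows by applying the equivalent condition $(2)$ to $f'\colon X'\to S'$ with $g'^*P$ in place of $P$: the suaveness of $g'^*P$ is inherited from that of $P$ by pulling back the adjunction along $g$, the identification $\SD_{f'}(g'^*P)\simeq g'^*\intHom(P,f^!\mathbf{1}_S)$ follows from uniqueness, and tensoring with $f'^*M$ for $M\in \Dcal(S')$ then gives the stated equivalence. The main obstacle is the verification of the zigzag identities in $(3)\Rightarrow(1)$: while each composite reduces formally to the identity under the identifications above, tracing the $2$-categorical compatibility between the unit constructed from $(3)$ and the tautological counit is the most technically delicate bookkeeping in the argument.
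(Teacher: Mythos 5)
First, a point of reference: the paper does not prove this lemma at all --- it is quoted verbatim from \cite[Lemma 4.4.5]{HM24} in a list of properties ``needed later without proofs.'' So the comparison is with the proof in Heyer--Mann, and your outline does follow the same architecture as theirs: take $Q=\intHom(P,f^!\mathbf{1}_S)$ with the tautological counit $\varepsilon\colon f_!(P\otimes Q)\to\mathbf{1}_S$ coming from the $(f_!,f^!)$-adjunction, extract the unit $\eta$ from condition (3) as the preimage of $\id_P$, and then recover (2) and the ``Moreover'' clause from the established adjunction by a Yoneda argument in the hom-categories of $\Kcal_{\Dcal,S}$. Your identifications $Q\circ P\simeq \pi_1^*Q\otimes\pi_2^*P$, $P\circ Q\simeq f_!(P\otimes Q)$, and $\Map(\Delta_!\mathbf{1}_X,\intHom(\pi_1^*P,\pi_2^!P))\simeq\End(P)$ are all correct, and the implications $(2)\Rightarrow(3)$ and $(1)\Rightarrow(2)\Rightarrow\text{Moreover}$ are handled adequately.

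The genuine gap is in $(3)\Rightarrow(1)$, precisely at the triangle identities, and you have in fact misstated what must be checked: for $P\dashv Q$ the two composites are $P\xrightarrow{P\eta}PQP\xrightarrow{\varepsilon P}P$ and $Q\xrightarrow{\eta Q}QPQ\xrightarrow{Q\varepsilon}Q$, which must equal $\id_P$ and $\id_Q$ respectively --- not ``$\id_P$ and $\id_{\mathbf{1}_S}$.'' The first identity does essentially come for free: once one checks that the map $\Map(\Delta_!\mathbf{1}_X,\,\pi_1^*Q\otimes\pi_2^*P)\to\End(P)$ appearing in (3) agrees with $u\mapsto(\varepsilon P)\cdot(Pu)$ (a compatibility between the counit and the base-change/projection-formula mates defining the natural morphism), then $\beta(\eta)=\id_P$ is the defining property of $\eta$. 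But the second identity $(Q\varepsilon)\cdot(\eta Q)=\id_Q$ does \emph{not} follow from the construction of $\eta$: condition (3) gives the bijectivity of the counit-transposition map for the single pair $(a,b)=(\id_X,P)$, which is not enough to run the universal characterization of adjunctions, and in general a candidate unit/counit pair satisfying one triangle identity need not satisfy the other. This is exactly where the real work in the source lies (one shows the endomorphism $(Q\varepsilon)\cdot(\eta Q)$ of $Q$ is invertible --- e.g., it is an idempotent that one identifies with the identity using the explicit form of $\varepsilon$ --- or one upgrades (3) to the bijectivity of the transposition map for all test objects before invoking the adjunction criterion). As written, your argument asserts the conclusion of this step rather than proving it, so the implication $(3)\Rightarrow(1)$, and with it the whole lemma, is not yet established.
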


\begin{corollary}[{\cite[Lemma 4.5.4, Corollary 4.5.11]{HM24}}]\label{cor:suave map characterization}
	The morphism $f\colon X\to S$ is $\Dcal$-suave if and only if the natural morphism
	$$\pi_1^*f^!\mathbf{1}_S\to \pi_2^!\mathbf{1}_X$$
	is an equivalence.
	In this case, we have an equivalence
	$$\omega_f\simeq f^!\mathbf{1}_S.$$
	Moreover, the natural morphisms
	\begin{align*}
	\omega_f\otimes f^*\to f^!,\quad f^* \to \intHom(\omega_f, f^!)
	\end{align*}
	are equivalences of functors from $\Dcal(S)$ to $\Dcal(X)$. 
	In particular, $f^!\colon \Dcal(S)\to \Dcal(X)$ preserves all small colimits, and therefore, $f_! \colon \Dcal(X) \to \Dcal(S)$ preserves compact objects.
\end{corollary}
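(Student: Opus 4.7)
The plan is to specialize Lemma \ref{lem:suave characterization} to the case $P=\mathbf{1}_X$ and read off each of the claims. Since $\pi_i^*\mathbf{1}_X\simeq \mathbf{1}_{X\times_S X}$ and $\intHom(\mathbf{1}_X,Y)\simeq Y$ for any $Y\in\Dcal(X\times_S X)$, condition (2) of that lemma reduces to the equivalence $\pi_1^*f^!\mathbf{1}_S\to \pi_2^!\mathbf{1}_X$, which proves the first ``if and only if.'' The identification $\omega_f\simeq f^!\mathbf{1}_S$ then follows at once from the formula $\SD_f(P)\simeq \intHom(P,f^!\mathbf{1}_S)$ provided in the ``In this case'' clause of the same lemma, applied to $P=\mathbf{1}_X$.

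Next, for the equivalence $\omega_f\otimes f^*M\to f^!M$, I would invoke the base-change statement in Lemma \ref{lem:suave characterization} with $P=\mathbf{1}_X$ and $g=\id_S$, so that $X'=X$, $f'=f$, $g'=\id_X$; the displayed equivalence then collapses to $\omega_f\otimes f^*M\simeq f^!M$ for every $M\in\Dcal(S)$. The second natural transformation $f^*M\to \intHom(\omega_f,f^!M)$ is the tensor-hom adjoint of the first; to show it is an equivalence I would use the 2-categorical adjunction $\mathbf{1}_X\dashv \omega_f$ in $\Kcal_{\Dcal,S}$ guaranteed by suaveness of $f$, whose unit and counit recover the two comparison maps and whose triangle identities force them to be mutually inverse after being transported to $\Dcal(X)$ via the identifications $\Fun_{\Kcal_{\Dcal,S}}(X,S)=\Fun_{\Kcal_{\Dcal,S}}(S,X)=\Dcal(X)$.

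The remaining claims are formal. Tensoring with the fixed object $\omega_f$ preserves small colimits in the presentable stable category $\Dcal(X)$, and $f^*$ is a left adjoint, so the equivalence $f^!\simeq \omega_f\otimes f^*(-)$ shows that $f^!$ preserves all small colimits. Consequently $f_!$ preserves compact objects: for $K\in\Dcal(X)$ compact, $\Hom_{\Dcal(S)}(f_!K,-)=\Hom_{\Dcal(X)}(K,f^!(-))$ commutes with filtered colimits because both $K$ is compact and $f^!$ preserves filtered colimits.

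The main obstacle I anticipate is the verification of the equivalence $f^*\simeq \intHom(\omega_f,f^!)$; formulated naively inside $\Dcal(X)$ it appears to demand invertibility of $\omega_f$, and a clean justification really has to pass through the 2-categorical adjunction in $\Kcal_{\Dcal,S}$ afforded by suaveness, rather than through direct manipulation of internal Homs. Every other ingredient is a direct specialization of Lemma \ref{lem:suave characterization} and standard adjoint-functor bookkeeping.
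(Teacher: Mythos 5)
The paper offers no proof of this corollary --- it is quoted directly from \cite[Lemma 4.5.4, Corollary 4.5.11]{HM24} --- so the only question is whether your derivation from Lemma \ref{lem:suave characterization} is sound. Most of it is: specializing condition (2) and the formula $\SD_f(P)\simeq\intHom(P,f^!\mathbf{1}_S)$ to $P=\mathbf{1}_X$ gives the ``if and only if'' and $\omega_f\simeq f^!\mathbf{1}_S$; taking $g=\id_S$ (which lies in $E$) in the final display gives $\omega_f\otimes f^*M\xrightarrow{\sim}f^!M$; and the colimit and compactness statements follow from this first equivalence alone by the adjoint-functor bookkeeping you describe.

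The gap is exactly where you suspected it, and your proposed repair does not close it. The unit and counit of the adjunction $\mathbf{1}_X\dashv\omega_f$ in $\Kcal_{\Dcal,S}$ are $2$-morphisms $\Delta_!\mathbf{1}_X\to\pi_2^*\omega_f$ in $\Dcal(X\times_SX)$ and $f_!\omega_f\to\mathbf{1}_S$ in $\Dcal(S)$; they do not ``recover'' the two comparison maps $\omega_f\otimes f^*\to f^!$ and $f^*\to\intHom(\omega_f,f^!)$, and those two maps are natural transformations between different pairs of functors, so the triangle identities cannot exhibit them as ``mutually inverse.'' As you yourself observe, inside $\Dcal(X)$ the map $f^*M\to\intHom(\omega_f,\omega_f\otimes f^*M)$ is the unit of the tensor-hom adjunction for the fixed object $\omega_f$, and such a unit is an equivalence only when $\omega_f$ is invertible --- which is not assumed (that is the stronger condition of $\Dcal$-smoothness). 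The missing ingredient is the involutivity of suave duality: one must know that $\omega_f=\SD_f(\mathbf{1}_X)$ is itself $f$-suave with $\SD_f(\omega_f)=\intHom(\omega_f,f^!\mathbf{1}_S)\simeq\mathbf{1}_X$; this is a separate statement proved in \cite{HM24} and is not a formal consequence of $\mathbf{1}_X$ being a left adjoint. Granting it, the final display of Lemma \ref{lem:suave characterization} applied to $P=\omega_f$ and $g=\id_S$ reads $\SD_f(\omega_f)\otimes f^*M\xrightarrow{\sim}\intHom(\omega_f,f^!M)$, which is precisely the desired $f^*M\simeq\intHom(\omega_f,f^!M)$. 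Either cite the biduality statement or prove it; without it the second displayed equivalence remains unproven.
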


\begin{lemma}[{\cite[Lemma 4.4.6]{HM24}}]\label{lem:prim characterization}
	The following are equivalent:
	\begin{enumerate}
		\item The object $P$ is $f$-prim. 
		\item The natural morphism
		$$f_!(\pi_{2*}\intHom(\pi_1^*P,\Delta_!\mathbf{1}_X)\otimes P)\to f_*\intHom(P,P)$$
		is an equivalence.
		\item The natural morphism
		$$f_!(\pi_{2*}\intHom(\pi_1^*P,\Delta_!\mathbf{1}_X)\otimes P)\to f_*\intHom(P,P)$$
		becomes an equivalence after applying $\Hom(\mathbf{1}_S, -)$. 
	\end{enumerate}
	In this case, we have 
	$$\PD_f(P) \simeq \pi_{2*}\intHom(\pi_1^*P,\Delta_!\mathbf{1}_X).$$
	Moreover, for every morphism $g\colon S^{\prime}\to S$ in $E$ and every object $M\in \Dcal(X\times_S S^{\prime})$, the natural morphism
	$$f^{\prime}_!(g^{\prime*}\pi_{2*}\intHom(\pi_1^*P,\Delta_!\mathbf{1}_X)\otimes M)\to f^{\prime}_*\intHom(g^{\prime*}P,M)$$
	is an equivalence in $\Dcal(S^{\prime})$.
\end{lemma}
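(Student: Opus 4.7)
The plan is to mirror the proof of Lemma \ref{lem:suave characterization}, working in the 2-category $\Kcal_{\Dcal,S}$ with left and right adjoints swapped. Primness of $P$ asserts that $P \in \Dcal(X) = \Fun_{\Kcal_{\Dcal,S}}(X, S)$ is a right adjoint; the expected left adjoint is
$$L \coloneqq \pi_{2*}\intHom(\pi_1^*P, \Delta_!\mathbf{1}_X) \in \Dcal(X) = \Fun_{\Kcal_{\Dcal,S}}(S, X),$$
and the stated natural map is to be identified with the transpose across $f$ of the counit of the adjunction $L \dashv P$.

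First, I would unwind composition in $\Kcal_{\Dcal,S}$ using the triple-product formula: for $Q \in \Dcal(X) = \Fun_{\Kcal_{\Dcal,S}}(S, X)$, the composite $P \circ Q \in \Fun_{\Kcal_{\Dcal,S}}(S, S) = \Dcal(S)$ becomes $f_!(Q \otimes P)$ (via $S \times_S X \times_S S = X$), while $Q \circ P \in \Fun_{\Kcal_{\Dcal,S}}(X, X) = \Dcal(X \times_S X)$ becomes $\pi_1^*Q \otimes \pi_2^*P$. By tensor-hom together with the adjunction $\pi_1^* \dashv \pi_{1*}$, combined with the $\pi_1 \leftrightarrow \pi_2$ symmetry of $X\times_S X$, one obtains a natural equivalence
$$\Hom_{\Dcal(X\times_S X)}(Q\circ P, \Delta_!\mathbf{1}_X) \simeq \Hom_{\Dcal(X)}(Q, L).$$
Taking $Q = L$ and transferring $\id_L$ yields a candidate counit $\varepsilon\colon L\circ P \to \Delta_!\mathbf{1}_X$, whose transpose across $f$ is the stated natural map $f_!(L\otimes P) \to f_*\intHom(P, P)$.

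Second, (1)$\Rightarrow$(2) holds by uniqueness of adjoints: any left adjoint to $P$ is canonically equivalent to $L$ with counit $\varepsilon$, and the base change and projection-formula axioms of the 6-functor formalism automatically upgrade the adjunction bijection to an equivalence of internal homs compatible with pullback along $g\colon S^{\prime}\to S$ in $E$, yielding the moreover part; (2) is the case $S^{\prime}=S$, $M=\mathbf{1}_X$. The implication (2)$\Rightarrow$(3) is immediate. Conversely, since the construction of the natural map is stable under arbitrary base change in $E$, (3) applied after all pullbacks $g\colon S^{\prime}\to S$ detects, by Yoneda, that the map is an equivalence in $\Dcal(S)$, giving (2).

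Finally, (3)$\Rightarrow$(1) is obtained by defining the unit $\eta\colon \mathbf{1}_S\to P\circ L$ as the preimage of $\id_P \in \Hom_{\Dcal(X)}(P,P)$ under the equivalence in (3); the triangle identities for $(\eta,\varepsilon)$ follow by naturality together with the fact that $\varepsilon$ was chosen to correspond to $\id_L$ under the Yoneda bijection above. The main obstacle lies in the compatibility with base change: one must verify that the candidate counit $\varepsilon$ and its transpose across $f$ behave naturally under pullback along every $g\colon S^{\prime}\to S$ in $E$, applied to arbitrary $M\in \Dcal(X\times_S S^{\prime})$, so that condition (3) in $\Dcal(S)$ captures enough information to promote $P$ to a right adjoint in $\Kcal_{\Dcal,S}$. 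This step is formal but requires a coordinated application of the base change and projection formula axioms of the 6-functor formalism.
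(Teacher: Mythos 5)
The paper does not prove this lemma: it is imported verbatim from \cite[Lemma~4.4.6]{HM24} and listed among the ``properties \dots which will be needed later without proofs,'' so there is no in-paper argument to compare against. Measured against the actual proof in loc.\ cit., your reconstruction follows essentially the right (and the standard) route: dualize Lemma~\ref{lem:suave characterization} inside $\Kcal_{\Dcal,S}$, identify $L=\pi_{2*}\intHom(\pi_1^*P,\Delta_!\mathbf{1}_X)$ via the representability $\Hom_{\Dcal(X\times_SX)}(\pi_1^*Q\otimes\pi_2^*P,\Delta_!\mathbf{1}_X)\simeq\Hom_{\Dcal(X)}(Q,L)$ (using the swap symmetry of $X\times_SX$), take $\varepsilon$ corresponding to $\id_L$, get $\eta$ from (3) as the preimage of $\id_P$, and close the second triangle identity by injectivity of $\psi\mapsto\varepsilon\circ(\psi\circ P)$. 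That skeleton is correct.

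Two points need repair or more care. First, your stated derivation of (3)$\Rightarrow$(2) --- ``(3) applied after all pullbacks $g\colon S'\to S$ detects, by Yoneda, that the map is an equivalence'' --- is not legitimate: condition (3) is a single statement over $S$ and you are not entitled to assume it after base change; moreover, testing an equivalence in $\Dcal(S)$ requires $\Hom(N,-)$ for all $N\in\Dcal(S)$, not pullbacks to other bases. This step is fortunately redundant, since your direct (3)$\Rightarrow$(1) together with (1)$\Rightarrow$(2) closes the cycle, but as written it is a gap. Second, the entire argument hinges on the unproved identification of the displayed ``natural morphism'' $f_!(L\otimes M)\to f_*\intHom(P,M)$ with the transpose across $f$ of $\varepsilon$; concretely one must check that under $f^*\dashv f_*$ it corresponds to $f^*f_!(L\otimes M)\otimes P\simeq\pi_{2!}(\pi_1^*L\otimes\pi_1^*M\otimes\pi_2^*P)\to\pi_{2!}(\pi_1^*M\otimes\Delta_!\mathbf{1}_X)\simeq M$, a base-change-plus-projection-formula computation of exactly the kind the paper carries out explicitly when computing $r_{i*}\simeq(g_i\times\id)_!$ in the proof of Proposition~\ref{prop:suave criterion}. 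Without this identification, ``$\eta$ is the preimage of $\id_P$'' does not yet give the first triangle identity. Finally, a small slip: condition (2) is the instance $S'=S$, $M=P$ of the ``moreover'' statement, not $M=\mathbf{1}_X$.
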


\begin{corollary}[{\cite[Lemma 4.5.5, Corollary 4.5.11]{HM24}}]\label{cor:prim map characterization}
	The morphism $f\colon X\to S$ is $\Dcal$-prim if and only if the natural morphism
	$$f_!\pi_{2*}\Delta_!\mathbf{1}_X\to f_*\mathbf{1}_X$$
	is an equivalence.
	In this case, we have an equivalence
	$$\delta_f\simeq \pi_{2*}\Delta_!\mathbf{1}_X.$$
	Moreover, the natural morphisms
	$$f_!(\delta_f\otimes -)\to f_*, \quad f_!\to f_*\intHom(\delta_f,-)$$
	are equivalences of functors from $\Dcal(X)$ to $\Dcal(S)$. 
\end{corollary}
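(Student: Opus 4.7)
The plan is to specialize Lemma \ref{lem:prim characterization} to $P=\mathbf{1}_X$. Since $\pi_1^*\mathbf{1}_X$ is the monoidal unit of $\Dcal(X\times_S X)$, we have $\intHom(\pi_1^*\mathbf{1}_X,\Delta_!\mathbf{1}_X)\simeq \Delta_!\mathbf{1}_X$ and $\intHom(\mathbf{1}_X,\mathbf{1}_X)\simeq\mathbf{1}_X$. Substituting these into the equivalent conditions of Lemma \ref{lem:prim characterization} directly produces the iff criterion for $\Dcal$-primness claimed in the corollary, and the formula $\PD_f(\mathbf{1}_X)\simeq\pi_{2*}\intHom(\pi_1^*\mathbf{1}_X,\Delta_!\mathbf{1}_X)$ collapses to $\delta_f\simeq \pi_{2*}\Delta_!\mathbf{1}_X$. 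For the first of the two further equivalences, I would apply the ``moreover'' clause of Lemma \ref{lem:prim characterization} with $g=\id_S$, which yields, for every $M\in\Dcal(X)$, a natural equivalence $f_!(\delta_f\otimes M)\simeq f_*\intHom(\mathbf{1}_X,M)\simeq f_*M$ in $\Dcal(S)$.

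For the second equivalence $f_!\simeq f_*\intHom(\delta_f,-)$, I would argue via the 2-categorical adjunction $\delta_f\dashv \mathbf{1}_X$ in $\Kcal_{\Dcal,S}$ that \emph{defines} $\delta_f$. Applying the representable 2-functor $\Fun_{\Kcal_{\Dcal,S}}(\id_S,-)$ converts this into a 1-categorical adjunction between $\Dcal(S)\simeq \Fun_{\Kcal_{\Dcal,S}}(\id_S,\id_S)$ and $\Dcal(X)\simeq \Fun_{\Kcal_{\Dcal,S}}(\id_S,X)$. A direct computation with the composition formula $(M,N)\mapsto \pi_{13!}(\pi_{12}^*M\otimes \pi_{23}^*N)$, using the identifications $S\times_S X\times_S S\simeq X$ and $X\times_S S\times_S S\simeq X$, shows that post-composition with $\mathbf{1}_X$ is $f_!\colon \Dcal(X)\to \Dcal(S)$, and post-composition with $\delta_f$ is $\delta_f\otimes f^*(-)\colon \Dcal(S)\to \Dcal(X)$. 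Hence we obtain an adjunction $\delta_f\otimes f^*(-)\dashv f_!$. On the other hand, composing $f^*\dashv f_*$ with $\delta_f\otimes(-)\dashv \intHom(\delta_f,-)$ shows that $\delta_f\otimes f^*(-)$ also admits $f_*\intHom(\delta_f,-)$ as a right adjoint. By uniqueness of right adjoints, $f_!\simeq f_*\intHom(\delta_f,-)$.

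The main technical point to be checked carefully is the explicit identification of the two post-composition functors above with $f_!$ and $\delta_f\otimes f^*(-)$; this is a direct unwinding of the composition law in $\Kcal_{\Dcal,S}$ on the relevant fiber products. One should also verify that the natural morphisms displayed in the statement coincide, up to canonical equivalence, with those produced by the abstract adjunction—this amounts to tracing the counit and unit of $\delta_f\dashv\mathbf{1}_X$ through the construction above and comparing with the base-change and projection-formula morphisms used in the statement.
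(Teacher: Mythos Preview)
Your proposal is correct and matches the paper's approach: the paper states this corollary without proof (citing \cite{HM24}), but in the appendix it explicitly derives $f_!\simeq f_*\intHom(\delta_f,-)$ by exactly your argument---identifying $(\delta_f\otimes f^*(-))\dashv f_!$ via post-composition in $\Kcal_{\Dcal,S}$ and then invoking uniqueness of right adjoints. The specialization of Lemma~\ref{lem:prim characterization} to $P=\mathbf{1}_X$ for the first three claims is likewise the intended route.
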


\begin{lemma}[{\cite[Lemma 4.4.8]{HM24}}]\label{lem:prim and suave obj D-local}
	\begin{enumerate}
		\item If $P$ is $f$-suave (resp. $f$-prim), then for every morphism $g\colon S^{\prime}\to S$, $P^{\prime}=g^{\prime*}P$ is $f^{\prime}$-suave (resp. $f^{\prime}$-prim).
		In this case, there is a natural equivalence $\SD_{f^{\prime}}(P^{\prime})\simeq g^{\prime*}\SD_f(P)$ (resp. $\PD_{f^{\prime}}(P^{\prime})\simeq g^{\prime*}\PD_f(P)$).
		\item Let $\Ucal\subset \Ccal_{/S}$ be a universal $\Dcal^*$-cover.
		If for every morphism $g\colon S^{\prime}\to S$ in $\Ucal$, the object $P^{\prime}=g^{\prime*}P$ is $f^{\prime}$-suave (resp. $f^{\prime}$-prim), then $P$ is $f$-suave (resp. $f$-prim).
	\end{enumerate}
\end{lemma}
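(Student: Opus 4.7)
The approach is to interpret the $f$-suaveness (resp.\ $f$-primness) of $P$ as the statement that $P$, regarded as a 1-morphism $X\to S$ in the 2-category $\Kcal_{\Dcal,S}$, admits a right adjoint (resp.\ left adjoint), and then exploit the functoriality of this 2-category under base change together with descent for $\Dcal^*$.

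For part (1), any morphism $g\colon S^{\prime}\to S$ should give rise to a base-change 2-functor
$$g^*\colon \Kcal_{\Dcal,S}\to \Kcal_{\Dcal,S^{\prime}}$$
sending $(X\to S)$ to $(X^{\prime}\to S^{\prime})$ and acting on hom-categories by the $*$-pullback $\Dcal(X\times_S Y)\to \Dcal(X^{\prime}\times_{S^{\prime}}Y^{\prime})$. That this is compatible with the composition law $(M,N)\mapsto \pi_{13!}(\pi_{12}^*M\otimes \pi_{23}^*N)$ is a direct consequence of proper base change and the projection formula, both of which are built into any 6-functor formalism. Since any 2-functor preserves adjunctions together with their units and counits, applying $g^*$ to the witness of $P\in \Fun_{\Kcal_{\Dcal,S}}(X,S)$ being a left (resp.\ right) adjoint yields the witness for $g^{\prime *}P\in \Fun_{\Kcal_{\Dcal,S^{\prime}}}(X^{\prime},S^{\prime})$. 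This furnishes the $f^{\prime}$-suaveness (resp.\ $f^{\prime}$-primness) of $g^{\prime *}P$ together with the claimed identification $\SD_{f^{\prime}}(g^{\prime *}P)\simeq g^{\prime *}\SD_f(P)$ (resp.\ $\PD_{f^{\prime}}(g^{\prime *}P)\simeq g^{\prime *}\PD_f(P)$).

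For part (2), the strategy is to glue the adjunction data over the universal $\Dcal^*$-cover $\Ucal$. The hypothesis combined with part (1) applied to the transition morphisms in the \v{C}ech nerve of $\Ucal$ provides a compatible family of objects $\SD_{f^{\prime}}(g^{\prime *}P)\in \Dcal(X^{\prime})$ indexed by $g\in \Ucal$. Since the pullback of $\Ucal$ along $f$ is again a universal $\Dcal^*$-cover, $\Dcal^*$-descent for $X$ assembles these into a single object $Q\in \Dcal(X)$. The same descent procedure, applied to the compatible systems of units $\eta_g\colon \id_{X^{\prime}}\to Q^{\prime}\circ P^{\prime}$ and counits $\epsilon_g\colon P^{\prime}\circ Q^{\prime}\to \id_{S^{\prime}}$ living in the appropriate hom-categories of $\Kcal_{\Dcal,S^{\prime}}$, produces candidate unit and counit for a putative adjunction $P\dashv Q$ in $\Kcal_{\Dcal,S}$. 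The triangle identities for this candidate are equalities of 2-cells in $\Kcal_{\Dcal,S}$, hence equalities of morphisms in certain $\Dcal(-)$'s, and they may therefore be checked after pulling back to $\Ucal$, where they hold by hypothesis. An identical argument with the roles of left and right adjoint (and of $\SD$, $\PD$) interchanged settles the prim case.

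The main obstacle I anticipate is the rigorous verification that base change really defines a 2-functor---so that the preservation of adjunctions together with their coherence data becomes automatic---and the careful bookkeeping of $\Dcal^*$-descent for the hom-categories of $\Kcal_{\Dcal,S}$. The 2-functoriality ultimately follows from the compatibility of the base change isomorphisms with proper base change and the projection formula, but phrasing this coherently is considerably cleaner via the $(\infty,2)$-categorical refinement of $\Kcal_{\Dcal,S}$ developed in \cite{HM24}; I would invoke that framework rather than attempting a direct bicategorical computation.
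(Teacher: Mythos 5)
First, note that this lemma is quoted from \cite[Lemma 4.4.8]{HM24}; the paper does not reprove it, and its ``proof'' consists only of the remark that the identification $\SD_{f'}(P')\simeq g'^*\SD_f(P)$ can be extracted from the proof in loc.\ cit. So the relevant comparison is with the argument in \cite{HM24}. Your part (1) is essentially that argument: base change along $g$ induces a morphism of (the $(\infty,2)$-categorical refinements of) $\Kcal_{\Dcal,S}\to\Kcal_{\Dcal,S'}$, compatible with composition by proper base change and the projection formula, and such a $2$-functor preserves adjunctions together with their adjoints; this also yields the formula for $\SD_{f'}(P')$ and $\PD_{f'}(P')$. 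That part is fine.

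Part (2) is where you diverge, and as written there is a gap. You propose to glue the right adjoints $Q_{S'}=\SD_{f'}(P')$ and, crucially, the units $\eta_{g}$ and counits $\epsilon_{g}$ over the \v{C}ech nerve of $\Ucal$. Gluing the \emph{objects} $Q_{S'}$ is unproblematic (the pullback of $\Ucal$ along $f$ is again a universal $\Dcal^*$-cover), but gluing the \emph{$2$-cells} requires a coherent family of morphisms in the limit $\infty$-categories $\Dcal(X'\times_{S'}X')$ and $\Dcal(S')$, and part (1) only supplies compatibility up to non-specified homotopy. In the $2$-categorical model used in this paper the hom-categories are homotopy categories, and $\pi_0$ of a limit of mapping spaces is not the limit of the $\pi_0$'s; for the same reason, ``equalities of $2$-cells may be checked after pulling back to $\Ucal$'' is not valid unless the $2$-cells have already been constructed globally. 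One can repair this either by invoking the full $(\infty,2)$-categorical descent machinery, or---more simply and as in \cite{HM24}---by sidestepping the gluing entirely: take the candidate dual to be the globally defined object $\intHom(P,f^!\mathbf{1}_S)$ (resp.\ $\pi_{2*}\intHom(\pi_1^*P,\Delta_!\mathbf{1}_X)$) and use the criteria of Lemma \ref{lem:suave characterization} (resp.\ Lemma \ref{lem:prim characterization}): suaveness (resp.\ primness) is equivalent to a single, globally defined morphism being an equivalence, the ``Moreover'' clauses of those lemmas identify its pullback with the corresponding local morphism, and conservativity of the pullback functors along a universal $\Dcal^*$-cover finishes the proof.
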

\begin{proof}
	The equivalence $\SD_{f^{\prime}}(P^{\prime})\simeq g^{\prime*}\SD_f(P)$ is not explicitly written in \cite[Lemma 4.4.8]{HM24}, however it easily follows from the proof of this lemma.
\end{proof}

\begin{lemma}[{\cite[Lemma 4.4.9]{HM24}}]\label{lem:suave and prim composition}
Let $f\colon X\to S$ and $g\colon Y\to X$ be morphisms in $E$.
\begin{enumerate}
	\item Let $P\in \Dcal(X)$ be a $f$-suave object, and $Q\in \Dcal(Y)$ be a $g$-suave object.
	Then $g^*P\otimes Q$ is $(f\circ g)$-suave, and we have a natural equivalence
	$$\SD_{f\circ g}(g^*P\otimes Q)\simeq g^*\SD_f(P)\otimes \SD_g(Q).$$
	\item Let $P\in \Dcal(Y)$ be a $(f\circ g)$-suave object, and $Q\in \Dcal(Y)$ be a $g$-prim object.
	Then $g_!(Q\otimes P)$ is $f$-suave, and we have a natural equivalence
	$$\SD_f(g_!(Q\otimes P)) \simeq g_!(\PD_{g}(Q)\otimes \SD_{f\circ g}(P)).$$
\end{enumerate}
The same is true with ``suave'' and ``prim'' swapped (and $\PD$ replaced with $\SD$).
\end{lemma}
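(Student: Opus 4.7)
My plan is to prove both parts via the $2$-categorical structure of $\Kcal_{\Dcal,S}$, exploiting the principle that a composition of two left adjoints is a left adjoint whose right adjoint is the reverse composition of the right adjoints (and dually for right adjoints). The explicit formulas on the right-hand sides should emerge from the defining composition rule $(M,N)\mapsto \pi_{13!}(\pi_{12}^{*}M \otimes \pi_{23}^{*}N)$ combined with the projection formula.

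For part (1), I view $P$ as the left adjoint 1-morphism $X\to S$ in $\Kcal_{\Dcal,S}$ (with right adjoint $\SD_{f}(P)$) and $Q$ as the left adjoint 1-morphism $Y\to X$ in $\Kcal_{\Dcal,X}$ (with right adjoint $\SD_{g}(Q)$). The key step is to construct a 2-functor $\Kcal_{\Dcal,X}\to\Kcal_{\Dcal,S}$ via post-composition with $f$, whose action on 1-morphisms is $!$-pushforward along the relative diagonals $V\times_{X}W\to V\times_{S}W$; since 2-functors preserve adjunctions, $Q$ transports to the left adjoint $\Delta_{!}Q\in\Dcal(X\times_{S}Y)$, where $\Delta=(g,\mathrm{id}_Y)\colon Y\to X\times_{S}Y$ is the graph of $g$. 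Computing $P\circ \Delta_{!}Q$ on the triple product $S\times_S X\times_S Y=X\times_S Y$ via the projection formula together with the identities $\pi_{12}\circ\Delta=g$ and $\pi_{13}\circ\Delta=\mathrm{id}_Y$, I obtain $g^{*}P\otimes Q\in\Dcal(Y)$, which is therefore $(f\circ g)$-suave; applying the same template to the reverse composition of right adjoints $\SD_{f}(P)$ and $\Delta'_{!}\SD_{g}(Q)$ (where $\Delta'=(\mathrm{id}_Y,g)$) yields the formula $\SD_{f\circ g}(g^{*}P\otimes Q)\simeq g^{*}\SD_{f}(P)\otimes \SD_{g}(Q)$.

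For part (2) the situation is subtler: the direct 2-categorical composition produces $g_{!}(\PD_{g}(Q)\otimes P)$ rather than $g_{!}(Q\otimes P)$, so I instead verify $f$-suaveness of $g_{!}(Q\otimes P)$ directly via Lemma~\ref{lem:suave characterization}. The formula for $\SD_{f}$ is essentially automatic once $f$-suaveness is in hand: expanding
$$\intHom\bigl(g_{!}(Q\otimes P),\,f^{!}\mathbf{1}_S\bigr)\simeq g_{*}\intHom\bigl(Q,\SD_{f\circ g}(P)\bigr)$$
via the adjunction $g_{!}\dashv g^{!}$ and the $(f\circ g)$-suaveness of $P$, and then applying the "moreover" part of Lemma~\ref{lem:prim characterization} to the $g$-prim object $Q$ (which yields $g_{*}\intHom(Q,-)\simeq g_{!}(\PD_{g}(Q)\otimes -)$), delivers $g_{!}(\PD_{g}(Q)\otimes \SD_{f\circ g}(P))$. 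The remaining content, namely the suaveness equivalence on $X\times_{S}X$, I verify by base-changing $g$ along the two projections $p_1,p_2\colon X\times_{S}X\to X$ (with fiber products $Y\times_{S}X$ and $X\times_{S}Y$ respectively) and applying the "moreover" clauses of Lemmas~\ref{lem:suave characterization} and~\ref{lem:prim characterization} in tandem, so that the contributions from $P$ and from $Q$ decouple into two separate equivalences that are already known.

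The main obstacle is the careful verification that post-composition with $f$ genuinely defines a 2-functor $\Kcal_{\Dcal,X}\to\Kcal_{\Dcal,S}$ preserving adjunction data (units, counits and all coherences); rigorously this requires the $(\infty,2)$-categorical enhancement of $\Kcal_{\Dcal,S}$ developed in \cite{HM24}, since the bare 2-categorical structure is only a shadow. For part (2) the principal technical labor is the base-change book-keeping needed to align the two characterization lemmas: tracking the several projections, together with the commutation isomorphisms between $g_{*}$, $g_{!}$, $g^{*}$ and the $p_i^{*}$, $p_i^{!}$, is error-prone, though the "moreover" clauses are tailor-made to make it go through.
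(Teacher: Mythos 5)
The paper itself gives no proof of this statement: it is quoted as a black box from \cite[Lemma 4.4.9]{HM24}, so there is no internal argument to compare against and your proposal must stand on its own. Your part (1) is correct and is the standard argument: transport $Q$ along the $2$-functor $\Kcal_{\Dcal,X}\to\Kcal_{\Dcal,S}$ (whose existence and compatibility with adjunctions is exactly the $(\infty,2)$-categorical input from \cite{HM24} already invoked in Lemma \ref{lem:preserving adjoint}), compose left adjoints, and evaluate the composition rule using the projection formula; your identities $P\circ\Delta_!Q\simeq g^*P\otimes Q$ and the dual computation for the right adjoints check out.

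Part (2) has a genuine gap. You correctly observe that composing $P$ with the left adjoint $\PD_g(Q)\colon X\to Y$ yields $g_!(\PD_g(Q)\otimes P)$ rather than $g_!(Q\otimes P)$, but your fallback --- a direct verification of the criterion of Lemma \ref{lem:suave characterization} for $N=g_!(Q\otimes P)$ --- is only asserted, and the asserted ``decoupling'' does not work as described: the target $\intHom(\pi_1^*N,\pi_2^!N)$ contains $\pi_2^!\,g_!(Q\otimes P)$, and a $6$-functor formalism has no general base change of $!$-pullback against $!$-pushforward, so the contributions of $P$ and $Q$ must be interleaved rather than separated; packaging that interleaving is precisely what the $2$-categorical argument is for. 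The clean fix is to stay inside $\Kcal_{\Dcal,S}$: since $\Fun_{\Kcal_{\Dcal,X}}(Y,X)=\Fun_{\Kcal_{\Dcal,X}}(X,Y)=\Dcal(Y)$ and the swap symmetry of $\Kcal$ exchanges left and right adjoints, the $g$-primness of $Q$ (i.e.\ $Q\colon Y\to X$ is a right adjoint) says exactly that $Q$, viewed as a $1$-morphism $X\to Y$, is a \emph{left} adjoint with right adjoint $\PD_g(Q)\colon Y\to X$. Transporting this adjunction to $\Kcal_{\Dcal,S}$ and composing with the left adjoint $P\colon Y\to S$ produces the left adjoint $P\circ\Delta'_!Q\simeq g_!(Q\otimes P)\colon X\to S$ with right adjoint $\Delta_!\PD_g(Q)\circ\SD_{f\circ g}(P)\simeq g_!(\PD_g(Q)\otimes\SD_{f\circ g}(P))$, by the same projection-formula computation you already carried out in part (1). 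Your separate identification $\intHom(g_!(Q\otimes P),f^!\mathbf{1}_S)\simeq g_!(\PD_g(Q)\otimes\SD_{f\circ g}(P))$ is correct, but it only determines what the suave dual must be once suaveness is known; it does not establish suaveness.
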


\begin{corollary}[{\cite[Lemma 4.5.7, Lemma 4.5.9]{HM24}}]\label{cor:prim and suave map D-local}
	\begin{enumerate}
		\item Let $f\colon Y\to X$ and $g\colon X\to S$ be $\Dcal$-suave (resp. $\Dcal$-smooth, $\Dcal$-prim) morphisms.
		Then $g\circ f\colon Y\to S$ is also $\Dcal$-suave (resp. $\Dcal$-smooth, $\Dcal$-prim) and its dualizing complex (resp. codualizing complex) is given by $g^*\omega_f\otimes \omega_g$ (resp. $g^*\delta_f\otimes \delta_g$).
		\item If the morphism $f$ is $\Dcal$-suave (resp. $\Dcal$-smooth, $\Dcal$-prim), then for every morphism $g\colon S^{\prime}\to S$, the pullback $f^{\prime}\colon X^{\prime}=X\times_S S^{\prime}\to S^{\prime}$ of $f$ is also $\Dcal$-suave (resp. $\Dcal$-smooth, $\Dcal$-prim) and its dualizing complex (resp. codualizing complex) is given by $g^{\prime *}\omega_f$ (resp. $g^{\prime *}\delta_f$).
		\item Let $\Ucal\subset \Ccal_{/S}$ be a universal $\Dcal^*$-cover.
		If for every morphism $g\colon S^{\prime}\to S$ in $\Ucal$, the pullback $f^{\prime}\colon X^{\prime}=X\times_S S^{\prime}\to S^{\prime}$ of $f$ is $\Dcal$-suave (resp. $\Dcal$-smooth, $\Dcal$-prim), then $f$ is also $\Dcal$-suave (resp. $\Dcal$-smooth, $\Dcal$-prim).
	\end{enumerate}
\end{corollary}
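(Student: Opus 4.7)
The plan is to derive each of the three parts from the corresponding statements about suave/prim \emph{objects} (Lemma \ref{lem:suave and prim composition} and Lemma \ref{lem:prim and suave obj D-local}) by specializing to monoidal units. The only genuinely new input beyond these two lemmas is the observation that a symmetric monoidal pullback functor preserves invertibility, which is needed to handle the $\Dcal$-smoothness clauses; the suave/prim clauses then come essentially for free.

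For part (1), I apply Lemma \ref{lem:suave and prim composition}(1) with $P = \mathbf{1}_X$, which is $g$-suave by hypothesis, and $Q = \mathbf{1}_Y$, which is $f$-suave by hypothesis. Since $f^*\mathbf{1}_X \otimes \mathbf{1}_Y \simeq \mathbf{1}_Y$, the lemma directly gives that $\mathbf{1}_Y$ is $(g\circ f)$-suave and that
$$\omega_{g\circ f} \simeq f^*\SD_g(\mathbf{1}_X) \otimes \SD_f(\mathbf{1}_Y) \simeq f^*\omega_g \otimes \omega_f.$$
The $\Dcal$-prim case is identical after swapping ``suave'' with ``prim'' and $\SD$ with $\PD$. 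In the smooth case, invertibility of the right-hand side follows because $f^*$ is symmetric monoidal and hence sends invertible objects to invertible objects.

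For parts (2) and (3), I apply Lemma \ref{lem:prim and suave obj D-local} with $P = \mathbf{1}_X$, using $g'^*\mathbf{1}_X \simeq \mathbf{1}_{X'}$. Part (1) of that lemma supplies the base change in (2) together with $\omega_{f'} \simeq g'^*\omega_f$ and $\delta_{f'} \simeq g'^*\delta_f$, and invertibility transports through $g'^*$ in the smooth case. Part (2) of that lemma supplies the descent in (3) for suaveness and primness. The only step that is not fully formal is the smoothness half of (3): from invertibility of every $g'^*\omega_f$ I need to conclude invertibility of $\omega_f$ itself. I expect this to be the main technical point of the proof; to handle it I would first check that the pullback of $\Ucal$ along $f$ is a universal $\Dcal^*$-cover of $X$, so that $\Dcal(X)$ is the limit of the $\Dcal(X \times_S S')$ along $*$-pullback, and then observe that invertibility descends along such a limit of presentably symmetric monoidal $\infty$-categories because duals are preserved by symmetric monoidal functors and compatible local inverses glue by the uniqueness of duals. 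Once this descent of invertibility is in hand, the smooth case of (3) follows from the suave case already established.
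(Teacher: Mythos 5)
Your proposal is correct and is essentially the intended derivation: the paper states this corollary without proof, citing \cite[Lemma 4.5.7, Lemma 4.5.9]{HM24}, and the proofs there are exactly the specialization of the object-level statements (Lemma \ref{lem:suave and prim composition} and Lemma \ref{lem:prim and suave obj D-local}) to $P=Q=\mathbf{1}$, with the smooth clauses handled by the fact that symmetric monoidal pullbacks preserve invertibility and that invertibility is detected on a universal $\Dcal^*$-cover (the latter is the same descent step the paper itself invokes in the proof of Lemma \ref{lem:suave suave-local}). You also silently corrected the formula for the dualizing complex of the composite: with the labelling $f\colon Y\to X$, $g\colon X\to S$ it must read $f^*\omega_g\otimes\omega_f$, as you wrote, rather than the statement's $g^*\omega_f\otimes\omega_g$, which does not typecheck.
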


\begin{lemma}[{\cite[Corollary 4.5.18]{HM24}}]\label{lem:suave diagonal}
Let $f\colon X\to S$ be a morphism in $E$. 
Suppose that the diagonal morphism $\Delta_f\colon X\to X\times_S X$ of $f$ is $\Dcal$-suave.
Then, every $f$-suave object $P\in \Dcal(X)$ is dualizable and $\SD_f(P)\cong P^{\vee}\otimes \omega_{\Delta_f}^{-1}$, where $P^{\vee}$ is the dual of $P$ in $\Dcal(X)$.

The same is true with ``prim'' in place of ``suave'', and with $\SD$ (resp.\ $\omega$) replaced by $\PD$ (resp.\ $\delta$).
\end{lemma}

\begin{lemma}[{\cite[Lemma 4.4.10, Lemma 4.5.8]{HM24}}]\label{lem:suave suave-local}
	Let $f\colon X\to S$ be a morphism in $E$, and let $\{f_i\colon X_i\to X\}_i$ be a universal $\Dcal^*$-cover in $\Ccal_E$.
	Let $\Ucal$ denote a sieve in $(\Ccal_E)_{/X}$ generated by $\{f_i\colon X_i\to X\}_i$.
	\begin{enumerate}
		\item We assume that for every $i$, both $f_i\colon X_i\to X$ and $f\circ f_i\colon X_i\to S$ are $\Dcal$-suave (resp. $\Dcal$-smooth), and that $\Dcal(X)$ admits $\Ucal$-indexed colimits.
		Then, the morphism $f\colon X\to S$ is also $\Dcal$-suave (resp. $\Dcal$-smooth).
		\item 
		We assume that for every $i$, both $f_i\colon X_i\to X$ and $f\circ f_i\colon X_i\to S$ are $\Dcal$-prim, and that $\Dcal(X)$ has $\Ucal^{\op}$-indexed limits and $f_!$ preserves them.
		Then, the morphism $f\colon X\to S$ is also $\Dcal$-prim.
	\end{enumerate}
\end{lemma}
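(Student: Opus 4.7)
My plan is to reduce the suaveness and primness of $f$ to the pointwise characterizations in Corollary \ref{cor:suave map characterization} and Corollary \ref{cor:prim map characterization}, and verify these characterizations on the given cover $\{f_i\}_i$.

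For (1), by Corollary \ref{cor:suave map characterization} the $\Dcal$-suaveness of $f$ is equivalent to the natural morphism
\[
\alpha\colon \pi_1^*f^!\mathbf{1}_S \to \pi_2^!\mathbf{1}_X
\]
in $\Dcal(X\times_S X)$ being an equivalence. Since $\{f_i\}_i$ is a universal $\Dcal^*$-cover of $X$, the family $\{f_i\times \id\colon X_i\times_S X \to X\times_S X\}_i$, arising as the pullback of $\{f_i\}_i$ along $\pi_1$, is again a universal $\Dcal^*$-cover, so it suffices to check that $(f_i\times \id)^*\alpha$ is an equivalence on each $X_i\times_S X$. I would then compute both sides via Corollary \ref{cor:prim and suave map D-local}(2): the morphism $f_i\times \id$ (the pullback of $f_i$ along $\pi_1$) is $\Dcal$-suave (resp. $\Dcal$-smooth) with $\omega_{f_i\times\id}=p^*\omega_{f_i}$, and $q\coloneqq \pi_2\circ (f_i\times \id)$ (the pullback of $f\circ f_i$ along $f$) is $\Dcal$-suave (resp. $\Dcal$-smooth) with $\omega_q=p^*\omega_{f\circ f_i}$, where $p\colon X_i\times_S X \to X_i$ is the first projection. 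Combining the factorization $q^! = (f_i\times \id)^!\pi_2^!$ with the formula $(f_i\times \id)^!\simeq p^*\omega_{f_i}\otimes (f_i\times \id)^*(-)$ from Corollary \ref{cor:suave map characterization} yields
\[
p^*\omega_{f_i}\otimes (f_i\times \id)^*\pi_2^!\mathbf{1}_X \simeq p^*\omega_{f\circ f_i},
\]
while from $(f\circ f_i)^!\mathbf{1}_S\simeq f_i^!f^!\mathbf{1}_S\simeq \omega_{f_i}\otimes f_i^*f^!\mathbf{1}_S$ (via Corollary \ref{cor:suave map characterization} for $f_i$) I obtain
\[
p^*\omega_{f\circ f_i}\simeq p^*\omega_{f_i}\otimes (f_i\times \id)^*\pi_1^*f^!\mathbf{1}_S.
\]
In the $\Dcal$-smooth case, $\omega_{f_i}$ is invertible, so I can cancel $p^*\omega_{f_i}$; checking that the resulting identification is naturally induced by $(f_i\times \id)^*\alpha$ and invoking descent along the universal $\Dcal^*$-cover completes the smooth case.

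Part (2) is dual: by Corollary \ref{cor:prim map characterization} it suffices to show that $\beta\colon f_!\pi_{2*}\Delta_!\mathbf{1}_X \to f_*\mathbf{1}_X$ is an equivalence in $\Dcal(S)$. Here the cover is on the source $X$, so I plan to use the \v{C}ech nerve $X_\bullet$ of $\{f_i\}_i$ (with maps $p_n\colon X_n\to X$) to express $\mathbf{1}_X$ as a limit $\lim_{[n]\in\Delta}(p_n)_*\mathbf{1}_{X_n}$; applying $f_!$ and $f_*$, with the hypothesis that $f_!$ preserves $\Ucal^{\op}$-indexed limits, reduces the equivalence of $\beta$ termwise to the primness of $f\circ p_n$ and $p_n$ (the latter following from iterated applications of Lemma \ref{lem:suave and prim composition}). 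The main obstacle in both parts is the general $\Dcal$-suave (resp. $\Dcal$-prim) case in which $\omega_{f_i}$ (resp. $\delta_{f_i}$) need not be invertible: the cancellation argument above breaks down, and one must instead invoke the more refined condition~(3) of Lemma \ref{lem:suave characterization} (resp. Lemma \ref{lem:prim characterization}) or work directly in the 2-category $\Kcal_{\Dcal,S}$. The hypothesis on $\Ucal$-indexed colimits of $\Dcal(X)$ (resp. on $f_!$ preserving $\Ucal^{\op}$-indexed limits) is precisely what is needed to transfer the termwise equivalences on the \v{C}ech nerve into the global equivalence of $\alpha$ (resp. $\beta$).
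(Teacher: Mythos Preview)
The paper's proof takes a much shorter route than your proposal. The suave and prim assertions are simply cited from \cite[Lemma~4.4.10, Lemma~4.5.8]{HM24}; the only thing the paper actually argues is the parenthetical $\Dcal$-smooth case, and for that it \emph{first} uses the suave case to know that $f$ is $\Dcal$-suave with $\omega_f=f^!\mathbf{1}_S$, and \emph{then} checks invertibility of $\omega_f$ on the cover: from Corollary~\ref{cor:suave map characterization} one has $f_i^*\omega_f\simeq\intHom(\omega_{f_i},f_i^!\omega_f)$, and since $\omega_{f_i}$ and $f_i^!\omega_f\simeq\omega_{f\circ f_i}$ are invertible, so is $f_i^*\omega_f$; conclude by $\Dcal^*$-descent.

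Your plan differs in that you attempt to prove suaveness and primness directly rather than citing them, and as you yourself note, your cancellation argument only goes through when $\omega_{f_i}$ is invertible; the general suave and prim cases remain open in your proposal. For the smooth case, your approach is correct in outline but more laborious than necessary: you must verify that the abstract isomorphism you build from the two displayed formulas really is $(f_i\times\id)^*\alpha$, which is a nontrivial compatibility of several exchange morphisms, whereas the paper sidesteps this entirely by separating ``suave'' from ``invertible dualizing complex''. One further small gap: your argument, as written, establishes that $\alpha$ is an equivalence (hence $f$ suave) but does not explicitly conclude that $\omega_f$ is invertible; this does follow from your second displayed formula, since it gives $f_i^*\omega_f\simeq\omega_{f_i}^{-1}\otimes\omega_{f\circ f_i}$, but you should say so.
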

\begin{proof}
	The claim for $\Dcal$-smooth morphisms is not explicitly written in \cite{HM24}, so we explain the proof of it.
	For every $i$, we have $f_i^*\omega_f \simeq \intHom(\omega_{f_i},f_i^!\omega_f)$ by Corollary \ref{cor:suave map characterization}.
	Since $f_i$ and $f\circ f_i$ are $\Dcal$-smooth, $\omega_{f_i}$ and $\omega_{f\circ f_i}\simeq f_i^!\omega_f$ are invertible.
	Therefore, $f_i^*\omega_f$ is invertible for every $i$.
	Since $\{f_i\colon X_i\to X\}_i$ is a universal $\Dcal^*$-cover, we find that $\omega_f$ is invertible.
\end{proof}
\begin{remark}\label{rem:mono cech cover finite}
	In (2), it is hard to check the condition that $f_!$ preserves $\Ucal^{\op}$-indexed limits in general.
	However, if $\{f_i\colon X_i\to X\}_i$ is a finite family of monomorphisms (i.e., the diagonals become equivalences), then $\Ucal^{\op}$ admits a finite cofinal subcategory by \cite[Lemma A.4.7]{HM24}.
	In this case, we only need to check that $f_!$ preserves finite limits, which is trivial in practice.
	For example, this condition is automatically satisfied in the 6-functor formalism for solid quasi-coherent sheaves on solid $\Dcal$-stacks.
\end{remark}


\begin{lemma}[{\cite[Lemma 4.7.4, Corollary 4.7.5]{HM24}}]\label{lem:prim descendable local}
We assume that the 6-functor formalism $\Dcal \colon \Corr(\Ccal,E)\to \Cat_{\infty}$ factors through the subcategory of $\Cat_{\infty}$ consisting of stable $\infty$-categories with exact functors (such a 6-functor formalism is called a \textit{stable 6-functor formalism} in \cite[Definition 3.1.1]{HM24}).
Let $f\colon Y\to X$ be a morphism in $E$.
We assume that $f$ is $\Dcal$-prim and the $\Ebb_{\infty}$-algebra $f_{*}\mathbf{1}_Y$ in $\Dcal(X)$ is descendable in the sense of \cite[Definition 3.18]{Mat16}.
\begin{enumerate}
	\item The morphism $f$ is a universal $\Dcal^*$-cover and a universal $\Dcal^!$-cover.
	\item Let $g\colon X\to S$ be a morphism in $E$. 
	If $g\circ f$ is $\Dcal$-prim, then $g$ is also $\Dcal$-prim.
\end{enumerate}
\end{lemma}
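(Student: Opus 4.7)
The plan is to use descendability of $A \coloneqq f_*\mathbf{1}_Y$ in $\Dcal(X)$, together with the projection formula available for $\Dcal$-prim morphisms, to reduce descent along $f$ to a nilpotence statement. For part~(1), since $f$ is prim, Corollary~\ref{cor:prim map characterization} gives $f_!(\delta_f \otimes -) \simeq f_*$; combined with the projection formula for $f_!$, this yields the projection formula for $f_*$ (and by the same argument, base change for $f_*$). Hence $f_*f^*N \simeq A \otimes N$ for every $N \in \Dcal(X)$, and iterating over the \v{C}ech nerve $Y_\bullet$ of $f$ gives $(f_n)_*(f_n)^*N \simeq A^{\otimes(n+1)} \otimes N$. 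Descendability of $A$ means precisely that the augmented cosimplicial diagram $\mathbf{1}_X \to A^{\otimes(\bullet+1)}$ is a limit with a finite homotopy-cofinal truncation, so the same remains true after tensoring with any $N$; this is the statement that $f^*$ is comonadic, i.e., that $f$ is a $\Dcal^*$-cover. Universality follows since base change of a prim morphism is prim (Corollary~\ref{cor:prim and suave map D-local}) and descendability is preserved under symmetric monoidal pullback functors. The universal $\Dcal^!$-cover claim then follows by a parallel argument on the right-adjoint side, using that for prim $f$ the functor $f_*$ admits the further right adjoint $\intHom(\delta_f, f^!(-))$, so the $\Dcal^*$-descent statement translates into the corresponding $\Dcal^!$-descent statement.

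For part~(2), I would verify primness of $g$ by showing that $g_*$ satisfies the projection formula $g_*N \otimes M \simeq g_*(N \otimes g^*M)$ for all $N \in \Dcal(X)$ and $M \in \Dcal(S)$, which via Corollary~\ref{cor:prim map characterization} is equivalent to the required natural transformation being an equivalence. The key observation is that each composite $g \circ f_n \colon Y_n \to S$ is again prim: the map $f_n$ factors as $Y_n \to Y \xrightarrow{f} X$ where $Y_n \to Y$ is a composition of base changes of $f$, so $f_n$ is prim, and composing with the prim morphism $g \circ f$ (using Corollary~\ref{cor:prim and suave map D-local} again) makes $g \circ f_n$ prim for every $n$. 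Applying $g_*$ to the totalization $N \simeq \Tot((f_n)_*(f_n)^*N)$ from part~(1) and using that $g_*$ preserves limits yields $g_*N \simeq \Tot((g \circ f_n)_*(f_n)^*N)$. The prim projection formula for each $g \circ f_n$ then reduces the desired identity to the commutation of this totalization with $(-)\otimes M$, which holds because descendability makes the totalization effectively finite.

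The main obstacle is the careful handling of the interplay between the prim adjunctions --- which give $f_*$ the unusual behavior of being both a left and a right adjoint --- and the quantitative content of descendability, namely that the totalization has a finite homotopy-cofinal truncation and therefore commutes with arbitrary tensoring and with right adjoints such as $g_*$. In particular, deriving the $\Dcal^!$-cover half of part~(1) from the $\Dcal^*$-cover half is the subtle step, since it requires identifying $\Dcal^*$- and $\Dcal^!$-descent for prim $f$ via the three-term adjunction $f^* \dashv f_* \dashv \intHom(\delta_f, f^!(-))$.
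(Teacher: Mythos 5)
This lemma is not proved in the paper: it is quoted verbatim from \cite{HM24} and appears in the list of properties stated ``without proofs,'' so your sketch can only be measured against the argument of Heyer--Mann. Your treatment of the universal $\Dcal^*$-cover half of (1) is essentially that argument and is fine: $f_*f^*N\simeq f_!(\delta_f\otimes f^*N)\simeq f_!\delta_f\otimes N=f_*\mathbf{1}_Y\otimes N$, the \v{C}ech nerve produces the cobar construction on $A=f_*\mathbf{1}_Y$, and descendability gives conservativity of $f^*$ together with the pro-constancy of the $\Tot$-tower needed for comonadic descent (the Beck--Chevalley conditions, which you leave implicit, follow from proper base change and the base-change compatibility of $\delta_f$).

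There are two genuine gaps. First, for the $\Dcal^!$-half of (1), the adjoint triple $f^*\dashv f_*\dashv \intHom(\delta_f,f^!(-))$ does not ``translate'' $\Dcal^*$-descent into $\Dcal^!$-descent: the third functor is $f^!$ twisted by $\intHom(\delta_f,-)$, and since $\delta_f$ is not assumed invertible this twist cannot be removed, so monadic descent for $f_*\dashv\intHom(\delta_f,f^!(-))$ concerns a different cosimplicial diagram from the one defining $\Dcal^!$-descent. The honest parallel argument identifies the comonad $f_!f^!\simeq\intHom(A,-)$ (the right adjoint of the monad $f_*f^*\simeq A\otimes-$) and runs the dual bar construction, using that descendability makes $I^{\otimes n}\to\mathbf{1}_X$ null for $I=\fib(\mathbf{1}_X\to A)$ and some finite $n$. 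Second, and more seriously, in (2) you reduce primness of $g$ to the projection formula for $g_*$ and claim the two are equivalent via Corollary~\ref{cor:prim map characterization}. That corollary only gives the implication prim $\Rightarrow$ projection formula for $g_*$ (through $g_*\simeq g_!(\delta_g\otimes-)$); the converse fails in general (a quasi-compact open immersion $j$ typically satisfies the projection formula for $j_*$ without being prim). Primness is the assertion that the specific map $g_!\pi_{2*}\Delta_!\mathbf{1}_X\to g_*\mathbf{1}_X$ is an equivalence, which your argument never addresses. The intended route is to feed part (1) into Lemma~\ref{lem:suave suave-local}(2): $\{f\colon Y\to X\}$ is a universal $\Dcal^*$-cover, both $f$ and $g\circ f$ are prim, and descendability makes the relevant $\Ucal^{\op}$-indexed limit effectively finite, so that the exact functor $g_!$ preserves it.
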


The following proposition plays an important role in the proof of the main theorem.
\begin{proposition}\label{prop:suave criterion}
	Let $f\colon X\to S$ be a morphism in $E$, and $X\overset{g_i}{\longrightarrow}X_i\overset{f_i}{\longrightarrow} S$ be factorizations of $f$ such that $f_i$ and $g_i$ lie in $E$.
	We assume the following:
	\begin{enumerate}
		\item The morphisms $f_i\colon X_i\to S$ are $\Dcal$-suave, and the morphisms $g_i\colon X\to X_i$ are $\Dcal$-prim.
		\item The family of functors $\{(g_i\times \id_X)_!\colon \Dcal(X\times_S X)\to \Dcal(X_i\times_S X)\}_i$ is conservative.
		\item For every $i$, $g_{i!}\delta_{g_i}$ is $f_i$-suave.
	\end{enumerate}
Then the morphism $f\colon X\to S$ is $\Dcal$-suave.
\end{proposition}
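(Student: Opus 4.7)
The plan is to use Corollary \ref{cor:suave map characterization}, which reduces $\Dcal$-suaveness of $f$ to showing that the Beck--Chevalley transformation
$$\alpha \colon \pi_1^* f^!\mathbf{1}_S \longrightarrow \pi_2^!\mathbf{1}_X$$
on $X \times_S X$ is an equivalence. Setting $h_i := g_i \times \id_X \colon X \times_S X \to X_i \times_S X$, the conservativity hypothesis (2) reduces the problem to showing that $h_{i!}\alpha$ is an equivalence in $\Dcal(X_i \times_S X)$ for each index $i$ separately.

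Next I would unpack $h_{i!}\alpha$ using the factorization $f = f_i \circ g_i$ and condition (1). Let $p_1^i, p_2^i$ denote the projections of $X_i \times_S X$. The base change identity $h_{i!}\pi_1^* \simeq p_1^{i*}g_{i!}$ rewrites the source as $p_1^{i*}(g_{i!}g_i^!\omega_{f_i})$, where $\omega_{f_i} := f_i^!\mathbf{1}_S$. For the target, Corollary \ref{cor:prim and suave map D-local} gives that the base change $p_2^i$ of the suave morphism $f_i$ is itself suave with $\omega_{p_2^i} \simeq p_1^{i*}\omega_{f_i}$, and combined with $\pi_2 = p_2^i \circ h_i$ the target becomes $h_{i!}h_i^!(p_1^{i*}\omega_{f_i})$. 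Thus $h_{i!}\alpha$ identifies with the natural comparison
$$\beta_i \colon p_1^{i*}(g_{i!}g_i^!\omega_{f_i}) \longrightarrow h_{i!}h_i^!(p_1^{i*}\omega_{f_i}).$$

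To prove $\beta_i$ is an equivalence, one extracts the required structure from condition (3). Primness of $g_i$, together with Corollary \ref{cor:prim map characterization} (in the form $g_{i!} \simeq g_{i*}\intHom(\delta_{g_i}, -)$) and the tensor--hom adjunction $\intHom(g_{i!}M, N) \simeq g_{i*}\intHom(M, g_i^!N)$, yields the identification
$$\SD_{f_i}(g_{i!}\delta_{g_i}) \simeq \intHom(g_{i!}\delta_{g_i}, \omega_{f_i}) \simeq g_{i!}g_i^!\omega_{f_i}.$$
So condition (3) endows $g_{i!}\delta_{g_i}$ with an $f_i$-suaveness structure whose dual is precisely the source of $\beta_i$. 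Applying the base-changed form of Lemma \ref{lem:suave characterization} (the last sentence) along $f \colon X \to S$ with $M = \mathbf{1}_X$ then produces an equivalence
$$p_1^{i*}(g_{i!}g_i^!\omega_{f_i}) \simeq \intHom\bigl(p_1^{i*}(g_{i!}\delta_{g_i}),\, p_1^{i*}\omega_{f_i}\bigr).$$
The base change $h_i$ of the prim $g_i$ is prim with $\delta_{h_i} \simeq \pi_1^*\delta_{g_i}$ (Corollary \ref{cor:prim and suave map D-local}) and $p_1^{i*}g_{i!}\delta_{g_i} \simeq h_{i!}\delta_{h_i}$ by base change, so the same tensor--hom/primness manipulation applied to $h_i$ rewrites the right-hand side as $h_{i!}h_i^!(p_1^{i*}\omega_{f_i})$. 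The main obstacle is to ensure that the composite of these equivalences really coincides with $\beta_i$ rather than merely having the same source and target: this is a formal but delicate naturality check tying together the adjunctions $g_{i!}\dashv g_i^!$ and $h_{i!}\dashv h_i^!$, the two base change transformations, the prim projection formulas, and the suave structure furnished by (3). Once this compatibility is verified, $\beta_i$ is an equivalence for every $i$, whence $h_{i!}\alpha$ is, and by condition (2) so is $\alpha$, completing the proof.
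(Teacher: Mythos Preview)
Your approach is genuinely different from the paper's. The paper does not analyze the Beck--Chevalley map $\alpha$ at all; it instead applies Proposition~\ref{prop:suave criterion general} (quoted from \cite[Proposition~D.2.9]{HM24}) in the 2-category $\Ecal = \Kcal_{\Dcal,S}$. Concretely, it takes $f = \mathbf{1}_X$ as a 1-morphism $X\to S$, and via Lemma~\ref{lem:preserving adjoint} obtains adjunctions $l_i \dashv r_i$ in $\Kcal_{\Dcal,S}$ with $l_i = \Gamma_{i!}\delta_{g_i}$ and $r_i = \Gamma_{i!}\mathbf{1}_X$, where $\Gamma_i$ is the graph of $g_i$. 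The three hypotheses of Proposition~\ref{prop:suave criterion general} are then verified by short kernel computations: one finds $r_{i*} \simeq (g_i\times\id)_!$, so conservativity is exactly your hypothesis~(2); the relevant $f_*$ on hom-categories is $f_!$ or $p_{2!}$, hence a left adjoint; and $f\circ l_i$ computes to $g_{i!}\delta_{g_i}$, which is $f_i$-suave by your hypothesis~(3).

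Your route via Corollary~\ref{cor:suave map characterization} is sound at the level of objects---the source and target of $h_{i!}\alpha$ are correctly identified---and you have honestly flagged the remaining work: checking that $h_{i!}\alpha$ really agrees with the composite of the equivalences you assemble. That verification intertwines the base-change isomorphisms for the pair $(g_i, h_i)$, the prim projection formulas, and the suave duality of Lemma~\ref{lem:suave characterization}; it is precisely the kind of coherence that the 2-category $\Kcal_{\Dcal,S}$ is designed to absorb. The paper's approach thus trades your explicit diagram chase for the once-and-for-all coherence of adjunctions in a 2-category; yours would give a proof staying at the level of sheaves and avoiding the $(\infty,2)$-categorical input, at the cost of that deferred compatibility check.
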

It essentially follows from the following proposition.
\begin{proposition}[{\cite[Proposition D.2.9]{HM24}}]\label{prop:suave criterion general}
Let $\Ecal$ be a 2-category, let $f\colon X\to S$ be a morphism in $\Ecal$, and let $\{l_i \colon X_i \rightleftarrows X \colon r_i\}_i$ be a family of adjunctions in $\Ecal$.
We assume the following:
\begin{enumerate}
	\item The family of functors $\{r_{i*}\colon \Fun_{\Ecal}(X,X)\to\Fun_{\Ecal}(X,X_i)\}_i$ is conservative.
	\item For $Z\in \{X,S\}$, the functor $f_*\colon \Fun_{\Ecal}(Z,X) \to \Fun_{\Ecal}(Z,S)$ is  functor.
	\item For every $i$, the composition $f\circ l_i$ is a left adjoint morphism.
\end{enumerate}	
Then $f\colon X\to S$ is a left adjoint morphism.
\end{proposition}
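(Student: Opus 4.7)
The plan is to apply Proposition \ref{prop:suave criterion general} to the 2-category $\Ecal=\Kcal_{\Dcal,S}$, with the abstract ``$f\colon X\to S$'' there taken to be the morphism represented by the monoidal unit $\mathbf{1}_X\in\Dcal(X)=\Dcal(S\times_S X)$; by definition, $\Dcal$-suaveness of $f$ is the assertion that this morphism is a left adjoint in $\Kcal_{\Dcal,S}$. The family of adjunctions $\{l_i\colon X_i\rightleftarrows X\colon r_i\}_i$ needed as input will be extracted from the $g_i$-primness of $\mathbf{1}_X$.

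First I would construct the $l_i$ and $r_i$. The $g_i$-prim hypothesis furnishes an adjunction $\delta_{g_i}\dashv \mathbf{1}_X$ inside $\Kcal_{\Dcal,X_i}$, with $\delta_{g_i},\mathbf{1}_X\in\Dcal(X)$. Pushing this adjunction forward to $\Kcal_{\Dcal,S}$ along the two graph immersions $X\hookrightarrow X\times_S X_i$ and $X\hookrightarrow X_i\times_S X$ (both of which are $!$-able, being base changes of the diagonal $X_i\hookrightarrow X_i\times_S X_i$) yields $l_i\in\Dcal(X\times_S X_i)$ and $r_i\in\Dcal(X_i\times_S X)$. That these still form an adjunction in $\Kcal_{\Dcal,S}$ is a general preservation fact: composing the structure morphism of a correspondence with $f_i\colon X_i\to S$ and $!$-pushing the morphism-spaces forward along the relative diagonal defines a 2-functor $\Kcal_{\Dcal,X_i}\to\Kcal_{\Dcal,S}$, so unit and counit triangles transport automatically.

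Next I would verify the three hypotheses of Proposition \ref{prop:suave criterion general}. For hypothesis (1): the composition formula in $\Kcal_{\Dcal,S}$, together with proper base change and the projection formula, identifies right-composition with $r_i$ as the functor $(g_i\times\id_X)_!\colon\Dcal(X\times_S X)\to\Dcal(X_i\times_S X)$, so conservativity of this family is exactly assumption (2). For hypothesis (2): an analogous unwinding identifies $f_*\colon\Dcal(X\times_S Z)\to\Dcal(Z)$ (for $Z\in\{X,S\}$) with the shriek pushforward $p_!$ along the projection $p\colon X\times_S Z\to Z$, which lies in $E$ as a base change of $f$; its right adjoint $p^!$ is supplied by the 6-functor formalism. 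For hypothesis (3): one computes $f\circ l_i = g_{i!}\delta_{g_i}\in\Dcal(X_i)=\Dcal(S\times_S X_i)$, so asking $f\circ l_i$ to be a left adjoint in $\Kcal_{\Dcal,S}$ is, by definition, asking $g_{i!}\delta_{g_i}$ to be $f_i$-suave, which is assumption (3).

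With all three hypotheses verified, Proposition \ref{prop:suave criterion general} concludes that $f\colon X\to S$ is a left adjoint in $\Kcal_{\Dcal,S}$, i.e., $f$ is $\Dcal$-suave. The main obstacle is the bookkeeping of the three composition computations — in particular, keeping straight the two graph immersions $X\hookrightarrow X\times_S X_i$ and $X\hookrightarrow X_i\times_S X$ and the indexing of the projections when invoking base change and the projection formula — and the justification that the $\Kcal_{\Dcal,X_i}$-adjunction transports to a $\Kcal_{\Dcal,S}$-adjunction. No step is individually deep, but directions must be tracked carefully.
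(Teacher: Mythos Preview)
You have proved the wrong statement. The proposition you were asked to prove is the abstract 2-categorical assertion (Proposition~\ref{prop:suave criterion general}): given a 2-category $\Ecal$, a morphism $f$, and a family of adjunctions $\{l_i\dashv r_i\}$ satisfying conditions (1)--(3), the morphism $f$ is a left adjoint. This is a statement purely internal to 2-category theory; there are no $\Dcal$-stacks, no 6-functor formalism, and no $\Kcal_{\Dcal,S}$ in its hypotheses. The paper does not prove it at all --- it is simply cited from \cite[Proposition~D.2.9]{HM24}.

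What you have written is instead a proof of Proposition~\ref{prop:suave criterion}, the preceding statement, which \emph{applies} Proposition~\ref{prop:suave criterion general} to the particular 2-category $\Ecal=\Kcal_{\Dcal,S}$. Your very first sentence, ``The plan is to apply Proposition~\ref{prop:suave criterion general},'' makes this explicit: you are invoking the statement you were asked to establish. As a proof of Proposition~\ref{prop:suave criterion}, your outline is essentially the same as the paper's --- the paper also identifies $l_i=\Gamma_{i!}\delta_{g_i}$ and $r_i=\Gamma_{i!}\mathbf{1}_X$ (via Lemma~\ref{lem:preserving adjoint}, which packages your ``2-functor $\Kcal_{\Dcal,X_i}\to\Kcal_{\Dcal,S}$'' step by citing \cite[Theorem~4.2.4(iii)]{HM24}), computes $r_{i*}\simeq(g_i\times\id)_!$, identifies $f_*$ with a shriek pushforward, and computes $f\circ l_i\simeq g_{i!}\delta_{g_i}$. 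But none of this addresses the actual target statement.
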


\begin{lemma}\label{lem:preserving adjoint}
Let $f \colon X\to S$ and $g\colon Y\to X$ be morphisms in $E$.
Let $\Gamma \colon Y\to X\times_S Y$ denote the graph morphism of $g$.
We assume that $g$ is $\Dcal$-prim.
Then $\Gamma_!\mathbf{1}_Y\in \Dcal(X\times_S Y)$ is a right adjoint as a morphism $Y\to X$ in $\Kcal_{\Dcal,S}$, and its left adjoint is a morphism $X\to Y$ in $\Kcal_{\Dcal,S}$ corresponding to $\Gamma_!\delta_g\in \Dcal(X\times_S Y)$.
\end{lemma}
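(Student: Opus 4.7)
The plan is to verify the adjunction $L\dashv R$ in the 2-category $\Kcal_{\Dcal,S}$, where $L=\Gamma_!\delta_g$ (a morphism $X\to Y$) and $R=\Gamma_!\mathbf{1}_Y$ (a morphism $Y\to X$), by reducing it via a 2-categorical Yoneda argument to a natural family of 1-categorical adjunctions of composition functors. Concretely, it suffices to show that for every object $W$ the induced functors
\[
L\circ{-}\colon \Fun_{\Kcal_{\Dcal,S}}(W,X) \rightleftarrows \Fun_{\Kcal_{\Dcal,S}}(W,Y) \colon R\circ{-}
\]
form an adjunction naturally in $W$; the 2-cells witnessing the 2-categorical adjunction are then recovered by specializing $W\in\{X,Y\}$ with $f$ the identity correspondence $\Delta_{X!}\mathbf{1}_X$ or $\Delta_{Y!}\mathbf{1}_Y$, and the triangle identities follow from naturality.

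The first concrete step is to identify these composition functors explicitly via proper base change and the projection formula. For $N\in\Dcal(Y\times_S W)$, pulling back the graph $\Gamma\colon Y\to X\times_S Y$ along $\pi_{12}\colon X\times_S Y\times_S W\to X\times_S Y$ yields the base-changed graph $Y\times_S W\to X\times_S Y\times_S W$, $(y,w)\mapsto (g(y),y,w)$. A direct computation then gives $R\circ N\simeq g_{W!}N$, where $g_W\colon Y\times_S W\to X\times_S W$ denotes the base change of $g$. An entirely analogous computation using $\Gamma'\colon Y\to Y\times_S X$ produces $L\circ M\simeq \delta_g\otimes g_W^*M$ for $M\in\Dcal(X\times_S W)$, where $\delta_g$ is pulled back from $Y$ to $Y\times_S W$ along the natural projection.

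Next, I would establish the pointwise adjunction $(\delta_g\otimes g_W^*(-))\dashv g_{W!}(-)$ using the prim hypothesis. By Lemma \ref{lem:prim and suave obj D-local} applied to $P=\mathbf{1}_Y$, the base change $g_W$ is also $\Dcal$-prim, with codualizing complex equal to the pullback of $\delta_g$. Corollary \ref{cor:prim map characterization} then provides the equivalence $g_{W!}\simeq g_{W*}\intHom(\delta_g,-)$, and combining it with the tensor-hom and $(g_W^*\dashv g_{W*})$-adjunctions yields
\[
\Hom(\delta_g\otimes g_W^*A, B)\simeq \Hom(g_W^*A,\intHom(\delta_g,B))\simeq \Hom(A,g_{W!}B),
\]
naturally in $W$, as required.

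The main obstacle is the 2-categorical-to-1-categorical reduction: one must argue that a compatible family of 1-categorical adjunctions $L\circ{-}\dashv R\circ{-}$ parameterized by $W$ assembles into an adjunction inside the 2-category $\Kcal_{\Dcal,S}$. This is a standard consequence of the bi-Yoneda embedding (which reflects adjunctions), but in case one wishes to avoid the abstract formalism, the same result can be obtained by writing down the unit $\Delta_{X!}\mathbf{1}_X\to \Delta_{X!}g_*\mathbf{1}_Y\simeq R\circ L$ (induced by the unit of $g^*\dashv g_*$ together with the prim identification $g_!\delta_g\simeq g_*\mathbf{1}_Y$) and a counit $L\circ R\to \Delta_{Y!}\mathbf{1}_Y$ coming from the prim adjunction, then checking the triangle identities by tracking base-change and projection-formula isomorphisms. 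No new conceptual input beyond the prim adjunction for $g$ and its base changes $g_W$ is needed.
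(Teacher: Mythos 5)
Your proposal is correct, but it takes a genuinely different route from the paper: the paper's entire proof is a citation of \cite[Theorem 4.2.4(iii)]{HM24}, which is a general $(\infty,2)$-categorical statement that prim morphisms give right-adjointable correspondences, whereas you reprove the adjunction by hand inside the homotopy $2$-category $\Kcal_{\Dcal,S}$. Your two key computations are sound and are in fact the same proper-base-change/projection-formula manipulations the paper carries out later when it identifies $r_{i*}\simeq (g_i\times\id)_!$ in the proof of Proposition \ref{prop:suave criterion}: composition with $\Gamma_!\mathbf{1}_Y$ is $g_{W!}$ and composition with $\Gamma_!\delta_g$ is $\delta_{g_W}\otimes g_W^*(-)$, and the pointwise adjunction between these follows from Corollary \ref{cor:prim and suave map D-local}(2) together with $g_{W!}\simeq g_{W*}\intHom(\delta_{g_W},-)$ from Corollary \ref{cor:prim map characterization}. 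What the citation buys is that all coherence is handled once and for all at the $(\infty,2)$-level; what your argument buys is self-containedness, at the cost of the one step you correctly flag as the obstacle, namely upgrading the $W$-indexed family of $1$-categorical adjunctions to an adjunction internal to the bicategory. That step does require verifying pseudo-naturality in $W$ (equivalently, the triangle identities for your explicit unit $\Delta_{X!}\mathbf{1}_X\to\Delta_{X!}g_!\delta_g\simeq\Delta_{X!}g_*\mathbf{1}_Y\simeq R\circ L$ and counit $L\circ R\to\Delta_{Y!}\mathbf{1}_Y$), which you sketch but do not carry out; since the hom-categories of $\Kcal_{\Dcal,S}$ are homotopy categories, only $1$-truncated identities are involved, so this is routine bookkeeping of base-change and projection-formula compatibilities rather than a gap, but it is the part of the argument that \cite{HM24} exists precisely to avoid.
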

\begin{proof}
	It follows from \cite[Theorem 4.2.4(iii)]{HM24}.
\end{proof}
\begin{proof}[Proof of Proposition \ref{prop:suave criterion}]
Let $\Gamma_i \colon X \to X_i\times_S X$ be the graph morphism of $g_i$.
By Lemma \ref{lem:preserving adjoint}, $\Gamma_{i!}\mathbf{1}_X\in \Dcal(X_i\times_S X)$ is a right adjoint as a morphism $X\to X_i$ in $\Kcal_{\Dcal,S}$, and its left adjoint is a morphism $X_i\to X$ in $\Kcal_{\Dcal,S}$ corresponding to $\Gamma_{i!}\delta_{g_i}\in \Dcal(X_i\times_S X)$.
We apply Proposition \ref{prop:suave criterion general} for $\Ecal=\Kcal_{\Dcal,S}$, $f=\mathbf{1}_X\colon X\to S$, $l_i=\Gamma_{i!}\delta_{g_i} \colon X_i\to X$, and $r_i=\Gamma_{i!}\mathbf{1}_X \colon X\to X_i$.
We check the three conditions in Proposition \ref{prop:suave criterion general}.

First, we compute $r_{i*}\colon \Dcal(X\times_S X)\to \Dcal(X_i\times_S X)$.
Let $p_{kl}$ denote the projections from $X_i\times_S X\times_S X$ ($1\leq k<l\leq 3$), and $p_k$ denote the projections from $X\times_S X$ ($k=1,2$). 
Then $r_{i*}$ is given by $p_{13!}(p_{23}^*(-)\otimes p_{12}^*\Gamma_{i!}\mathbf{1}_X)$.
We have equivalences 
\begin{align*}
	&p_{13!}(p_{23}^*(-)\otimes p_{12}^*\Gamma_{i!}\mathbf{1}_X)\\
	\simeq& p_{13!}(p_{23}^*(-)\otimes (\Gamma_{i}\times \id)_!p_1^*\mathbf{1}_X)\\
	\simeq& p_{13!}(p_{23}^*(-)\otimes (\Gamma_{i}\times \id)_!\mathbf{1}_{X\times_S X})\\
	\simeq& p_{13!}(\Gamma_{i}\times \id)_!(\Gamma_{i}\times \id)^*p_{23}^*(-)\\
	\simeq& (g_i\times \id)_!(-),
\end{align*}
where the first equivalence follows from the proper base change, and the third equivalence follows from the projection formula.
From the above, we get an equivalence $r_{i*} \simeq (g_i\times \id)_!$, and therefore, we find that the family of functors $\{r_{i*}\colon \Dcal(X\times_S X)\to \Dcal(X_i\times_S X)\}_i$ is conservative.

Next, we check the condition (2).
If $Z=S$, then the functor $$f_*\colon \Fun_{\Ecal}(S,X) \to \Fun_{\Ecal}(S,S)$$ is given by $f_!\colon \Dcal(X)\to \Dcal(S)$, and it is a left adjoint functor.
If $Z=X$, then by the same computation as for $r_{i*}$, we find that the functor $$f_*\colon \Fun_{\Ecal}(X,X) \to \Fun_{\Ecal}(X,S)$$ is given by $p_{2!}\colon \Dcal(X\times_S X)\to \Dcal(X)$, and it is also a left adjoint functor.

Finally, we compute $f\circ l_i$.
Let $q\colon X\times_S X_i \to X_i$ denote the projection.
Then, by definition, $f\circ l_i \colon X_i\to S$ in $\Kcal_{\Dcal,S}$ corresponds to $q_!\Gamma_{i!}\delta_{g_i}\in \Dcal(X_i)$.
It is equivalent to $g_{i!}\delta_{g_i}$, which is $f_i$-suave.
Therefore, $f\circ l_i \colon X_i\to S$ is a left adjoint morphism in $\Kcal_{\Dcal,S}$.
\end{proof}

Next, we consider \'{e}taleness and properness of morphisms of solid $\Dcal$-stacks.
\begin{definition}[{\cite[Definition 4.6.1]{HM24}}]\label{def:etale}
	Let $f\colon Y\to X$ be a truncated morphism of solid $\Dcal$-stacks in $\widetilde{E}$ (Definition \ref{def:solid D-stack 6ff}).
	Let $\Delta_f \colon Y\to Y\times_X Y$ denote the diagonal morphism of $f$.
	The morphism $f$ is said to be \textit{$\Dcal$-\'{e}tale} if it is $\Dcal$-suave and if the diagonal morphism $\Delta_f$ is $\Dcal$-\'{e}tale or an equivalence.
\end{definition}
\begin{remark}
	By the definition of truncated morphisms, if $f$ is $n$-truncated then $\Delta_f$ is $(n-1)$-truncated.
	Therefore, the above definition is well-defined.
\end{remark}

\begin{remark}\label{rem:D-etale D-smooth}
	Let $f\colon Y\to X$ be a $-1$-truncated $\Dcal$-\'{e}tale morphism. 
	The diagonal morphism $\Delta_f \colon Y \to Y\times_X Y$ is an equivalence, so we have natural equivalences $\Delta_{f}^*\simeq\Delta_{f}^!$ and $\omega_{\Delta_f}\simeq \Ocal_Y$.
	By Lemma \ref{lem:suave diagonal} and Corollary \ref{cor:suave map characterization}, we get natural equivalences $\omega_f\simeq \Ocal_Y$ and $f^*\simeq f^!$.

	For a general $\Dcal$-\'{e}tale morphism $f\colon Y\to X$, we get a natural equivalence $\omega_f\simeq \Ocal_Y$ and $f^*\simeq f^!$ by induction.
	In particular, $\Dcal$-\'{e}tale morphisms are $\Dcal$-smooth.
\end{remark}

\begin{lemma}[{\cite[Lemma 3.2.9]{RC24}}]\label{lem:open etale}
	Let $f\colon \AnSpec\Bcal\to \AnSpec \Acal$ be an open immersion in the associated locale in $\Aff_{\Zbb_{\square}}$.
	Then it is $\Dcal$-\'{e}tale.
\end{lemma}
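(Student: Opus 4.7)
By Definition \ref{def:etale}, to show that $f\colon \AnSpec\Bcal\to \AnSpec\Acal$ is $\Dcal$-\'{e}tale I must verify two things: that $f$ is $\Dcal$-suave, and that the diagonal $\Delta_f$ is $\Dcal$-\'{e}tale or an equivalence. My plan is to show directly that $\Delta_f$ is an equivalence, and then deduce $\Dcal$-suaveness essentially for free.

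For the diagonal, I would exploit the fact that $f_!$ is a fully faithful left adjoint to $f^*$, so the unit $\id \to f^* f_!$ is an equivalence. Combining this with the projection formula
$$f_!(f^* M \otimes_{\Bcal} N) \simeq M \otimes_{\Acal} f_! N,$$
applied with $M = f_!\Ocal_{\AnSpec\Bcal}$ and $N = \Ocal_{\AnSpec\Bcal}$, I obtain $f_!\Ocal_{\AnSpec\Bcal} \simeq f_!\Ocal_{\AnSpec\Bcal} \otimes_{\Acal} f_!\Ocal_{\AnSpec\Bcal}$, exhibiting $f_!\Ocal_{\AnSpec\Bcal}$ as an idempotent $\Ebb_\infty$-algebra in $\Dcal(\Acal)$. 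This identifies $\Dcal(\Bcal)$ with a smashing localization of $\Dcal(\Acal)$, which in turn forces $\Bcal \otimes_{\Acal} \Bcal \simeq \Bcal$ as analytic animated rings. Hence the multiplication map inducing $\Delta_f$ is an equivalence in $\Aff_{\Zbb_{\square}}$.

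For $\Dcal$-suaveness, observe first that $f^!$ is by construction the right adjoint of $f_!$; but $f^*$ is already a right adjoint of $f_!$ by Definition \ref{def:open locale}, so the uniqueness of adjoints gives $f^! \simeq f^*$. In particular, $\omega_f \simeq f^* \Ocal_{\AnSpec\Acal} \simeq \Ocal_{\AnSpec\Bcal}$, which is invertible, so $f$ will even turn out to be $\Dcal$-smooth once suaveness is established. To verify suaveness via Corollary \ref{cor:suave map characterization}, I need the natural morphism $\pi_1^* f^! \mathbf{1}_{\AnSpec\Acal} \to \pi_2^! \mathbf{1}_{\AnSpec\Bcal}$ on $\AnSpec\Bcal \times_{\AnSpec\Acal} \AnSpec\Bcal$ to be an equivalence. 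But since $\Delta_f$ is an equivalence by the previous step, the two projections $\pi_i$ coincide with $\Delta_f^{-1}$, so the morphism is trivially an equivalence. Combined with $\Delta_f$ being an equivalence, Definition \ref{def:etale} then yields the desired $\Dcal$-\'{e}taleness.

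The main obstacle in this plan is the passage from the module-theoretic statement ``$f_!\Ocal_{\AnSpec\Bcal}$ is an idempotent algebra'' to the geometric statement ``$\Bcal \otimes_{\Acal} \Bcal \simeq \Bcal$ as analytic animated rings'' and hence the equivalence of $\Delta_f$ in $\Aff_{\Zbb_{\square}}$. Unwinding this requires comparing fiber products in $\Aff_{\Zbb_{\square}}$ with the corresponding tensor products of module categories over $\Dcal(\Acal)$. This comparison is essentially the justification for the terminology ``open immersion in the associated locale'' and is standard, but it is where the argument does real work; once it is granted, the remainder of the proof is purely formal and the dualizing complex is just $\Ocal_{\AnSpec\Bcal}$.
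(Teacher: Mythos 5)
The paper itself gives no proof of this lemma --- it is imported verbatim from \cite[Lemma 3.2.9]{RC24} --- so your argument can only be judged on its own terms and against that source. Your overall architecture (idempotency from full faithfulness plus the projection formula, diagonal an equivalence, $f^!\simeq f^*$, suaveness in the degenerate situation) is the right one, but the step you yourself flag as the ``main obstacle'' is where the entire content of the lemma sits, and the mechanism you propose for it does not work as stated. First, $f_!\Ocal_{\AnSpec\Bcal}$ is not an idempotent $\Ebb_{\infty}$-algebra: the counit of $f_!\dashv f^*$ provides a map $f_!\Ocal_{\AnSpec\Bcal}\to\Ocal_{\AnSpec\Acal}$, not a unit map in the other direction, so it is an idempotent object on the ``open'' side of a recollement, and the essential image of $f_!$ is a smashing \emph{co}localization. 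More seriously, deducing $\Bcal\otimes_{\Acal}\Bcal\simeq\Bcal$ from this presupposes that the pushout of analytic animated rings is computed by the relative tensor product $\Dcal(\Bcal)\otimes_{\Dcal(\Acal)}\Dcal(\Bcal)$ of module categories; that identification is a steadiness-type statement which is not automatic and is not how the monomorphism property is actually established. The working route, which is the content of \cite[Proposition 2.2.4]{RC24}, is: for the adjoint triple $f_!\dashv f^*\dashv f_*$, full faithfulness of $f_!$ is equivalent to full faithfulness of $f_*$; one then identifies the underlying condensed ring of $\Bcal\otimes_{\Acal}\Bcal$ with a completion of $f^*f_*\Ocal_{\AnSpec\Bcal}\simeq\Ocal_{\AnSpec\Bcal}$ and checks that the analytic ring structures agree, whence $\Delta_f$ is an equivalence. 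That computation is the real work and cannot be waved through as ``standard bookkeeping.''

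On suaveness, ``the morphism is trivially an equivalence'' is also too quick: identifying the source and target of the natural map $\pi_1^*f^!\mathbf{1}_{\AnSpec\Acal}\to\pi_2^!\mathbf{1}_{\AnSpec\Bcal}$ with the monoidal unit does not make the map itself an equivalence, since an endomorphism of $\mathbf{1}$ can be zero. Once $\Delta_f$ is known to be an equivalence the verification is short but should be carried out, e.g.\ by exhibiting the adjunction in $\Kcal_{\Dcal,S}$ directly (with $S=\AnSpec\Acal$, $X=\AnSpec\Bcal$): the candidate right adjoint of $f=\mathbf{1}_X\in\Fun_{\Kcal_{\Dcal,S}}(X,S)$ is $\mathbf{1}_X\in\Fun_{\Kcal_{\Dcal,S}}(S,X)$; the composite endomorphism of $X$ is $\mathbf{1}_{X\times_S X}\simeq\Delta_{f!}\mathbf{1}_X=\id_X$, the composite endomorphism of $S$ is $f_!\mathbf{1}_X$ with counit that of $f_!\dashv f^*$, and the triangle identities follow from full faithfulness of $f_!$. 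Your identification $f^!\simeq f^*$ is correct (for $f\in I$ the formalism takes $f_!$ to be the given left adjoint), and truncatedness of $f$, which Definition \ref{def:etale} requires, does follow from the diagonal being an equivalence, so the remainder of the skeleton is sound.
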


Morphisms that we want to be \'{e}tale are truncated, so the above definition works well.
However, there are a lot of non-truncated morphisms that we want to be proper.
For example, a closed immersion $\AnSpec (\Zbb[T]/T)_{\square}\to \AnSpec(\Zbb[T],\Zbb[T])_{\square}$ is not truncated. In fact, almost all Zariski closed immersions are not truncated.
Therefore, a definition similar to Definition \ref{def:etale} is not appropriate for our purpose.
Instead, we introduce the following notion.

\begin{definition}
	Let $f\colon Y\to X$ be a morphism of solid $\Dcal$-stacks in $\widetilde{E}$.
	The morphism $f$ is said to be \textit{weakly $\Dcal$-proper} if it is $\Dcal$-prim and if the codualizing complex $\delta_f$ is (non-canonically) equivalent to the monoidal unit $\Ocal_Y\in \Dcal(Y)$.
\end{definition}

\begin{remark}
	We note that our definition is slightly different from the definition in \cite[Definition 3.1.19]{RC24}.
	In loc. cit., it is imposed that the diagonal morphism $\Delta_f \colon Y\to Y\times_X Y$ is also $\Dcal$-prim and its codualizing complex is (non-canonically) equivalent to $\Ocal_Y\in \Dcal(Y)$.
\end{remark}
\begin{remark}
	This definition is rather ad hoc.
	For example, while we have an equivalence $f_*\simeq f_!$ for a weakly $\Dcal$-proper morphism $f\colon Y\to X$, this equivalence depends on the choice of $\delta_f\simeq \Ocal_Y$.
	However, for a morphism $f$ in $P$, we can take a natural identification $\delta_f\simeq \Ocal_Y$ compatible with the natural identification $f_!\simeq f_*$ which comes from the construction of the 6-functor formalism, see Remark \ref{rem:identification}.
	Weakly $\Dcal$-proper morphisms considered in this paper are all explicitly constructed from $P$, and for these, it can be seen from the proofs that there are natural trivializations of the codualizing complexes.
\end{remark}
\begin{remark}
	In \cite{CLL25}, $(\infty,2)$-categorical 6-functor formalisms are studied. 
	This 6-functor formalism contains the data of $2$-morphisms $f_!\to f_*$ for nice morphisms $f$, and therefore, we can define the $\Dcal$-proper morphisms as those $f$ such that $f_!\to f_*$ is an equivalence. 
	Applying this framework to our setting would make certain arguments more natural, but we will not pursue this direction in the present paper.
\end{remark}

\begin{lemma}\label{lem:proper composition basechange}
	\begin{enumerate}
		\item Let $f\colon Y\to X$ and $g\colon X\to S$ be weakly $\Dcal$-proper morphisms.
		Then $g\circ f\colon Y\to S$ is also weakly $\Dcal$-proper.
		\item If the morphism $f$ is weakly $\Dcal$-proper, then for every morphism $g\colon S^{\prime}\to S$, the pullback $f^{\prime}\colon X^{\prime}=X\times_S S^{\prime}\to S^{\prime}$ of $f$ is also weakly $\Dcal$-proper.
	\end{enumerate}
\end{lemma}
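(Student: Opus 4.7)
My plan is to derive both parts as immediate consequences of Corollary \ref{cor:prim and suave map D-local}, which already establishes preservation of the $\Dcal$-prim property under composition and base change together with the explicit formulas for the resulting codualizing complexes. Since weak $\Dcal$-properness only requires that $\delta$ be non-canonically equivalent to the monoidal unit $\Ocal$, the remaining work is purely formal: I need to check that the formulas of that corollary send trivializations to trivializations, which is automatic because pullback and tensor product both preserve the monoidal unit.

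For part (1), I would apply Corollary \ref{cor:prim and suave map D-local}(1) to the $\Dcal$-prim morphisms $f\colon Y\to X$ and $g\colon X\to S$ to conclude that $g\circ f$ is $\Dcal$-prim with codualizing complex given (with the orientation appropriate to $f\colon Y\to X$) by $\delta_{g\circ f}\simeq f^{*}\delta_g\otimes \delta_f$. Choosing non-canonical equivalences $\delta_f\simeq \Ocal_Y$ and $\delta_g\simeq \Ocal_X$ provided by the hypothesis, pulling the second back along $f$ gives $f^{*}\delta_g\simeq f^{*}\Ocal_X\simeq \Ocal_Y$, and tensoring then yields $\delta_{g\circ f}\simeq \Ocal_Y\otimes \Ocal_Y\simeq \Ocal_Y$, as desired.

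For part (2), Corollary \ref{cor:prim and suave map D-local}(2) directly yields that the base-change $f'\colon X'=X\times_S S'\to S'$ is $\Dcal$-prim with $\delta_{f'}\simeq g'^{*}\delta_f$, where $g'\colon X'\to X$ is the projection. Pulling back the chosen equivalence $\delta_f\simeq \Ocal_X$ along $g'$ produces the desired trivialization $\delta_{f'}\simeq g'^{*}\Ocal_X\simeq \Ocal_{X'}$, which completes the argument.

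In both cases the substantive content is already packaged inside Corollary \ref{cor:prim and suave map D-local}, so there is no real obstacle to overcome; the only point to notice is the stability of the trivialization of the codualizing complex under the operations of pullback and tensor product, which is formal.
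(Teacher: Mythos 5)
Your proposal is correct and is exactly the paper's argument: the paper's proof consists of the single line ``It easily follows from Corollary \ref{cor:prim and suave map D-local},'' and your write-up simply makes explicit the (formal) observation that the codualizing-complex formulas of that corollary carry trivializations to trivializations. No issues.
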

\begin{proof}
	It easily follows from Corollary \ref{cor:prim and suave map D-local}.
\end{proof}

\begin{lemma}\label{lem:p is prim}
	Let $f\colon Y\to X$ be a morphism of solid affinoid spaces in $P$ (Definition \ref{def:suitable dcp}).
	Then the morphism $f$ is weakly $\Dcal$-proper.
\end{lemma}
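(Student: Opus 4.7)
The plan is to apply Corollary \ref{cor:prim map characterization}: it suffices to show that the natural morphism $f_!\pi_{2*}\Delta_!\Ocal_Y \to f_*\Ocal_Y$ is an equivalence, and then $\delta_f\simeq \pi_{2*}\Delta_!\Ocal_Y$. The key computational input is that in the 6-functor formalism coming from the suitable decomposition $(P,I)$, every morphism $p\in P$ satisfies $p_!\simeq p_*$ by construction, and the class $P$ is closed under base change.

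Writing $f\colon \AnSpec\Bcal\to \AnSpec\Acal$ with $\Bcal\simeq (\underline{\Bcal},\pi_0\Acal^+)_{\square}$ induced from $\Acal$, both projections $\pi_1,\pi_2\colon Y\times_X Y\to Y$ are pullbacks of $f\in P$, hence lie in $P$, so in particular $\pi_{2!}\simeq \pi_{2*}$. I would then check that the diagonal $\Delta\colon Y\to Y\times_X Y$ also lies in $P$: the fiber product has analytic ring structure $(\underline{\Bcal}\otimes_{\underline{\Acal}}\underline{\Bcal},\pi_0\Acal^+)_{\square}$, and the target $\Bcal$ of the multiplication map has analytic structure $(\underline{\Bcal},\pi_0\Acal^+)_{\square}$, which is exactly the structure induced from the source along multiplication. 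Hence $\Delta\in P$ and $\Delta_!\simeq \Delta_*$.

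With these facts in hand, the computation becomes immediate:
$$\pi_{2*}\Delta_!\Ocal_Y \simeq \pi_{2!}\Delta_!\Ocal_Y \simeq (\pi_2\circ\Delta)_!\Ocal_Y \simeq \Ocal_Y,$$
which gives $\delta_f\simeq \Ocal_Y$, and under this identification the natural morphism of Corollary \ref{cor:prim map characterization} reduces to the natural equivalence $f_!\Ocal_Y\simeq f_*\Ocal_Y$ coming from $f\in P$. The one place where care is needed — and the main obstacle, although a mild one — is to verify that the natural transformation appearing in Lemma \ref{lem:prim characterization}(2) is genuinely compatible with the chain of identifications above. One can either unravel the construction of that transformation directly, or test on $\Hom(\Ocal_X,-)$ via the criterion of Lemma \ref{lem:prim characterization}(3), reducing matters to the elementary equality $(\pi_2\circ\Delta)_!=\id_!$.
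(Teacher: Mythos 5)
Your proposal is correct and follows essentially the same route as the paper: both observe that the diagonal $\Delta$ lies in $P$ (so $\Delta_!\simeq\Delta_*$ and $f_!\simeq f_*$ by construction of the 6-functor formalism from the suitable decomposition), and both then identify $\delta_f\simeq\pi_{2*}\Delta_!\Ocal_Y\simeq\Ocal_Y$ via Corollary \ref{cor:prim map characterization}. The only cosmetic difference is that you collapse $\pi_{2*}\Delta_!$ through $\pi_{2!}\Delta_!\simeq(\pi_2\circ\Delta)_!$ while the paper uses $\pi_{2*}\Delta_*$; the compatibility issue you flag at the end is addressed by the paper in the subsequent Remark \ref{rem:identification} rather than in the proof itself.
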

\begin{proof}
	Since the diagonal morphism $\Delta \colon Y\to Y\times_X Y$ also lies in $P$, we have natural identifications $\Delta_*=\Delta_!$ and $f_*=f_!$ by construction.
	Therefore, $f$ is $\Dcal$-prim and the codualizing complex $\delta_f$ is equivalent to $\pi_{2*}\Delta_!\Ocal_Y\simeq \pi_{2*}\Delta_*\Ocal_Y=\Ocal_Y$ by Corollary \ref{cor:prim map characterization}.
\end{proof}
\begin{remark}\label{rem:identification}
	For a morphism $g$ in $P$, let $\iota_g \colon g_!\simeq g_*$ denote the natural equivalence which comes from the construction of the 6-functor formalism.
	For a morphism $f\colon Y\to X$ in $P$, three identifications of $f_*$ and $f_!$ exist.
	One is the equivalence $\iota_f\colon f_*\simeq f_!$.
	The others come from the identification $\delta_f\simeq \pi_{2*}\Delta_!\Ocal_Y\overset{\iota_{\Delta}}{\simeq} \pi_{2*}\Delta_*\Ocal_Y\simeq \Ocal_Y$ and the equivalences $f_!(\delta_f\otimes -)\simeq f_*$ and $f_!\simeq f_*\intHom(\delta_f,-)$ in Corollary \ref{cor:prim map characterization}, respectively. 
	These identifications coincide.
	Let us prove it.
	By unwinding the construction in \cite[Lemma 4.4.17]{HM24}, the equivalence $f_!(\delta_f\otimes -)\simeq f_*$ is constructed from the following morphism via adjunction:
	\begin{align*}
		&f^*f_!(\pi_{2*}\Delta_!\Ocal_Y\otimes -)\\
		\to &\pi_{1!}\pi_2^*(\pi_{2*}\Delta_!\Ocal_Y\otimes -)\\
		\to &\pi_{1!}(\pi_2^*\pi_{2*}\Delta_!\Ocal_Y\otimes \pi_2^*-)\\
		\to &\pi_{1!}(\Delta_!\Ocal_Y\otimes \pi_2^*-)\\
		\to &\pi_{1!}\Delta_!(\Ocal_Y\otimes \Delta^*\pi_2^*-)\\
		\simeq &-,
	\end{align*}
	where the first morphism follows from the proper base change, and the fourth morphism follows from the projection formula.
	By the construction of the 6-functor formalism, the proper base change and the projection formula above are induced by the adjunction under the equivalence $\iota_g\colon g_!\simeq g_*$ where $g=f,\pi_1,\Delta$.
	Therefore, we find that the equivalence $\iota_f\colon f_!\simeq f_*$ coincides with the equivalence induced from $f_!(\delta_f\otimes -)\simeq f_*$ and $\delta_f\simeq \Ocal_Y$ by standard computations of adjunctions.
	In a similar way, we can treat the equivalence induced from $f_!\simeq f_*\intHom(\delta_f,-)$.
\end{remark}

\begin{lemma}\label{lem:weak proper D-local}
	Let $f\colon Y\to X$ be a morphism of solid $\Dcal$-stacks in $\widetilde{E}$.
	We assume that $f$ is, universally $\Dcal^*$-locally on $X$, representable by a morphism in $P$, that is, there exists a universal $\Dcal^*$-cover $\{X_i\to X\}_i$ with $X_i\in \Aff_{\Zbb_{\square}}$ such that the pullback $f_i\colon Y_i=Y\times_X X_i \to X_i$ of $f$ along $X_i\to X$ lies in $P$ for every $i$.
	Then there is a natural equivalence $f_!\simeq f_*$ of functors from $\Dcal(Y)$ to $\Dcal(X)$.
\end{lemma}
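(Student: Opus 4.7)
The plan is to construct the equivalence $f_!\simeq f_*$ by descent along the given cover, exploiting the fact that for morphisms in $P$ the identification $f_!=f_*$ is tautologically built into the 6-functor formalism (cf.\ Remark \ref{rem:identification}).

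First I would fix a universal $\Dcal^*$-cover $\{g_i\colon X_i\to X\}_i$ as in the hypothesis, so that each pullback $f_i\colon Y_i=Y\times_X X_i\to X_i$ lies in $P$, and form the \v{C}ech nerve $X_\bullet\to X$ together with the pullback $Y_\bullet=Y\times_X X_\bullet$ and induced morphisms $f_\bullet\colon Y_\bullet\to X_\bullet$. Because $P$ is stable under base change and the extended class $\widetilde E$ is $*$-local on the target, each $f_{[n]}$ is $*$-locally in $P$ and therefore inherits a canonical equivalence $\iota_{f_{[n]}}\colon f_{[n]!}\simeq f_{[n]*}$ coming directly from the construction of the 6-functor formalism; moreover these equivalences are functorial with respect to the face and degeneracy maps of the \v{C}ech nerve, since the identification $h_!=h_*$ on $P$ is built in at the level of the correspondence category.

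Next I would invoke descent. Since universal $\Dcal^*$-covers are stable under pullback, $\{Y_i\to Y\}_i$ is also a universal $\Dcal^*$-cover, so one obtains $\Dcal(X)\simeq\lim_{[n]\in\Delta}\Dcal(X_{[n]})$ and $\Dcal(Y)\simeq\lim_{[n]\in\Delta}\Dcal(Y_{[n]})$. Under these presentations, $f_*$ is the limit of the $f_{[n]*}$ by $*$-descent on $X$, and $f_!$ is the limit of the $f_{[n]!}$ by proper base change in $\widetilde E$ (available because each $f_{[n]}$ lies in $\widetilde E$). The simplicial natural transformation $\iota_{f_\bullet}$ then descends to the desired equivalence $f_!\simeq f_*$ of functors $\Dcal(Y)\to\Dcal(X)$.

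I expect the main obstacle to be the coherence step---verifying that the levelwise identifications $\iota_{f_{[n]}}$ genuinely assemble into a simplicial natural transformation, and not merely a collection of unrelated pointwise equivalences. This reduces to the functoriality of the $P$-part of the 6-functor formalism of \cite{HM24,RC24}, where the identification ``$h_!=h_*$ for $h\in P$'' is encoded at the level of the correspondence $\infty$-category and hence is automatic once the definitions are carefully unwound; everything else in the argument is formal manipulation of limits and base change.
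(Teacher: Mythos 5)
Your proposal is correct and follows essentially the same route as the paper: descent of $\Dcal$ along the given universal $\Dcal^*$-cover, with $f_*$ commuting with the limit because the pullbacks lie in $P$, $f_!$ commuting by proper base change, and the levelwise identifications $\iota_{f_{[n]}}\colon f_{[n]!}\simeq f_{[n]*}$ assembling coherently because the identification $h_!\simeq h_*$ for $h\in P$ is built into the construction of the 6-functor formalism. The only cosmetic difference is that the paper indexes the limit over the sieve generated by the cover rather than over the \v{C}ech nerve.
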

\begin{proof}
	Let $\{X_i\to X\}_i$ be a universal $\Dcal^*$-cover as above.
	Let $\Ucal \subset (\Aff_{\Zbb_{\square}})_{/X}$ be the sieve generated by $\{X_i\to X\}_i$.
	Then we have natural equivalences
	\begin{align*}
	\Dcal(X)\simeq \varprojlim_{(X^{\prime}\to X)\in \Ucal}\Dcal(X^{\prime}), \;
	\Dcal(Y)\simeq \varprojlim_{(X^{\prime}\to X)\in \Ucal}\Dcal(Y\times_X X^{\prime}).
	\end{align*}
	Let $f_{X^{\prime}}\colon Y\times_X X^{\prime} \to X^{\prime}$ denote the pullback of $f$ along $(X^{\prime}\to X)\in\Ucal$.
	Then we have an equivalence $$f_!\simeq \varprojlim_{(X^{\prime}\to X)\in \Ucal} (f_{X^{\prime}})_! \colon \Dcal(Y)\simeq \varprojlim_{(X^{\prime}\to X)\in \Ucal}\Dcal(Y\times_X X^{\prime}) \to \Dcal(X)\simeq \varprojlim_{(X^{\prime}\to X)\in \Ucal}\Dcal(X^{\prime}),$$
	where we note that $\{(f_{X^{\prime}})_!\}_{(X^{\prime}\to X)\in \Ucal}$ preserves cocartesian sections by the proper base change.
	On the other hand, we have an equivalence 
	$$f_*\simeq \varprojlim_{(X^{\prime}\to X)\in \Ucal} (f_{X^{\prime}})_* \colon \Dcal(Y)\simeq \varprojlim_{(X^{\prime}\to X)\in \Ucal}\Dcal(Y\times_X X^{\prime}) \to \Dcal(X)\simeq \varprojlim_{(X^{\prime}\to X)\in \Ucal}\Dcal(X^{\prime}),$$
	where we note that $\{(f_{X^{\prime}})_*\}_{(X^{\prime}\to X)\in \Ucal}$ preserves cocartesian sections since $f_{X^{\prime}}\in P$.
	By the construction of $\Dcal\colon\Corr(\Shv_{\Dcal}(\Aff_{\Zbb_{\square}}),\widetilde{E})\to \Pr^{L,\ex}$, the diagrams over $\Delta^1\times\Ucal$
	$$\{(f_{X^{\prime}})_!\colon \Dcal(Y\times_X X^{\prime}) \to \Dcal(X^{\prime})\}_{(X^{\prime}\to X)\in \Ucal}$$
	and 
	$$\{(f_{X^{\prime}})_*\colon \Dcal(Y\times_X X^{\prime}) \to \Dcal(X^{\prime})\}_{(X^{\prime}\to X)\in \Ucal}$$
	are equivalent.
	From this, we get an equivalence $f_!\simeq f_* \colon \Dcal(Y)\to \Dcal(X)$.
\end{proof}

\begin{corollary}[{\cite[Lemma 3.1.21]{RC24}}]\label{cor:weakly proper}
	Let $f\colon Y\to X$ be a morphism of solid $\Dcal$-stacks in $\widetilde{E}$.
	We assume that $f$ is, universally $\Dcal^*$-locally on $X$, representable by a morphism in $P$.
	Then $f$ is weakly $\Dcal$-proper.
\end{corollary}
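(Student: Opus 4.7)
The goal is to verify the two defining conditions of weak $\Dcal$-properness: $\Dcal$-primness of $f$ and a (non-canonical) equivalence $\delta_f \simeq \Ocal_Y$.

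First I would choose a universal $\Dcal^*$-cover $\{h_i \colon X_i \to X\}_i$ witnessing the hypothesis, so that $X_i \in \Aff_{\Zbb_{\square}}$ and each pullback $f_i\colon Y_i \to X_i$ lies in $P$. By Lemma \ref{lem:p is prim}, every $f_i$ is weakly $\Dcal$-proper, hence in particular $\Dcal$-prim with $\delta_{f_i} \simeq \Ocal_{Y_i}$. Since $\Dcal$-primness descends along universal $\Dcal^*$-covers of the target (Corollary \ref{cor:prim and suave map D-local}(3)), it follows that $f$ is $\Dcal$-prim.

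It remains to produce an equivalence $\delta_f \simeq \Ocal_Y$. Writing $g_i \colon Y_i \to Y$ for the pullbacks of the $h_i$, Corollary \ref{cor:prim and suave map D-local}(2) combined with Lemma \ref{lem:p is prim} gives canonical equivalences $g_i^*\delta_f \simeq \delta_{f_i} \simeq \Ocal_{Y_i}$, so $\delta_f$ is at least locally trivial. To globalize, I would observe that $P$ is stable under pullback, so for every finite tuple of indices the restriction of $f$ over the corresponding fibered product of the $X_i$'s again lies in $P$. The identification $\delta_h \simeq \pi_{2*}\Delta_!\Ocal \simeq \pi_{2*}\Delta_*\Ocal \simeq \Ocal$ established in the proof of Lemma \ref{lem:p is prim} is natural in $h \in P$ via the canonical $\Delta_! \simeq \Delta_*$ of Remark \ref{rem:identification}. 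Hence the local equivalences $g_i^*\delta_f \simeq \Ocal_{Y_i}$ fit into a coherent descent datum along the \v{C}ech nerve of $\{g_i\colon Y_i \to Y\}_i$, which is itself a universal $\Dcal^*$-cover, and assemble to a global equivalence $\delta_f \simeq \Ocal_Y$.

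The main obstacle is exactly this last compatibility check. It is the same mechanism by which Lemma \ref{lem:weak proper D-local} produces its natural equivalence $f_! \simeq f_*$ from the canonical identifications $(f_i)_! \simeq (f_i)_*$ for $f_i \in P$; indeed, a more polished argument would likely invoke that lemma together with the characterization $f_!(\delta_f \otimes -) \simeq f_*(-)$ of Corollary \ref{cor:prim map characterization} to extract a natural self-equivalence of $f_!$, from which the invertibility of $\delta_f$ (which is automatic since $\delta_f$ is locally trivial and invertibility is universal $\Dcal^*$-local) forces $\delta_f \simeq \Ocal_Y$ directly.
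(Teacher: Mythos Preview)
Your first step, establishing $\Dcal$-primness via Lemma~\ref{lem:p is prim} and Corollary~\ref{cor:prim and suave map D-local}(3), matches the paper exactly. For the trivialization of $\delta_f$, however, the paper takes a shorter route than either of your two suggestions. Rather than gluing the local equivalences $g_i^*\delta_f \simeq \Ocal_{Y_i}$, the paper observes that the \emph{diagonal} $\Delta\colon Y\to Y\times_X Y$ is itself, universal $\Dcal^*$-locally on $X$, representable by a morphism in $P$ (since if $f_i\in P$ then $\Delta_i\colon Y_i\to Y_i\times_{X_i}Y_i$ is in $P$). Lemma~\ref{lem:weak proper D-local} applied to $\Delta$ then yields a global equivalence $\Delta_!\simeq\Delta_*$, and one simply computes $\delta_f\simeq\pi_{2*}\Delta_!\Ocal_Y\simeq\pi_{2*}\Delta_*\Ocal_Y\simeq\Ocal_Y$ using Corollary~\ref{cor:prim map characterization}. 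This completely bypasses the coherence verification you flag as ``the main obstacle'': the descent is already packaged inside Lemma~\ref{lem:weak proper D-local}, but applied to $\Delta$ rather than to $f$. Your descent argument is not wrong, but it duplicates work that the formula $\delta_f=\pi_{2*}\Delta_!\Ocal_Y$ does for free.

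Your alternative in the final paragraph, on the other hand, has a genuine gap: from $f_!(\delta_f\otimes-)\simeq f_*\simeq f_!$ you obtain a natural equivalence $f_!(\delta_f\otimes-)\simeq f_!(-)$, but $f_!$ need not be conservative (nor reflect equivalences between tensoring-by-invertible endofunctors), so this does not by itself force $\delta_f\simeq\Ocal_Y$.
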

\begin{proof}
	By Corollary \ref{cor:prim and suave map D-local} and Lemma \ref{lem:p is prim}, the morphism $f$ is $\Dcal$-prim.
	Since the diagonal $\Delta \colon Y\to Y\times_X Y$ is, universally $\Dcal^*$-locally on $Y\times_X Y$, also representable by a morphism in $P$, there is an equivalence $\Delta_*\simeq \Delta_!$ by Lemma \ref{lem:weak proper D-local}.
	Therefore, we get an equivalence $\delta_f\simeq \Ocal_Y$ by Corollary \ref{cor:prim map characterization}.
\end{proof}

\subsection{Examples}

In this subsection, we examine the relationship between the notions of morphisms of analytic adic spaces and the notions of morphisms defined via the 6-functor formalism on solid $\Dcal$-stacks.

Let $\AnAdic$ denote the category of analytic adic spaces (in Huber's sense).
Then, we can construct a natural functor 
$$\AnAdic \to \Shv_{\Dcal}(\Aff_{Z_{\square}});\; X\mapsto X_{\square}$$
as follows:
Let $\AffAnAdic$ be the site of analytic affinoid adic spaces with the analytic topology.
Then we have a natural fully faithful functor $\AnAdic \to \Shv(\AffAnAdic)$, where $\Shv(\AffAnAdic)$ is the $\infty$-category of sheaves of anima on $\AffAnAdic$.
The functor $\AffAnAdic \to \Aff_{Z_{\square}} ;\; \Spa(A,A^+)\mapsto \AnSpec(A,A^+)_{\square}$ is a continuous functor. 
In fact, for an open cover $\{\Spa(A_i,A_i^+)\to \Spa(A,A^+) \}_i$, $\{\AnSpec(A_i,A_i^+)_{\square}\to \AnSpec(A,A^+)_{\square}\}_i$ is a $\Dcal$-cover by \cite[Lemma 3.2.9]{RC24}, and for every morphism $\Spa(B,B^+) \to \Spa(A,A^+)$, the natural morphism
$$\AnSpec(B_i,B_i^+)_{\square} \to \AnSpec(B,B^+)_{\square}\times_{\AnSpec(A,A^+)_{\square}}\AnSpec(A_i,A_i^+)_{\square}$$
is an equivalence by \cite[Proposition 4.12]{And21}, where $$\Spa(B_i,B_i^+)=\Spa(B,B^+)\times_{\Spa(A,A^+)}\Spa(A_i,A_i^+).$$
Therefore, we get a colimit-preserving functor 
$$\Shv(\AffAnAdic) \to \Shv_{\Dcal}(\Aff_{Z_{\square}});\; \Spa(A,A^+)\mapsto \AnSpec(A,A^+)_{\square}.$$
By composing with the functor $\AnAdic \to \Shv(\AffAnAdic)$, we obtain a functor $\AnAdic \to \Shv_{\Dcal}(\Aff_{Z_{\square}});\; X\mapsto X_{\square}$.

\begin{remark}\label{rem:explicit description}
	For an analytic adic space $X$, we have an explicit description 
	$$X_{\square}=\varinjlim_{\Spa(A,A^+)\hookrightarrow X} \AnSpec(A,A^+)_{\square},$$
	where the colimit is taken over all open immersions $\Spa(A,A^+)\hookrightarrow X$.
\end{remark}

\begin{remark}
	The author does not know whether the above functor is fully faithful. The restriction to the full subcategory of affinoid analytic adic spaces is fully faithful by \cite[Proposition 3.34]{And21}. However, it seems difficult to extend it. One of the difficulties is that ``open covers of $\AnSpec(A,A^+)_{\square}$ in $\Shv_{\Dcal}(\Aff_{Z_{\square}})$'' cannot necessarily be refined by affinoid open covers of $\Spa(A,A^+)$.
\end{remark}

We note that the functor $X\mapsto X_{\square}$ does not necessarily preserve fiber products.
However, we have the following lemma.

\begin{lemma}\label{lem:fiber product commute}
	Let $X\to S \leftarrow Y$ be a diagram in $\AnAdic$.
	We assume that for any point $x \in X \times_S Y$ and an open neighborhood $U\subset X \times_S Y$ of $x$, there exists an affinoid open neighborhood $\Spa(A,A^+)\times_{\Spa(R,R^+)}\Spa(B,B^+)\subset U$ of $x$ such that the derived tensor product $(A,A^+)_{\square}\otimes_{(R,R^+)_{\square}}(B,B^+)_{\square}$ is static.
	Then the natural morphism $(X \times_S Y)_{\square}\to X_{\square}\times_{S_{\square}} Y_{\square}$ is an equivalence.
\end{lemma}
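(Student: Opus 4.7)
The plan is to check the equivalence by passing to a basis of affinoid opens on $X \times_S Y$ and using universality of colimits in the $\infty$-topos $\Shv_{\Dcal}(\Aff_{\Zbb_{\square}})$, combined with the staticness hypothesis to identify the local fiber products with affinoid adic fiber products.

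First, I would compute the right-hand side as a colimit. By Remark~\ref{rem:explicit description} applied to $X$, $Y$, $S$, and using that colimits in the $\infty$-topos $\Shv_{\Dcal}(\Aff_{\Zbb_{\square}})$ are universal (so that $(-)\times_{S_{\square}}(-)$ distributes over the colimits defining $X_{\square}$ and $Y_{\square}$, and $S_{\square}$ may itself be replaced by its affinoid pieces when the maps from $\Spa(A,A^+)$ and $\Spa(B,B^+)$ factor through them), one writes
\begin{equation*}
X_{\square}\times_{S_{\square}} Y_{\square} \;\simeq\; \varinjlim_{(A,B,R)} \AnSpec(A,A^+)_{\square}\times_{\AnSpec(R,R^+)_{\square}}\AnSpec(B,B^+)_{\square},
\end{equation*}
where the colimit runs over affinoid opens $\Spa(A,A^+)\hookrightarrow X$, $\Spa(B,B^+)\hookrightarrow Y$ and $\Spa(R,R^+)\hookrightarrow S$ such that $\Spa(A,A^+)\to S$ and $\Spa(B,B^+)\to S$ both factor through $\Spa(R,R^+)$. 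That affinoid open covers of $S$ induce $\Dcal$-covers (recalled in the construction of $X\mapsto X_{\square}$) makes the factorization step legitimate after $\Dcal$-descent.

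Next, for each such triple, the key identification recalled from \cite[Proposition 4.12]{And21} gives
\begin{equation*}
\AnSpec(A,A^+)_{\square}\times_{\AnSpec(R,R^+)_{\square}}\AnSpec(B,B^+)_{\square} \;\simeq\; \AnSpec\bigl((A,A^+)_{\square}\otimes_{(R,R^+)_{\square}}(B,B^+)_{\square}\bigr).
\end{equation*}
When the derived tensor product on the right is static, it coincides with the completed tensor product $A\hotimes_R B$ equipped with its natural integral subring; this affinoid animated ring represents the affinoid adic space $V_{A,B}:=\Spa(A,A^+)\times_{\Spa(R,R^+)}\Spa(B,B^+)$, viewed as an affinoid open subspace of $X\times_S Y$. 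Hence for such ``good'' triples one has $(V_{A,B})_{\square}\simeq \AnSpec(A,A^+)_{\square}\times_{\AnSpec(R,R^+)_{\square}}\AnSpec(B,B^+)_{\square}$.

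By the hypothesis of the lemma, the good $V_{A,B}$'s form a basis of the topology of $X\times_S Y$, hence a cofinal subsystem of the diagram of affinoid opens appearing in Remark~\ref{rem:explicit description} for $(X\times_S Y)_{\square}$. Restricting both colimit descriptions to this cofinal subsystem and comparing term-by-term yields
\begin{equation*}
(X\times_S Y)_{\square} \;\simeq\; \varinjlim_{(A,B,R)\text{ good}} (V_{A,B})_{\square} \;\simeq\; \varinjlim_{(A,B,R)\text{ good}} \AnSpec(A,A^+)_{\square}\times_{\AnSpec(R,R^+)_{\square}}\AnSpec(B,B^+)_{\square} \;\simeq\; X_{\square}\times_{S_{\square}} Y_{\square}.
\end{equation*}
The main technical obstacle is the cofinality/descent argument in the first and last steps: verifying that restricting to good triples legitimately computes both sides, and that the distribution of $\times_{S_{\square}}$ past the colimits defining $X_{\square}$, $Y_{\square}$, $S_{\square}$ really lands in a colimit indexed by compatible triples. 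This is a formal but slightly delicate manipulation in the $\infty$-topos; no further analytic input beyond the hypothesis and the Andreychev identification is required.
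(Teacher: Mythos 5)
Your proposal is correct and is essentially the argument the paper has in mind: the paper's entire proof is the one-line assertion that the lemma ``easily follows from Remark \ref{rem:explicit description}'', and your plan simply fills in the details of that remark — writing both sides as colimits of affinoid pieces, using universality of colimits in the $\infty$-topos to distribute the fiber product, invoking the Andreychev identification together with the staticness hypothesis on each affinoid piece, and concluding by cofinality of the good affinoid opens. The cofinality/refinement bookkeeping you flag as the main technical point is indeed the only content beyond the remark, and your treatment of it is adequate.
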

\begin{proof}
It easily follows from Remark \ref{rem:explicit description}.
\end{proof}

\begin{example}
	Let $X\to S \leftarrow Y$ be a diagram in $\AnAdic$ such that $Y\to S$ is an open immersion.
	Then the natural morphism $(X \times_S Y)_{\square}\to X_{\square}\times_{S_{\square}} Y_{\square}$ is an equivalence.
\end{example}

\begin{lemma}\label{lem:open cov D-cov}
	Let $X$ be an analytic adic space, and $\{X_i\}_i$ be an open cover of $X$ by affinoid adic spaces.
	Then $\{X_{i\square} \to X_{\square}\}_i$ is a universal $\Dcal^*$-cover.
\end{lemma}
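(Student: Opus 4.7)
The plan is a two-step reduction: first to the case where $X$ is affinoid, then to a finite rational open cover of that affinoid, where $\Dcal^*$-descent is encoded in the analytic ring structure on $(A,A^+)_\square$.

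To carry out the first step, I would invoke Remark \ref{rem:explicit description} to write
\[
X_\square \simeq \varinjlim_{\Spa(A,A^+) \hookrightarrow X} \AnSpec(A,A^+)_\square
\]
as a colimit in $\Shv_{\Dcal}(\Aff_{\Zbb_\square})$ over all affinoid open immersions $\Spa(A,A^+)\hookrightarrow X$. This colimit presentation exhibits the family of such maps as a canonical cover of $X_\square$, hence by Remark \ref{rem:canonical cover *-cover} as a universal $\Dcal^*$-cover. Therefore, to show that the sieve $\Ucal$ generated by $\{X_{i\square}\to X_\square\}_i$ is a universal $\Dcal^*$-cover, it suffices (by local-character of universal $\Dcal^*$-covers along this canonical cover) to pull back to each $\AnSpec(A,A^+)_\square$ and check that the pulled-back sieve, which is generated by the open cover $\{X_i\cap \Spa(A,A^+)\}_i$ of $\Spa(A,A^+)$, is a universal $\Dcal^*$-cover. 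Thus one may assume $X=\Spa(A,A^+)$ is affinoid.

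For affinoid $X$, I would refine the given open cover $\{X_i\}_i$ by a finite rational open cover $\{U_1,\dots,U_n\}$ of $\Spa(A,A^+)$ using quasi-compactness of $X$. A refinement of a family generates a subsieve of $\Ucal$; since universal $\Dcal^*$-covers form a Grothendieck topology (so any sieve containing a covering sieve is itself covering), it suffices to show $\{U_{k\square}\to X_\square\}_k$ generates a universal $\Dcal^*$-cover. Each $U_{k\square}\to X_\square$ is an open immersion in the associated locale by the example following Definition \ref{def:open locale}, and the family forms a $\Dcal$-cover of $\AnSpec(A,A^+)_\square$ by the construction of the analytic ring structure on $(A,A^+)_\square$, cf.\ \cite[Lemma 3.2.9]{RC24}; hence it is a universal $\Dcal^*$-cover by Definition \ref{defn:solid space *-cover !-cover}(3). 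The main obstacle is precisely this last step: making rigorous the statement that finite rational open covers of an affinoid adic space become $\Dcal$-covers of the associated solid affinoid space requires unpacking the \v{C}ech-descent property built into the analytic ring structure on $(A,A^+)_\square$.
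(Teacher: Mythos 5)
Your argument is correct, but it is a longer route to a fact the paper gets in one line. The paper's proof observes that $\{X_{i\square}\to X_\square\}_i$ is \emph{already} a canonical cover in $\Shv_{\Dcal}(\Aff_{\Zbb_\square})$ by construction: the functor $X\mapsto X_\square$ factors through the colimit-preserving functor $\Shv(\AffAnAdic)\to\Shv_{\Dcal}(\Aff_{\Zbb_\square})$ induced by a continuous functor of sites, which sends the analytic cover $\{X_i\to X\}$ to an effective epimorphism; Remark \ref{rem:canonical cover *-cover} then applies directly. Your two-step reduction (first to the cover by all affinoid opens via Remark \ref{rem:explicit description}, then to a finite rational cover of a single affinoid) essentially re-derives this canonical-cover statement by hand, invoking Remark \ref{rem:canonical cover *-cover} for a different, larger cover and then using that universal $\Dcal^*$-covers form a Grothendieck topology (refinement, locality, and stability of covering sieves under enlargement), which is standard but which you should cite from \cite{HM24} rather than leave implicit. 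The step you flag at the end as "the main obstacle" is not actually an obstacle: the statement that an affinoid open cover of $\Spa(A,A^+)$ induces a $\Dcal$-cover of $\AnSpec(A,A^+)_\square$ is precisely \cite[Lemma 3.2.9]{RC24}, which the paper already invokes in the construction of the functor $X\mapsto X_\square$ (it is the continuity of $\AffAnAdic\to\Aff_{\Zbb_\square}$), so there is nothing left to unpack there. In short: both proofs rest on the same two inputs, namely \cite[Lemma 3.2.9]{RC24} and the limit-preservation of $\Dcal^*$ on $\Shv_{\Dcal}(\Aff_{\Zbb_\square})$; the paper packages them once and for all in the construction of $(-)_\square$, whereas you unpack them again at the point of use.
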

\begin{proof}
	By construction, $\{X_{i\square} \to X_{\square}\}_i$ is a canonical cover in $\Shv_{\Dcal}(\Aff_{Z_{\square}})$, so it is a universal $\Dcal^*$-cover by Remark \ref{rem:canonical cover *-cover}.
\end{proof}

\begin{lemma} [{\cite[Lemma 3.2.9]{RC24}}]\label{lem:open imm etale}
Let $X$ be an analytic adic space, and $U\subset X$ be an open subspace of $X$.
Then the inclusion $U_{\square}\hookrightarrow X_{\square}$ is $\Dcal$-\'{e}tale.
\end{lemma}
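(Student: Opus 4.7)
The plan is to verify the two conditions of Definition \ref{def:etale}: namely that $f\colon U_\square \to X_\square$ is a monomorphism (so its diagonal is an equivalence and hence trivially $\Dcal$-\'etale) and that $f$ is $\Dcal$-suave. Since monomorphisms are $(-1)$-truncated, this is enough.

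First, $f$ is a monomorphism. Since $U\hookrightarrow X$ is an open immersion in $\AnAdic$, we have $U\times_X U = U$. The condition of Lemma \ref{lem:fiber product commute} is satisfied for fiber products along open immersions: rational subspaces of affinoid analytic adic spaces become open immersions in the associated locale in $\Aff_{\Zbb_\square}$ (see the discussion following Definition \ref{def:open locale}), which in particular implies flatness of the corresponding solid rings, so the relevant derived tensor products are static. Thus $U_\square\times_{X_\square}U_\square \simeq (U\times_X U)_\square = U_\square$, and $f$ is a monomorphism.

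Next I show $f$ is $\Dcal$-smooth (in particular $\Dcal$-suave). By Corollary \ref{cor:prim and suave map D-local}, $\Dcal$-smoothness can be verified after base change along a universal $\Dcal^*$-cover of the target. Covering $X$ by affinoid opens $\{X_i\}$ yields a universal $\Dcal^*$-cover of $X_\square$ by Lemma \ref{lem:open cov D-cov}, and the pullback of $f$ along $X_{i\square}\to X_\square$ is $(U\cap X_i)_\square\to X_{i\square}$ by Lemma \ref{lem:fiber product commute}. So we reduce to the case $X = \Spa(A,A^+)$ affinoid. Then $U$ can be covered by rational opens $\{V_j\subset X\}$ contained in $U$; each $V_{j\square}\to X_\square$ is an open immersion in the associated locale, hence $\Dcal$-\'etale (and in particular $\Dcal$-smooth) by Lemma \ref{lem:open etale}. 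Since $V_j\cap U = V_j$, the morphism $V_{j\square}\to U_\square$ is obtained as the base change of $V_{j\square}\to X_\square$ along $f$ (once again via Lemma \ref{lem:fiber product commute}), so it is also $\Dcal$-\'etale by Corollary \ref{cor:prim and suave map D-local}(2). By Lemma \ref{lem:open cov D-cov}, $\{V_{j\square}\to U_\square\}$ is a universal $\Dcal^*$-cover. Both the cover maps $V_{j\square}\to U_\square$ and their compositions $V_{j\square}\to X_\square$ are $\Dcal$-smooth, so Lemma \ref{lem:suave suave-local}(1) concludes that $f$ is $\Dcal$-smooth, thus $\Dcal$-suave.

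The main technical point is ensuring Lemma \ref{lem:fiber product commute} applies in the three places above; this comes down to the staticness of derived tensor products for open immersions in the associated locale, which is built into the fact that such morphisms are $\Dcal$-\'etale (Lemma \ref{lem:open etale}). Granted this, the definition of $\Dcal$-\'etale is satisfied for $f$ by combining the monomorphism property with $\Dcal$-suaveness.
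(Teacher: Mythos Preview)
Your proof is correct and follows the same overall strategy as the paper: reduce to affinoid $X$ via Lemma \ref{lem:open cov D-cov}, cover $U$ by affinoid opens whose inclusions into $X_\square$ are $\Dcal$-\'etale by Lemma \ref{lem:open etale}, and conclude with Lemma \ref{lem:suave suave-local}. The one difference is in verifying that the cover maps $V_{j\square}\to U_\square$ are $\Dcal$-suave: you obtain this directly by base change from $V_{j\square}\to X_\square$ along $f$ (using that $(-)_\square$ preserves fiber products along open immersions, i.e.\ the Example after Lemma \ref{lem:fiber product commute}), whereas the paper instead performs a second $\Dcal^*$-local reduction to affinoid opens $V\subset U$ and then appeals to Lemma \ref{lem:open etale} for $(U_j\cap V)_\square\to V_\square$. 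Your route is slightly more direct; note however that the justification you give for Lemma \ref{lem:fiber product commute} (``built into $\Dcal$-\'etaleness'') is imprecise---the relevant input is really the staticness of rational localizations from \cite[Proposition 4.12]{And21}, which is what underlies the Example you are using.
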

\begin{proof}
	Since the diagonal morphism of $U_{\square}\hookrightarrow X_{\square}$ is an equivalence, it suffices to show that $U_{\square} \hookrightarrow X_{\square}$ is $\Dcal$-suave.
	By Corollary \ref{cor:prim and suave map D-local} and Lemma \ref{lem:open cov D-cov}, we may assume that $X$ is an affinoid analytic adic space.
	Next, we take an affinoid open cover $\{U_j\}$ of $U$.
	By \cite[Lemma 3.2.9]{RC24}, the inclusion $U_{j\square}\hookrightarrow X_{\square}$ is $\Dcal$-\'{e}tale.
	Moreover, $\{U_{j\square} \to U_{\square}\}_i$ is a universal $\Dcal^*$-cover by Lemma \ref{lem:open cov D-cov}.
	Therefore, by Lemma \ref{lem:suave suave-local}, it suffices to show that $U_{j\square} \to U_{\square}$ is $\Dcal$-suave.
	By the same reason as above, it is enough to show that for an affinoid open subspace $V\subset U$, $(U_j\times_U V)_{\square} \to V_{\square}$ is $\Dcal$-suave.
	Since $U_j\times_U V \cong U_j\times_X V$ is an affinoid analytic adic space, $(U_j\times_U V)_{\square} \to V_{\square}$ is $\Dcal$-suave by \cite[Lemma 3.2.9]{RC24}.
\end{proof}	

Next, we consider \'{e}tale morphisms of locally noetherian adic spaces, where locally noetherian adic spaces are adic spaces covered by analytic affinoid adic spaces $\Spa(A,A^+)$ such that $A$ is a strongly noetherian complete Tate algebra.

\begin{proposition}\label{prop:etale static}
	Let $X=\Spa(A,A^+)$ be an analytic affinoid adic space such that $A$ is a strongly noetherian complete Tate algebra, and $B$ be a Tate algebra of the form $$B=A\langle T_1,\ldots,T_n\rangle/(f_1,\ldots,f_n).$$
	We assume that the matrix $(\partial f_i/\partial T_j)_{ij}$ is invertible in $M_n(B)$.
	Then the natural morphism 
	$$\Kos(A\langle T_1,\ldots,T_n\rangle; f_1,\ldots,f_n)\to B$$
	is an equivalence in $\Dcal(\underline{A\langle T_1,\ldots,T_n\rangle})$, where $\underline{A\langle T_1,\ldots,T_n\rangle}$ is the condensed ring associated to $A\langle T_1,\ldots,T_n\rangle$.
\end{proposition}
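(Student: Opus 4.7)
The plan is to reduce the condensed statement to a classical commutative-algebra statement about Koszul regularity in the noetherian Tate algebra $R := A\langle T_1,\ldots,T_n\rangle$, and then to apply the Jacobian criterion. Since $A$ is strongly noetherian, $R$ is noetherian, so the ideal $I := (f_1,\ldots,f_n)$ is closed and $B$ agrees with the algebraic quotient $R/I$. Both the Koszul complex $K := \Kos(R; f_1,\ldots,f_n)$ and $B$ are complexes of finitely generated $R$-modules carrying their canonical Tate/Banach $R$-module structures; these are precisely the complexes appearing in the statement once passed to $\Dcal(\underline{R})$, and the augmentation $K \to B$ is the quotient map in lowest degree.

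The first step is to reduce to classical derived categories. For a finitely generated module $M$ over the noetherian Tate ring $R$, the assignment $M \mapsto \underline{M}$ (equipping $M$ with its canonical Tate topology and passing to the associated condensed $\underline{R}$-module) is functorial in the abstract $R$-module structure and exact on finitely generated $R$-modules; this rests on uniqueness of topology and the open-mapping theorem for noetherian Tate rings. Consequently, $K \to B$ is an equivalence in $\Dcal(\underline{R})$ if and only if it is a quasi-isomorphism of complexes of abstract $R$-modules, equivalently if $f_1,\ldots,f_n$ is a Koszul-regular sequence in $R$.

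The second step is Koszul regularity. Since $K$ consists of free modules, its cohomology commutes with localization, so regularity may be checked at each prime $\pfrak \subset R$. For $\pfrak \not\supset I$, some element of $I$ is a unit in $R_\pfrak$, yielding a contracting homotopy on $K \otimes_R R_\pfrak$. For $\pfrak \supset I$, the conormal exact sequence
\[
I/I^2 \longrightarrow \Omega^1_{R/A}\otimes_R B \longrightarrow \Omega^1_{B/A} \longrightarrow 0,
\]
combined with $\Omega^1_{R/A} = \bigoplus_i R\, dT_i$ and invertibility of $(\partial f_i/\partial T_j)$ in $B$, shows that $\Omega^1_{B/A} = 0$ and that $I/I^2$ is free of rank $n$ over $B$ with basis $\bar f_1,\ldots,\bar f_n$. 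Passing to the noetherian local ring $R_\pfrak$ and invoking the standard local-complete-intersection criterion (e.g.\ Matsumura, Theorem 16.8: an ideal in a noetherian local ring generated by $n$ elements whose residues form a basis of $I/I^2$ is generated by a regular sequence, namely those same generators), we conclude that $f_1,\ldots,f_n$ is regular in $R_\pfrak$, hence Koszul-regular. Combining the two cases, $K \to B$ is a classical quasi-isomorphism.

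The main obstacle is the first reduction step: one must verify that passage from finitely generated $R$-modules to condensed $\underline{R}$-modules is exact and detects quasi-isomorphisms. This is precisely where the strong-noetherian hypothesis on $A$ enters decisively, ensuring that every finitely generated $R$-module has a unique compatible Tate-module topology and that $R$-linear maps between such modules are automatically continuous and open. Once this input is granted, the remainder is a routine Jacobian-criterion computation in commutative algebra.
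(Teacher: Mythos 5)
Your first step (reducing the condensed statement to a classical quasi-isomorphism of finitely generated $R$-modules via uniqueness of topologies and the open mapping theorem) is sound and is essentially the same mechanism the paper uses, via \cite[Theorem 3.2]{And21}, to pass from a discrete quasi-isomorphism to an equivalence in $\Dcal((A_i,A_i^+)_\square)$. One should say ``continuous differentials'' rather than $\Omega^1_{R/A}$ in the conormal sequence (the algebraic K\"ahler differentials of $A\langle T_1,\ldots,T_n\rangle$ over $A$ are not free of rank $n$), but that is cosmetic.

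The genuine gap is in your second step. The criterion you invoke --- that in a noetherian local ring an ideal $I$ generated by $n$ elements whose residues form a basis of the free module $I/I^2$ is automatically generated by a regular sequence --- is false. Take $R = k[[x]]/(x^2)$ and $I=(x)$: then $I^2=0$, $I/I^2 \cong k = R/I$ is free of rank $1$ with basis $\bar{x}$, yet $x$ is a zerodivisor. The correct statements in this circle of ideas require strictly more input: either quasi-regularity of the sequence (freeness of $I/I^2$ does not imply that $\mathrm{gr}_I(R)$ is a polynomial ring over $R/I$, which is what Matsumura's Theorem 16.3 actually needs), or, in the Ferrand--Vasconcelos theorem, finiteness of $\mathrm{pd}_R(R/I)$ --- and proving the latter here is essentially equivalent to the Koszul-regularity you are trying to establish, so it cannot be assumed. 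In the scheme-theoretic analogue (``standard smooth'' presentations), Koszul-regularity of $f_1,\ldots,f_n$ is deduced from flatness of $B$ over $A$ together with regularity of the sequence in the fibers, not from the conormal sequence alone; your argument never uses flatness over $A$, which is a sign that it proves too little. This is precisely the nontrivial input the paper outsources: after a rational localization it invokes \cite[Corollary 5.11]{Zav24} to know that the Zariski-closed immersion cut out by $f_1,\ldots,f_n$ is a regular immersion of codimension $n$, and only then runs the Koszul/open-mapping argument and descends along the rational cover. To repair your proof you would need to supply an actual proof of Koszul-regularity (e.g.\ an analytic analogue of the flatness-plus-fibers argument, or Zavyalov's result), not the stated local criterion.
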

\begin{proof}
	Let $B^+$ be the smallest ring of integral elements of $B$ containing the image of $A^+\langle T_1,\ldots,T_n\rangle/(f_1,\ldots,f_n)$.
	By \cite[Corollary 5.11]{Zav24}, there exists a rational open cover $\{\Spa(A_i,A_i^+)\to \Spa(A\langle T_1,\ldots,T_n\rangle,A^+\langle T_1,\ldots,T_n\rangle)\}_{i=1}^m$ such that 
	$$\Spa(B_i,B_i^+)=\Spa(B,B^+)\cap \Spa(A_i,A_i^+) \to \Spa(A_i,A_i^+)$$
	is a regular immersion of pure codimension $n$ for every $i$.
	Since the definition ideal of the Zariski closed immersion $\Spa(B_i,B_i^+)\to \Spa(A_i,A_i^+)$ is generated by the image of $f_1,\ldots,f_n$ in $A_i$, the sequence $f_1,\ldots,f_n\in A_i$ is also Koszul regular, that is, 
	the morphism $\Kos(A_i; f_1,\ldots,f_n)\to B_i$
	is a quasi-isomorphism of the underlying discrete modules.
	This can be proved in the same way as in \cite[page 22]{Tate57}, see also  \cite[066A]{stacks-project}.
	By the open mapping theorem (\cite[Theorem 3.2]{And21}), the morphism 
	$$\Kos(A_i; f_1,\ldots,f_n)\to B_i$$ 
	is also an equivalence in $\Dcal((A_i,A_i^+)_{\square})$.
	Since $B$ is sheafy, the natural morphism 
	$$B\otimes_{(A\langle T_1,\ldots,T_n\rangle,A^+\langle T_1,\ldots,T_n\rangle)_{\square}}(A_i,A_i^+)_{\square} \to B_i$$
	is an equivalence in $\Dcal((A_i,A_i^+)_{\square})$ by \cite[Proposition 4.12 (ii)]{And21}.
	Combining with the above arguments, we find that the morphism 
	$$\Kos(A\langle T_1,\ldots,T_n\rangle; f_1,\ldots,f_n)\to B$$ 
	becomes an equivalence after applying $-\otimes_{(A\langle T_1,\ldots,T_n\rangle,A^+\langle T_1,\ldots,T_n\rangle)_{\square}}(A_i,A_i^+)_{\square}$.
	Therefore, this is already an equivalence by \cite[Proposition 4.12 (v)]{And21}.
\end{proof}

\begin{corollary}\label{cor:etale fiber product}
	Let $X\to S \leftarrow Y$ be a diagram of locally noetherian adic spaces such that $Y\to S$ is \'{e}tale.
	Then the natural morphism $(X \times_S Y)_{\square}\to X_{\square}\times_{S_{\square}} Y_{\square}$ is an equivalence.
\end{corollary}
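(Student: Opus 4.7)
The plan is to verify the criterion of Lemma \ref{lem:fiber product commute}, so given any point $x \in X\times_S Y$, I need to produce an affinoid open neighborhood of the form $\Spa(A,A^+)\times_{\Spa(R,R^+)}\Spa(B,B^+)$ whose associated derived tensor product is static. Let $s\in S$, $x_X \in X$, $x_Y\in Y$ be the images of $x$. By the local structure theorem for \'etale morphisms of locally noetherian analytic adic spaces (as in Huber's framework), after shrinking I can find affinoid open neighborhoods $\Spa(R,R^+)\subset S$ of $s$, $\Spa(A,A^+)\subset X$ of $x_X$ mapping into $\Spa(R,R^+)$, and $\Spa(B,B^+)\subset Y$ of $x_Y$ also mapping into $\Spa(R,R^+)$, such that $A$ and $R$ are strongly noetherian Tate and $B$ has the standard \'etale presentation $B = R\langle T_1,\ldots,T_n\rangle/(f_1,\ldots,f_n)$ with the Jacobian $(\partial f_i/\partial T_j)$ invertible in $M_n(B)$.

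Next I would invoke Proposition \ref{prop:etale static} to resolve $(B,B^+)_{\square}$: the natural map
\[
\Kos\bigl(R\langle T_1,\ldots,T_n\rangle;\, f_1,\ldots,f_n\bigr) \lra (B,B^+)_{\square}
\]
is an equivalence in $\Dcal(\underline{R\langle T_1,\ldots,T_n\rangle})$, hence a fortiori in $\Dcal((R,R^+)_{\square})$. Base changing along $(R,R^+)_{\square}\to (A,A^+)_{\square}$ and using that the solid tensor product intertwines restricted power series, i.e.
\[
(A,A^+)_{\square}\otimes_{(R,R^+)_{\square}} (R\langle T_1,\ldots,T_n\rangle,R^+\langle T_1,\ldots,T_n\rangle)_{\square} \simeq (A\langle T_1,\ldots,T_n\rangle, A^+\langle T_1,\ldots,T_n\rangle)_{\square},
\]
yields
\[
(A,A^+)_{\square}\otimes_{(R,R^+)_{\square}}(B,B^+)_{\square} \;\simeq\; \Kos\bigl(A\langle T_1,\ldots,T_n\rangle;\, f_1,\ldots,f_n\bigr).
\]

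To conclude that this derived tensor product is static, I would apply Proposition \ref{prop:etale static} a second time, now over the base $A$. The images of the $f_i$ in $A\langle T_1,\ldots,T_n\rangle$ still have invertible Jacobian in $M_n(A\langle T_1,\ldots,T_n\rangle/(f_1,\ldots,f_n))$, since the existence of an inverse matrix over $B$ descends to the classical (non-derived) quotient $A\langle T_1,\ldots,T_n\rangle/(f_1,\ldots,f_n) = B\otimes_R A$. The proposition therefore identifies $\Kos(A\langle T_1,\ldots,T_n\rangle; f_1,\ldots,f_n)$ with the static ring $A\langle T_1,\ldots,T_n\rangle/(f_1,\ldots,f_n)$, finishing the verification and hence the corollary via Lemma \ref{lem:fiber product commute}.

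The main obstacle is the clean justification of the tensor-product compatibility $(A,A^+)_{\square}\otimes_{(R,R^+)_{\square}}(R\langle\underline{T}\rangle,R^+\langle\underline{T}\rangle)_{\square}\simeq (A\langle\underline{T}\rangle, A^+\langle\underline{T}\rangle)_{\square}$, which rests on the behaviour of solid completions for rational localizations (as in Andreychev); once this is in hand, the Koszul input of Proposition \ref{prop:etale static} does the real work, both to present $(B,B^+)_{\square}$ and to read off staticness after base change. The other delicate point is invoking the standard-\'etale local form, which is classical in the noetherian analytic adic setting.
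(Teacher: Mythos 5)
Your proposal is correct and follows essentially the same route as the paper: reduce to the affinoid case via Lemma \ref{lem:fiber product commute}, invoke Huber's standard-\'etale local presentation (\cite[Proposition 1.7.1]{Hub96}), and then use the Koszul-resolution statement of Proposition \ref{prop:etale static} twice --- once to present $(B,B^+)_{\square}$ over the base and once to identify the base-changed Koszul complex with the static quotient. The paper compresses these last steps into a one-line appeal to Proposition \ref{prop:etale static}, but the content, including the compatibility of $-\otimes_{\square}-$ with restricted power series from Andreychev, is the same.
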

\begin{proof}
	By Lemma \ref{lem:fiber product commute}, we may assume that $X$, $Y$, and $S$ are affinoid analytic adic spaces.
	We put $S=\Spa(A,A^+)$ and $Y=\Spa(B,B^+)$.
	By Proposition \cite[Proposition 1.7.1]{Hub96}, we may assume that $(B,B^+)$ is of the form 
	$$(A\langle T_1,\ldots,T_n\rangle/(f_1,\ldots,f_n), (A^+\langle T_1,\ldots,T_n\rangle/(f_1,\ldots,f_n))^{\sim})$$
	such that $(\partial f_i/\partial T_j)_{ij}$ is invertible in $M_n(B)$, where $(A^+\langle T_1,\ldots,T_n\rangle/(f_1,\ldots,f_n))^{\sim}$ is the integral closure of the image of $A^+\langle T_1,\ldots,T_n\rangle/(f_1,\ldots,f_n)$.
	Therefore, the claim follows from Proposition \ref{prop:etale static}.
\end{proof}

\begin{corollary}\label{cor:etale D-etale}
Let $f\colon X\to S$ be an \'{e}tale morphism of locally noetherian adic spaces.	
Then $f_{\square}\colon X_{\square}\to S_{\square}$ is $\Dcal$-\'{e}tale.
\end{corollary}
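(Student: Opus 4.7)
The plan is to verify the three defining conditions of $\Dcal$-\'{e}taleness (Definition \ref{def:etale}) for $f_{\square}$: truncatedness, $\Dcal$-suaveness with trivial dualizing complex, and $\Dcal$-\'{e}taleness of the diagonal. Since $f$ is \'{e}tale, it is $0$-truncated as a morphism of adic spaces, and its diagonal $\Delta_f\colon X\to X\times_S X$ is an open immersion (as $f$ is unramified). By Corollary \ref{cor:etale fiber product}, the natural morphism $(X\times_S X)_{\square}\to X_{\square}\times_{S_{\square}} X_{\square}$ is an equivalence, so the diagonal of $f_{\square}$ is identified with $(\Delta_f)_{\square}$, which is $\Dcal$-\'{e}tale by Lemma \ref{lem:open imm etale}. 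This simultaneously settles the diagonal and truncatedness conditions.

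It remains to show that $f_{\square}$ is $\Dcal$-smooth with $\omega_{f_{\square}}\simeq \Ocal_{X_{\square}}$. This property is local on source and target by Corollary \ref{cor:prim and suave map D-local}(3), Lemma \ref{lem:suave suave-local}, and Lemma \ref{lem:open cov D-cov}. By Huber's local structure of \'{e}tale morphisms of locally noetherian adic spaces, we may therefore assume $f$ factors as $X\overset{j}{\hookrightarrow} Y\overset{g}{\to} S$ where $j$ is a rational open immersion and $g\colon Y=\Spa(B,B^+)\to S=\Spa(A,A^+)$ is finite \'{e}tale. The open immersion $j_{\square}$ is $\Dcal$-\'{e}tale by Lemma \ref{lem:open imm etale}, so by Corollary \ref{cor:prim and suave map D-local}(1) we reduce to the finite \'{e}tale case.

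For finite \'{e}tale $g$, $B$ is finite projective over $A$, which implies (after checking that the analytic ring structure on $(B,B^+)_{\square}$ is induced from that on $(A,A^+)_{\square}$) that $g_{\square}\in P$ in the sense of Definition \ref{def:suitable dcp}; hence $g_{\square}$ is weakly $\Dcal$-proper by Lemma \ref{lem:p is prim}, and in particular $g_{\square *}=g_{\square !}$. Perfectness of $B$ over $A$ then yields a right adjoint $g_{\square}^!(M)\simeq \intHom_A(B,M)$ of $g_{\square !}$ satisfying the projection formula $g_{\square}^!(M)\simeq g_{\square}^!\Ocal_{S_{\square}}\otimes g_{\square}^*M$. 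The classical trace-pairing isomorphism $\intHom_A(B,A)\simeq B$ of a finite \'{e}tale extension gives $\omega_{g_{\square}}=g_{\square}^!\Ocal_{S_{\square}}\simeq \Ocal_{Y_{\square}}$, and Corollary \ref{cor:suave map characterization} then confirms $\Dcal$-smoothness. Composing with the open-immersion factor via Corollary \ref{cor:prim and suave map D-local}(1) completes the proof.

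The main technical obstacle is the $\Dcal$-suaveness of the finite \'{e}tale piece $g_{\square}$, namely producing the right adjoint $g_{\square}^!$ with the projection formula in the solid framework; once the classical module-theoretic dualizability of $B$ over $A$ is transferred to the solid setting, the trace pairing delivers the triviality of $\omega_{g_{\square}}$ and the rest is formal.
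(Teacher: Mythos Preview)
Your proposal is correct and takes a genuinely different route from the paper. After the common reduction to the affinoid case, the paper uses Huber's standard \'etale presentation $B=A\langle T_1,\ldots,T_n\rangle/(f_1,\ldots,f_n)$ from \cite[Proposition~1.7.1]{Hub96} together with Proposition~\ref{prop:etale static} to recognize $f_{\square}$ as standard solid \'etale in the sense of \cite[Definition~3.5.5(2)]{RC24}, and then imports $\Dcal$-suaveness from \cite[Proposition~3.6.13]{RC24}. You instead invoke the local factorization of an \'etale map as an open immersion into a finite \'etale map (this is \cite[Lemma~2.2.8]{Hub96}, which you should cite explicitly rather than ``Huber's local structure''), and handle the finite \'etale piece by bare hands using dualizability of $B$ over $A$ and the trace pairing. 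Your approach is more self-contained and makes the finite \'etale case transparent; the paper's approach leverages the existing machinery of \cite{RC24} and sidesteps any direct verification of the suaveness criterion.

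One point in your argument deserves more care. You write that the projection formula $g_{\square}^!(M)\simeq g_{\square}^!\Ocal_{S_{\square}}\otimes g_{\square}^*M$ together with the trace isomorphism $g_{\square}^!\Ocal_{S_{\square}}\simeq\Ocal_{Y_{\square}}$ allows Corollary~\ref{cor:suave map characterization} to ``confirm $\Dcal$-smoothness''. But that corollary characterizes suaveness by the condition that $\pi_1^*g^!\mathbf{1}_S\to\pi_2^!\mathbf{1}_Y$ is an equivalence; the projection formula you state is recorded there as a \emph{consequence} of suaveness, not as a criterion for it. What you actually need is that formation of $\intHom_A(B,A)$ commutes with the base change $A\to B$, which holds precisely because $B$ is finite projective over $A$; this is exactly the $\pi_1^*/\pi_2^!$ condition. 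The mathematics is correct, but you should verify the criterion rather than the consequence.
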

\begin{proof}
	Since the diagonal morphism $X\to X\times_S X$ is an open immersion, the diagonal morphism $X_{\square} \to X_{\square}\times_{S_{\square}} X_{\square}$ is $\Dcal$-\'{e}tale by Lemma \ref{lem:open imm etale} and Corollary \ref{cor:etale fiber product}.
	Therefore, it is enough to show that $f_{\square}\colon X_{\square}\to S_{\square}$ is $\Dcal$-suave. 
	By the same argument as in Lemma \ref{lem:open imm etale}, we may assume that $X$ and $S$ are affinoid analytic adic spaces.
	By \cite[Proposition 1.7,1]{Hub96} and Proposition \ref{prop:etale static}, we may assume that $X_{\square}\to S_{\square}$ is standard solid \'{e}tale in the sense of \cite[Definition 3.5.5 (2)]{RC24}.
	Therefore, it is $\Dcal$-suave by \cite[Proposition 3.6.13]{RC24}.
\end{proof}

\begin{corollary}\label{cor:smooth fiber product}
	Let $X\to S \leftarrow Y$ be a diagram of locally noetherian adic spaces such that $Y\to S$ is smooth.
	Then the natural morphism $(X \times_S Y)_{\square}\to X_{\square}\times_{S_{\square}} Y_{\square}$ is an equivalence.
\end{corollary}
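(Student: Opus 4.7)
The plan mirrors the proof of Corollary \ref{cor:etale fiber product}: by Lemma \ref{lem:fiber product commute}, it suffices to work affinoid-locally and to show that for a morphism $(R,R^+)\to (A,A^+)$ of affinoid locally noetherian adic pairs and a smooth $Y=\Spa(B,B^+)\to S=\Spa(R,R^+)$, the derived solid tensor product
\begin{equation*}
(A,A^+)_{\square}\otimes_{(R,R^+)_{\square}}(B,B^+)_{\square}
\end{equation*}
is static and agrees with the affinoid ring of the fiber product. I would then reduce this to two separate cases via the local structure theorem for smooth morphisms of locally noetherian adic spaces (the smooth analogue of \cite[Proposition 1.7.1]{Hub96}): after shrinking $Y$, the map $Y\to S$ factors as $Y\overset{g}{\to} Z\overset{h}{\to} S$, where $Z=\Spa(R\langle T_1,\ldots,T_k\rangle,R^+\langle T_1,\ldots,T_k\rangle)$ is a relative polydisc and $g$ is \'etale.

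Using the identification $X\times_S Y\cong (X\times_S Z)\times_Z Y$, Corollary \ref{cor:etale fiber product} applied to the diagram $X\times_S Z\to Z\leftarrow Y$ reduces us to proving the polydisc case: I need to show that the natural morphism $(X\times_S Z)_{\square}\to X_{\square}\times_{S_{\square}} Z_{\square}$ is an equivalence. Concretely, this amounts to verifying the equivalence
\begin{equation*}
R\langle T_1,\ldots,T_k\rangle \otimes_{(R,R^+)_{\square}} (A,A^+)_{\square} \simeq A\langle T_1,\ldots,T_k\rangle
\end{equation*}
in $\Dcal((A,A^+)_{\square})$.

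For this, I would first treat the integral level: the topological ring $R^+\langle T_1,\ldots,T_k\rangle$ identifies with the solid product $\prod_{\Nbb^k} R^+$ as an $R^+_{\square}$-module, so it is solid-flat, and
\begin{equation*}
R^+\langle T_1,\ldots,T_k\rangle \otimes_{R^+_{\square}} A^+_{\square} \simeq \textstyle\prod_{\Nbb^k} A^+ = A^+\langle T_1,\ldots,T_k\rangle
\end{equation*}
is static. Inverting a pseudo-uniformizer $\pi$ of $R$ (which becomes one of $A$ after base change), and noting that $(R,R^+)_{\square}=R^+_{\square}[\pi^{-1}]$ and similarly for $A$, this yields the desired equivalence on the Tate level.

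I expect the main obstacle to be the careful verification of this last polydisc step: one must check that the natural ring of integral elements $R^+\langle T\rangle$ really is the solid product $\prod_{\Nbb^k} R^+$, that this product is solid-flat over $R^+_{\square}$ in the derived sense, and that inverting the pseudo-uniformizer commutes with the derived solid tensor product. Subtleties may appear when $R^+$ is only a bounded ring of integral elements rather than the full power-bounded subring, which may force a further auxiliary choice of $B^+$ as in the proof of Corollary \ref{cor:etale fiber product}; but once the polydisc case is established, the \'etale case via Corollary \ref{cor:etale fiber product} assembles the two factors into the desired equivalence.
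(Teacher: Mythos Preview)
Your proposal follows essentially the same route as the paper: reduce to the affinoid case via Lemma \ref{lem:fiber product commute}, factor $Y\to S$ locally as an \'etale map followed by a relative polydisc (the paper cites \cite[Corollary 1.6.10]{Hub96} for this), apply Corollary \ref{cor:etale fiber product} for the \'etale part, and handle the polydisc separately. The only difference is in the polydisc step: rather than arguing by hand via integral models and solid products as you sketch, the paper invokes \cite[Lemma 4.7]{And21}, which gives directly
\[
(A\langle T_1,\ldots,T_d\rangle, A^+\langle T_1,\ldots,T_d\rangle)_{\square}\simeq (A,A^+)_{\square}\otimes_{\Zbb_{\square}} (\Zbb[T_1,\ldots,T_d],\Zbb[T_1,\ldots,T_d])_{\square}
\]
(and similarly over $(C,C^+)$), whence the claim follows by associativity of the tensor product. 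This absorbs exactly the flatness and compatibility checks you flag as potential obstacles, so your hands-on argument is morally what underlies that lemma, but citing it avoids the bookkeeping you were worried about.
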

\begin{proof}
	By Lemma \ref{lem:fiber product commute}, we may assume that $X$, $Y$, and $S$ are affinoid analytic adic spaces.
	We put $S=\Spa(A,A^+)$, $Y=\Spa(B,B^+)$, and $X=\Spa(C,C^+)$.
	By \cite[Corollary 1.6.10]{Hub96}, we may assume that $Y\to S$ factors as a composition
	$$Y=\Spa(B,B^+)\to \Spa(A\langle T_1,\ldots,T_d\rangle, A^+\langle T_1,\ldots,T_d\rangle)\to \Spa(A,A^+)=S$$
	such that $\Spa(B,B^+)\to \Spa(A\langle T_1,\ldots,T_d\rangle, A^+\langle T_1,\ldots,T_d\rangle)$ is \'{e}tale.
	Therefore, by Corollary \ref{cor:etale fiber product}, we may assume $\Spa(B,B^+)=\Spa(A\langle T_1,\ldots,T_d\rangle, A^+\langle T_1,\ldots,T_d\rangle)$.
	In this case, we have equivalences 
	\begin{align*}
	&(A\langle T_1,\ldots,T_d\rangle, A^+\langle T_1,\ldots,T_d\rangle)_{\square}=(A,A^+)_{\square}\otimes_{\Zbb_{\square}} (\Zbb[T_1,\ldots,T_d],\Zbb[T_1,\ldots,T_d])_{\square},\\
	&(C\langle T_1,\ldots,T_d\rangle, C^+\langle T_1,\ldots,T_d\rangle)_{\square}=(C,C^+)_{\square}\otimes_{\Zbb_{\square}} (\Zbb[T_1,\ldots,T_d],\Zbb[T_1,\ldots,T_d])_{\square}
	\end{align*}
	by \cite[Lemma 4.7]{And21}.
	The claim easily follows from these equivalences.
\end{proof}

\begin{theorem}\label{thm:smooth D-smooth}
	Let $f\colon X\to S$ be a smooth morphism of pure relative dimension $d$ of locally noetherian adic spaces.	
	Then $f_{\square} \colon X_{\square}\to S_{\square}$ is $\Dcal$-smooth, and its dualizing complex $\omega_{f_{\square}}$ is given by $\Omega_{X/S}^{d}[d]$, where we note that an invertible module on $X$ gives an invertible object in $\Dcal(X_{\square})$.
\end{theorem}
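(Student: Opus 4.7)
The plan is to reduce to the standard relative polydisc via a local \'etale factorization and then transport the resulting local description of the dualizing complex into the global one. By Corollary \ref{cor:prim and suave map D-local}, $\Dcal$-smoothness and the formation of the dualizing complex are both $\Dcal^*$-local on the target, and Lemma \ref{lem:open cov D-cov} turns an affinoid open cover of $S$ into a universal $\Dcal^*$-cover of $S_\square$. After replacing $X$ by an affinoid open cover (which similarly gives a universal $\Dcal^*$-cover of $X_\square$, and allows one to check smoothness $\Dcal^*$-locally on the source by Lemma \ref{lem:suave suave-local}), \cite[Corollary 1.6.10]{Hub96} lets us factor $f$ as
\[
	X = \Spa(B,B^+) \xrightarrow{g} \Spa(A\langle T_1,\ldots,T_d\rangle, A^+\langle T_1,\ldots,T_d\rangle) = Y \xrightarrow{h} \Spa(A,A^+) = S
\]
with $g$ \'etale and $h$ the projection from the relative polydisc.

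For the \'etale factor, Corollary \ref{cor:etale D-etale} together with Remark \ref{rem:D-etale D-smooth} yields that $g_\square$ is $\Dcal$-smooth with $\omega_{g_\square} \simeq \Ocal_{X_\square}$. For the polydisc, Corollary \ref{cor:smooth fiber product} gives
\[
	Y_\square \simeq S_\square \times_{\AnSpec \Zbb_\square} \AnSpec(\Zbb[T_1,\ldots,T_d], \Zbb[T_1,\ldots,T_d])_\square,
\]
so by base change (Corollary \ref{cor:prim and suave map D-local}) it suffices to know that the absolute polydisc $\AnSpec(\Zbb[T_1,\ldots,T_d], \Zbb[T_1,\ldots,T_d])_\square \to \AnSpec \Zbb_\square$ is $\Dcal$-smooth with dualizing complex (non-canonically) $\Ocal[d]$; this is a standard input from \cite[Section 3.6]{RC24}, established by iterating the one-variable case. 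Applying Corollary \ref{cor:prim and suave map D-local} to the composition $h_\square \circ g_\square$, we conclude that $f_\square$ is $\Dcal$-smooth with
\[
	\omega_{f_\square} \simeq g_\square^*\omega_{h_\square} \otimes \omega_{g_\square} \simeq \Ocal_{X_\square}[d].
\]

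It remains to upgrade this abstract invertibility to a globally coherent identification with $\Omega^d_{X/S}[d]$. For the polydisc $h$ the sheaf $\Omega^d_{Y/S}$ is canonically trivialized by $dT_1 \wedge \cdots \wedge dT_d$, which provides an identification $\omega_{h_\square} \simeq \Omega^d_{Y/S}[d]$ via the trace map on the relative disc. For the \'etale morphism $g$ one has the canonical isomorphism $g^*\Omega^d_{Y/S} \simeq \Omega^d_{X/S}$, so combining gives $\omega_{f_\square} \simeq \Omega^d_{X/S}[d]$ on each chart. The main obstacle is verifying that these local trivializations glue to a global isomorphism independent of the chosen \'etale factorization; this is analogous to the classical construction of the relative dualizing sheaf on a smooth morphism of schemes and ultimately rests on the compatibility of trace maps under \'etale base change. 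Since both $\omega_{f_\square}$ and $\Omega^d_{X/S}[d]$ are invertible objects of $\Dcal(X_\square)$ whose formation commutes with restriction to open affinoids, it suffices to check the cocycle compatibility on pairwise overlaps, reducing the question to the functoriality of $\Omega^d$ along the \'etale transition morphisms, which is standard.
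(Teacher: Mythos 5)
Your proof that $f_{\square}$ is $\Dcal$-smooth is essentially the paper's own argument: reduce to affinoids by checking $\Dcal^*$-locally on source and target, factor through the relative polydisc via \cite[Corollary 1.6.10]{Hub96}, dispose of the \'etale factor with Corollary \ref{cor:etale D-etale} and Remark \ref{rem:D-etale D-smooth}, and handle the polydisc via Corollary \ref{cor:smooth fiber product}, base change, and \cite[Proposition 3.6.13]{RC24}. That part is correct.

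The identification $\omega_{f_{\square}}\simeq\Omega_{X/S}^{d}[d]$ is where you diverge from the paper, and where there is a genuine gap. The paper produces a \emph{canonical} identification by the deformation to the normal cone, following \cite[Theorem 3.6.15]{RC24}; you instead trivialize $\omega_{h_{\square}}$ on each chart by $dT_1\wedge\cdots\wedge dT_d$ and propose to glue. The trivialization $\omega_{h_{\square}}\simeq\Ocal[d]$ coming from \cite[Proposition 3.6.13]{RC24} is a priori non-canonical, and two charts of $X$ carry two \emph{different} \'etale factorizations, hence two different coordinate systems $T_1,\dots,T_d$ and $T_1',\dots,T_d'$ on the respective polydiscs. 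The transition on an overlap is therefore not "functoriality of $\Omega^d$ along an \'etale morphism over a fixed polydisc": to obtain the cocycle condition you must show that the composite $\Ocal[d]\simeq\omega_{h_{\square}}\simeq\Ocal[d]$ induced by a change of \'etale factorization is multiplication by the Jacobian determinant $\det(\partial T_i'/\partial T_j)$, exactly cancelling the transition function of $\Omega^d_{X/S}$. This Jacobian compatibility is the substantive content of the second half of the theorem --- it is precisely what the deformation-to-the-normal-cone argument (or an explicit trace/residue computation on the disc) supplies --- and it cannot be dismissed as standard. Without it, your local isomorphisms $\phi_i$ need not satisfy $\phi_i\circ\phi_j^{-1}=\id$ on overlaps, and the two invertible objects, though locally isomorphic, could a priori differ by a nontrivial class in $H^1(X,\Ocal_X^{\times})$.
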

\begin{proof}
	First, we prove that $f_{\square}$ is $\Dcal$-smooth.
	By the same argument as in Lemma \ref{lem:open imm etale}, we may assume that $X$ and $S$ are affinoid analytic adic spaces.
	Moreover, by \cite[Corollary 1.6.10]{Hub96}, we may assume that $X\to S$ factors as a composition 
	$$X \to \mathbb{B}^n_S \to S$$
	such that $X \to \mathbb{B}^n_S$ is \'{e}tale, where $\mathbb{B}^n_S=S\times_{\Spa \Zbb} \Spa (\Zbb[T_1,\ldots,T_n],\Zbb[T_1,\ldots,T_n])$.
	By Corollary \ref{cor:etale D-etale}, $X_{\square} \to \mathbb{B}^n_{S\square}$ is $\Dcal$-\'{e}tale and therefore $\Dcal$-smooth (Remark \ref{rem:D-etale D-smooth}), so it is enough to show that 
	$$\mathbb{B}^n_{S\square}\simeq S_{\square} \times_{\AnSpec \Zbb_{\square}} \AnSpec (\Zbb[T_1,\ldots,T_d],\Zbb[T_1,\ldots,T_d])_{\square} \to S_{\square}$$
	is $\Dcal$-smooth.
	It follows from the proof of \cite[Proposition 3.6.13]{RC24}.

	The equivalence $\omega_{f_{\square}}\simeq \Omega_{X/S}^{d}[d]$ follows from the same argument as in \cite[Theorem 3.6.15]{RC24}, in which the dualizing complex is computed by using the deformation to normal cone.
	We note that in our setting, it is enough to consider the deformation to normal cone in $\AnAdic$, that is, it is not necessary to consider the deformation to normal cone in the derived setting.
	We omit the details.
\end{proof}

\section{Dagger groups}
\subsection{Locally analytic representations and $h\dagger$-analytic representations}\label{subsection:la repns}
Let $G$ be a compact $p$-adic Lie group (over $\Qbb_p$) of dimension $d$, and $\gfrak$ be the Lie algebra of $G$.
Let $\Lcal$ be a $\Zbb_p$-lattice of $\gfrak$ such that $[\Lcal,\Lcal]\subset p\Lcal$.
As it is explained in \cite[5.2]{Eme17}, $\Lcal$ defines a rigid analytic group $\Gbb_0$ over $\Qbb_p$ whose underlying adic space is isomorphic to the close unit ball $\Spa(\Qbb_p\langle T_1,\ldots,T_d \rangle,\Zbb_p\langle T_1,\ldots,T_d \rangle)$.
From now on, we fix such a $\Zbb_p$-lattice $\Lcal$.
By shrinking $\Lcal$, we can assume that $\Gbb_0(\Qbb_p)$ is a normal subgroup of $G$.
Moreover, as it is explained in \cite[Example 2.1.1]{RJRC23}, for a rational number $h\geq 0$, we define a rigid analytic subgroup $\Gbb_h \subset \Gbb_0$ which is a rational localization defined by $|T_1|,\ldots,|T_d| \leq |p^h|$.
We also define a rigid analytic subgroup $\mathring{\Gbb}_h$ as $\bigcup_{h^{\prime}>h} \Gbb_{h^{\prime}}$, whose underlying adic space is a Stein space.
Let $\Gbb^h$ (resp. $\Gbb^{h+}$) denote the rigid analytic group $\bigsqcup_{g\in G/\Gbb_h(\Qbb_p)}g \Gbb_h$ (resp. $\bigsqcup_{g\in G/\mathring{\Gbb}_h(\Qbb_p)}g \mathring{\Gbb}_h$), where we note that $\Gbb_h(\Qbb_p)$ (resp. $\mathring{\Gbb}_h(\Qbb_p)$) is a normal subgroup of $G$.

\begin{remark}
	In this section, the index $h$ starts at $0$, but starting from any other integer would also work.
\end{remark}

\begin{definition}
	\begin{enumerate}
		\item Let $C^h(G,\Qbb_p)$ (resp. $C^{h+}(G,\Qbb_p)$) denote the space $\Ocal(\Gbb^h)$ (resp. $\Ocal(\Gbb^{h+})$) of \textit{analytic functions} on $\Gbb^h$ (resp. $\Gbb^{h+}$) with values in $\Qbb_p$.
		Let $C^h(G,\Zbb_p)$ denote the space $\Ocal^+(\Gbb^h)$ of \textit{analytic functions} on $\Gbb^h$ with values in $\Zbb_p$.
		\item For a rational number $h>0$, let $C^{h\dagger}(G,\Qbb_p)\coloneqq \varinjlim_{h>h^{\prime}\geq0}C^{h^{\prime}}(G,\Qbb_p)$ denote the space of \textit{overconvergent analytic functions} on $\Gbb^h$, which is a Hopf algebra over $\Qbb_{p,\square}$.
		\item Let $C^{\la}(G,\Qbb_p)\coloneqq\varinjlim_{h\geq0}C^{h}(G,\Qbb_p)$ denote the space of \textit{locally analytic functions} on $G$, which is a Hopf algebra over $\Qbb_{p,\square}$.
		\item For a rational number $h>0$, we define a dagger group $\Gbb^{h\dagger}$ as a group object $\AnSpec C^{h\dagger}(G,\Qbb_p)_{\square}$ in $\Aff_{\Qbb_{p,\square}}$.
		\item We define a locally analytic group $\Gbb^{\la}$ as a group object $\AnSpec C^{\la}(G,\Qbb_p)_{\square}$ in $\Aff_{\Qbb_{p,\square}}$.
	\end{enumerate}
\end{definition}

\begin{remark}
	\begin{enumerate}
		\item We note that our notation is slightly different from \cite{RJRC23}.
		For example, the space $C^{h+}(G,\Qbb_p)$ is written by $C^h(G,\Qbb_p)$ in loc. cit.
		\item For a rational number $h\geq0$, we have a natural isomorphism 
		$$C^{h+}(G,\Qbb_p)\cong \varprojlim_{h^{\prime}>h}C^{h^{\prime}}(G,\Qbb_p),$$
		where the limit is taken in the $\infty$-category $\Dcal(\Qbb_{p,\square})$. 
		\item For a rational number $h>0$, we have a natural isomorphism $$C^{h\dagger}(G,\Qbb_p)\simeq\varinjlim_{h>h^{\prime}\geq0}C^{h^{\prime}+}(G,\Qbb_p).$$
		Moreover, we have a natural isomorphism $$C^{h\dagger}(G,\Qbb_p)_{\square}\simeq\varinjlim_{h>h^{\prime}\geq0}(C^{h^{\prime}}(G,\Qbb_p),C^{h^{\prime}}(G,\Zbb_p))_{\square}$$ as analytic rings.
		\item We have natural isomorphisms
		$$C^{\la}(G,\Qbb_p)\simeq\varinjlim_{h\geq0}C^{h+}(G,\Qbb_p)\simeq\varinjlim_{h>0}C^{h\dagger}(G,\Qbb_p).$$
	\end{enumerate}
\end{remark}
\begin{remark}
	Let $(H,\Hbb_0)$ be another pair of a compact $p$-adic Lie group $H$ and a rigid analytification $\Hbb_0$ of a compact open normal subgroup of $H$.
	We set $(\Gbb\times \Hbb)_0=\Gbb_0\times \Hbb_0$.
	Then, for $*\in \{h, h+, h\dagger, \la\}$, we have a natural isomorphism $$C^*(G,\Qbb_p)\otimes_{\Qbb_{p,\square}}C^*(H,\Qbb_p)\cong C^*(G\times H,\Qbb_p)$$
	by \cite[Lemma 3.13, Lemma 3.28 (2)]{RJRC22}.
\end{remark}

\begin{lemma}\label{lem:nuclearness}
	Let $*$ be one of $\{h,h+, h\dagger, \la\}$.
	Then $C^*(G,\Qbb_p)$ is nuclear as an object of $\Dcal(\Qbb_{p,\square})$.
	In particular, $\Qbb_{p,\square}\to C^*(G,\Qbb_p)_{\square}$ is steady by \cite[Proposition 13.14]{AG}.
\end{lemma}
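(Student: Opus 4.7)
The strategy I would take is a two-step reduction to a standard fact about Tate algebras. First, since nuclearity in $\Dcal(\Qbb_{p,\square})$ is stable under filtered colimits, and since by construction
$$C^{h\dagger}(G,\Qbb_p) \simeq \varinjlim_{0 \leq h' < h} C^{h'}(G,\Qbb_p), \qquad C^{\la}(G,\Qbb_p) \simeq \varinjlim_{h \geq 0} C^h(G,\Qbb_p),$$
the proof reduces to the Banach case $* = h$. Next, the coset decomposition $\Gbb^h = \bigsqcup_{g \in G/\Gbb_h(\Qbb_p)} g\Gbb_h$ yields an isomorphism
$$C^h(G,\Qbb_p) \cong \bigoplus_{G/\Gbb_h(\Qbb_p)} \Qbb_p\langle p^{-h}T_1,\ldots,p^{-h}T_d\rangle$$
of $\Qbb_{p,\square}$-modules, and since a finite direct sum of nuclear modules is nuclear, it remains only to verify that a $\Qbb_p$-Tate algebra in finitely many variables is nuclear.

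This last point is standard in the Clausen--Scholze framework. One writes $\Qbb_p\langle T_1,\ldots,T_d\rangle \simeq \varinjlim_n p^{-n}\Zbb_p\langle T_1,\ldots,T_d\rangle$, where the unit-ball module $\Zbb_p\langle T_1,\ldots,T_d\rangle$ is a $p$-adically complete compact solid $\Zbb_p$-module of countable orthonormal type; after inverting $p$ all transition maps become trace-class, realizing the colimit as a nuclear $\Qbb_{p,\square}$-module (compare \cite{And21}). The ``In particular'' assertion is then a direct invocation of the already-cited \cite[Proposition 13.14]{AG}, which produces steadiness from nuclearity of the underlying module of an algebra. I do not anticipate any serious obstacle; the whole argument is a short formal reduction to the well-known nuclearity of Tate algebras over $\Qbb_p$, and the main care is merely in citing the appropriate pieces of the Clausen--Scholze analytic geometry formalism.
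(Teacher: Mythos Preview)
Your proposal is correct and follows essentially the same approach as the paper: reduce the $h\dagger$ and $\la$ cases to the $h$ case via filtered colimits, then verify nuclearity of $C^h(G,\Qbb_p)$. The only difference is that the paper handles the base case in one stroke by citing the general fact that any Banach $\Qbb_p$-module is nuclear (\cite[Corollary 3.16]{RJRC22}), whereas you unpack this further via the coset decomposition into Tate algebras; both are fine, but the paper's citation makes the explicit decomposition unnecessary.
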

\begin{proof}
	Both $C^h(G,\Qbb_p)$ and $C^{h+}(G,\Qbb_p)$ are nuclear $\Qbb_{p,\square}$-modules by \cite[Corollary 3.16, Proposition 3.29]{RJRC22}.
	The other modules can be written as colimits of $C^h(G,\Qbb_p)$, so we get the claim.
\end{proof}

\begin{definition}
	For $V\in \Dcal(\Qbb_{p,\square})$, we define the following spaces of functions with values in $V$.
	\begin{enumerate}
		\item The space of \textit{$\Gbb^{h}$-analytic functions}
		$$C^h(G,V)\coloneqq(C^{h}(G,\Qbb_p),C^{h}(G,\Zbb_p))_{\square} \otimes_{\Qbb_{p,\square}} V.$$
		\item The space of \textit{$\Gbb^{h+}$-analytic functions} 
		$$C^{h+}(G,V)\coloneqq \varprojlim_{h^{\prime}>h}C^{h^{\prime}}(G,V)\simeq \varprojlim_{h^{\prime}>h} C^{h^{\prime}}(G,\Qbb_p)\otimes_{\Qbb_{p,\square}} V.$$
		\item For $h>0$, the space of \textit{$\Gbb^{h\dagger}$-analytic functions} 
		\begin{align*}
			C^{h\dagger}(G,V)&\coloneqq C^{h\dagger}(G,\Qbb_p)\otimes_{\Qbb_{p,\square}} V\\
			&\simeq \varinjlim_{h>h^{\prime}} C^{h^{\prime}}(G,V) \\
			&\simeq \varinjlim_{h>h^{\prime}} C^{h^{\prime}+}(G,V).
		\end{align*}
		\item The space of locally analytic functions 
		\begin{align*}
			C^{\la}(G,V)&\coloneqq C^{\la}(G,\Qbb_p) \otimes_{\Qbb_{p,\square}} V\\
			&\simeq \varinjlim_{h} C^{h}(G,V) \\
			&\simeq \varinjlim_{h} C^{h+}(G,V)\\
			&\simeq \varinjlim_{h} C^{h\dagger}(G,V).
		\end{align*}
	\end{enumerate}
\end{definition}

\begin{definition}
	We define the $\infty$-category $\Rep_{\square}(G)$ of representations of $G$ as 
	$$\Rep_{\square}(G)\coloneqq \Mod_{\Qbb_{p,\square}[G]}(\Dcal(\Qbb_{p,\square})),$$ the $\infty$-category of left $\Qbb_{p,\square}[G]$-modules in $\Dcal(\Qbb_{p,\square})$.
	It is a symmetric monoidal $\infty$-category by defining the tensor product of $V,W\in \Rep_{\square}(G)$ as $V\otimes_{\Qbb_{p,\square}}W$ with the diagonal $G$-action.
\end{definition}

\begin{remark}
	We refer to objects of $\CAlg(\Rep_{\square}(G))$ as ($\Ebb_{\infty}$-)algebras over $\Qbb_{p,\square}$ with a $G$-action.
	The full subcategory $\CAlg(\Rep_{\square}(G))^{\heart}\subset \CAlg(\Rep_{\square}(G))$ consisting of static objects is equivalent to the category of static $\Qbb_{p,\square}$-algebras with $G$-actions in the usual sense (i.e., a static $\Qbb_{p,\square}$-algebra $A$ with a morphism $G\to \Aut(A)$ of condensed groups, where $\Aut(A)$ is the (condensed) automorphism group of $A$ as a $\Qbb_{p,\square}$-algebra).
\end{remark}

Next, following in \cite{RJRC22,RJRC23}, we define locally analytic vectors and locally analytic representations.
By \cite[Lemma 3.2.1]{RJRC23}, the functor $\Dcal(\Qbb_{p,\square}) \to \Dcal(\Qbb_{p,\square}) ;\; V \mapsto C^{?}(G,V)$ naturally promotes to an exact functor $\Rep_{\square}(G)\to \Rep_{\square}(G^3)$, where $C^?$ is one of the spaces of functions $C^h$, $C^{h+}$, $C^{h\dagger}$, or $C^{\la}$.
Roughly speaking, the $G^3$-action on $C^{?}(G,V)$ is given by 
$$((g_1,g_2,g_3)\star f)(h)=g_3\cdot f(g_1^{-1}hg_2).$$
For a finite set $I\subset \{1,2,3\}$, we have a group homomorphism $\iota_I\colon G\to G^3$ defined by $\iota_I(g)_j=g$ if $j\in I$ and $\iota_I(g)_j=1$ if $j\notin I$.
By using $\iota_I$, we can regard $V \in \Rep_{\square}(G^3)$ as an object in $\Rep_{\square}(G)$, and we write it $V_{\star_{I}}$.

\begin{definition}
	\begin{enumerate}
		\item The functor of (derived) $h+$-analytic vectors $$(-)^{h+\mathchar`-\an} \colon \Rep_{\square}(G) \to \Rep_{\square}(G)$$ is defined as 
		$$V^{h+\mathchar`-\an}\coloneqq \intHom_{\Qbb_{p,\square}[G]}(\Qbb_p,C^{h+}(G,V)_{\star_{1,3}}),$$
		where the $G$-action on $V^{h+\mathchar`-\an}$ is induced by the $\star_2$-action.
		Replacing $C^{h+}$ with $C^{h\dagger}$ (resp. $C^{\la}$), we also define the functor of (derived) $h\dagger$-analytic vectors (resp. locally analytic vectors) $(-)^{h\dagger\mathchar`-\an} \colon \Rep_{\square}(G) \to \Rep_{\square}(G)$ (resp. $(-)^{\la}$).
		\item A representation $V \in \Rep_{\square}(G)$ of $G$ is called \textit{$h+$-analytic} (resp. \textit{$h\dagger$-analytic}, \textit{locally analytic}) if the natural morphism $V^{h+\mathchar`-\an}\to V$ (resp. $V^{h\dagger-an}\to V$, $V^{\la}\to V$) is an equivalence.
		Let $\Rep_{\square}^{h+}(G)$ (resp. $\Rep_{\square}^{h\dagger}(G)$, $\Rep_{\square}^{\la}(G)$) denote the full subcategory of $\Rep_{\square}(G)$ consisting of $h+$-analytic representations (resp. $h\dagger$-analytic representations, locally analytic representations).
	\end{enumerate}
\end{definition}

The following two lemmas follow from the fact that $\Qbb_p$ with the trivial $G$-action is a compact object in $\Rep_{\square}(G)$, which follows from the Lazard-Serre resolution (see the proof of Lemma \ref{lem:descendable zero map}).

\begin{lemma}\label{lem:la=colim of h-an}
	Let $V\in \Rep_{\square}(G)$ be a representation of $G$.
	Then we have natural equivalences $$V^{\la}\simeq \varinjlim_{h}V^{h+\mathchar`-\an}\simeq \varinjlim_{h}V^{h\dagger\mathchar`-\an}$$ and $$V^{h\dagger\mathchar`-\an} \simeq \varinjlim_{h>h^{\prime}}V^{h^{\prime}+\mathchar`-\an}\simeq \varinjlim_{h>h^{\prime}}V^{h^{\prime}\dagger\mathchar`-\an}.$$
\end{lemma}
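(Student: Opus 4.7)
The plan is to reduce all four equivalences to a single fact: the functor
\[
\intHom_{\Qbb_{p,\square}[G]}(\Qbb_p,-)\colon \Rep_{\square}(G)\to \Dcal(\Qbb_{p,\square})
\]
commutes with filtered colimits, which is what the hypothesis of compactness of $\Qbb_p$ in $\Rep_{\square}(G)$ is meant to supply.

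First, I would unwind the definitions. By construction, for each symbol $?\in\{h+, h\dagger, \la\}$, one has $V^{?-\an}=\intHom_{\Qbb_{p,\square}[G]}(\Qbb_p,C^{?}(G,V)_{\star_{1,3}})$, with the $G$-action coming from $\star_2$. Next, I would identify $C^{\la}(G,V)$ and $C^{h\dagger}(G,V)$ as filtered colimits inside $\Rep_{\square}(G^3)$. Indeed, the previously established identifications
\[
C^{\la}(G,\Qbb_p)\simeq \varinjlim_{h} C^{h+}(G,\Qbb_p)\simeq \varinjlim_{h} C^{h\dagger}(G,\Qbb_p),
\quad
C^{h\dagger}(G,\Qbb_p)\simeq \varinjlim_{h>h'}C^{h'+}(G,\Qbb_p)\simeq \varinjlim_{h>h'}C^{h'\dagger}(G,\Qbb_p)
\]
upgrade at once to equivalences of the associated function spaces with values in $V$, because $-\otimes_{\Qbb_{p,\square}}V$ commutes with colimits and because the $G^3$-action is defined functorially. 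Restricting the $G^3$-action via $\star_{1,3}$ does not affect the validity of the colimit presentation, since restriction of scalars along the diagonal group homomorphism preserves filtered colimits.

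Then I would apply the key commutation: since $\Qbb_p$ is compact in $\Rep_{\square}(G)$, the internal Hom $\intHom_{\Qbb_{p,\square}[G]}(\Qbb_p,-)$ preserves the above filtered colimits. Plugging in the displayed colimit presentations for $C^{\la}(G,V)_{\star_{1,3}}$ and $C^{h\dagger}(G,V)_{\star_{1,3}}$ produces exactly the asserted equivalences
\[
V^{\la}\simeq \varinjlim_{h}V^{h+-\an}\simeq \varinjlim_{h}V^{h\dagger-\an},
\qquad
V^{h\dagger-\an}\simeq \varinjlim_{h>h'}V^{h'+-\an}\simeq \varinjlim_{h>h'}V^{h'\dagger-\an}.
\]

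The only subtle point, and the main obstacle, is justifying that compactness of $\Qbb_p$ in $\Rep_{\square}(G)$ implies that the internal Hom $\intHom_{\Qbb_{p,\square}[G]}(\Qbb_p,-)$ (as opposed to merely the mapping anima $\Map_{\Qbb_{p,\square}[G]}(\Qbb_p,-)$) commutes with filtered colimits. This is not formal in an arbitrary closed symmetric monoidal presentable $\infty$-category, but it holds in the present solid setting because the filtered colimits appearing on the right-hand sides are computed in the underlying $\Dcal(\Qbb_{p,\square})$, and internal Hom out of a compact generator in $\Rep_{\square}(G)$ can be expressed via a finite diagram built out of $\Qbb_{p,\square}$-internal Homs which themselves preserve filtered colimits. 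Once this is in place, the rest of the argument is a straightforward bookkeeping exercise.
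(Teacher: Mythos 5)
Your proposal is correct and is essentially the paper's own argument: the paper gives no proof beyond the remark that the lemma ``follows from the fact that $\Qbb_p$ is a compact object in $\Rep_{\square}(G)$,'' and your write-up simply makes that explicit by passing the filtered colimit presentations of $C^{\la}(G,V)$ and $C^{h\dagger}(G,V)$ through $\intHom_{\Qbb_{p,\square}[G]}(\Qbb_p,-)$. Your handling of the one genuine subtlety is also the right one --- via the Lazard--Serre resolution, $\Qbb_p$ is a retract of a finite complex of free $\Qbb_{p,\square}[G]$-modules, so the internal Hom out of it is a finite limit of shifts of the forgetful functor and hence preserves all colimits, not just mapping anima into filtered colimits.
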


\begin{lemma}\label{lem:stable under colimit}
	The functors $$(-)^{h\dagger\mathchar`-\an} \colon \Rep_{\square}(G) \to \Rep_{\square}(G)$$ and $$(-)^{\la}\colon \Rep_{\square}(G) \to \Rep_{\square}(G)$$ preserve all small colimits, and the full subcategories $\Rep_{\square}^{h\dagger}(G)$ and $\Rep_{\square}^{\la}(G)$ of $\Rep_{\square}(G)$ are stable under small colimits.
\end{lemma}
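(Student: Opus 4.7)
My plan is to use the fact, noted just above the statement, that $\Qbb_p$ is compact in $\Rep_{\square}(G)$, combined with the stability of the ambient $\infty$-categories. I would factor the functor $V \mapsto V^{h\dagger-\an}$ as the composition of the tensor product functor $V \mapsto C^{h\dagger}(G,V) = C^{h\dagger}(G,\Qbb_p) \otimes_{\Qbb_{p,\square}} V$ (regarded as landing in the category of $G^2$-representations via the commuting $\star_{1,3}$- and $\star_2$-actions) and the invariants functor $\intHom_{\Qbb_{p,\square}[G]}(\Qbb_p,-)$ taken along the $\star_{1,3}$-direction, which lands in $\Rep_{\square}(G)$ via the residual $\star_2$-action. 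The first functor preserves all small colimits as a tensor product with a fixed object.

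For the second functor, $\intHom_{\Qbb_{p,\square}[G]}(\Qbb_p,-)$ is right adjoint to the ``trivial $G$-action'' functor $\Dcal(\Qbb_{p,\square}) \to \Rep_{\square}(G)$, hence preserves all limits; in the stable setting this is equivalent to exactness and yields preservation of finite colimits. Compactness of $\Qbb_p$ means $\Map_{\Rep_{\square}(G)}(\Qbb_p, -)$ commutes with filtered colimits, and applying this to all shifts $\Qbb_p[n]$ shows that each homotopy group of $\intHom_{\Qbb_{p,\square}[G]}(\Qbb_p,-)$ commutes with filtered colimits, hence so does the whole functor. Since a functor between stable $\infty$-categories preserves all small colimits iff it preserves both finite and filtered colimits, $(-)^{h\dagger-\an}$ preserves all small colimits.

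For $(-)^{\la}$, Lemma \ref{lem:la=colim of h-an} gives $V^{\la} \simeq \varinjlim_{h} V^{h\dagger-\an}$; a pointwise filtered colimit of colimit-preserving functors is itself colimit-preserving (by interchange of colimits), so $(-)^{\la}$ preserves small colimits as well. Stability of $\Rep_{\square}^{h\dagger}(G)$ and $\Rep_{\square}^{\la}(G)$ under small colimits is then formal: for a diagram $\{V_i\}$ in the subcategory, naturality of the transformation $(-)^{h\dagger-\an} \to \id$ together with the established colimit preservation gives that the natural map $(\colim_i V_i)^{h\dagger-\an} \simeq \colim_i V_i^{h\dagger-\an} \to \colim_i V_i$ is an equivalence (since each $V_i^{h\dagger-\an} \to V_i$ is), placing the colimit in the subcategory; the locally analytic case is identical. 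The only mildly subtle step is the upgrade of compactness from a statement about mapping anima to one about internal Hom, but this is immediate in the stable context by checking on each homotopy group.
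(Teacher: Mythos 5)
Your proposal is correct and is essentially the paper's argument spelled out in full: the paper's one-line proof invokes exactly the two facts you use, namely that $C^{h\dagger}(G,-)$ and $C^{\la}(G,-)$ preserve all small colimits and that $\Qbb_p$ is compact in $\Rep_{\square}(G)$ (so that $\intHom_{\Qbb_{p,\square}[G]}(\Qbb_p,-)$ commutes with filtered, hence all, colimits in the stable setting). The only cosmetic difference is that you deduce the $\la$ case from the $h\dagger$ case via the colimit over $h$, whereas the paper runs the same argument directly with $C^{\la}(G,-)$; both are fine.
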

\begin{proof}
It follows from the fact that the functors $C^{h\dagger}(G,-)$ and $C^{\la}(G,-)$ preserve all small colimits.
\end{proof}

\begin{remark}
 The functor $$(-)^{h+\mathchar`-\an} \colon \Rep_{\square}(G) \to \Rep_{\square}(G)$$ does not necessarily preserve small colimits.
 However, for another reason, the full subcategory $\Rep_{\square}^{h+}(G)$ is stable under small colimits, see Lemma \ref{lem:h+-an implies la}.
\end{remark}

\begin{definition}
	Let $D^{h+}(G,\Qbb_p)$ (resp. $D^{h}(G,\Qbb_p)$) denote the distribution algebra $H^0(\intHom_{\Qbb_p}(C^{h+}(G,\Qbb_p),\Qbb_p))$ (resp. $H^0(\intHom_{\Qbb_p}(C^{h}(G,\Qbb_p),\Qbb_p))$), where we note that this dual is non-derived.	
\end{definition}
\begin{remark}
	The distribution algebra $D^{h+}(G,\Qbb_p)$ is denoted by $D^h(G,\Qbb_p)$ in \cite[Definition 2.1.2]{RJRC23}.
\end{remark}

\begin{lemma}[{\cite[Lemma 4.33, Theorem 4.36]{RJRC22}}]\label{lem:h+-an implies la}
	\begin{enumerate}
		\item The distribution algebra $D^{h+}(G,\Qbb_p)$ is an idempotent $\Qbb_{p,\square}[G]$-algebra, that is, the multiplication morphism $D^{h+}(G,\Qbb_p) \otimes_{\Qbb_{p,\square}[G]} D^{h+}(G,\Qbb_p) \to D^{h+}(G,\Qbb_p)$ is an equivalence.
		\item There is a natural functorial equivalence 
		$$V^{h+\mathchar`-\an}\simeq \intHom_{\Qbb_{p,\square}[G]}(D^{h+}(G,\Qbb_p),V)$$
		for $V\in \Rep_{\square}(G)$, where the $G$-action on $\intHom_{\Qbb_{p,\square}[G]}(D^{h+}(G,\Qbb_p),V)$ is induced by the right multiplication on $D^{h+}(G,\Qbb_p)$.
		\item A representation $V \in \Rep_{\square}(G)$ of $G$ is $h+$-analytic if and only if $V$ lies in the essential image of the forgetful functor $$\Mod_{D^{h+}(G,\Qbb_p)}(\Dcal(\Qbb_{p,\square})) \to \Mod_{\Qbb_{p,\square}[G]}(\Dcal(\Qbb_{p,\square}))=\Rep_{\square}(G),$$ which is fully faithful by (1). 
		In particular, there is an equivalence of $\infty$-categories
		$$\Rep_{\square}^{h+}(G)\simeq \Mod_{D^{h+}(G,\Qbb_p)}(\Dcal(\Qbb_{p,\square})),$$
		and the full subcategory $\Rep_{\square}^{h+}(G)$ is stable under small colimits and limits.
	\end{enumerate}
\end{lemma}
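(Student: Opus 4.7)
The central claim is (2), from which (1) and (3) will follow. My plan for (2) is to identify $C^{h+}(G,V)$ with $\intHom_{\Qbb_{p,\square}}(D^{h+}(G,\Qbb_p),V)$ via the duality between $C^{h+}$ and $D^{h+}$, and then track the $G^3$-action carefully. Concretely, since $C^{h+}(G,\Qbb_p)=\varprojlim_{h'>h} C^{h'}(G,\Qbb_p)$ is a Fréchet space and is nuclear by Lemma \ref{lem:nuclearness}, it is reflexive at the derived level, so $C^{h+}(G,\Qbb_p)\simeq \intHom_{\Qbb_{p,\square}}(D^{h+}(G,\Qbb_p),\Qbb_p)$. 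Combining with the identification $C^{h+}(G,V)\simeq C^{h+}(G,\Qbb_p)\otimes_{\Qbb_{p,\square}} V$ and tensor-hom adjunction gives $C^{h+}(G,V)\simeq \intHom_{\Qbb_{p,\square}}(D^{h+}(G,\Qbb_p),V)$. Under this identification, the $\star_1$-action (left translation on functions) transports to left multiplication by the Dirac measures $\delta_g$ on $D^{h+}(G,\Qbb_p)$; hence taking derived $\star_{1,3}$-invariants precisely enforces $\Qbb_{p,\square}[G]$-linearity, yielding $V^{h+-\an}\simeq \intHom_{\Qbb_{p,\square}[G]}(D^{h+}(G,\Qbb_p),V)$ with residual $\star_2$-action coming from right multiplication on $D^{h+}$, as claimed.

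For (1), I would apply (2) to $V=D^{h+}(G,\Qbb_p)$ itself, equipped with the left $\Qbb_{p,\square}[G]$-action. The distribution algebra is manifestly $h+$-analytic as a representation, since its $G$-action already extends to full convolution by $D^{h+}$; hence $(D^{h+}(G,\Qbb_p))^{h+-\an}\simeq D^{h+}(G,\Qbb_p)$. By (2) this translates to $\intHom_{\Qbb_{p,\square}[G]}(D^{h+},D^{h+})\simeq D^{h+}$, and via tensor-hom adjunction (together with a Yoneda argument) this is equivalent to the idempotence $D^{h+}\otimes_{\Qbb_{p,\square}[G]}D^{h+}\simeq D^{h+}$. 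Given (1) and (2), part (3) is then a formal consequence of the theory of idempotent $\Ebb_1$-algebras: the forgetful functor $\Mod_{D^{h+}(G,\Qbb_p)}(\Dcal(\Qbb_{p,\square}))\hookrightarrow \Rep_{\square}(G)$ is fully faithful, and its essential image consists of those $V$ admitting a (necessarily unique) compatible $D^{h+}$-action, which by (2) are exactly the $V$ with $V^{h+-\an}\overset{\sim}{\to} V$. Stability of $\Rep_{\square}^{h+}(G)$ under all small colimits and limits follows because the forgetful functor from modules over an $\Ebb_1$-algebra preserves them.

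The main obstacle is the first step: precisely identifying the $G^3$-action under tensor-hom duality. The naive ``untwisting'' map sending $f \in C^{h+}(G,V)_{\star_{1,3}}$ to $g\mapsto g^{-1}f(g)\in C^{h+}(G,V)_{\star_1}$ would reduce the computation to the trivial-action case, but it is ill-defined because $V$ carries no assumed analytic structure. One is therefore forced to pass through the duality with $D^{h+}(G,\Qbb_p)$, where the $G$-actions are algebraically rigid, and to verify carefully the compatibility of the Hopf algebra antipode with the directions of the translation actions — essentially a Shapiro-type rearrangement. A secondary technical point is ensuring that all relevant tensor products and internal Homs are already derived, which is where the nuclearity input of Lemma \ref{lem:nuclearness} enters.
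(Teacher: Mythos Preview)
The paper does not prove this lemma; it is cited from \cite{RJRC22}. Your overall strategy --- duality between $C^{h+}$ and $D^{h+}$, then tracking the $G$-actions --- is indeed the approach used there, but your execution contains a genuine gap.

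The intermediate identification $C^{h+}(G,V)\simeq C^{h+}(G,\Qbb_p)\otimes_{\Qbb_{p,\square}} V$ is false for general $V$: by definition $C^{h+}(G,V)=\varprojlim_{h'>h}\bigl(C^{h'}(G,\Qbb_p)\otimes_{\Qbb_{p,\square}} V\bigr)$, and solid tensor products do not commute with inverse limits. (Note also that Lemma~\ref{lem:nuclearness} treats only $C^h$, $C^{h\dagger}$, $C^{\la}$, not $C^{h+}$.) The correct route reverses the order of operations: apply Lemma~\ref{lem:tensor product dual} at each finite radius $h'>h$ to obtain $C^{h'}(G,V)\simeq\intHom_{\Qbb_p}(D^{h'}(G,\Qbb_p),V)$, then pass to the limit and use $D^{h+}(G,\Qbb_p)\simeq\varinjlim_{h'>h}D^{h'}(G,\Qbb_p)$ (which holds because the transition maps among the $C^{h'}$ are compact). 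This yields $C^{h+}(G,V)\simeq\intHom_{\Qbb_p}(D^{h+}(G,\Qbb_p),V)$ without the illicit commutation, after which your Shapiro-type identification of the $\star_{1,3}$-invariants with $\Qbb_{p,\square}[G]$-linear maps goes through as you describe.

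Your derivation of (1) from (2) is also incomplete: knowing $\intHom_{\Qbb_{p,\square}[G]}(D^{h+},D^{h+})\simeq D^{h+}$ does not, by itself, yield the idempotence $D^{h+}\otimes_{\Qbb_{p,\square}[G]}D^{h+}\simeq D^{h+}$ via ``tensor-hom adjunction and Yoneda''. These are dual conditions; one needs either a direct argument on the multiplication map, or the stronger statement that $\intHom_{\Qbb_{p,\square}[G]}(D^{h+},M)\simeq M$ for \emph{every} $D^{h+}$-module $M$, not just for $M=D^{h+}$. In \cite{RJRC22} idempotence is established independently. Once (1) and (2) are in place, your deduction of (3) from the general theory of idempotent algebras is correct.
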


\begin{remark}
	Since the trivial representation of $G$ becomes a $D^{h+}(G,\Qbb_p)$-module, it is $h+$-analytic for any $h\geq0$.
	Moreover, it is locally analytic and $h\dagger$-analytic for any $h>0$ by the following corollary.
\end{remark}

\begin{corollary}\label{cor:transitivity}
	\begin{enumerate}
		\item If a representation $V \in \Rep_{\square}(G)$ of $G$ is $h+$-analytic, then $V$ is also $h^{\prime}+$-analytic for any $h^{\prime}\geq h$. 
		\item If a representation $V \in \Rep_{\square}(G)$ of $G$ is $h\dagger$-analytic, then $V$ is also $h+$-analytic and $h^{\prime}\dagger$-analytic for any $h^{\prime}\geq h$. 
		\item If a representation $V \in \Rep_{\square}(G)$ of $G$ is $h+$-analytic, then $V$ is also $h^{\prime}\dagger$-analytic for any $h^{\prime}> h$. 
		\item If a representation $V \in \Rep_{\square}(G)$ of $G$ is $h+$-analytic or $h\dagger$-analytic, then $V$ is locally analytic.
	\end{enumerate}
\end{corollary}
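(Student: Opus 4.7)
The plan is to prove the four claims in the order $(1), (3), (2), (4)$, each reducing to the module-theoretic characterization of Lemma \ref{lem:h+-an implies la} or the cofinality afforded by Lemma \ref{lem:la=colim of h-an}.

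For $(1)$, the idea is to produce a transition morphism of idempotent $\Qbb_{p,\square}[G]$-algebras $D^{h'+}(G,\Qbb_p) \to D^{h+}(G,\Qbb_p)$ for $h' \geq h$, obtained by taking $H^0\intHom_{\Qbb_p}(-,\Qbb_p)$ of the restriction map $C^{h+}(G,\Qbb_p) \to C^{h'+}(G,\Qbb_p)$ (which in turn is induced by the containment $\mathring{\Gbb}_{h'} \subset \mathring{\Gbb}_h$ of rigid analytic subgroups, i.e., $\Gbb^{h'+} \hookrightarrow \Gbb^{h+}$). Then the standard fact about idempotent algebras applies: for any map $R \to R'$ of idempotent $A$-algebras, idempotence of $R'$ forces $R \otimes_A R' \simeq R'$, so for $M \in \Mod_{R'}$ one computes $R \otimes_A M \simeq R \otimes_A R' \otimes_{R'} M \simeq M$, giving $\Mod_{R'} \subset \Mod_R$. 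Via Lemma \ref{lem:h+-an implies la}$(3)$, this translates to $\Rep_{\square}^{h+}(G) \subset \Rep_{\square}^{h'+}(G)$, which is exactly $(1)$.

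For $(3)$, assume $V$ is $h+$-analytic and $h' > h$. Lemma \ref{lem:la=colim of h-an} (applied to $h'$ in place of $h$) gives $V^{h'\dagger-\an} \simeq \varinjlim_{h' > h''} V^{h''+-\an}$. The subset $\{h'' : h \leq h'' < h'\}$ is cofinal in this filtered diagram (using $h < h'$), and by $(1)$ each term with $h'' \geq h$ satisfies $V^{h''+-\an} \simeq V$; hence the colimit is essentially constant at $V$ and collapses to it, showing that $V$ is $h'\dagger$-analytic.

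Statement $(2)$ then follows by combining $(1)$, $(3)$, and the stability of $\Rep_{\square}^{h+}(G)$ under colimits (Lemma \ref{lem:h+-an implies la}): writing $V \simeq V^{h\dagger-\an} \simeq \varinjlim_{h > h'} V^{h'+-\an}$, each $V^{h'+-\an}$ is $h'+$-analytic with $h' < h$, hence $h+$-analytic by $(1)$; the colimit inherits the property. The $h''\dagger$-analyticity for $h'' \geq h$ is then immediate from $(3)$ (the case $h'' = h$ being the hypothesis). Finally, $(4)$ follows by the same cofinality argument applied to $V^{\la} \simeq \varinjlim_{h''} V^{h''+-\an}$: under $(1)$, the tail $\{h'' \geq h\}$ is essentially constant at $V$, while the $h\dagger$-case reduces to the $h+$-case via $(2)$. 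I foresee no real obstacle here: the whole argument is formal bookkeeping. The only mildly delicate point is verifying the \emph{direction} of the transition map $D^{h'+} \to D^{h+}$ (reversed by duality, corresponding to the fact that $\Gbb^{h'+}$ shrinks as $h'$ grows) together with its compatibility with the idempotent $\Qbb_{p,\square}[G]$-algebra structure.
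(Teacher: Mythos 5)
Your proposal is correct and follows essentially the same route as the paper: part (1) via the idempotent-algebra characterization of Lemma \ref{lem:h+-an implies la}(3) (restriction of scalars along $D^{h'+}(G,\Qbb_p)\to D^{h+}(G,\Qbb_p)$), and parts (2)–(4) via the colimit descriptions of Lemma \ref{lem:la=colim of h-an} together with stability of the relevant subcategories under colimits. The paper's proof is just a terse citation of these same lemmas, so your write-up is a faithful (and more detailed) expansion of it.
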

\begin{proof}
	Part (1) follows from Lemma \ref{lem:h+-an implies la} (3).
	Parts (2), (3), and (4) follow from Lemma \ref{lem:la=colim of h-an}, Lemma \ref{lem:stable under colimit}, and Lemma \ref{lem:h+-an implies la} (3).
\end{proof}

\begin{corollary}\label{cor:adj la}
	Let $V\in \Rep_{\square}(G)$ be a representation of $G$.
	\begin{enumerate}
		\item The representation $V^{h+\mathchar`-\an}$ of $G$ is $h+$-analytic.
		Moreover, the functor of $h+$-analytic vectors 
		$$(-)^{h+\mathchar`-\an} \colon \Rep_{\square}(G) \to \Rep_{\square}^{h+}(G)$$
		is a right adjoint functor of the inclusion $\Rep_{\square}^{h+}(G) \hookrightarrow \Rep_{\square}(G)$.
		\item The representation $V^{h\dagger\mathchar`-\an}$ of $G$ is $h\dagger$-analytic.
		Moreover, the functor of $h\dagger$-analytic vectors 
		$$(-)^{h\dagger\mathchar`-\an} \colon \Rep_{\square}(G) \to \Rep_{\square}^{h\dagger}(G)$$
		is a right adjoint functor of the inclusion $\Rep_{\square}^{h\dagger}(G) \hookrightarrow \Rep_{\square}(G)$.
		\item The representation $V^{\la}$ of $G$ is locally analytic.
		Moreover, the functor of locally analytic vectors 
		$$(-)^{\la} \colon \Rep_{\square}(G) \to \Rep_{\square}^{\la}(G)$$
		is a right adjoint functor of the inclusion $\Rep_{\square}^{\la}(G) \hookrightarrow \Rep_{\square}(G)$.
	\end{enumerate}
\end{corollary}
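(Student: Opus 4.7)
My plan for part (1) is to directly invoke Lemma \ref{lem:h+-an implies la}. The formula
$$V^{h+-\an}\simeq \intHom_{\Qbb_{p,\square}[G]}(D^{h+}(G,\Qbb_p),V)$$
of Lemma \ref{lem:h+-an implies la}(2) naturally equips $V^{h+-\an}$ with a $D^{h+}(G,\Qbb_p)$-module structure via left multiplication on $D^{h+}(G,\Qbb_p)$, which places it in $\Rep_{\square}^{h+}(G)$ by Lemma \ref{lem:h+-an implies la}(3). Since $D^{h+}(G,\Qbb_p)$ is idempotent, the forgetful functor $\Mod_{D^{h+}(G,\Qbb_p)}(\Dcal(\Qbb_{p,\square})) \to \Rep_{\square}(G)$ identifies with the inclusion $\Rep_{\square}^{h+}(G)\hookrightarrow \Rep_{\square}(G)$, and its right adjoint $\intHom_{\Qbb_{p,\square}[G]}(D^{h+}(G,\Qbb_p),-)$ is exactly $(-)^{h+-\an}$.

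For the halves of (2) and (3) asserting that $V^{h\dagger-\an}$ is $h\dagger$-analytic and $V^{\la}$ is locally analytic, I will combine Lemma \ref{lem:la=colim of h-an}, Corollary \ref{cor:transitivity}, and Lemma \ref{lem:stable under colimit}. Writing $V^{h\dagger-\an}\simeq \varinjlim_{h>h'} V^{h'+-\an}$ and $V^{\la}\simeq \varinjlim_h V^{h+-\an}$ presents these as filtered colimits whose terms are respectively $h\dagger$-analytic by Corollary \ref{cor:transitivity}(3) (since $h' < h$) and locally analytic by Corollary \ref{cor:transitivity}(4). Closure of the relevant subcategories under small colimits (Lemma \ref{lem:stable under colimit}) then yields $V^{h\dagger-\an}\in \Rep_{\square}^{h\dagger}(G)$ and $V^{\la}\in \Rep_{\square}^{\la}(G)$.

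For the adjunction statements in (2) and (3), the plan is to produce the right adjoint abstractly and then identify it with $(-)^{h\dagger-\an}$ or $(-)^{\la}$. Because the inclusion $\iota\colon \Rep_{\square}^{\la}(G)\hookrightarrow \Rep_{\square}(G)$ preserves small colimits (Lemma \ref{lem:stable under colimit}) between presentable $\infty$-categories, the adjoint functor theorem yields a right adjoint $L$. To see $L\simeq (-)^{\la}$: for any $h+$-analytic $W$ (which is locally analytic by Corollary \ref{cor:transitivity}(4)), combining the adjunction for $L$ with part (1) gives
$$\Map(W, V^{h+-\an})\simeq \Map(W,V)\simeq \Map(W,LV)\simeq \Map(W,(LV)^{h+-\an}),$$
so the Yoneda lemma in $\Rep_{\square}^{h+}(G)$ forces $(LV)^{h+-\an}\simeq V^{h+-\an}$ naturally in $h$. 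Since $LV$ is locally analytic, Lemma \ref{lem:la=colim of h-an} then delivers
$$LV\simeq (LV)^{\la}\simeq \varinjlim_h (LV)^{h+-\an}\simeq \varinjlim_h V^{h+-\an}\simeq V^{\la}.$$
The argument for (2) is parallel, restricted to the filtered system of $h' < h$ and using Corollary \ref{cor:transitivity}(2). The main obstacle I expect is verifying the presentability hypothesis of the adjoint functor theorem for $\Rep_{\square}^{\la}(G)$ and $\Rep_{\square}^{h\dagger}(G)$; this should follow from their stability under colimits in the presentable category $\Rep_{\square}(G)$ together with an accessibility argument, or alternatively from the geometric identifications $\Rep_{\square}^{\la}(G)\simeq \Dcal(\ast/\Gbb^{\la})$ and $\Rep_{\square}^{h\dagger}(G)\simeq \Dcal(\ast/\Gbb^{h\dagger})$ appearing later in Proposition \ref{prop:geometric interpretation}.
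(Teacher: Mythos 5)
Your part (1) and the verification that $V^{h\dagger-\an}$ and $V^{\la}$ land in the correct subcategories coincide with the paper's argument. For the adjunction statements in (2) and (3), however, you take a genuinely different route. The paper does not invoke the adjoint functor theorem at all: it applies the dual of \cite[Proposition 5.2.7.4]{HTT} (the recognition criterion for colocalizations), which reduces the adjunction to checking that $(\iota_V^{h\dagger})^{h\dagger-\an}\colon (V^{h\dagger-\an})^{h\dagger-\an}\to V^{h\dagger-\an}$ is an equivalence; this is obtained by writing it as a filtered colimit of the corresponding equivalences $(\iota_V^{h'+})^{h'+-\an}$ already known from part (1). Your approach instead produces an abstract right adjoint $L$ and identifies $LV$ with $V^{\la}$ (resp.\ $V^{h\dagger-\an}$) by testing against $h+$-analytic objects and using Yoneda. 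Both work; the paper's route is more economical because it never needs to know that $\Rep_{\square}^{h\dagger}(G)$ or $\Rep_{\square}^{\la}(G)$ is presentable, whereas your route makes that an essential hypothesis.

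That presentability hypothesis is the one real gap in your write-up. Of the two ways you propose to discharge it, the appeal to Proposition \ref{prop:geometric interpretation} is circular: its proof uses Proposition \ref{prop:adjoint cofree}, which in turn is deduced from Corollary \ref{cor:adj la} itself. So you must take the first route: observe that each $\Rep_{\square}^{h'+}(G)\simeq \Mod_{D^{h'+}(G,\Qbb_p)}(\Dcal(\Qbb_{p,\square}))$ is presentable, that $\Rep_{\square}^{h\dagger}(G)$ (resp.\ $\Rep_{\square}^{\la}(G)$) is, by Lemma \ref{lem:la=colim of h-an}, Lemma \ref{lem:stable under colimit}, and Corollary \ref{cor:transitivity}, the smallest full subcategory of $\Rep_{\square}(G)$ closed under small colimits and containing the $\Rep_{\square}^{h'+}(G)$ for $h'<h$ (resp.\ for all $h'$), and that such a subcategory generated under colimits by a small set of objects of a presentable stable $\infty$-category is presentable. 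This is standard but should be said explicitly, since without it the adjoint functor theorem does not apply. One further small point: your chain of equivalences $(LV)^{h+-\an}\simeq V^{h+-\an}$ must be induced by the single counit $LV\to V$ so that the equivalences are compatible with the transition maps before passing to the colimit over $h$; this is true and worth a sentence.
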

\begin{proof}
	Part (1) follows from Lemma \ref{lem:h+-an implies la} (2), (3), where we note that 
	$$\intHom_{\Qbb_{p,\square}[G]}(D^{h+}(G,\Qbb_p),-)\colon \Mod_{\Qbb_{p,\square}[G]}(\Dcal(\Qbb_{p,\square})) \to \Mod_{D^{h+}(G,\Qbb_p)}(\Dcal(\Qbb_{p,\square}))$$ 
	gives a right adjoint of the forgetful functor 
	$$\Mod_{D^{h+}(G,\Qbb_p)}(\Dcal(\Qbb_{p,\square}))\to \Mod_{\Qbb_{p,\square}[G]}(\Dcal(\Qbb_{p,\square})).$$
	We prove (2).
	From (1), Lemma \ref{lem:la=colim of h-an}, and Lemma \ref{cor:transitivity} (3), we find that the representation $V^{h\dagger\mathchar`-\an}$ of $G$ is $h\dagger$-analytic.
	For $V\in \Rep_{\square}(G)$, we write $\iota_V^{h\dagger} \colon V^{h\dagger\mathchar`-\an} \to V$ and $\iota_V^{h+} \colon V^{h+\mathchar`-\an} \to V$ for the natural morphisms.
	To prove adjointness, it suffices to show that for every object $V\in \Rep_{\square}(G)$, the morphism 
	$$(\iota_V^{h\dagger})^{h\dagger\mathchar`-\an} \colon (V^{h\dagger\mathchar`-\an})^{h\dagger\mathchar`-\an} \to V^{h\dagger\mathchar`-\an}$$
	is an equivalence by the dual of \cite[Proposition 5.2.7.4]{HTT}, where we note that we have already proved that $$\iota_{V^{h\dagger\mathchar`-\an}}^{h\dagger}\colon (V^{h\dagger\mathchar`-\an})^{h\dagger\mathchar`-\an} \to V^{h\dagger\mathchar`-\an}$$ is an equivalence.
	From (1), we have already known that for every object $V\in \Rep_{\square}(G)$, the morphism 
	$$(\iota_V^{h^{\prime}+})^{h^{\prime}+\mathchar`-\an} \colon (V^{h^{\prime}+\mathchar`-\an})^{h^{\prime}+\mathchar`-\an} \to V^{h^{\prime}+\mathchar`-\an}$$
	is an equivalence for every $h^{\prime}<h$.
	Therefore, by taking colimits, we find that $(\iota_V^{h\dagger})^{h\dagger\mathchar`-\an}$ is also an equivalence.
	
	Part (3) follows from the same argument as above.
\end{proof}

\begin{corollary}
	Let $V\in \Rep_{\square}(G)$ be a representation of $G$.
	Then $V$ is $h+$-analytic (resp. $h\dagger$-analytic, locally analytic) if and only if $H^i(V)$ is $h+$-analytic (resp. $h\dagger$-analytic, locally analytic) for any $i\in \Zbb$.
	In particular, the $t$-structure on $\Rep_{\square}(G)$ induces a $t$-structure on $\Rep_{\square}^{h+}(G)$, $\Rep_{\square}^{h\dagger}(G)$, and $\Rep_{\square}^{\la}(G)$.
\end{corollary}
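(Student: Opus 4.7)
The plan is to handle the $h+$-analytic case first using the module-category interpretation in Lemma \ref{lem:h+-an implies la}, and then reduce the $h\dagger$-analytic and locally analytic cases to it via the filtered colimit presentations of Lemma \ref{lem:la=colim of h-an}.

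For the $h+$ case, I would invoke the equivalence $\Rep_{\square}^{h+}(G) \simeq \Mod_{D^{h+}(G,\Qbb_p)}(\Dcal(\Qbb_{p,\square}))$ from Lemma \ref{lem:h+-an implies la}(3). Since $D^{h+}(G,\Qbb_p)$ is static (defined as an $H^0$), the module category inherits the natural $t$-structure from $\Dcal(\Qbb_{p,\square})$ and the inclusion $\Rep_{\square}^{h+}(G) \hookrightarrow \Rep_{\square}(G)$ is $t$-exact, which immediately yields the ``only if'' direction: each truncation $\tau^{\leq n}V$ of an $h+$-analytic $V$ retains a canonical $D^{h+}(G,\Qbb_p)$-module structure, so every $H^i(V)$ is $h+$-analytic. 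For the ``if'' direction, I would argue by Postnikov induction: the bounded case follows by induction on cohomological amplitude, using the fiber sequence $\tau^{\leq n-1}V \to \tau^{\leq n}V \to H^n(V)[-n]$ and stability of $\Rep_{\square}^{h+}(G)$ under cofibers; to pass to unbounded $V$, combine the fiber sequence $\tau^{\leq -1}V \to V \to \tau^{\geq 0}V$ with the presentations $\tau^{\geq 0}V \simeq \varinjlim_n \tau^{[0,n]}V$ and $\tau^{\leq -1}V \simeq \varprojlim_n \tau^{[-n,-1]}V$ coming from completeness of the $t$-structure on $\Dcal(\Qbb_{p,\square})$, and invoke closure of $\Rep_{\square}^{h+}(G)$ under small limits and colimits from Lemma \ref{lem:h+-an implies la}(3).

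For the $h\dagger$-analytic and locally analytic cases, I would exploit the identifications $V^{h\dagger-\an} \simeq \varinjlim_{h>h'} V^{h'+-\an}$ and $V^{\la} \simeq \varinjlim_h V^{h+-\an}$ from Lemma \ref{lem:la=colim of h-an}, together with the fact that filtered colimits in $\Dcal(\Qbb_{p,\square})$ commute with truncations. The ``only if'' direction follows by applying $\tau^{\leq n}$ to such a colimit presentation: each $\tau^{\leq n}V^{h'+-\an}$ is $h'+$-analytic by the $h+$ case, hence $h\dagger$-analytic, and the filtered colimit stays $h\dagger$-analytic (resp. locally analytic) by Lemma \ref{lem:stable under colimit}. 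The ``if'' direction runs the Postnikov argument again, with Lemma \ref{lem:stable under colimit} supplying closure under colimits; the coconnective half involves a sequential limit $\varprojlim_n \tau^{[-n,-1]}V$ whose analyticity needs to be verified directly, using the right-adjoint property of $(-)^{h\dagger-\an}$ and $(-)^{\la}$ from Corollary \ref{cor:adj la}.

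The main obstacle I anticipate is this unbounded step of the Postnikov argument, and in particular, in the $h\dagger$ and $\la$ cases, verifying that sequential limits of bounded analytic truncations stay analytic, since Lemma \ref{lem:stable under colimit} only gives closure under colimits. This reduces to completeness of the $t$-structure on $\Dcal(\Qbb_{p,\square})$ combined with an explicit analysis of the counit of the adjunction from Corollary \ref{cor:adj la} on $\varprojlim_n \tau^{[-n,-1]}V$; this should follow from standard properties of solid module categories, but is the only non-formal point requiring care.
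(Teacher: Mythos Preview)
Your approach to the $h+$ case via the module-category description of Lemma~\ref{lem:h+-an implies la}(3) is correct: since $D^{h+}(G,\Qbb_p)$ is static and the full subcategory is closed under both limits and colimits, the Postnikov argument goes through completely. For $h\dagger$ and $\la$, your ``only if'' direction (truncating the filtered-colimit presentation of Lemma~\ref{lem:la=colim of h-an} and invoking the $h+$ case on each term) and the bounded ``if'' direction are also fine.

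The gap is precisely where you flag it: the unbounded coconnective step in the ``if'' direction for $h\dagger$ and $\la$. Your proposed resolution via the right-adjoint property of Corollary~\ref{cor:adj la} does not apply as stated. That corollary tells you $(-)^{h\dagger-\an}\colon \Rep_{\square}(G)\to\Rep_{\square}^{h\dagger}(G)$ preserves limits, but what you need is that the \emph{inclusion} $\Rep_{\square}^{h\dagger}(G)\hookrightarrow\Rep_{\square}(G)$ preserves the particular limit $\varprojlim_n\tau^{[-n,-1]}V$. These are different statements, and no closure of $\Rep_{\square}^{h\dagger}(G)$ or $\Rep_{\square}^{\la}(G)$ under limits is established in the paper.

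The fix---and almost certainly the argument in the cited \cite[Proposition~3.2.5]{RJRC23}---is to use the \emph{finite cohomological amplitude} of the analytic-vectors functors rather than any limit argument. Since $C^{h\dagger}(G,-)$ is $t$-exact (tensor with a flat module) and $\Gamma(G,-)=\intHom_{\Qbb_{p,\square}[G]}(\Qbb_p,-)$ has amplitude in $[0,d]$ by the Lazard--Serre resolution (cf.\ the proof of Lemma~\ref{lem:descendable zero map}), the composite $(-)^{h\dagger-\an}$ sends objects concentrated in degrees $[a,b]$ to degrees $[a,b+d]$. It follows that for each $i$ the map $H^i(V^{h\dagger-\an})\to H^i(V)$ only depends on the bounded truncation $\tau^{[i-d,i]}V$, and your bounded Postnikov argument then handles every $H^i$. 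This bypasses the limit step entirely.
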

\begin{proof}
	It follows from the same argument as in \cite[Proposition 3.3.5]{RJRC23}.
\end{proof}

\begin{definition}[{\cite[Definition 4.32]{RJRC22}}]\label{defn:non-derived analytic vector}
	Let $V$ be a static $\Qbb_{p,\square}$-module with a continuous $G$-action.
	It is called \textit{non-derived $h$-analytic} if the natural morphism
	$$H^0(V^{h\mathchar`-\an})\coloneqq H^0(\Gamma(G,C^h(G,V))) \to V$$
	is an isomorphism.
\end{definition}
\begin{remark}
	In contrast to $h+$-analytic representations or $h\dagger$-analytic representations, there are few static representations $V$ of $G$ such that $H^i(\Gamma(G,C^h(G,V)))=0$ for $i>0$.
	For example, if $G=\Zbb_p$ and $V=\Qbb_p$ with the trivial action, then $H^1(\Gamma(G,C^h(G,V)))\neq 0$ for any $h$.
	Therefore, we do not define the notion of $h$-analytic representations for non-static representations.
\end{remark}

\begin{lemma}\label{lem:h-an implies h+-an}
	Let $V$ be a non-derived $h$-analytic representation of $G$.
	Then $V$ is an $h+$-analytic representation.
\end{lemma}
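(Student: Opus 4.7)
The plan is to equip $V$ with a natural $D^{h+}(G,\Qbb_p)$-module structure extending the given $\Qbb_{p,\square}[G]$-action, at which point Lemma \ref{lem:h+-an implies la}(3) immediately concludes that $V$ is $h+$-analytic. The route I would take is to first produce a $D^h(G,\Qbb_p)$-module structure on $V$ directly from the non-derived $h$-analyticity hypothesis, and then restrict scalars along a natural algebra map $D^{h+}(G,\Qbb_p)\to D^h(G,\Qbb_p)$ obtained by dualizing a Hopf algebra map.

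First, I would exploit Definition \ref{defn:non-derived analytic vector} to construct the orbit map $V\to C^h(G,V)\simeq C^h(G,\Qbb_p)\otimes_{\Qbb_{p,\square}} V$, where the factorization of the tensor product uses that $V$ is static and that $C^h(G,\Qbb_p)$ is nuclear over $\Qbb_{p,\square}$ (Lemma \ref{lem:nuclearness}). The hypothesis that $H^0(\Gamma(G,C^h(G,V)))\to V$ is an isomorphism is equivalent to saying that, for every $v\in V$, the orbit function $g\mapsto gv$ is $h$-analytic, so the orbit map is well-defined. This orbit map encodes a coaction of the condensed Hopf algebra $C^h(G,\Qbb_p)$ on $V$; dualizing using nuclearity converts it into an action map $D^h(G,\Qbb_p)\otimes_{\Qbb_{p,\square}} V\to V$. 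Compatibility with the original $G$-action is automatic, since the delta distribution $\delta_g\in D^h(G,\Qbb_p)$ acts as $g$-translation by the very definition of the orbit map.

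Next, I would observe that restriction from the closed polydisc $\Gbb^h$ to the open sub-polydisc $\Gbb^{h+}$ yields a morphism of Hopf algebras $C^h(G,\Qbb_p)\to C^{h+}(G,\Qbb_p)$, whose dual is an algebra homomorphism $D^{h+}(G,\Qbb_p)\to D^h(G,\Qbb_p)$. Restricting the $D^h(G,\Qbb_p)$-module structure on $V$ along this homomorphism promotes $V$ to a static $D^{h+}(G,\Qbb_p)$-module, and hence to an object of $\Mod_{D^{h+}(G,\Qbb_p)}(\Dcal(\Qbb_{p,\square}))$ whose image under the forgetful functor to $\Rep_{\square}(G)$ recovers $V$. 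Lemma \ref{lem:h+-an implies la}(3) then gives that $V$ is $h+$-analytic.

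The main obstacle is carrying out the dualization from coaction to action in the condensed/solid setting of Step 1, since it relies on the nuclearity and reflexivity of $C^h(G,\Qbb_p)$ and on the identification $C^h(G,V)\simeq C^h(G,\Qbb_p)\otimes_{\Qbb_{p,\square}} V$ for static $V$. Once those identifications are justified via Lemma \ref{lem:nuclearness} and the general formalism, the remaining steps (the construction of $D^{h+}(G,\Qbb_p)\to D^h(G,\Qbb_p)$ and the appeal to Lemma \ref{lem:h+-an implies la}(3)) are purely formal.
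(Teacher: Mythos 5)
Your proposal is correct and follows essentially the same route as the paper: produce a $D^h(G,\Qbb_p)$-module structure on $V$ from non-derived $h$-analyticity, pass to a $D^{h+}(G,\Qbb_p)$-module structure via the algebra map $D^{h+}(G,\Qbb_p)\to D^h(G,\Qbb_p)$, and conclude with Lemma \ref{lem:h+-an implies la}(3). The only cosmetic difference is that the paper obtains the $D^h(G,\Qbb_p)$-module structure by citing the isomorphism $H^0(\Gamma(G,C^h(G,V)))\cong \intHom_G(D^h(G,\Qbb_p),V)$ from \cite[Theorem 4.36(1)]{RJRC22}, whereas you reconstruct it by dualizing the coaction map directly.
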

\begin{proof}
	By \cite[Theorem 4.36(1)]{RJRC22}, we have an isomorphism
	$$H^0(\Gamma(G,C^h(G,V))) \cong H^0(\intHom_G(D^h(G,\Qbb_p),V)).$$
	Therefore, $V$ becomes a $D^h(G,\Qbb_p)$-module, and therefore, it becomes a $D^{h+}(G,\Qbb_p)$-module.
	Then the claim follows from Lemma \ref{lem:h+-an implies la} (3).
\end{proof}

In the following, to handle both the cases of locally analytic representations and $h\dagger$-analytic representations simultaneously, we denote $\la$ by $\infty \dagger$.
This notation is consistent with various computations.

The following lemma and proposition are analogues of \cite[Proposition 1.62, Proposition 1.63]{Mikami24}.
Since their proofs are the same as those in \cite{Mikami24}, we omit them.

\begin{lemma}\label{lem:orbit morphism locally analytic}
	Let $h$ be a positive rational number or $\infty$.
	Let $V\in \Rep_{\square}^{h\dagger}(G)$ be an $h\dagger$-analytic representation.
	Then, there is a natural equivalence in $\Rep_{\square}(G)$
	$$C^{h\dagger}(G,V)_{\star_1}\simeq C^{h\dagger}(G,V)_{\star_{1,3}}.$$
\end{lemma}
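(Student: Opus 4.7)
The plan is to exhibit an explicit $G$-equivariant isomorphism $\Phi \colon C^{h\dagger}(G,V)_{\star_1}\xrightarrow{\sim} C^{h\dagger}(G,V)_{\star_{1,3}}$ given informally by $\Phi(f)(g) = g\cdot f(g)$, with inverse $\Psi(f)(g) = g^{-1}\cdot f(g)$; this follows the same template as \cite[Proposition 1.53]{Mikami24}.

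First, I would produce the orbit morphism $\rho_V \colon V \to C^{h\dagger}(G,V)$, informally sending $v$ to $g\mapsto g\cdot v$. By Lemma \ref{lem:la=colim of h-an} and the $h\dagger$-analyticity of $V$ (if $h=\infty\dagger$, this is locally analytic), there is an equivalence $V \simeq \varinjlim_{h'<h} V^{h'+-\an}$; each piece carries a $D^{h'+}(G,\Qbb_p)$-module structure by Lemma \ref{lem:h+-an implies la}(3). Dualizing the action yields compatible coactions $V^{h'+-\an} \to C^{h'+}(G,V)$, and passing to the colimit in $h'$ produces $\rho_V$. Alternatively, $\rho_V$ can be extracted directly from the unit of the adjunction in Corollary \ref{cor:adj la}(2).

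Given $\rho_V$, define $\Phi$ as the composition
\begin{align*}
C^{h\dagger}(G,\Qbb_p)\otimes_{\Qbb_{p,\square}} V
\xrightarrow{\id\otimes \rho_V} &C^{h\dagger}(G,\Qbb_p)\otimes_{\Qbb_{p,\square}} C^{h\dagger}(G,\Qbb_p)\otimes_{\Qbb_{p,\square}} V\\
\xrightarrow{m\otimes \id_V} &C^{h\dagger}(G,\Qbb_p)\otimes_{\Qbb_{p,\square}} V,
\end{align*}
where $m$ is the multiplication on $C^{h\dagger}(G,\Qbb_p)$. The equivariance from $\star_1$ to $\star_{1,3}$ unwinds directly: both $\Phi(g_0\star_1 f)$ and $g_0\star_{1,3}\Phi(f)$ evaluate at $h$ to $h\cdot f(g_0^{-1}h)$. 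The inverse $\Psi$ is built the same way, precomposing with the antipode $S$ of the Hopf algebra $C^{h\dagger}(G,\Qbb_p)$ before applying $\rho_V$; the standard Hopf comodule identities then yield $\Phi\circ\Psi=\Psi\circ\Phi=\id$.

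The main obstacle is the careful production and manipulation of $\rho_V$ in the derived solid setting, since $h\dagger$-analyticity is defined via an overconvergent filtered colimit rather than a single Banach datum. This is handled by combining the colimit presentation in Lemma \ref{lem:la=colim of h-an} with the nuclearness of $C^{h\dagger}(G,\Qbb_p)$ over $\Qbb_{p,\square}$ (Lemma \ref{lem:nuclearness}), which guarantees that the iterated tensor products and the multiplication map $m$ behave as in the classical Hopf algebra setting. Once $\rho_V$ is in place, every subsequent verification is a formal diagram chase using the comodule axioms.
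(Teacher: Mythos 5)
Your proposal is correct and is essentially the proof the paper intends: the paper omits the argument, citing \cite[Proposition 1.53]{Mikami24}, and the remark following Proposition \ref{prop:geometric interpretation} confirms that the intended equivalence is exactly the untwisting map $f\mapsto(g\mapsto g\cdot f(g))$ built from the coaction. Of your two routes to the orbit morphism $\rho_V$, the one via the unit of the adjunction in Corollary \ref{cor:adj la} (composing $V\simeq V^{h\dagger-\an}=\intHom_{\Qbb_{p,\square}[G]}(\Qbb_p,C^{h\dagger}(G,V))\to C^{h\dagger}(G,V)$) is the cleaner one in the derived solid setting and suffices on its own.
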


\begin{remark}\label{rem:orbit}
	In the same way, we can also prove that for $V\in \Rep_{\square}^{h\dagger}(G)$, there is a natural equivalence in $\Rep_{\square}(G)$
	$$C^{h\dagger}(G,V)_{\star_2}\simeq C^{h\dagger}(G,V)_{\star_{2,3}}.$$
\end{remark}

\begin{proposition}\label{prop:proj formula}
	Let $h$ be a positive rational number or $\infty$.
	Let $V,W \in \Rep_{\square}(G)$ be representations of $G$.
	We assume that $W$ is $h\dagger$-analytic.
	Then there is a natural equivalence in $\Rep_{\square}(G)$
	$$(V\otimes_{\Qbb_{p,\square}}W)^{h\dagger\mathchar`-\an} \simeq V^{h\dagger\mathchar`-\an}\otimes_{\Qbb_{p,\square}}W.$$
\end{proposition}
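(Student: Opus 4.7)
The plan is to use Lemma \ref{lem:orbit morphism locally analytic} to trade the natural $G$-action on $W$ inside $C^{h\dagger}(G,V\otimes W)_{\star_{1,3}}$ for a trivial action, and then extract $W$ from the $G$-invariants via a projection formula.

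First, I would rewrite the left-hand side using $C^{h\dagger}(G, V\otimes W)\simeq C^{h\dagger}(G,\Qbb_p)\otimes V\otimes W$: under this identification, the $\star_{1,3}$-action is $\star_1$ on $C^{h\dagger}(G,\Qbb_p)$ combined with the natural actions on $V$ and on $W$. Next, I would tensor the equivalence $C^{h\dagger}(G, W)_{\star_1}\simeq C^{h\dagger}(G, W)_{\star_{1,3}}$ of Lemma \ref{lem:orbit morphism locally analytic} on the right with $V$ (endowed with its natural $G$-action). Regrouping the triple tensor product, this yields an equivalence in $\Rep_{\square}(G)$
\[
C^{h\dagger}(G, V)_{\star_{1,3}}\otimes W^{\triv}\;\simeq\;C^{h\dagger}(G, V\otimes W)_{\star_{1,3}},
\]
where $W^{\triv}$ denotes $W$ equipped with the trivial $G$-action. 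Applying $\intHom_{\Qbb_{p,\square}[G]}(\Qbb_p,-)$ and using the projection formula
\[
\intHom_{G}(\Qbb_p, N\otimes W^{\triv})\;\simeq\;\intHom_{G}(\Qbb_p, N)\otimes W
\]
would then give $(V\otimes W)^{h\dagger-\an}\simeq V^{h\dagger-\an}\otimes W$ as desired.

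The main subtlety will be promoting this to an equivalence in $\Rep_{\square}(G)$ for the \emph{residual} $G$-action: on $(V\otimes W)^{h\dagger-\an}$ this action comes from $\star_2$, while on $V^{h\dagger-\an}\otimes W$ it is the diagonal of the $\star_2$-induced action on $V^{h\dagger-\an}$ and the natural action on $W$. To handle this, I would upgrade Lemma \ref{lem:orbit morphism locally analytic} to an equivalence in $\Rep_{\square}(G\times G)$, with the second factor acting through $\star_2$, and then carry out the tensor manipulations above inside $\Rep_{\square}(G\times G)$; the correct match-up of residual actions then drops out from the explicit form of the orbit map. The projection formula invoked at the end is a formal consequence of the 6-functor formalism on the classifying stack — concretely, of the weak $\Dcal$-properness of $*/\Gbb^{\la}\to\AnSpec\Qbb_{p,\square}$ proved later in the paper.
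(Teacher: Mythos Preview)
Your outline is the same as the argument the paper has in mind (the one from \cite[Proposition 1.54]{Mikami24} referenced just above the statement): apply Lemma~\ref{lem:orbit morphism locally analytic} to $W$ to swap the $\star_{1,3}$-action for $\star_1$ on the $W$-factor, then pull $W^{\triv}$ through $\intHom_{\Qbb_{p,\square}[G]}(\Qbb_p,-)$. The upgrade to $\Rep_{\square}(G\times G)$ to track the residual $\star_2$-action is exactly the right bookkeeping.

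There is, however, a gap in your justification of the projection formula $\intHom_G(\Qbb_p,\,N\otimes W^{\triv})\simeq\intHom_G(\Qbb_p,N)\otimes W$. Invoking weak $\Dcal$-properness of $*/\Gbb^{\la}\to\AnSpec\Qbb_{p,\square}$ is both circular and not quite applicable. It is circular because that statement (Proposition~\ref{prop:6-ff classifying stack}) rests on the geometric interpretation Proposition~\ref{prop:geometric interpretation}, which requires $\Rep_{\square}^{h\dagger}(G)$ to be symmetric monoidal --- precisely the corollary of the proposition you are proving. It is not quite applicable because that 6-functor projection formula lives on $\Rep_{\square}^{\la}(G)\simeq\Dcal(*/\Gbb^{\la})$, whereas your $N=C^{h\dagger}(G,V)_{\star_{1,3}}$ lies only in $\Rep_{\square}(G)$ for general $V$. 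The correct justification is more elementary and logically prior: $\Qbb_p$ is a perfect $\Qbb_{p,\square}[G]$-module by the Lazard--Serre resolution (cf.\ the proof of Lemma~\ref{lem:descendable zero map}), so $\intHom_{\Qbb_{p,\square}[G]}(\Qbb_p,-)$ is computed by a finite complex of forgetful functors and therefore commutes with $-\otimes_{\Qbb_{p,\square}}W$ for any $W$.
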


\begin{remark}
	In the case of locally analytic representations, the above proposition is proven in \cite[Corollary 3.2.14 (3)]{RJRC23}.
\end{remark}

\begin{corollary}
	Let $h$ be a positive rational number or $\infty$.
	The full subcategory $\Rep_{\square}^{h\dagger}(G)\subset \Rep_{\square}(G)$ is stable under tensor products.
	In particular, $\Rep_{\square}^{h\dagger}(G)$ becomes a symmetric monoidal $\infty$-category.
\end{corollary}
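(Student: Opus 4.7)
The plan is direct: deduce the statement as a formal consequence of the projection-formula type result in Proposition \ref{prop:proj formula}, and then upgrade closure under tensor products to an inherited symmetric monoidal structure.

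Fix $V,W \in \Rep_{\square}^{h\dagger}(G)$. I want to show that the counit map
$$\epsilon_{V\otimes W}\colon (V\otimes_{\Qbb_{p,\square}} W)^{h\dagger-\an}\longrightarrow V\otimes_{\Qbb_{p,\square}} W$$
is an equivalence. Since $W$ is $h\dagger$-analytic, Proposition \ref{prop:proj formula} provides a natural equivalence
$$(V\otimes_{\Qbb_{p,\square}} W)^{h\dagger-\an}\simeq V^{h\dagger-\an}\otimes_{\Qbb_{p,\square}} W,$$
and by naturality this equivalence identifies $\epsilon_{V\otimes W}$ with $\epsilon_V\otimes \id_W$. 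Since $V$ is also $h\dagger$-analytic, $\epsilon_V\colon V^{h\dagger-\an}\to V$ is an equivalence, hence so is $\epsilon_V\otimes \id_W$, and therefore so is $\epsilon_{V\otimes W}$. This shows $V\otimes_{\Qbb_{p,\square}}W\in \Rep_{\square}^{h\dagger}(G)$.

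For the second assertion, I would argue that the symmetric monoidal structure on $\Rep_{\square}(G)$ restricts to $\Rep_{\square}^{h\dagger}(G)$. The full subcategory contains the monoidal unit: the trivial representation $\Qbb_p$ is $h+$-analytic (as it carries a $D^{h+}(G,\Qbb_p)$-module structure by Lemma \ref{lem:h+-an implies la}), hence $h\dagger$-analytic (resp. locally analytic) for every positive rational $h$ (resp. $h=\infty\dagger$) by Corollary \ref{cor:transitivity}. Combined with the closure under $\otimes_{\Qbb_{p,\square}}$ just established, the full subcategory inclusion $\Rep_{\square}^{h\dagger}(G)\hookrightarrow \Rep_{\square}(G)$ inherits a unique symmetric monoidal structure making the inclusion a symmetric monoidal functor (for example by restricting the cocartesian fibration defining the symmetric monoidal structure to the full subcategory spanned by tuples of $h\dagger$-analytic objects, which is stable under the tensor product and contains the unit).

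There is no substantial obstacle: once Proposition \ref{prop:proj formula} is available, everything reduces to naturality of the counit and to the standard fact that a full subcategory closed under the tensor product and containing the unit inherits the symmetric monoidal structure. The only place where one must be slightly careful is to invoke naturality of the equivalence in Proposition \ref{prop:proj formula} to identify $\epsilon_{V\otimes W}$ with $\epsilon_V\otimes \id_W$; this is a consequence of how that equivalence is constructed from the $(-)^{h\dagger-\an}$-functor applied to the morphism $V^{h\dagger-\an}\otimes W\to V\otimes W$.
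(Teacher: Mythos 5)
Your proof is correct and follows exactly the route the paper intends: the corollary is stated without proof as an immediate consequence of Proposition \ref{prop:proj formula}, and your argument (apply the projection formula with $W$ analytic, identify the counit of $V\otimes W$ with $\epsilon_V\otimes\id_W$ by naturality, and note the unit $\Qbb_p$ is $h\dagger$-analytic so the symmetric monoidal structure restricts) is the standard way to fill it in. No gaps.
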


\begin{remark}
	The analogous statement for $h+$-analytic representations is not true.
	This is one of the reasons why we introduce the notion of $h\dagger$-analytic representations.
\end{remark}

Next, we give geometric interpretations of locally analytic representations and $h\dagger$-analytic representations.
For this, we prove some lemmas.

\begin{lemma}\label{lem:tensor product dual}
	Let $h\geq 0$ be a rational number.
	\begin{enumerate}
		\item The natural morphism 
		$$C^h(G,\Qbb_p) \to \intHom_{\Qbb_p}(D^h(G,\Qbb_p),\Qbb_p)$$ 
		is an equivalence.
		\item Let $V\in \Dcal(\Qbb_{p,\square})$ be a $\Qbb_{p,\square}$-module.
		Then a $C^h(G,\Qbb_p)$-module $$\intHom_{\Qbb_p}(D^h(G,\Qbb_p),V)$$ is $(C^h(G,\Qbb_p),C^h(G,\Zbb_p))_{\square}$-complete, and the natural morphism
		$$(C^h(G,\Qbb_p),C^h(G,\Zbb_p))_{\square}\otimes_{\Qbb_{p,\square}} V \to \intHom_{\Qbb_p}(D^h(G,\Qbb_p),V)$$ 
		is an equivalence.
	\end{enumerate}
\end{lemma}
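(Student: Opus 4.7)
The plan is to invoke the reflexivity and nuclearity of Banach $\Qbb_p$-spaces in the solid setting (Clausen--Scholze's analytic geometry), specialized to the Banach space $C^h(G,\Qbb_p)$. Since $G/\Gbb_h(\Qbb_p)$ is a finite set and $\Ocal(\Gbb_h)$ is a Tate algebra of dimension $d$, the Banach $\Qbb_p$-module $C^h(G,\Qbb_p)$ admits an orthonormal Schauder basis indexed by a countable set $I$, so that $C^h(G,\Qbb_p)\simeq c_0(I,\Qbb_p)$ and $C^h(G,\Zbb_p)\simeq c_0(I,\Zbb_p)$ as solid modules. A direct computation of the classical Banach dual (which agrees with the solid dual since the relevant derived $\Ext$'s vanish) yields
\[
D^h(G,\Qbb_p)\simeq \Qbb_p\otimes^\square_{\Zbb_p}\prod_I\Zbb_p,
\]
concentrated in degree $0$, so the $H^0$ in its definition is harmless.

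For part (1), I would then compute
\[
\intHom_{\Qbb_{p,\square}}\bigl(D^h(G,\Qbb_p),\Qbb_p\bigr)
\simeq \intHom_{\Zbb_{p,\square}}\Bigl(\prod_I\Zbb_p,\Qbb_p\Bigr)
\simeq c_0(I,\Qbb_p)\simeq C^h(G,\Qbb_p),
\]
where the middle equivalence uses that $\prod_I\Zbb_p$ identifies with the reduced free solid $\Zbb_p$-module on the one-point compactification $I^+$ (with basepoint $\infty$), so that maps into $\Qbb_p$ are precisely pointed continuous maps $I^+\to\Qbb_p$, i.e., null sequences. A direct check shows this identification coincides with the canonical evaluation morphism in the statement.

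For part (2), the claim about the natural morphism is the nuclearity property of $C^h(G,\Qbb_p)$ as a $\Qbb_{p,\square}$-module (Lemma~\ref{lem:nuclearness}): for a nuclear module $N$ with solid dual $N^\vee$, the natural map $N\otimes^\square V\to \intHom(N^\vee,V)$ is an equivalence for every $V\in\Dcal(\Qbb_{p,\square})$. Applying this with $N=C^h(G,\Qbb_p)$ and $N^\vee=D^h(G,\Qbb_p)$ identifies the right-hand side with $C^h(G,\Qbb_p)\otimes^\square_{\Qbb_{p,\square}} V$. For the left-hand side, since $C^h(G,\Zbb_p)$ is already a solid $\Zbb_p$-module, the analytic base change $(C^h(G,\Qbb_p),C^h(G,\Zbb_p))_\square\otimes_{\Qbb_{p,\square}} V$ reduces to the ordinary solid tensor product $C^h(G,\Qbb_p)\otimes^\square_{\Qbb_{p,\square}}V$ without further completion; the completeness assertion then follows automatically from the natural $(C^h(G,\Qbb_p),C^h(G,\Zbb_p))_\square$-module structure on both sides.

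The main obstacle is the nuclearity formula $N\otimes^\square V\simeq \intHom(N^\vee,V)$ specialized to the Banach space $N=C^h(G,\Qbb_p)$. Although this is a standard consequence of the Clausen--Scholze theory of nuclear solid modules, invoking it cleanly requires the identification of the solid dual of a Banach $\Qbb_p$-space with its classical strong dual together with vanishing of higher Ext's; once part (1) is in hand this reduction becomes routine, using the orthonormal basis to reduce to the compact projective generator $\prod_I\Zbb_p$ of $\Dcal(\Zbb_{p,\square})$ and the basic duality for free solid modules on profinite sets.
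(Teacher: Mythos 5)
Your treatment of part (1) is fine: the identification $D^h(G,\Qbb_p)\simeq\Qbb_p\otimes_{\Zbb_p}\prod_I\Zbb_p$ via an orthonormal basis and the computation of the double dual is exactly the content of the reference the paper invokes (\cite[Lemma 3.10, Remark 3.11]{RJRC22}), so part (1) is essentially the same argument carried out by hand.

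Part (2), however, has a genuine gap, in two places. First, the ``nuclearity formula'' you invoke is not what nuclearity of $N=C^h(G,\Qbb_p)$ gives. Nuclearity of $N$ yields equivalences of the form $\intHom(P,N)\simeq P^{\vee}\otimes N$ for compact projective $P$ (the dual module is pulled out of the \emph{source} when the \emph{target} is nuclear); it does not yield $\intHom(N^{\vee},V)\simeq N\otimes V$ for arbitrary $V$. Second, and more seriously, that formula is simply false with the uncompleted solid tensor product, and your claim that the analytic base change $(C^h(G,\Qbb_p),C^h(G,\Zbb_p))_{\square}\otimes_{\Qbb_{p,\square}}V$ ``reduces to the ordinary solid tensor product without further completion'' is incorrect: the analytic ring structure $(A,A^+)_{\square}$ induced from a ring of integral elements is strictly finer than the structure induced from $\Qbb_{p,\square}$, and the two tensor products genuinely differ. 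Take $V=\Qbb_{p,\square}[S]\simeq\bigl(\prod_J\Zbb_p\bigr)[1/p]$ for an infinite profinite $S$ and $d=1$: then $\Qbb_p\langle T\rangle\otimes_{\Qbb_{p,\square}}V$ consists of series $\sum_n a_nT^n$ with $a_n\in\prod_J\Zbb_p$ tending to $0$ \emph{uniformly} in $j$, whereas $\intHom(D^h,V)\simeq\prod_J c_0(I,\Qbb_p)$ only imposes decay for each $j$ separately; the latter agrees with the completed tensor product $(\Qbb_p\langle T\rangle,\Zbb_p\langle T\rangle)_{\square}\otimes_{\Qbb_{p,\square}}V$, not the plain one. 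So the completion you dismiss is precisely where the content of the lemma lies, and the completeness of $\intHom(D^h(G,\Qbb_p),V)$ for the finer analytic ring structure does not ``follow automatically'' from a module structure over the underlying ring. The correct route is the one the paper takes: for integral $h$ one writes $C^h(G,\Qbb_p)$ as a finite product of Tate algebras $\Qbb_p\langle T_1,\ldots,T_d\rangle$ and invokes \cite[Corollary 2.19]{RJRC22}, whose content is exactly the completed-tensor identity for such algebras, and one reduces general rational $h$ to this case by passing to a finite extension $K/\Qbb_p$ containing $p^h$.
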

\begin{proof}
	Part (1) follows from \cite[Lemma 3.10, Remark 3.11]{RJRC22}.
	Next, we prove (2).
	If $h$ is an integer, then $C^h(G,\Qbb_p)$ is a finite direct product of affinoid $\Qbb_p$-algebras which are isomorphic to $\Qbb_p\langle T_1,\ldots,T_d \rangle$.
	Therefore, the claim follows from \cite[Corollary 2.19]{RJRC22}.
	In general, by taking a finite extension $K/\Qbb_p$ such that $p^h \in K$, we can reduce the claim to the case above.
\end{proof}

\begin{proposition}\label{prop:adjoint cofree}
	Let $h$ be a positive rational number or $\infty$.
	The functor 
	$$\Dcal(\Qbb_{p,\square}) \to \Rep_{\square}^{h\dagger}(G) ;\; V \mapsto C^{h\dagger}(G,V)_{\star_2}$$
	is a right adjoint functor of the forgetful functor $\Rep_{\square}^{h\dagger}(G)\to \Dcal(\Qbb_{p,\square})$.
\end{proposition}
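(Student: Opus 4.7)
The plan is to exhibit the adjunction by constructing the unit and counit explicitly and verifying the triangle identities. Set $U \colon \Rep_{\square}^{h\dagger}(G) \to \Dcal(\Qbb_{p,\square})$ for the forgetful functor and $R(V) \coloneqq C^{h\dagger}(G, V)_{\star_2}$. The counit $\epsilon_V \colon R(V) \to V$ is evaluation at the identity $e \in G$, obtained by tensoring the augmentation $C^{h\dagger}(G, \Qbb_p) \to \Qbb_p$ with $\id_V$. The unit $\eta_W \colon W \to R(U(W))$ on $W \in \Rep_{\square}^{h\dagger}(G)$ is the orbit map, obtained as the composition of the equivalence $W \simeq W^{h\dagger-\an}$ with the inclusion $W^{h\dagger-\an} = \intHom_{\Qbb_{p,\square}[G]}(\Qbb_p, C^{h\dagger}(G, W)_{\star_{1,3}}) \hookrightarrow C^{h\dagger}(G, W)_{\star_2}$; it is $\star_2$-equivariant because the $\star_2$-action commutes with the $\star_1$- and $\star_3$-actions inside the $G^3$-action on $C^{h\dagger}(G, W)$, and because by definition the $G$-action on $W^{h\dagger-\an}$ is induced from $\star_2$.

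The preliminary step is to verify that $R$ actually lands in $\Rep_{\square}^{h\dagger}(G)$. I would write $C^{h\dagger}(G, V) \simeq C^{h\dagger}(G, \Qbb_p) \otimes_{\Qbb_{p,\square}} V$ with trivial (hence vacuously $h\dagger$-analytic) $G$-action on $V$, and apply Proposition \ref{prop:proj formula} to reduce to showing that $C^{h\dagger}(G, \Qbb_p)_{\star_2}$ is $h\dagger$-analytic. Expressing this as the filtered colimit $\varinjlim_{h' < h} C^{h'}(G, \Qbb_p)_{\star_2}$, Lemma \ref{lem:stable under colimit} together with Corollary \ref{cor:transitivity} reduces further to checking that each $C^{h'}(G, \Qbb_p)_{\star_2}$ is $h'+$-analytic. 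The explicit decomposition $C^{h'}(G, \Qbb_p) = \bigoplus_{g \in G/\Gbb_{h'}(\Qbb_p)} \Ocal(g\Gbb_{h'})$ renders the right regular action manifestly $h'$-analytic in the sense of Definition \ref{defn:non-derived analytic vector}, and Lemma \ref{lem:h-an implies h+-an} upgrades this to $h'+$-analyticity.

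Once these ingredients are in place, the triangle identities are formal. The identity $\epsilon_W \circ \eta_W = \id_W$ is immediate from the definition of the orbit map: evaluating $g \mapsto g \cdot w$ at $e$ returns $w$. The dual identity $C^{h\dagger}(G, \epsilon_V) \circ \eta_{R(V)} = \id_{R(V)}$ follows from the counitality of the Hopf algebra $C^{h\dagger}(G, \Qbb_p)$, since the composition sends $f$ to $g \mapsto (g \star_2 f)(e) = f(g)$. The main obstacle is the preliminary step: upgrading the naive static $h'$-analyticity of $C^{h'}(G, \Qbb_p)_{\star_2}$ to the required derived condensed condition through Lemma \ref{lem:h-an implies h+-an} is the key technical input, after which the construction of the adjunction and the verification of the triangle identities are essentially formal.
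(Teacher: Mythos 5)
Your argument is correct, but it takes a genuinely different route from the paper. The paper does not construct a unit and counit at all: it factors the forgetful functor as $\Rep_{\square}^{h\dagger}(G)\hookrightarrow\Rep_{\square}(G)\to\Dcal(\Qbb_{p,\square})$, invokes the two already-established right adjoints $(-)^{h\dagger-\an}$ (Corollary \ref{cor:adj la}) and $\intHom_{\Qbb_p}(\Qbb_{p,\square}[G],-)_{\star_2}$, and then identifies the composite right adjoint with $C^{h\dagger}(G,-)_{\star_2}$ by the chain of equivalences
$(\intHom_{\Qbb_p}(\Qbb_{p,\square}[G],V))^{h\dagger-\an}\simeq\varinjlim_{h'<h}\intHom_{\Qbb_{p,\square}[G]}(D^{h'+}(G,\Qbb_p),\intHom_{\Qbb_p}(\Qbb_{p,\square}[G],V))\simeq\varinjlim_{h'<h}\intHom_{\Qbb_p}(D^{h'}(G,\Qbb_p),V)\simeq C^{h\dagger}(G,V)$,
using Lemma \ref{lem:h+-an implies la}, the pro-equivalence of $\{D^{h'+}\}$ and $\{D^{h'}\}$, and Lemma \ref{lem:tensor product dual}. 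That route gets the adjunction for free and obtains the $h\dagger$-analyticity of $C^{h\dagger}(G,V)_{\star_2}$ as a byproduct, at the cost of passing through the distribution algebras. Your route is more self-contained at this spot but shifts the burden to two places you correctly identify: (i) the preliminary verification that $R$ lands in $\Rep_{\square}^{h\dagger}(G)$, which your reduction via Proposition \ref{prop:proj formula}, Lemma \ref{lem:stable under colimit}, Corollary \ref{cor:transitivity} and Lemma \ref{lem:h-an implies h+-an} handles (note you also need that a module with trivial $G$-action is $h\dagger$-analytic, which follows from Lemma \ref{lem:h+-an implies la}(3) via the augmentation $D^{h'+}(G,\Qbb_p)\to\Qbb_p$); and (ii) the $\infty$-categorical bookkeeping: your unit and counit must be specified as genuine natural transformations (the counit via the augmentation of the Hopf algebra, the unit as the natural map $(-)^{h\dagger-\an}\to C^{h\dagger}(G,-)_{\star_2}$ precomposed with the inverse of the natural equivalence $(-)^{h\dagger-\an}\simeq\id$ on $\Rep_{\square}^{h\dagger}(G)$), after which triangle identities verified in the homotopy $2$-category do suffice to produce an adjunction of $\infty$-categories (the induced maps on mapping anima are then mutually inverse up to homotopy, cf.\ the dual of \cite[Proposition 5.2.2.8]{HTT}). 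With those caveats made explicit, your proof is complete and independent of the paper's.
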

\begin{proof}
	By Corollary \ref{cor:adj la}, the forgetful functor $\Rep_{\square}^{h\dagger}(G)\to\Rep_{\square}(G)$ admits a right adjoint functor 
	$$\Rep_{\square}(G) \to \Rep_{\square}^{h\dagger}(G) ;\; V \mapsto V^{h\dagger\mathchar`-\an}$$
	and the forgetful functor $\Rep_{\square}(G)\to \Dcal(\Qbb_{p,\square})$ admits a right adjoint functor 
	$$\Dcal(\Qbb_{p,\square}) \to \Rep_{\square}(G) ;\; V \mapsto \intHom_{\Qbb_p}(\Qbb_{p,\square}[G],V)_{\star_2}.$$
	Therefore, it is enough to show that for $V\in \Dcal(\Qbb_{p,\square})$, there is a natural equivalence
	$$(\intHom_{\Qbb_p}(\Qbb_{p,\square}[G],V)_{\star_2})^{h\dagger\mathchar`-\an}\simeq C^{h\dagger}(G,V)_{\star_2}.$$
	It follows from the following computation:
	\begin{align*}
		&(\intHom_{\Qbb_p}(\Qbb_{p,\square}[G],V))^{h\dagger\mathchar`-\an} \\
		\simeq &\varinjlim_{h^{\prime}<h} (\intHom_{\Qbb_p}(\Qbb_{p,\square}[G],V))^{h^{\prime}+\mathchar`-\an}\\
		\simeq &\varinjlim_{h^{\prime}<h} \intHom_{\Qbb_{p,\square}[G]}(D^{h^{\prime}+}(G,\Qbb_p),\intHom_{\Qbb_p}(\Qbb_{p,\square}[G],V))\\
		\simeq &\varinjlim_{h^{\prime}<h}\intHom_{\Qbb_p}(D^{h^{\prime}+}(G,\Qbb_p),V)\\
		\simeq &\varinjlim_{h^{\prime}<h}\intHom_{\Qbb_p}(D^{h^{\prime}}(G,\Qbb_p),V)\\
		\simeq &\varinjlim_{h^{\prime}<h}(C^{h^{\prime}}(G,\Qbb_p),C^{h^{\prime}}(G,\Zbb_p))_{\square}\otimes_{\Qbb_{p,\square}} V \\
		\simeq &C^{h\dagger}(G,V),
	\end{align*}
	where the first equivalence follows from Lemma \ref{lem:la=colim of h-an}, the second equivalence follows from Lemma \ref{lem:h+-an implies la}, the fourth equivalence follows from the fact that projective systems $\{D^{h^{\prime}+}(G,\Qbb_p)\}_{h^{\prime}<h}$ and $\{D^{h^{\prime}}(G,\Qbb_p)\}_{h^{\prime}<h}$ are pro-equivalent, and the fifth equivalence follows from Lemma \ref{lem:tensor product dual}.
\end{proof}

We want to show that the adjunction in Proposition \ref{prop:adjoint cofree} is comonadic.

\begin{lemma}\label{lem:descendable zero map}
	Let $F\colon \Rep_{\square}(G) \to \Dcal(\Qbb_{p,\square})$ be the forgetful functor.
	Then, there exists an integer $n\geq 1$ such that for morphisms $f_i \colon V_i\to V_{i+1}$ in $\Rep_{\square}(G)$ ($i=0,1,\ldots,n-1$), if $F(f_i)=0$ for all $i$, then $f_{n-1}\circ \cdots \circ f_0=0$ in $\Rep_{\square}(G)$.
\end{lemma}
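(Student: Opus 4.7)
Proof proposal.

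The plan is to establish the lemma with $n = d+1$, where $d = \dim G$ as a $p$-adic Lie group, by exploiting the finite cohomological dimension of continuous cohomology of $G$.

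\textbf{Step 1 (Hom as group cohomology).} For $V,W\in \Rep_{\square}(G)$, I would first exhibit a natural equivalence
\[
R\Map_{\Rep_{\square}(G)}(V,W)\simeq R\Gamma(G,R\Map_{\Dcal(\Qbb_{p,\square})}(V,W)),
\]
where $G$ acts on the internal Hom by conjugation and $R\Gamma(G,-)=R\Map_{\Rep_{\square}(G)}(\Qbb_p,-)$ is the derived $G$-invariants. This is the adjunction identity together with the fact that the internal Hom in $\Rep_{\square}(G)$ is the underlying internal Hom of $\Dcal(\Qbb_{p,\square})$ equipped with the conjugation action.

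\textbf{Step 2 (Cohomological dimension).} Next I would show that $R\Gamma(G,-)$ has cohomological amplitude $[0,d]$. By Lazard's theorem $G$ contains an open normal subgroup $G'\cong \Zbb_p^d$ with $G/G'$ finite. For $G'=\Zbb_p^d$, iterating the length-one resolution $\Qbb_{p,\square}[\Zbb_p]\xrightarrow{\gamma-1}\Qbb_{p,\square}[\Zbb_p]\to \Qbb_p$ provides a Koszul-type resolution of $\Qbb_p$ by free $\Qbb_{p,\square}[\Zbb_p^d]$-modules of length $d$, whence $R\Gamma(\Zbb_p^d,-)$ has amplitude $[0,d]$. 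The Hochschild--Serre spectral sequence for $G'\subset G$ then transfers the bound to $G$, using that $G/G'$ is finite and $\Qbb_p$ is of characteristic zero so $H^{>0}(G/G',-)=0$.

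\textbf{Step 3 (Filtration).} Endowing $R\Map_{\Dcal(\Qbb_{p,\square})}(V,W)$ with its Postnikov filtration and applying $R\Gamma(G,-)$, I obtain a hypercohomology spectral sequence
\[
E_2^{p,q}=H^p(G,\Ext^q_{\Dcal(\Qbb_{p,\square})}(V,W))\Longrightarrow \Ext^{p+q}_{\Rep_{\square}(G)}(V,W),
\]
with $E_2^{p,q}=0$ for $p>d$ by Step~2. Restricting to $\Ext^0_{\Rep_{\square}(G)}(V,W)=\Hom_{\Rep_{\square}(G)}(V,W)$, this induces a finite decreasing filtration $F^\bullet$ with $F^{d+1}=0$, and the composite
\[
F^0\twoheadrightarrow F^0/F^1\hookrightarrow H^0(G,\Hom_{\Dcal(\Qbb_{p,\square})}(V,W))\hookrightarrow \Hom_{\Dcal(\Qbb_{p,\square})}(V,W)
\]
recovers the forgetful map $F$. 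Hence $F(f)=0$ forces $f\in F^1$.

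\textbf{Step 4 (Multiplicativity and conclusion).} The Postnikov filtration on internal Hom spectra is compatible with composition of morphisms, in the sense that the composition pairing $R\Map_{\Dcal}(V_1,V_2)\otimes R\Map_{\Dcal}(V_0,V_1)\to R\Map_{\Dcal}(V_0,V_2)$ is a map of filtered spectra. Applying $R\Gamma(G,-)$, the induced filtration on $\Hom_{\Rep_{\square}(G)}$ satisfies $F^a\cdot F^b\subseteq F^{a+b}$. Thus for $n=d+1$ composable morphisms $f_i\in F^1$ we get $f_{n-1}\circ\cdots\circ f_0\in F^{d+1}=0$, which is the lemma.

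\textbf{Main obstacle.} The technically delicate points are the construction and multiplicativity of the Postnikov filtration for the composition pairing in the solid $\infty$-categorical framework (Step~4), and the transfer of the cohomological dimension bound from the classical setting to solid $\Qbb_{p,\square}[G]$-modules (Step~2). Both should go through by standard reductions, but care is required because $V$ and $W$ can be unbounded objects of $\Dcal(\Qbb_{p,\square})$.
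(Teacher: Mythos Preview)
Your overall strategy—finite cohomological dimension plus a multiplicative filtration—is sound and would yield the sharper bound $n=d+1$. However, Step~3 contains a genuine gap: the Postnikov filtration on $M=\intHom_{\Qbb_p}(V,W)$ does not produce the filtration you describe. When you filter $M$ by $\tau^{\geq q}M$ and apply $R\Gamma(G,-)$, the induced filtration on $H^0$ is indexed by $q$, and its \emph{bottom} piece (not the top quotient) is the image of $H^0(G,H^0(M))=H^0(G,\Hom_{\Dcal}(V,W))$. Concretely, the map $\tau^{\geq 0}M\to M$ gives an injection $\alpha\colon H^0(G,H^0(M))\hookrightarrow \Hom_{\Rep_{\square}(G)}(V,W)$, and one checks that the forgetful map $\beta$ satisfies $\beta\circ\alpha=\mathrm{incl}$. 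Thus $\ker(\beta)$ is a \emph{complement} to the bottom filtration step, not a higher step of the Postnikov filtration; the implication ``$F(f)=0\Rightarrow f\in F^1$'' fails. (Try $G=\Zbb_p$, $V=\Qbb_p$, $W=\Qbb_p[1]$: every map has $F(f)=0$, yet the Postnikov filtration on $H^0(\Gamma)=\Qbb_p$ has bottom step $0$.) The filtration with the property you want—forgetful map as top quotient—comes instead from a free resolution of $\Qbb_p$ over $\Qbb_{p,\square}[G]$ (the Adams filtration), but then the multiplicativity in Step~4 needs a different justification than compatibility of Postnikov towers with tensor products.

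The paper's proof sidesteps this by arguing directly at the level of objects rather than via a filtration on Hom-sets. It introduces the class $\Ccal_n$ of those $V$ for which the induced $n$-fold composite on $\intHom_{\Qbb_p}(V,-)_{\star_{1,3}}$ vanishes in $\Rep_{\square}(G)$, observes that $\Qbb_{p,\square}[G]\in\Ccal_1$ because $\intHom_{\Qbb_p}(\Qbb_{p,\square}[G],-)_{\star_{1,3}}$ factors through the forgetful functor, and then uses closure under retracts, shifts, sums, and fibers (which sends $\Ccal_n$ to $\Ccal_{2n}$) together with the Lazard--Serre resolution to place $\Qbb_p$ in some $\Ccal_n$. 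This is essentially the Adams-filtration argument unwound, but it avoids having to set up and verify multiplicativity of a spectral sequence. Note also that your Step~2 should invoke a uniform pro-$p$ subgroup rather than $\Zbb_p^d$; the group need not be abelian, but the Lazard--Serre resolution still gives the needed finite free resolution of $\Qbb_p$.
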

\begin{proof}
	For an integer $n>0$, let $\Ccal_n$ be the full subcategory of $\Rep_{\square}(G)$ consisting of those representations $V\in \Rep_{\square}(G)$ such that for morphisms $f_i \colon V_i\to V_{i+1}$ in $\Rep_{\square}(G)$ ($i=0,1,\ldots,n-1$), if $F(f_i)=0$ for all $i$, then $$\intHom_{\Qbb_p}(V,V_0)_{\star_{1,3}}\overset{f_0\circ}{\longrightarrow} \intHom_{\Qbb_p}(V,V_1)_{\star_{1,3}} \overset{f_1\circ}{\longrightarrow} \cdots \overset{f_{n-1}\circ}{\longrightarrow} \intHom_{\Qbb_p}(V,V_n)_{\star_{1,3}}$$ is equal to $0$ in $\Rep_{\square}(G)$.
	It is enough to show that there exists an integer $n\geq 0$ such that $\Qbb_p \in \Ccal_n$.

	First, we have a functor 
	$$H \colon \Dcal(\Qbb_{p,\square}) \to \Rep_{\square}(G) ;\; V \mapsto \intHom_{\Qbb_p}(\Qbb_{p,\square}[G],V)_{\star_1}.$$
	Then the functor $H\circ F$ is given by $V\mapsto \intHom_{\Qbb_p}(\Qbb_{p,\square}[G],V)_{\star_1}$, and it is equivalent to the functor $\Rep_{\square}(G) \to \Rep_{\square}(G) ;\; V\mapsto \intHom_{\Qbb_p}(\Qbb_{p,\square}[G],V)_{\star_{1,3}}$.
	Therefore, we get $\Qbb_{p,\square}[G]\in \Ccal_0$.

	Next, by the definition, the full subcategory $\Ccal_n \subset \Rep_{\square}(G)$ is stable under retracts, shifts, and direct sums.
	Moreover, for any morphism $f \colon V\to W$ in $\Ccal_n$, $\fib(V\to W)$ lies in $\Ccal_{2n}$.
	We take a compact open normal uniform pro-$p$ subgroup $G_0 \subset G$.
	Then we have the Lazard-Serre resolution
	$$0 \to \Qbb_{p,\square}[G_0]^{\binom{d}{d}}\to \cdots\to \Qbb_{p,\square}[G_0]^{\binom{d}{0}} \to \Qbb_p\to 0$$
	(\cite[Theorem 5.7]{RJRC22}).
	By applying $\Qbb_{p,\square}[G]\otimes_{\Qbb_{p,\square}[G_0]} -$, we get a resolution
	$$0 \to \Qbb_{p,\square}[G]^{\binom{d}{d}}\to \cdots\to \Qbb_{p,\square}[G]^{\binom{d}{0}} \to \Qbb_{p,\square}[G]\otimes_{\Qbb_{p,\square}[G_0]}\Qbb_p\to 0.$$
	Since the trivial representation $\Qbb_p$ of $G$ is a direct summand of $\Qbb_{p,\square}[G]\otimes_{\Qbb_{p,\square}[G_0]}\Qbb_p$, there exists an integer $n>0$ such that $\Qbb_p\in \Ccal_n$.
\end{proof}

\begin{proposition}\label{prop:comonadic}
	Let $h$ be a positive rational number or $\infty$.
	The adjunction
	\begin{align*}
		F \colon \Rep_{\square}^{h\dagger}(G)\rightleftarrows \Dcal(\Qbb_{p,\square})\colon C^{h\dagger}(G,-)_{\star_2}, 
	\end{align*}
	where $F$ is the forgetful functor, is comonadic.
\end{proposition}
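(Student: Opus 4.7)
The plan is to verify the hypotheses of the (dual) Barr--Beck--Lurie comonadicity theorem for the adjunction $F \dashv C^{h\dagger}(G,-)_{\star_2}$. Writing $F = F_0 \circ \iota$, where $\iota \colon \Rep_{\square}^{h\dagger}(G) \hookrightarrow \Rep_{\square}(G)$ is the fully faithful inclusion and $F_0 \colon \Rep_{\square}(G) \to \Dcal(\Qbb_{p,\square})$ is the underlying-module functor, the conservativity of $F$ is immediate: $\iota$ is fully faithful, and $F_0$ is conservative because a representation vanishes if and only if its underlying $\Qbb_{p,\square}$-module does.

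The main step will be to show that $F$ preserves $F$-split totalizations. The functor $F_0$ is a right adjoint---its left adjoint is $V \mapsto \Qbb_{p,\square}[G] \otimes_{\Qbb_p} V$---so it preserves all small limits. Hence for any cosimplicial object $X^\bullet$ in $\Rep_{\square}^{h\dagger}(G)$ the totalization $T \coloneqq \Tot_{\Rep_{\square}(G)}(\iota X^\bullet)$ exists and satisfies $F_0(T) \simeq \Tot_{\Dcal(\Qbb_{p,\square})}(F X^\bullet)$. The proof will therefore reduce to showing that $T$ lies in $\Rep_{\square}^{h\dagger}(G)$ whenever $X^\bullet$ is $F$-split; granting this, $T$ represents $\Tot(X^\bullet)$ inside $\Rep_{\square}^{h\dagger}(G)$ and $F(T) \simeq F_0(T) \simeq \Tot(F X^\bullet)$ yields the required compatibility.

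To establish that $T \in \Rep_{\square}^{h\dagger}(G)$, I would invoke Lemma \ref{lem:descendable zero map}, which is the uniform-nilpotency form of the descendability of $F_0$ in the sense of Mathew \cite{Mat16}. By a standard descent argument, this property implies that any $F_0$-split totalization in $\Rep_{\square}(G)$ is already computed by a finite partial totalization $\Tot^{\leq N}(\iota X^\bullet)$ for some $N$ depending only on the nilpotency bound supplied by the lemma. Each $\iota X^i$ is $h\dagger$-analytic, and finite limits of $h\dagger$-analytic representations remain $h\dagger$-analytic: writing $(-)^{h\dagger-\an} = \varinjlim_{h' < h}(-)^{h'+-\an}$, each $(-)^{h'+-\an}$ preserves limits by Lemma \ref{lem:h+-an implies la}, and filtered colimits commute with finite limits in the stable presentable setting, so $(\lim_i V_i)^{h\dagger-\an} \simeq \lim_i V_i$ whenever the indexing is finite and each $V_i \in \Rep_{\square}^{h\dagger}(G)$. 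Therefore $\Tot^{\leq N}(\iota X^\bullet)$, and hence $T$, lies in $\Rep_{\square}^{h\dagger}(G)$.

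The principal obstacle will be the descent-theoretic reduction from the full $F_0$-split totalization to a finite partial one in $\Rep_{\square}(G)$. Mere conservativity of $F_0$ is insufficient here; it is essential to use the uniform bound $n$ of Lemma \ref{lem:descendable zero map} to control the entire cobar tower simultaneously, in the spirit of Mathew's descent arguments \cite{Mat16}. Without this uniformity, the filtered colimit defining $(-)^{h\dagger-\an}$ would not interact well with the a priori infinite totalization, and the argument would break down.
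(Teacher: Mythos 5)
Your proposal is correct and follows essentially the same route as the paper: dual Barr--Beck--Lurie, conservativity of the forgetful functor, reduction to showing the totalization computed in $\Rep_{\square}(G)$ lands in $\Rep_{\square}^{h\dagger}(G)$, and the use of Lemma \ref{lem:descendable zero map} to conclude that a split totalization is a retract of a finite partial totalization (the paper phrases this via pro-zero fibers; note it is a retract rather than an equivalence, but stability of $\Rep_{\square}^{h\dagger}(G)$ under retracts and finite limits closes the argument either way).
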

\begin{proof}
	By the dual of the Barr-Beck-Lurie theorem (\cite[Theorem 3.3]{Mat16}, \cite[Theorem 4.7.3.5]{HA}), it suffices to show that $F$ is conservative, and that for every cosimplicial object $V^{\bullet}$ in $\Rep_{\square}^{h\dagger}(G)$ such that $F(V^{\bullet})$ admits a splitting, the natural morphism $F(\Tot(V^{\bullet}))\to \Tot(F(V^{\bullet}))$ is an equivalence.
	It is clear that $F$ is conservative.
	Let $V^{\bullet}$ be a cosimplicial object such that $F(V^{\bullet})$ admits a splitting.
	We regard $V^{\bullet}$ as a cosimplicial object in $\Rep_{\square}(G)$, and let $\Tot^{\prime}(V^{\bullet})$ denote the totalization of $V^{\bullet}$ in $\Rep_{\square}(G)$.
	Since the forgetful functor $F^{\prime}\colon \Rep_{\square}(G) \to \Dcal(\Qbb_{p,\square})$ preserves all small limits, the natural morphism $F^{\prime}(\Tot^{\prime}(V^{\bullet}))\to \Tot(F(V^{\bullet}))$ is an equivalence.
	Therefore, it is enough to show $\Tot^{\prime}(V^{\bullet})\in \Rep_{\square}^{h\dagger}(G)$.
	Since $F(V^{\bullet})$ admits a splitting and $F^{\prime}(\Tot^{\prime}(V^{\bullet}))\to \Tot(F(V^{\bullet}))$ is an equivalence, the pro-object $\{F(\Tot^{\prime}_n(\fib(\Tot^{\prime}(V^{\bullet})\to V^{\bullet})))\}_n$ is pro-zero, where $\Tot^{\prime}_n$ is the $n$-truncated totalization in $\Rep_{\square}(G)$.
	By Lemma \ref{lem:descendable zero map}, $\{\Tot^{\prime}_n(\fib(\Tot^{\prime}(V^{\bullet})\to V^{\bullet}))\}_n$ is also pro-zero.
	Therefore, $\Tot^{\prime}(V^{\bullet})$ is a retract of $\Tot^{\prime}_n(V^{\bullet})$ for $n$ sufficiently large.
	Since $\Rep_{\square}^{h\dagger}(G) \subset \Rep_{\square}(G)$ is stable under finite limits and retracts, we get $\Tot^{\prime}(V^{\bullet})\in \Rep_{\square}^{h\dagger}(G)$.
\end{proof}

The following lemma is a combination of the dual of \cite[Corollary 4.7.5.3, Theorem 4.7.3.5]{HA}.

\begin{lemma}\label{lem:Barr-Beck-Lurie descent}
	Let $\Ccal^{\bullet}\colon \Delta_{+} \to \Cat_{\infty}$ be an augmented cosimplicial $\infty$-category, and set $\Ccal=\Ccal^{-1}$.
	Let $F\colon \Ccal \to \Ccal^0$ be the evident functor.
	We assume the following:
	\begin{enumerate}
		\item The functor $F$ exhibits $\Ccal$ as comonadic over $\Ccal^0$, that is, $F$ has a right adjoint functor $G\colon \Ccal^0\to\Ccal$ and the adjunction $(F,G)$ is comonadic.
		\item For every $\alpha\colon[m]\to[n]$ in $\Delta_+$, the diagram
		$$
		\xymatrix{
			\Ccal^m\ar[r]^-{d_{m+1}^0}\ar[d]_-{\alpha}& \Ccal^{m+1}\ar[d]^-{\alpha^{\prime}}\\
			\Ccal^n \ar[r]^-{d_{n+1}^0}& \Ccal^{n+1}
		}
		$$
		is right adjointable, where $\alpha^{\prime}$ is defined as $\alpha^{\prime}(0)=0$ and $\alpha^{\prime}(i)=\alpha(i-1)+1$ for $i\geq 1$.
		In other words, $d_{m+1}^0$ (resp. $d_{n+1}^0$) admits a right adjoint functor $G_{m+1}\colon \Ccal^{m+1}\to \Ccal^{m}$ (resp. $G_{n+1}\colon \Ccal^{n+1}\to \Ccal^{n}$), and the natural morphism $(\alpha \circ G_{m+1})\to (G_{n+1}\circ \alpha^{\prime})$ of functors from $\Ccal^{m+1}$ to $\Ccal^n$ is an equivalence. 
	\end{enumerate}
	Then the canonical morphism $\Ccal\to \varprojlim_{[n]\in\Delta}\Ccal^n$ is an equivalence of $\infty$-categories.
\end{lemma}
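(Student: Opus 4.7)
The plan is to prove this by combining the dual versions of Lurie's Barr--Beck--Lurie theorem \cite[Theorem 4.7.3.5]{HA} and of the descent statement for monadic adjunctions \cite[Corollary 4.7.5.3]{HA}, exactly as the remark before the lemma suggests. The overall picture: comonadicity of $F$ alone identifies $\Ccal$ with comodules over a canonical comonad on $\Ccal^0$, which can in turn be computed as a totalization of a canonical (cobar) cosimplicial $\infty$-category $\widetilde{\Ccal}^{\bullet}$; condition (2) is then used to identify the given $\Ccal^\bullet$ with $\widetilde{\Ccal}^\bullet$ as augmented cosimplicial objects under $\Ccal$.

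First I would invoke condition (1): by the dual of \cite[Theorem 4.7.3.5]{HA}, $F\colon \Ccal\to \Ccal^0$ has a right adjoint $G$ and exhibits $\Ccal$ as the $\infty$-category $\mathrm{coMod}_T(\Ccal^0)$ of comodules over the comonad $T = F\circ G$. Next, the dual of \cite[Corollary 4.7.5.3]{HA} realizes $\mathrm{coMod}_T(\Ccal^0)$ as the limit $\varprojlim_{[n]\in\Delta}\widetilde{\Ccal}^{n}$ of the cobar cosimplicial $\infty$-category $\widetilde{\Ccal}^{\bullet}$ built from $T$ (roughly, $\widetilde{\Ccal}^n$ is $\Ccal^0$ equipped with the structure of a $T^{\otimes(n+1)}$-coaction, with face and degeneracy maps encoded by the comultiplication and counit of $T$). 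Combining these two statements gives $\Ccal\simeq \varprojlim_{[n]\in\Delta}\widetilde{\Ccal}^{n}$.

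It then remains to identify $\widetilde{\Ccal}^\bullet$ with the given $\Ccal^\bullet$ as augmented cosimplicial $\infty$-categories extending the augmentation $F$; this is where condition (2) enters. Applied to the coface $d^0\colon \Ccal^{n-1}\to \Ccal^n$ and the operator $\alpha$, the right adjointability hypothesis produces a right adjoint $G_n\colon \Ccal^n\to \Ccal^{n-1}$ together with Beck--Chevalley equivalences $\alpha\circ G_{m+1}\simeq G_{n+1}\circ \alpha'$ for every $\alpha\colon[m]\to[n]$. Iterating the adjunctions $(d^0,G_n)$ along the augmentation endows each $\Ccal^n$ with a canonical comonadic structure over $\Ccal^0$, coherently compatible with all simplicial operators, which by \cite[Theorem 4.7.3.5]{HA} at each level exhibits $\Ccal^n$ as comodules over the $(n+1)$-fold iterate of $T$. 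The Beck--Chevalley compatibility in (2) is precisely what is needed to upgrade these pointwise identifications $\Ccal^n\simeq \widetilde{\Ccal}^n$ to an equivalence of cosimplicial diagrams, and hence of their limits.

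The main obstacle is this last step: extracting from the pointwise right adjointability in (2) a single coherent equivalence $\Ccal^\bullet\simeq \widetilde{\Ccal}^\bullet$ of cosimplicial $\infty$-categories augmented over $\Ccal$. In a $2$-categorical setting this would be essentially formal, but in the $\infty$-categorical setting one must invoke Lurie's formalism for right adjoints in $(\infty,2)$-categories and the associated functoriality of Beck--Chevalley data; the combined statement of the two cited results in \cite{HA} is designed to package exactly this coherence, so once the setup is in place the equivalence $\Ccal\simeq \varprojlim_{[n]\in\Delta}\Ccal^n$ follows.
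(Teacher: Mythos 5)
Your proposal follows exactly the route the paper takes: the paper offers no written proof beyond the remark that the lemma "is a combination of the dual of [HA, Corollary 4.7.5.3, Theorem 4.7.3.5]," and your argument is precisely an unwinding of how those two dualized results combine (comonadicity identifies $\Ccal$ with comodules over $T=F\circ G$, and the right-adjointability/Beck--Chevalley condition identifies $\Ccal^{\bullet}$ with the cobar resolution so that the totalization recovers $\Ccal$). This matches the intended proof.
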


\begin{proposition}\label{prop:geometric interpretation}
Let $h$ be a positive rational number or $\infty$.
There is a natural equivalence of symmetric monoidal $\infty$-categories
$$\Dcal(\AnSpec\Qbb_{p,\square}/\Gbb^{h\dagger})\simeq \Rep_{\square}^{h\dagger}(G).$$
\end{proposition}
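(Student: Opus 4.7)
The plan is to identify both sides with the $\infty$-category of comodules over the comonad $T \colon V \mapsto C^{h\dagger}(G,V)_{\star_2}$ on $\Dcal(\Qbb_{p,\square})$, and then observe that the resulting equivalence is symmetric monoidal.

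First, since $\AnSpec\Qbb_{p,\square}/\Gbb^{h\dagger}$ is by construction the geometric realization in $\Shv_{\Dcal}(\Aff_{\Zbb_{\square}})$ of the action groupoid
$$[n] \mapsto \AnSpec\Qbb_{p,\square} \times (\Gbb^{h\dagger})^n \simeq \AnSpec C^{h\dagger}(G^n,\Qbb_p)_{\square},$$
the projection $\pi \colon \AnSpec\Qbb_{p,\square} \to \AnSpec\Qbb_{p,\square}/\Gbb^{h\dagger}$ is a canonical cover, hence a universal $\Dcal^*$-cover by Remark \ref{rem:canonical cover *-cover}. Descent along $\pi$ therefore gives
$$\Dcal(\AnSpec\Qbb_{p,\square}/\Gbb^{h\dagger}) \simeq \varprojlim_{[n] \in \Delta} \Mod_{C^{h\dagger}(G^n,\Qbb_p)_{\square}}(\Dcal(\Qbb_{p,\square})).$$

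Next, I would apply Lemma \ref{lem:Barr-Beck-Lurie descent} to the augmented cosimplicial diagram obtained by prepending $\Dcal(\AnSpec\Qbb_{p,\square}/\Gbb^{h\dagger})$. The functor $\pi^*$ is conservative and admits a right adjoint $\pi_*$, since $\pi$ is $\Dcal^*$-locally representable by a morphism in $P$ (cf.\ Lemma \ref{lem:p is prim}). The cofree-representation formula on a classifying stack identifies $\pi_*(V)$ with $C^{h\dagger}(G,V)_{\star_2}$, so the comonad $\pi^*\pi_*$ on $\Dcal(\Qbb_{p,\square})$ agrees with $T$. The comonadicity hypothesis (1) in Lemma \ref{lem:Barr-Beck-Lurie descent} corresponds to comonadicity of $(\pi^*, \pi_*)$, and hypothesis (2) reduces to the statement that $\pi_*$ commutes with the cosimplicial transition maps; this base-change property follows from the steadiness of $\Qbb_{p,\square} \to C^{h\dagger}(G,\Qbb_p)_{\square}$ (Lemma \ref{lem:nuclearness}) combined with the identification $C^{h\dagger}(G^n,\Qbb_p)_{\square} \simeq C^{h\dagger}(G,\Qbb_p)_{\square}^{\otimes n}$ over $\Qbb_{p,\square}$.

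Combining this with Proposition \ref{prop:comonadic}, which realizes $\Rep_{\square}^{h\dagger}(G)$ as comodules over the very same comonad $T$, both sides become canonically equivalent to $T\text{-}\mathrm{comod}(\Dcal(\Qbb_{p,\square}))$, compatibly with the forgetful functors to $\Dcal(\Qbb_{p,\square})$. The symmetric monoidal enhancement follows because both the Čech nerve of $\pi$ and the cobar construction of $T$ are naturally diagrams in $\CAlg(\Pr^{L,\ex})$, with symmetric monoidal forgetful functors; the equivalence is then an equivalence of presentable symmetric monoidal stable $\infty$-categories over $\Dcal(\Qbb_{p,\square})$.

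The main obstacle is verifying the right adjointability condition (2) of Lemma \ref{lem:Barr-Beck-Lurie descent} as a statement about the entire cosimplicial diagram rather than pointwise, together with the identification of $\pi_*$ with the $\star_2$-action-twisted tensor product $C^{h\dagger}(G,\Qbb_p)_{\square} \otimes_{\Qbb_{p,\square}} -$. Both of these ultimately rest on the nuclearness/steadiness of $C^{h\dagger}(G,\Qbb_p)$ and on the Hopf-algebra compatibilities of the face and degeneracy maps, but care is needed to match the $\star_i$-indexing conventions used in Section \ref{subsection:la repns} with the geometric group structure on $\Gbb^{h\dagger}$.
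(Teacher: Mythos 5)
Your overall strategy coincides with the paper's: descent along $\pi$, the dual Barr--Beck--Lurie criterion (Lemma \ref{lem:Barr-Beck-Lurie descent}), comonadicity of $\Rep_{\square}^{h\dagger}(G)$ over $\Dcal(\Qbb_{p,\square})$ (Proposition \ref{prop:comonadic}), Proposition \ref{prop:adjoint cofree} for the augmentation square, and steadiness for the remaining adjointability. However, the step you defer to the last sentence is not a bookkeeping matter but the actual mathematical core, and as written it is a gap. To conclude that both sides are comodules over ``the very same comonad $T$'' you must identify the geometric comonad $\pi^*\pi_*$ --- whose comultiplication is induced by the group structure of $\Gbb^{h\dagger}$, i.e.\ by the coface maps $f\mapsto\bigl((g_1,g_2)\mapsto f(g_1g_2)\bigr)$ --- with the comonad of the adjunction $F\dashv C^{h\dagger}(G,-)_{\star_2}$, \emph{as comonads} (indeed as cosimplicial diagrams, since you also need the higher coherences to compare the two cobar constructions). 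The underlying endofunctors agree, both being $V\mapsto C^{h\dagger}(G,\Qbb_p)_{\square}\otimes_{\Qbb_{p,\square}}V$, but the two cosimplicial structures differ by a twist: on the representation side the zeroth coface carries the $G$-action on $V$, on the geometric side it does not. Reconciling them requires the explicit shearing equivalence $\delta_V\colon f\mapsto\bigl((g_1,\ldots,g_n)\mapsto g_1f(g_1,\ldots,g_n)\bigr)$ and the verification that it intertwines all faces and degeneracies; this is precisely the content of steps (1) and (2) of the paper's proof (the commutative squares involving $s_n^0$, $d_n^0$ and $\delta_V$), and nothing in your argument supplies it.

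A secondary issue: your identification of $\pi_*(V)$ with $C^{h\dagger}(G,V)_{\star_2}$ via ``the cofree-representation formula on a classifying stack'' is circular, since that formula (Proposition \ref{prop:descendable cover}(2)) is deduced in the paper \emph{from} Proposition \ref{prop:geometric interpretation} together with Proposition \ref{prop:adjoint cofree}. What you may legitimately compute without circularity is the endofunctor $\pi^*\pi_*$ on $\Dcal(\Qbb_{p,\square})$, via base change along the Cartesian square exhibiting $\Gbb^{h\dagger}$ as $\AnSpec\Qbb_{p,\square}\times_{*/\Gbb^{h\dagger}}\AnSpec\Qbb_{p,\square}$ (using that $\pi$ lies in $P$); but this again only gives the underlying functor, returning you to the comonad-identification problem above. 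Finally, the symmetric monoidal enhancement ``because both are diagrams in $\CAlg(\Pr^{L,\ex})$'' is too quick on the comonadic side --- the cobar construction of $T$ is not a priori a diagram of symmetric monoidal categories --- whereas the paper's route, exhibiting each component of the comparison functor as a scalar-extension functor, gives this for free.
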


\begin{proof}
	We construct a symmetric monoidal functor $$\Rep_{\square}^{h\dagger}(G)\to \Dcal(\AnSpec\Qbb_{p,\square}/\Gbb^{h\dagger}).$$
	By taking the \v{C}ech nerve of $\AnSpec\Qbb_{p,\square}\to \AnSpec\Qbb_{p,\square}/\Gbb^{h\dagger}$, we get a cosimplicial $\Qbb_{p,\square}$-algebra $\{C^{h\dagger}(G^n,\Qbb_p)\}_{[n]\in \Delta}$.
	Roughly speaking, this cosimplicial algebra is given as follows:
	\begin{itemize}
		\item For $1\leq n$ and $0\leq i\leq n$, the coface map $d_n^i$ is given by 
		$$\begin{array}{ccl}
			C^{h\dagger}(G^{n-1},\Qbb_p) &\longrightarrow &  C^{h\dagger}(G^{n},\Qbb_p)\\
	        \rotatebox{90}{$\in$}       &                &  \quad\quad\rotatebox{90}{$\in$}\\
			f                            &\longmapsto     & \left( (g_1,\ldots,g_n) \mapsto
						                                                   \begin{cases}
							                                                f(g_2,\ldots,g_n) &(i=0) \\
							                                                f(g_1,\ldots,g_{i}g_{i+1},\ldots, g_n) &(0<i<n)\\
						                                                    f(g_{1},\ldots,g_{n-1}) &(i=n) 
						                                                  \end{cases}\right).
		\end{array}$$
		\item For $0\leq n$ and $0\leq i\leq n$, the codegeneracy map $s_n^i$ is given by 
		$$\begin{array}{ccl}
			C^{h\dagger}(G^{n+1},\Qbb_p) &\longrightarrow &  C^{h\dagger}(G^{n},\Qbb_p)\\
	        \rotatebox{90}{$\in$}       &                &  \quad\quad\rotatebox{90}{$\in$}\\
			f                            &\longmapsto     &  \left((g_1,\ldots,g_n) \mapsto f(g_1,\ldots,g_{i},1,g_{i+1},\ldots,g_n)\right).	                                                
		\end{array}$$
	\end{itemize}
	By composing with the functor $\iota\colon\Delta\to \Delta ;\; [n]\mapsto \{*\}\sqcup[n]$, where the well-order on $\{*\}\sqcup[n]$ is defined so that $*$ becomes the smallest element, we get a cosimplicial algebra $\{C^{h\dagger}(G^{n+1},\Qbb_p)\}_{[n]\in \Delta}$.
	Moreover, the natural transformation $\id \to \iota$ defines a morphism $\{C^{h\dagger}(G^{n},\Qbb_p)\}_{[n]\in \Delta}\to \{C^{h\dagger}(G^{n+1},\Qbb_p)\}_{[n]\in \Delta}$.
	We endow $\{C^{h\dagger}(G^{n},\Qbb_p)\}_{[n]\in \Delta}$ with the trivial $G$-action and $\{C^{h\dagger}(G^{n+1},\Qbb_p)\}_{[n]\in \Delta}$ with a $G$-action defined by the left translation on the first component of $G^{n+1}$, that is, the action is given by $(h\cdot f)(g_1,\ldots,g_{n+1})=f(h^{-1}g_1,g_2,\ldots,g_{n+1})$ for $h\in G$ and $f\in C^{h\dagger}(G^{n+1},\Qbb_p)$.
	Then 
	$$\{C^{h\dagger}(G^{n},\Qbb_p)\}_{[n]\in \Delta}\to \{C^{h\dagger}(G^{n+1},\Qbb_p)\}_{[n]\in \Delta}$$ 
	becomes a morphism of cosimplicial objects in $\CAlg(\Rep_{\square}(G))$.
    This morphism induces an equivalence $\{C^{h\dagger}(G^{n},\Qbb_p)\}_{[n]\in \Delta}\to \{C^{h\dagger}(G^{n+1},\Qbb_p)^G\}_{[n]\in \Delta}$ of cosimplicial $\Qbb_{p,\square}$-algebras, where $(-)^G=\Gamma(G,-)$. 
	Let $\Mod_{\{C^{h\dagger}(G^{n},\Qbb_p)\}}(\Dcal(\Qbb_{p,\square}))$ (resp. $\Mod_{\{C^{h\dagger}(G^{n+1},\Qbb_p)\}}(\Rep_{\square}(G))$) be the $\infty$-category of cosimplicial $\{C^{h\dagger}(G^{n},\Qbb_p)\}_{[n]\in \Delta}$-modules in $\Dcal(\Qbb_{p,\square})$ (resp. cosimplicial $\{C^{h\dagger}(G^{n+1},\Qbb_p)\}_{[n]\in \Delta}$-modules in $\Rep_{\square}(G)$).
	Then we have functors 
	$$\begin{array}{rccc}
	\alpha \colon &\Rep_{\square}^{h\dagger}(G)&\longrightarrow &\Mod_{\{C^{h\dagger}(G^{n+1},\Qbb_p)\}}(\Rep_{\square}(G))\\
	 &\rotatebox{90}{$\in$} & &\rotatebox{90}{$\in$}\\
	 &V &\longmapsto &\{C^{h\dagger}(G^{n+1},\Qbb_p)\otimes_{\Qbb_{p,\square}}V\}_{[n]\in \Delta},
	\end{array}$$
	and 
	$$\begin{array}{rccc}
	\beta \colon &\Mod_{\{C^{h\dagger}(G^{n+1},\Qbb_p)\}}(\Rep_{\square}(G))&\longrightarrow &\Mod_{\{C^{h\dagger}(G^{n},\Qbb_p)\}}(\Dcal(\Qbb_{p,\square}))\\
	 &\rotatebox{90}{$\in$} & &\rotatebox{90}{$\in$}\\
	 &\{V_n\}_{[n]\in\Delta} &\longmapsto &\{V_n^G\}_{[n]\in \Delta}.
	\end{array}$$
	We denote the $m$-th projection by
	$$p_m \colon \Mod_{\{C^{h\dagger}(G^{n},\Qbb_p)\}}(\Dcal(\Qbb_{p,\square})) \to \Dcal(C^{h\dagger}(G^{m},\Qbb_p)_{\square});\; \{V_n\}_{[n]\in \Delta}\mapsto V_m$$
	for $m\geq 0$.
	We prove the following:
	\begin{enumerate}
		\item The lax symmetric monoidal functor 
		$$\begin{array}{rccc}
		p_m\circ\beta\circ\alpha \colon &\Rep_{\square}^{h\dagger}(G)& \longrightarrow &\Dcal(C^{h\dagger}(G^{m},\Qbb_p)_{\square})\\
		 & \rotatebox{90}{$\in$}& &\rotatebox{90}{$\in$}\\
		 & V &\longmapsto& C^{h\dagger}(G^{m+1},V)^G
		\end{array}$$
		is symmetric monoidal.
		\item For every morphism $\gamma \colon [l]\to[m]$ in $\Delta$, the natural morphism
		$$\gamma^*(p_l\beta\alpha(V))\to p_m\beta\alpha(V)$$
		is an equivalence for every $V\in \Rep_{\square}^{h\dagger}(G)$, where $\gamma^*$ is the scalar extension functor along $\gamma\colon C^{h\dagger}(G^{l},\Qbb_p)\to C^{h\dagger}(G^{m},\Qbb_p)$.
	\end{enumerate}
	Let us prove (1).
	The morphism $s_n^0\colon C^{h\dagger}(G^{n+1},\Qbb_p) \to C^{h\dagger}(G^{n},\Qbb_p)$ induces
	\begin{align*}
	C^{h\dagger}(G,C^{h\dagger}(G^{n},V))^G\simeq C^{h\dagger}(G^{n+1},V)^G 
	\to C^{h\dagger}(G^{n},V)
	\end{align*} 
	and it is an equivalence since $C^{h\dagger}(G^{n},V)$ is an $h\dagger$-analytic $G$-representation.
	Therefore, $p_m\circ\beta\circ\alpha$ is equivalent to the scalar extension functor
	$$\Rep_{\square}^{h\dagger}(G)\to \Dcal(C^{h\dagger}(G^{m},\Qbb_p)_{\square}) ;\; V \mapsto C^{h\dagger}(G^{n},\Qbb_p)\otimes_{\Qbb_{p,\square}}V,$$ 
	and it is symmetric monoidal.
	Next, let us prove (2).
	It suffices to prove the claim for $\gamma=d_n^i, s_m^j$.
	For $m\geq 0$ and $0\leq j \leq m$, we have the following commutative diagram:
	$$
	\xymatrix{
		C^{h\dagger}(G^{m+2},V)^G\ar[r]^-{s_{m+1}^0}_-{\simeq}\ar[d]_-{s_{m+1}^{j+1}} & C^{h\dagger}(G^{m+1},V)\ar[d]^-{s_m^j\otimes \id_V}\\
		C^{h\dagger}(G^{m+1},V)^G \ar[r]^-{s_{m}^0}_-{\simeq}& C^{h\dagger}(G^{m},V).
	}
	$$
	Therefore, we get (2) in the case when $\gamma=s_m^j$. By the same argument, we get (2) in the case when $\gamma=d_n^i$ for $n\geq 1$ and $0< i \leq n$.
	We prove (2) in the case when $\gamma=d_n^0$.
	In the same way as in the proof of \cite[Proposition 1.62]{Mikami24}, we can construct an equivalence of $C^{h\dagger}(G^{n},\Qbb_p)$-modules 
	$$
	\begin{array}{rccc}
	\delta_V\colon &C^{h\dagger}(G^{n},V)& \overset{\sim}{\longrightarrow}&C^{h\dagger}(G^{n},V)\\
	& \rotatebox{90}{$\in$} & &\rotatebox{90}{$\in$} \\
	& f &\longmapsto &((g_1,\ldots,g_n)\mapsto g_1f(g_1,\ldots,g_n)).
	\end{array}
	$$
	Then we get the following commutative diagram:
	$$
	\xymatrix{
		C^{h\dagger}(G^{n},V)^G\ar[r]^-{s_{n-1}^0}_-{\simeq}\ar[dd]_-{d_{n+1}^1} & C^{h\dagger}(G^{n-1},V)\ar[d]^-{d_n^0\otimes \id_V}\\
		& C^{h\dagger}(G^{n-1},V)\ar[d]^-{\delta_V}_{\simeq}\\
		C^{h\dagger}(G^{n+1},V)^G \ar[r]^-{s_{n}^0}_-{\simeq}&C^{h\dagger}(G^{n},V).
	}
	$$
	Therefore, we get (2) in the case when $\gamma=d_n^0$.

	From (1) and (2), we get a symmetric monoidal functor $$\Rep_{\square}^{h\dagger}(G)\to \varprojlim_{[n]\in \Delta}\Dcal(C^{h\dagger}(G^{n},\Qbb_p)_{\square})\simeq \Dcal(\AnSpec\Qbb_{p,\square}/\Gbb^{h\dagger}).$$
	We check the conditions (1) and (2) in Lemma \ref{lem:Barr-Beck-Lurie descent}.
	The condition (1) follows from Proposition \ref{prop:comonadic}.
	Let us check the condition (2).
	For any morphism $\epsilon \colon [m]\to [n]$ in $\Delta$, the morphism $\epsilon \colon C^{h\dagger}(G^{m},\Qbb_p)_{\square} \to C^{h\dagger}(G^{n},\Qbb_p)_{\square}$ is steady by \cite[Proposition 2.3.19 (ii)]{Mann22}, where we note that the source and target are steady analytic $\Qbb_{p,\square}$-algebra by Lemma \ref{lem:nuclearness}.
	Therefore, the condition (2) is satisfied when $m\geq 0$.
	Moreover, the diagram
	$$
	\xymatrix{
		\Rep_{\square}^{h\dagger}(G)\ar[r]\ar[d]& \Dcal(\Qbb_{p,\square})\ar[d]\\
		\Dcal(\Qbb_{p,\square})\ar[r]^-{d_{1}^0} & \Dcal(C^{h\dagger}(G^{1},\Qbb_p)_{\square})
	}
	$$
	is right adjointable by Proposition \ref{prop:adjoint cofree}.
	By combining the above, we find that the condition (2) is satisfied for every $\alpha\colon [m]\to[n]$ in $\Delta_+$. 
	Therefore, $\Rep_{\square}^{h\dagger}(G)\to \Dcal(\AnSpec\Qbb_{p,\square}/\Gbb^{h\dagger})$
	is an equivalence of $\infty$-categories, and consequently, it is also an equivalence of symmetric monoidal $\infty$-categories.
\end{proof}

\begin{remark}
	Roughly speaking, the equivalence $\Rep_{\square}^{h\dagger}(G) \overset{\simeq}{\to} \Dcal(\AnSpec\Qbb_{p,\square}/\Gbb^{h\dagger})$ is given by $V\mapsto \{C^{h\dagger}(G^{n},\Qbb_p)\otimes_{\Qbb_{p,\square}}V\}_{[n]\in\Delta}=\{C^{h\dagger}(G^{n},V)\}_{[n]\in\Delta}$, where the coface maps and the codegeneracy maps are described as follows:
	\begin{itemize}
		\item For $1\leq n$ and $0\leq i\leq n$, the coface map $d_n^i$ is given by 
		$$\begin{array}{ccl}
			C^{h\dagger}(G^{n-1},V) &\longrightarrow &  C^{h\dagger}(G^{n},V)\\
	        \rotatebox{90}{$\in$}       &                &  \quad\quad\rotatebox{90}{$\in$}\\
			f                            &\longmapsto     & \left( (g_1,\ldots,g_n) \mapsto
						                                                   \begin{cases}
							                                                g_1f(g_2,\ldots,g_n) &(i=0) \\
							                                                f(g_1,\ldots,g_{i}g_{i+1},\ldots, g_n) &(0<i<n)\\
						                                                    f(g_{1},\ldots,g_{n-1}) &(i=n) 
						                                                  \end{cases}\right).
		\end{array}$$
		\item For $0\leq n$ and $0\leq i\leq n$, the codegeneracy map $s_n^i$ is given by 
		$$\begin{array}{ccl}
			C^{h\dagger}(G^{n+1},V) &\longrightarrow &  C^{h\dagger}(G^{n},V)\\
	        \rotatebox{90}{$\in$}       &                &  \quad\quad\rotatebox{90}{$\in$}\\
			f                            &\longmapsto     &  \left((g_1,\ldots,g_n) \mapsto f(g_1,\ldots,g_{i},1,g_{i+1},\ldots,g_n)\right).	                                                
		\end{array}$$
	\end{itemize}
	The proof of Proposition \ref{prop:geometric interpretation} provides a precise formulation
\end{remark}

\subsection{Locally analytic representations and six functors}
We use the same notation as in the previous subsection.
In this subsection, we interpret functors related to locally analytic representations in terms of the 6-functor formalism.
Continuing from the previous subsection, let $h$ be a positive rational number or $\infty$. 
Let $p_h\colon \AnSpec\Qbb_{p,\square} \to \AnSpec\Qbb_{p,\square}/\Gbb^{h\dagger}$ and $f_h\colon \AnSpec\Qbb_{p,\square}/\Gbb^{h\dagger}\to \AnSpec\Qbb_{p,\square}$ denote the natural morphisms of solid $\Dcal$-stacks.
If there is no room for confusion, we will drop the index $h$.
For $h^{\prime}\geq h$, let $f_{h^{\prime},h}\colon \AnSpec\Qbb_{p,\square}/\Gbb^{h^{\prime}\dagger}\to\AnSpec\Qbb_{p,\square}/\Gbb^{h\dagger}$ denote the natural morphism.
In the following, we denote $\AnSpec\Qbb_{p,\square}$ by $*$ and fiber products $- \times_{\AnSpec\Qbb_{p,\square}}-$ by $-\times -$.

\begin{proposition}\label{prop:descendable cover}
	\begin{enumerate}
		\item The functor $$p^*\colon \Rep_{\square}^{h\dagger}(G)\simeq \Dcal(*/\Gbb^{h\dagger}) \to \Dcal(\Qbb_{p,\square})$$ is the forgetful functor.
		\item The functor 
		$$p_*\colon \Dcal(\Qbb_{p,\square})\to \Dcal(*/\Gbb^{h\dagger})\simeq \Rep_{\square}^{h\dagger}(G)$$ 
		is given by $V\mapsto C^{h\dagger}(G,V)_{\star_2}$.
		\item The morphism $p\colon * \to */\Gbb^{h\dagger}$ is weakly $\Dcal$-proper. In particular, we have an equivalence $p_*\simeq p_!$.
		\item The $\Ebb_{\infty}$-algebra $p_*\Qbb_p=C^{h\dagger}(G,\Qbb_p)_{\star_2}$ in $\Rep_{\square}^{h\dagger}(G)$ is descendable.
		\item The morphism $p\colon * \to */\Gbb^{h\dagger}$ is a $\Dcal$-cover.
	\end{enumerate}
\end{proposition}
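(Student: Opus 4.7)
The plan is to establish (1)--(4) in turn and then deduce (5) from Lemma \ref{lem:prim descendable local}. Parts (1) and (2) are essentially formal consequences of Proposition \ref{prop:geometric interpretation} and Proposition \ref{prop:adjoint cofree}: the equivalence $\Rep_{\square}^{h\dagger}(G) \simeq \Dcal(*/\Gbb^{h\dagger})$ sends $V$ to the cosimplicial object $\{C^{h\dagger}(G^{n+1},V)^G\}_{[n]\in\Delta}$, and pullback along $p$ picks out the $[0]$-th component, which is $C^{h\dagger}(G,V)^G \simeq V$ (using that $V$ is $h\dagger$-analytic); hence $p^*$ is the forgetful functor. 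The right adjoint $p_*$ is then $V \mapsto C^{h\dagger}(G,V)_{\star_2}$ by Proposition \ref{prop:adjoint cofree}.

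For (3), I will apply Corollary \ref{cor:weakly proper}, using $p$ itself as the universal $\Dcal^*$-cover of $*/\Gbb^{h\dagger}$. The pullback of $p$ along itself is $\pi_2\colon \Gbb^{h\dagger} = \AnSpec C^{h\dagger}(G,\Qbb_p)_{\square} \to \AnSpec \Qbb_{p,\square}$, which lies in $P$ since the analytic ring structure on $C^{h\dagger}(G,\Qbb_p)_{\square}$ is induced from $\Zbb_{\square}$. That $p$ is a universal $\Dcal^*$-cover follows from the construction of $*/\Gbb^{h\dagger}$ as the realization in $\Shv_{\Dcal}$ of the \v{C}ech groupoid of $p$, together with the fact that $\Dcal^*$ preserves small limits (Definition \ref{def:solid D-stack 6ff}); universality is stable under base change by construction of the 6-functor formalism on solid $\Dcal$-stacks.

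For (4), I will use Lemma \ref{lem:descendable zero map} to establish descendability of $p_*\Qbb_p = C^{h\dagger}(G,\Qbb_p)_{\star_2}$. The nilpotence provided by that lemma---that there exists an integer $n$ such that any composition of $n$ morphisms in $\Rep_{\square}(G)$ which vanish after forgetting to $\Dcal(\Qbb_{p,\square})$ is itself zero---can be translated, via the comonadic adjunction of Proposition \ref{prop:comonadic}, into nilpotence of the cofiber of the unit $\Ocal_{*/\Gbb^{h\dagger}} \to p_*p^*\Ocal_{*/\Gbb^{h\dagger}} = p_*\Qbb_p$ in $\Rep_{\square}^{h\dagger}(G)$, which is exactly the criterion of \cite[Definition 3.18]{Mat16}. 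Since $\Rep_{\square}^{h\dagger}(G) \subset \Rep_{\square}(G)$ is fully faithful, the $n$ supplied by Lemma \ref{lem:descendable zero map} works equally well in the $h\dagger$-analytic setting. Finally, (5) follows by combining (3) and (4): $p$ is $\Dcal$-prim by (3) and $p_*\mathbf{1}$ is descendable by (4), so by Lemma \ref{lem:prim descendable local} it is both a universal $\Dcal^*$-cover and a universal $p^!$-cover (hence a small universal $\Dcal^!$-cover); together with the fact that universal $\Dcal^*$-covers are automatically canonical covers, this verifies all three conditions for $p$ to be a $\Dcal$-cover.

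The main obstacle is the precise verification for (4): translating the concrete nilpotence statement of Lemma \ref{lem:descendable zero map} into categorical descendability requires identifying the \v{C}ech cobar complex of $\Ocal_{*/\Gbb^{h\dagger}} \to p_*\Qbb_p$ with the resolution coming from the comonad $p^*p_*$, and showing that the vanishing of $n$-fold zero-on-$p^*$-compositions implies the thick $\otimes$-ideal generated by $p_*\Qbb_p$ is all of $\Dcal(*/\Gbb^{h\dagger})$. Everything else follows fairly directly from the machinery already assembled.
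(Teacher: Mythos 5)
Your treatment of (1), (2), (3) and the deduction of (5) from Lemma \ref{lem:prim descendable local} follows essentially the same route as the paper (the paper likewise gets (1)--(2) from Proposition \ref{prop:geometric interpretation} and Proposition \ref{prop:adjoint cofree}, and (3) from Corollary \ref{cor:weakly proper}). The genuine gap is in (4). Lemma \ref{lem:descendable zero map} is only a \emph{transfer} device: together with the nilpotence characterization of descendability in \cite[Proposition 3.20]{Mat16}, it shows that an $\Ebb_{\infty}$-algebra in $\Rep_{\square}(G)$ is descendable provided its image under the forgetful functor $F$ is descendable in $\Dcal(\Qbb_{p,\square})$ --- the point being that the maps in the relevant pro-system (powers of the fiber of the unit, or the fibers of the truncated Tot-tower) become zero after applying $F$, and Lemma \ref{lem:descendable zero map} then kills a bounded composite of them upstairs. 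Comonadicity of the adjunction (Proposition \ref{prop:comonadic}) does \emph{not} by itself yield this: it gives convergence of totalizations, which is strictly weaker than descendability (no uniform nilpotence bound). What your argument is missing is the base case downstairs: you must exhibit descendability of $F(p_*\Qbb_p)=C^{h\dagger}(G,\Qbb_p)$ over $\Qbb_{p,\square}$. This is where the paper's one concrete input enters: the unit $\Qbb_p\to C^{h\dagger}(G,\Qbb_p)$ splits in $\Dcal(\Qbb_{p,\square})$ (evaluation at $1\in G$), so the fiber of the unit is a retract-complement and descendability holds there with index one. Without this splitting the ``translation via the comonadic adjunction'' does not close, and indeed you flag (4) as the unresolved obstacle rather than resolving it.

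Two smaller points. In (5) you invoke ``the fact that universal $\Dcal^*$-covers are automatically canonical covers''; this is the converse of Remark \ref{rem:canonical cover *-cover} and is not asserted in the paper. What you actually need is that $p$ generates a canonical cover, which holds because $*/\Gbb^{h\dagger}$ is by construction the geometric realization of the \v{C}ech groupoid of $p$, so $p$ is an effective epimorphism in $\Shv_{\Dcal}(\Aff_{\Zbb_{\square}})$. Finally, the criterion you attribute to \cite[Definition 3.18]{Mat16} (nilpotence of the fiber of the unit) is the content of \cite[Proposition 3.20]{Mat16}; Definition 3.18 is the thick $\otimes$-ideal formulation.
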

\begin{proof}
	Part (1) easily follows from the construction in Proposition \ref{prop:geometric interpretation}.
	Since $p_*$ is a right adjoint functor of $p^*$, (2) follows from Proposition \ref{prop:adjoint cofree}.
	Part (3) follows from Corollary \ref{cor:weakly proper}.
	Let us prove (4).
	It suffices to show that $C^{h\dagger}(G,\Qbb_p)_{\star_2}$ is descendable in $\Rep_{\square}(G)$.
	By Lemma \ref{lem:descendable zero map}, it is enough to show that $C^{h\dagger}(G,\Qbb_p)_{\star_2}$ is descendable in $\Dcal(\Qbb_{p,\square})$, where we use a characterization of descendable algebras in \cite[Proposition 3.20]{Mat16}.
	In this case, the natural morphism $\Qbb_p\to C^{h\dagger}(G,\Qbb_p)$ admits a splitting, so we get the claim. 
	Part (5) follows from (3) and (4) by Lemma \ref{lem:prim descendable local}. 
\end{proof}

\begin{proposition}\label{prop:compact underlying module}
	\begin{enumerate}
		\item An $h\dagger$-analytic representation $V\in \Rep_{\square}^{h\dagger}(G)$ is dualizable if and only if the underlying $\Qbb_{p,\square}$-module $V\in \Dcal(\Qbb_{p,\square})$ is dualizable.
		\item An $h\dagger$-analytic representation $V\in \Rep_{\square}^{h\dagger}(G)$ is compact if and only if the underlying $\Qbb_{p,\square}$-module $V\in \Dcal(\Qbb_{p,\square})$ is compact.
	\end{enumerate}
\end{proposition}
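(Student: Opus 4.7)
The plan is to use the three properties of $p\colon * \to */\Gbb^{h\dagger}$ established in Proposition \ref{prop:descendable cover}: weak $\Dcal$-properness (so $p_* \simeq p_!$), the fact that $p$ is a $\Dcal$-cover, and the descendability of $A \coloneqq p_* \Qbb_p$ in $\Ccal \coloneqq \Rep^{h\dagger}_\square(G)$. Both ``only if'' directions are formal. For (1), the functor $p^*$ is symmetric monoidal and hence preserves dualizable objects. For (2), since $p_* \simeq p_!$ is a left adjoint (to $p^!$), it preserves all small colimits, so its left adjoint $p^*$ preserves compact objects.

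For the ``if'' direction of (1), I would use that $\{p\}$ generates a universal $\Dcal^*$-cover (Proposition \ref{prop:descendable cover}(5)). Since dualizability of $V\in \Dcal(*/\Gbb^{h\dagger})$ is equivalent to $\id$-suaveness of $V$ (Remark \ref{rem:suave and primand dualizable}), Lemma \ref{lem:prim and suave obj D-local}(2) applied to $f = \id_{*/\Gbb^{h\dagger}}$ reduces $\id$-suaveness of $V$ to $\id_*$-suaveness of $p^*V$, which is exactly dualizability of $p^*V$ in $\Dcal(\Qbb_{p,\square})$.

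The main work is the ``if'' direction of (2). Assume $p^*V$ is compact in $\Dcal(\Qbb_{p,\square})$, and consider the full subcategory
$$\Gcal \;\coloneqq\; \bigl\{X\in \Ccal : \Map_\Ccal(V,\; X\otimes -)\text{ preserves filtered colimits}\bigr\}.$$
Because $\Map_\Ccal(V,-)$ is exact and $-\otimes-$ is exact in each variable, preservation of filtered colimits is inherited along fiber sequences in $X$; together with closure under shifts and retracts this makes $\Gcal$ a thick subcategory of $\Ccal$. Moreover $A \in \Gcal$: the projection formula $p_* p^* W \simeq A\otimes W$ (valid because $p_* \simeq p_!$) and the adjunction $p^* \dashv p_*$ give
$$\Map_\Ccal(V,\; A\otimes W) \;\simeq\; \Map_{\Dcal(\Qbb_{p,\square})}(p^* V,\; p^* W),$$
which is continuous in $W$ since $p^*V$ is compact and $p^*$ preserves filtered colimits. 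By descendability in the sense of \cite[Definition 3.18]{Mat16}, $\Ocal_{*/\Gbb^{h\dagger}}$ lies in the thick subcategory of $\Ccal$ generated by $A$, hence in $\Gcal$. This shows $\Map_\Ccal(V, -)$ preserves filtered colimits, so $V$ is compact.

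I expect the only real subtlety to be the careful verification that $\Gcal$ is thick, which is a standard computation using exactness of $\otimes$ and of $\Map_\Ccal(V,-)$, together with exactness of filtered colimits in spectra. Everything else is a direct application of descendability, the projection formula, and the adjunction $p^* \dashv p_*$.
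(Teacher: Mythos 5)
Your proposal is correct and follows essentially the same route as the paper: both reduce (1) to the $\Dcal$-cover $p$ via Remark \ref{rem:suave and primand dualizable} and Lemma \ref{lem:prim and suave obj D-local}, both get the ``only if'' of (2) from $p_*$ preserving colimits, and both prove the ``if'' of (2) by showing that the subcategory of $X$ with $\Map(V,X\otimes-)$ continuous contains $C^{h\dagger}(G,\Qbb_p)_{\star_2}=p_*\Qbb_p$ (via the projection formula and the adjunction of Proposition \ref{prop:adjoint cofree}) and then invoking descendability. The only nitpick is that descendability places $\Ocal$ in the thick \emph{tensor ideal} generated by $A$, not merely the thick subcategory — but your $\Gcal$ is visibly a tensor ideal by its definition, so the argument goes through.
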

\begin{proof}
Part (1) follows from Remark \ref{rem:suave and primand dualizable}, Lemma \ref{lem:prim and suave obj D-local}, and Proposition \ref{prop:descendable cover}.
Let us prove (2).
Since $p_*\colon \Dcal(\Qbb_{p,\square})\to \Dcal(*/\Gbb^{h\dagger})\simeq \Rep_{\square}^{h\dagger}(G)$ preserves all small colimits, the left adjoint functor $p^*$ of $p_*$ preserves compact objects, which proves the ``only if'' direction.
We take $V\in \Rep_{\square}^{h\dagger}(G)$ such that the underlying $\Qbb_{p,\square}$-module $V\in \Dcal(\Qbb_{p,\square})$ is compact.
We need to show that for any filtered diagram $\{M_{\lambda}\}_{\lambda\in \Lambda}$ in $\Rep_{\square}^{h\dagger}(G)$, the natural morphism
$$\varinjlim_{\lambda}\Hom_G(V,M_{\lambda})\to \Hom_G(V,\varinjlim_{\lambda}M_{\lambda})$$
is an equivalence.
Let $\Ccal\subset \Rep_{\square}^{h\dagger}(G)$ be the full subcategory consisting of $h\dagger$-analytic representations $N$ such that 
$$\varinjlim_{\lambda}\Hom_G(V,M_{\lambda}\otimes_{\Qbb_{p,\square}}N)\to \Hom_G(V,\varinjlim_{\lambda}M_{\lambda}\otimes_{\Qbb_{p,\square}}N)$$
is an equivalence for any filtered diagram $\{M_{\lambda}\}_{\lambda\in \Lambda}$ in $\Rep_{\square}^{h\dagger}(G)$.
It suffices to show that the trivial representation $\Qbb_p$ lies in $\Ccal$.
We note that the full subcategory $\Ccal$ is stable under finite limits, finite colimits, tensor products, and direct summands.
By Proposition \ref{prop:adjoint cofree} and Remark \ref{rem:orbit}, we have 
\begin{align*}
	&\varinjlim_{\lambda}\Hom_G(V,M_{\lambda}\otimes_{\Qbb_{p,\square}}C^{h\dagger}(G,\Qbb_p)_{\star_2})\\
	\simeq &\varinjlim_{\lambda}\Hom_G(V,C^{h\dagger}(G,M_{\lambda})_{\star_2})\\
	\simeq &\varinjlim_{\lambda}\Hom_{\Qbb_p}(V,M_{\lambda})\\
	\simeq &\Hom_{\Qbb_p}(V,\varinjlim_{\lambda}M_{\lambda})\\
	\simeq &\Hom_G(V,C^{h\dagger}(G,\varinjlim_{\lambda}M_{\lambda})_{\star_2})\\
	\simeq &\Hom_G(V,\varinjlim_{\lambda}M_{\lambda}\otimes_{\Qbb_{p,\square}}C^{h\dagger}(G,\Qbb_p)_{\star_2}).
\end{align*}
Therefore, we find that $C^{h\dagger}(G,\Qbb_p)_{\star_2} \in \Ccal$.
Since $C^{h\dagger}(G,\Qbb_p)_{\star_2}$ is a descendable $\Ebb_{\infty}$-algebra in $\Rep_{\square}^{h\dagger}(G)$ by Proposition \ref{prop:descendable cover} (4), the trivial representation $\Qbb_p$ also lies in $\Ccal$.
\end{proof}

\begin{lemma}\label{lem:compact h-analytic}
	Let $V \in \Rep_{\square}^{\la}(G)$ be a compact object.
	Then there exists a positive rational number $h$ such that $V$ lies in $\Rep_{\square}^{h\dagger}(G)$.
\end{lemma}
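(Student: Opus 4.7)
The plan is to reduce the lemma to the following three ingredients, all of which are already in place: (i) the filtered colimit presentation $V^{\la} \simeq \varinjlim_{h} V^{h\dagger-\an}$ from Lemma \ref{lem:la=colim of h-an}; (ii) compactness of $V$ in $\Rep_{\square}^{\la}(G)$; (iii) closure of $\Rep_{\square}^{h\dagger}(G)$ under retracts inside $\Rep_{\square}^{\la}(G)$.

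First I would observe that, since $V$ is locally analytic, the counit identifies $V \simeq V^{\la}$, and by Lemma \ref{lem:la=colim of h-an} we get $V \simeq \varinjlim_{h} V^{h\dagger-\an}$ in $\Rep_{\square}(G)$, where the colimit is filtered in the positive rational $h$. By Lemma \ref{lem:stable under colimit} the inclusion $\Rep_{\square}^{\la}(G) \hookrightarrow \Rep_{\square}(G)$ preserves small colimits, so this is also a colimit diagram in $\Rep_{\square}^{\la}(G)$. The transition maps $V^{h\dagger-\an} \to V^{h'\dagger-\an}$ for $h \le h'$ come from Corollary \ref{cor:transitivity}.

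Next, compactness of $V$ in $\Rep_{\square}^{\la}(G)$ yields
\[
\Hom_{\Rep_{\square}^{\la}(G)}(V,V) \simeq \varinjlim_{h} \Hom_{\Rep_{\square}^{\la}(G)}(V, V^{h\dagger-\an}),
\]
so the identity $\id_V$ factors through some $s \colon V \to V^{h\dagger-\an}$ whose composition with the counit $V^{h\dagger-\an} \to V$ is $\id_V$. Thus $V$ is a retract of the $h\dagger$-analytic representation $V^{h\dagger-\an}$ inside $\Rep_{\square}^{\la}(G)$.

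Finally I would conclude by noting that $\Rep_{\square}^{h\dagger}(G) \hookrightarrow \Rep_{\square}^{\la}(G)$ is coreflective, via the right adjoint $(-)^{h\dagger-\an}$ from Corollary \ref{cor:adj la}, hence closed under all small colimits in $\Rep_{\square}^{\la}(G)$; since the ambient $\infty$-category is stable and thus idempotent complete, it is in particular closed under retracts. Therefore $V$ lies in $\Rep_{\square}^{h\dagger}(G)$. The main (essentially only) subtlety is making sure that the colimit $\varinjlim_h V^{h\dagger-\an}$ taken in $\Rep_{\square}(G)$ agrees with the one in $\Rep_{\square}^{\la}(G)$, which is the content of Lemma \ref{lem:stable under colimit}; once this is granted, the rest is a direct application of compactness and coreflectivity.
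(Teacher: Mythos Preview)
Your proof is correct and follows essentially the same approach as the paper's: write $V \simeq \varinjlim_{h} V^{h\dagger-\an}$, use compactness to exhibit $V$ as a retract of some $V^{h\dagger-\an}$, and conclude since $\Rep_{\square}^{h\dagger}(G)$ is closed under retracts. Your write-up is in fact slightly more explicit about why the colimit can be computed in $\Rep_{\square}^{\la}(G)$ and why closure under retracts holds.
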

\begin{proof}
	We have an equivalence $V\simeq \varinjlim_{h>0}V^{h\dagger\mathchar`-\an}$ in $\Rep_{\square}^{\la}(G).$
	Since $V$ is compact, there exists a positive rational number $h$ such that $V$ is a direct summand of $V^{h\dagger\mathchar`-\an}\in \Rep_{\square}^{h\dagger}(G)$.
	The full subcategory $\Rep_{\square}^{h\dagger}(G)\subset \Rep_{\square}^{\la}(G)$ is stable under direct summands, so we get the claim.
\end{proof}

\begin{proposition}\label{prop:6-ff classifying stack}
	\begin{enumerate}
		\item The functor 
		$$f^*\colon \Dcal(\Qbb_{p,\square})\to \Dcal(*/\Gbb^{h\dagger})\simeq \Rep_{\square}^{h\dagger}(G)$$ 
		is equivalent to the functor sending $V\in \Dcal(\Qbb_{p,\square})$ to $V$ with the trivial $G$-action.
		\item The functor 
		\begin{align*}
			f_{h^{\prime},h}^*\colon \Rep_{\square}^{h\dagger}(G)\simeq &\Dcal(*/\Gbb^{h\dagger})\\
			\to &\Dcal(*/\Gbb^{h^{\prime}\dagger})\simeq \Rep_{\square}^{h^{\prime}\dagger}(G)
		\end{align*}
		is equivalent to the natural inclusion $\Rep_{\square}^{h\dagger}(G) \hookrightarrow \Rep_{\square}^{h^{\prime}\dagger}(G)$.
		\item The functor $f_*\colon \Rep_{\square}^{h\dagger}(G)\simeq \Dcal(*/\Gbb^{h\dagger}) \to \Dcal(\Qbb_{p,\square})$ is given by $V\mapsto V^G=\Gamma(G,V)$.
		\item The functor 
		\begin{align*}
			(f_{h^{\prime},h})_*\colon \Rep_{\square}^{h^{\prime}\dagger}(G)\simeq &\Dcal(*/\Gbb^{h^{\prime}\dagger})\\
			\to &\Dcal(*/\Gbb^{h\dagger})\simeq \Rep_{\square}^{h\dagger}(G)
		\end{align*}
		is given by $V\mapsto V^{h\dagger\mathchar`-\an}$.
		\item The morphisms 
		$$f_h\colon */\Gbb^{h\dagger}\to *$$ 
		and 
		$$f_{h^{\prime},h}\colon */\Gbb^{h^{\prime}\dagger}\to */\Gbb^{h\dagger}$$ 
		are weakly $\Dcal$-proper.
		In particular, we have equivalences $(f_h)_*\simeq (f_h)_!$ and $(f_{h^{\prime},h})_*\simeq (f_{h^{\prime},h})_!$.
		\item There exists a positive rational number $h^{\prime}$ such that the morphism 
		$$f_h\colon */\Gbb^{h\dagger}\to *$$
		is $\Dcal$-smooth for $h\geq h^{\prime}$.
		The dualizing complex $\omega_f\in \Rep_{\square}^{h\dagger}(G)$ of $f$ is given by $\wedge^d \gfrak[d]$, where we endow $\gfrak$ with the adjoint action of $G$.
		Moreover, if $G$ is abelian, then $f_h$ is $\Dcal$-smooth for every positive rational number $h$.
	\end{enumerate}
\end{proposition}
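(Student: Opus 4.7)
The six assertions split naturally into three blocks. Parts (1)--(4) are formal consequences of the construction of the equivalence in Proposition \ref{prop:geometric interpretation} together with adjointness. Part (5) follows from Corollary \ref{cor:weakly proper} once one identifies the pullbacks of $f_h$ and $f_{h',h}$ along the $\Dcal$-cover $p\colon *\to */\Gbb^{h\dagger}$ (Proposition \ref{prop:descendable cover}(5)) with morphisms in $P$. Part (6) is the main content and is the only step that requires substantial work; the strategy is to again use $p$ as a $\Dcal$-cover and reduce to the $\Dcal$-smoothness of the fiber $\Gbb^{h\dagger}\to *$.

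For (1), the functor $f^*$ is symmetric monoidal, so $f^*V\simeq V\otimes_{\Qbb_{p,\square}}\Ocal_{*/\Gbb^{h\dagger}}$; since the monoidal unit of $\Rep_{\square}^{h\dagger}(G)$ is the trivial representation, this is $V$ equipped with the trivial $G$-action. The compatibility $p^*f^*\simeq (f\circ p)^*\simeq \id$ together with Proposition \ref{prop:descendable cover}(1) confirms this. For (2), both $p_{h'}^*\circ f_{h',h}^*$ and $p_h^*$ are the forgetful functor, forcing $f_{h',h}^*$ to be the inclusion $\Rep_{\square}^{h\dagger}(G)\hookrightarrow \Rep_{\square}^{h'\dagger}(G)$ (well-defined by Corollary \ref{cor:transitivity}(2)). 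Parts (3) and (4) are then obtained by passing to right adjoints: $f_*$ is right adjoint to the trivial-action functor, hence is $V\mapsto \intHom_{\Qbb_{p,\square}[G]}(\Qbb_p,V)=\Gamma(G,V)$; similarly $(f_{h',h})_*$ is right adjoint to the inclusion, which by Corollary \ref{cor:adj la} is $V\mapsto V^{h\dagger-\an}$.

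For (5), the projection $*\times_{*/\Gbb^{h\dagger}}*\to *$ is, by the \v{C}ech nerve construction, the structure morphism $\Gbb^{h\dagger}=\AnSpec C^{h\dagger}(G,\Qbb_p)_{\square}\to *$, and this lies in $P$ because the analytic ring structure on $C^{h\dagger}(G,\Qbb_p)_{\square}$ is induced from $\Qbb_{p,\square}$. Hence Corollary \ref{cor:weakly proper} yields weak $\Dcal$-properness of $f_h$. For $f_{h',h}$, pulling back along $p_h$ gives a stack $Y=*\times_{*/\Gbb^{h\dagger}}*/\Gbb^{h'\dagger}$; further pulling back the structure morphism $Y\to *$ along $p_{h'}\colon *\to */\Gbb^{h'\dagger}$ gives $\Gbb^{h\dagger}\to *$ in $P$. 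Since weak $\Dcal$-properness is $\Dcal^*$-local on the target (Lemma \ref{lem:weak proper D-local}, Corollary \ref{cor:weakly proper}) and the relevant covers are $\Dcal$-covers, iterating yields the claim.

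For (6), I apply Lemma \ref{lem:suave suave-local}(1) with the $\Dcal$-cover $p\colon *\to */\Gbb^{h\dagger}$: the composition $f_h\circ p=\id_*$ is trivially $\Dcal$-smooth, so it suffices to prove $p$ itself is $\Dcal$-smooth. By Corollary \ref{cor:prim and suave map D-local}(3), using $p$ once more as a $\Dcal$-cover of its source, this in turn reduces to showing that the pullback of $p$ along $p$, which is $\Gbb^{h\dagger}\to *$, is $\Dcal$-smooth of relative dimension $d$. This is the main obstacle. For $h$ sufficiently large (so that the exponential map is defined on $p^h\Lcal$ and gives an isomorphism with $\Gbb_h$), the algebra $C^{h\dagger}(G,\Qbb_p)=\varinjlim_{h'<h}C^{h'}(G,\Qbb_p)$ is a filtered colimit of finite products of smooth affinoid $\Qbb_p$-algebras of relative dimension $d$, each of which is $\Dcal$-smooth by Theorem \ref{thm:smooth D-smooth}; I would then verify that this $\Dcal$-smoothness is preserved by the filtered colimit in $\AnRing_{\Qbb_{p,\square}}$ (using Lemma \ref{lem:suave suave-local} applied to the evident open cover by rational subdomains, together with compatibility of $\omega$ under filtered colimits of opens). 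The dualizing complex of $\Gbb^{h\dagger}\to *$ is then trivialized by left-invariant top forms as $\Ocal_{\Gbb^{h\dagger}}\otimes\wedge^d\gfrak^*[d]$. Transporting this through the base change $\omega_{\pi_2}\simeq \pi_1^*\omega_p$ and the identity $\omega_{\id_*}\simeq p^*\omega_{f_h}\otimes \omega_p$ yields $\omega_{f_h}$ with underlying $\Qbb_p$-module $\wedge^d\gfrak[d]$; the $G$-action is tracked through the difference between the left and right translation actions of $G$ on itself entering the two projections $\Gbb^{h\dagger}\rightrightarrows *$, and identifies as the adjoint action of $G$ on $\gfrak$. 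The main difficulty is the colimit step: carefully extending $\Dcal$-smoothness (and the explicit form of the dualizing complex) from each smooth affinoid $\Spa(C^{h'}(G,\Qbb_p),C^{h'}(G,\Zbb_p))_{\square}$ to the dagger algebra $\Gbb^{h\dagger}$ in a way compatible with the $G$-equivariant structure.
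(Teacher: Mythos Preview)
Parts (1)--(4) are handled as in the paper and are fine.

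Your argument for (5) has a genuine gap. Corollary \ref{cor:weakly proper} applies to a morphism that becomes a morphism in $P$ after a universal $\Dcal^*$-cover of the \emph{target}. For $f_h\colon */\Gbb^{h\dagger}\to *$ the target is already $\AnSpec\Qbb_{p,\square}$; there is nothing to cover, and $f_h$ itself is not in $P$ since its source is not affinoid. The fiber product $*\times_{*/\Gbb^{h\dagger}}*\simeq \Gbb^{h\dagger}$ you compute is the pullback of $p$ along itself, which is relevant for $p$ (and is indeed how Proposition \ref{prop:descendable cover}(3) is proved) but says nothing about $f_h$. The same confusion recurs in your treatment of $f_{h',h}$: after pulling back to $Y\to *$ you again localize on the source. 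The paper's route is different: first Corollary \ref{cor:weakly proper} is applied to the \emph{diagonal} $\Delta_h\colon */\Gbb^{h\dagger}\to */(G\times G)^{h\dagger}$ (here the target is a stack and the cover by $*$ does reduce to a morphism in $P$); then Lemma \ref{lem:prim descendable local}(2), applied to the factorization $*\xrightarrow{p}*/\Gbb^{h\dagger}\xrightarrow{f_h}*$ with $p$ prim and $p_*\Qbb_p$ descendable (Proposition \ref{prop:descendable cover}), gives that $f_h$ is $\Dcal$-prim; finally the description $\delta_{f_h}\simeq \pi_{2*}\Delta_{h!}\Qbb_p\simeq \pi_{2*}\Delta_{h*}\Qbb_p\simeq \Qbb_p$ trivializes the codualizing complex. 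For $f_{h',h}$ one then combines weak $\Dcal$-properness of $f_{h'}$ with that of $\Delta_h$.

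For (6) your strategy is different from the paper's and substantially harder. Rather than proving directly that the dagger group $\Gbb^{h\dagger}\to *$ is $\Dcal$-smooth (which, as you note, does not follow formally from Theorem \ref{thm:smooth D-smooth} and would require a separate argument for dagger affinoids), the paper invokes \cite[Proposition 6.2.4]{RC24} for the case $h=\infty$ and then bootstraps down: applying Lemma \ref{lem:suave and prim composition}(2) to $(*/\Gbb^{\la}\xrightarrow{f_{\infty,h}} */\Gbb^{h\dagger}\xrightarrow{f_h} *)$ with $P=Q=\Qbb_p$ shows $(f_{\infty,h})_!\Qbb_p$ is $f_h$-suave with $\SD_{f_h}$ equal to $(f_{\infty,h})_!\wedge^d\gfrak[d]$. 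Since $\Qbb_p$ and, for $h$ large enough (Lemma \ref{lem:compact h-analytic}), $\wedge^d\gfrak[d]$ are already $h\dagger$-analytic, $(f_{\infty,h})_!$ acts as the identity on them, giving $\omega_{f_h}\simeq \wedge^d\gfrak[d]$ immediately. This avoids the colimit step entirely.
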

\begin{proof}
	Parts (1) and (2) easily follow from the construction in Proposition \ref{prop:geometric interpretation}.
	Since $f_*$ (resp. $(f_{h^{\prime},h})_*$) is a right adjoint functor of $f^*$ (resp. $f_{h^{\prime},h}^*$), we get (3) and (4).

	Let us prove (5). 
	First, the diagonal morphism $\Delta_h\colon */\Gbb^{h\dagger} \to */\Gbb^{h\dagger}\times */\Gbb^{h\dagger}\simeq */(\Gbb\times \Gbb)^{h\dagger}$ of $f_h$ is, universally $\Dcal^*$-locally on the target, representable by a morphism in $P$, so $\Delta_h$ is weakly $\Dcal$-proper by Corollary \ref{cor:weakly proper}. 
	Next, we show that $f_h$ is $\Dcal$-prim.
	It follows from Proposition \ref{prop:descendable cover} by applying Lemma \ref{lem:prim descendable local} to 
	$$* \overset{p}{\to} */\Gbb^{h\dagger}\overset{f_h}{\to} *,$$
	where we note that the identity morphism is always $\Dcal$-prim.
	Since there is an equivalence $\delta_{f_h}\simeq \pi_{2*}\Delta_{h!}\Qbb_p\simeq \pi_{2*}\Delta_{h*}\Qbb_p\simeq \Qbb_p$, where $\pi_{2}$ is the second projection $*/\Gbb^{h\dagger}\times */\Gbb^{h\dagger}\to */\Gbb^{h\dagger}$, we find that $f_h$ is weakly $\Dcal$-proper.
	Since $\ast/\Gbb^{h^{\prime}\dagger} \to \ast$ and the diagonal morphism $\Delta_h$ of $\ast/\Gbb^{h\dagger} \to \ast$ are weakly $\Dcal$-proper, $f_{h^{\prime},h}$ is also weakly $\Dcal$-proper by Lemma \ref{lem:proper composition basechange} and the standard argument.

	Let us prove (6). 
	If $h=\infty$, then the claim is proven in \cite[Proposition 6.2.4]{RC24}.
	By Proposition \ref{prop:compact underlying module} and Lemma \ref{lem:compact h-analytic}, we can take a positive rational number $h^{\prime}$ such that $\wedge^d \gfrak[d]\in \Rep_{\square}^{h\dagger}(G)$ for $h\geq h^{\prime}$.
	We note that if $G$ is abelian, $\wedge^d \gfrak[d]$ lies in $\Rep_{\square}^{h\dagger}(G)$ for every positive rational number $h$, since the adjoint action of $G$ on $\gfrak$ is trivial.
	Then (6) can be obtained by applying Lemma \ref{lem:suave and prim composition} to 
	$(Y\overset{g}{\to}X\overset{f}{\to}S)=(*/\Gbb^{\la}\to */\Gbb^{h\dagger} \to *)$ and $P=Q=\Qbb_p$, where we note $(f_{\infty,h})_! \wedge^d \gfrak[d] =\wedge^d \gfrak[d]$ since $\wedge^d \gfrak[d]$ is $h\dagger$-analytic.
\end{proof}

\subsection{Semilinear representations}
We use the same notation as in the previous subsection.
In this subsection, we define a locally analytic action of $G$ on $\Acal\in \AffRing_{\Qbb_{p,\square}}$ and study locally analytic semilinear representations over $\Acal$.
Continuing from the previous subsection, let $h$ be a positive rational number or $\infty$. 

\begin{definition}
	Let $\Acal$ be a solid affinoid animated $\Qbb_{p,\square}$-algebra.
	\textit{An $h\dagger$-analytic action of $G$ on $\Acal$} is a (right) action of the group object $\Gbb^{h\dagger}=\AnSpec C^{h\dagger}(G,\Qbb_p)_{\square}$ in $\Aff_{\Qbb_{p,\square}}$ on $\AnSpec \Acal\in \Aff_{\Qbb_{p,\square}}$, or equivalently, a groupoid object $\{ \AnSpec \Acal\times (\Gbb^n)^{h\dagger}\}_{[n]\in \Delta}$ such that $d_1^1 \colon \AnSpec \Acal\times \Gbb^{h\dagger} \to \AnSpec \Acal$ is the natural projection, and such that the projection gives a morphism 
	$$\{ \AnSpec \Acal\times (\Gbb^n)^{h\dagger}\}_{[n]\in \Delta}\to \{(\Gbb^n)^{h\dagger}\}_{[n]\in \Delta}$$ 
	of groupoid objects (see \cite[Definition 3.1]{NSS14}).
	Let $\AnSpec \Acal/ \Gbb^{h\dagger}$ denote its geometric realization in $\Shv_{\Dcal}(\Aff_{\Qbb_{p,\square}})$.
\end{definition}

\begin{lemma}\label{lem:steady semilinear}
	Let $\Acal$ be a solid affinoid animated $\Qbb_{p,\square}$-algebra with an $h\dagger$-analytic action of $G$.
	Let $\{ \AnSpec \Acal\times (\Gbb^n)^{h\dagger}\}_{[n]\in \Delta}$ be the corresponding cosimplicial object.
	Then, for any morphism $\alpha \colon [m]\to [n]$ in $\Delta$, the morphism 
	$$\AnSpec \Acal\times(\Gbb^n)^{h\dagger} \to \AnSpec \Acal\times(\Gbb^m)^{h\dagger}$$
	corresponding to $\alpha$ is steady.
\end{lemma}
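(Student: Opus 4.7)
The plan is to reduce the statement to the generators of $\Delta$ and to treat the component involving the action via a shearing isomorphism, exploiting that steadiness is stable under composition, base change, and isomorphism. Any $\alpha\colon [m]\to[n]$ in $\Delta$ factors as a finite composition of elementary cofaces $d^i$ and codegeneracies $s^j$. Applied to the groupoid object $\{\AnSpec\Acal\times(\Gbb^n)^{h\dagger}\}_{[n]\in\Delta}$, these correspond to the face maps $d_i$ and the degeneracy maps $s_j$. By stability of steadiness under composition (\cite[Proposition 2.3.18]{Mann22}), it suffices to show that each such elementary $d_i$ and $s_j$ is steady.

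On functor-of-points, the structure maps read
\begin{align*}
d_0(a,g_1,\ldots,g_n)&=(a\cdot g_1,g_2,\ldots,g_n),\\
d_i(a,g_1,\ldots,g_n)&=(a,g_1,\ldots,g_ig_{i+1},\ldots,g_n)\quad(0<i<n),\\
d_n(a,g_1,\ldots,g_n)&=(a,g_1,\ldots,g_{n-1}),\\
s_j(a,g_1,\ldots,g_n)&=(a,g_1,\ldots,g_j,1,g_{j+1},\ldots,g_n).
\end{align*}
Every generator other than $d_0$ is of the form $\id_{\AnSpec\Acal}\times\beta$ for some structure map $\beta\colon (\Gbb^a)^{h\dagger}\to(\Gbb^b)^{h\dagger}$ in the bar construction of $\Gbb^{h\dagger}$ alone, and hence arises as the base change of $\beta$ along $\AnSpec\Acal\to\AnSpec\Qbb_{p,\square}$. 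Since $C^{h\dagger}(G^k,\Qbb_p)_{\square}$ is a steady $\Qbb_{p,\square}$-algebra for every $k\geq 0$ by Lemma~\ref{lem:nuclearness} (steadiness being stable under tensor products), the morphism $\beta$ is steady by \cite[Proposition 2.3.19 (ii)]{Mann22}, and so is its base change.

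For $d_0$, I plan to use the shearing isomorphism $\psi\colon \AnSpec\Acal\times\Gbb^{h\dagger}\to \AnSpec\Acal\times\Gbb^{h\dagger}$ informally given by $(a,g)\mapsto(a\cdot g,g)$, with inverse $(a,g)\mapsto(a\cdot g^{-1},g)$ built from the action of $\Gbb^{h\dagger}$ on $\AnSpec\Acal$ and the antipode of the Hopf algebra $C^{h\dagger}(G,\Qbb_p)_{\square}$. A direct check on points gives
$$d_0=(\pi\times\id_{(\Gbb^{n-1})^{h\dagger}})\circ(\psi\times\id_{(\Gbb^{n-1})^{h\dagger}}),$$
where $\pi\colon \AnSpec\Acal\times\Gbb^{h\dagger}\to\AnSpec\Acal$ is the projection. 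Since $\pi$ is the base change of the steady morphism $\Gbb^{h\dagger}\to\AnSpec\Qbb_{p,\square}$ (Lemma~\ref{lem:nuclearness}) and $\psi$ is an isomorphism, $d_0$ is steady. The main obstacle I anticipate is giving a careful definition of $\psi$ at the level of solid affinoid animated rings and verifying that it is invertible; this should follow formally from the action axioms of $\Gbb^{h\dagger}$ on $\AnSpec\Acal$ (encoded in the Segal/groupoid conditions) together with the Hopf algebra structure of $C^{h\dagger}(G,\Qbb_p)_{\square}$, in particular the antipode providing the inverse on the $\Gbb^{h\dagger}$-factor.
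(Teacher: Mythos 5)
Your proof is correct and is essentially the paper's argument made explicit: the paper notes that the structure maps on the group factors $(\Gbb^n)^{h\dagger}\to(\Gbb^m)^{h\dagger}$ are steady (nuclearity plus \cite[Proposition 2.3.19 (ii)]{Mann22}) and that the projection $\{\AnSpec\Acal\times(\Gbb^n)^{h\dagger}\}_{[n]}\to\{(\Gbb^n)^{h\dagger}\}_{[n]}$ is a cartesian morphism of groupoid objects, so every structure map upstairs is a base change of a steady map. Your generator-by-generator reduction, with the shearing isomorphism handling $d_0$ (which is exactly where cartesianness is non-trivial), is a hands-on verification of the same fact.
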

\begin{proof}
	The morphism $\alpha \colon (\Gbb^n)^{h\dagger} \to (\Gbb^m)^{h\dagger}$ is steady by Lemma \ref{lem:nuclearness} and \cite[Proposition 2.3.19 (ii)]{Mann22}.
	Therefore, the claim follows from the fact that the natural morphism 
	$$\{ \AnSpec \Acal\times (\Gbb^n)^{h\dagger}\}_{[n]\in \Delta}\to \{(\Gbb^n)^{h\dagger}\}_{[n]\in \Delta}$$
	of groupoid objects is cartesian in the sense of \cite[definition 6.1.3.1]{HTT}.
\end{proof}

\begin{corollary}\label{cor:underlying module}
	Let $f\colon \AnSpec\Acal\to\AnSpec\Bcal$ be a $\Gbb^{h\dagger}$-equivariant morphism of solid affinoid spaces over $\AnSpec\Qbb_{p,\square}$ with an $h\dagger$-analytic action of $G$.
	Let us consider the cartesian diagram 
$$
\xymatrix{
	\AnSpec \Acal \ar[r]^-{f}\ar[d]_-{p_{\Acal}} & \AnSpec \Bcal\ar[d]^-{p_{\Bcal}}\\
	\AnSpec \Acal/ \Gbb^{h\dagger}\ar[r]^-{\overline{f}} & \AnSpec \Bcal/ \Gbb^{h\dagger}.
}
$$
Then the natural morphism $p_{\Bcal}^*\overline{f}_*\to f_*p_{\Acal}^{*}$ of functors from $\Dcal(\AnSpec \Acal/ \Gbb^{h\dagger})$ to $\Dcal(\Bcal)$ is an equivalence.
\end{corollary}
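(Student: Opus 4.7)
The plan is to exploit \v{C}ech descent along $p_{\Bcal}$ to compute $\overline{f}_*M$ levelwise on the simplicial nerve, and then extract the $n=0$ component as the desired base-change statement. First, I would note that $p\colon \ast \to \ast/\Gbb^{h\dagger}$ is a $\Dcal$-cover by Proposition \ref{prop:descendable cover}(5), so its base change $p_{\Bcal}$ is a universal $\Dcal^*$-cover (and similarly $p_{\Acal}$). The descent formula then gives
$$\Dcal(\AnSpec \Bcal/\Gbb^{h\dagger}) \simeq \varprojlim_{[n]\in\Delta}\Dcal(\AnSpec \Bcal\times(\Gbb^n)^{h\dagger}),$$
where the transitions are $*$-pullbacks, and $p_{\Bcal}^*$ corresponds to restricting to the $n=0$ component.

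Next, given $M\in \Dcal(\AnSpec \Acal/\Gbb^{h\dagger})$, I would produce the candidate section $\{(f_n)_*\pi_{\Acal}^{n,*}M\}_{[n]\in\Delta}$, where $f_n = f\times \id$ and $\pi_{\Acal}^n\colon \AnSpec\Acal\times(\Gbb^n)^{h\dagger}\to \AnSpec\Acal/\Gbb^{h\dagger}$ is the natural projection. The cocartesian compatibility amounts to checking, for every $\alpha\colon[m]\to[n]$ in $\Delta$, the base-change equivalence
$$\alpha_{\Bcal}^* (f_m)_* \simeq (f_n)_*\alpha_{\Acal}^*$$
in the evidently cartesian square of product spaces. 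Here Lemma \ref{lem:steady semilinear} is the key input: $\alpha_{\Bcal}$ is steady, so $*$-pushforward satisfies base change along it by \cite[Proposition 2.3.19]{Mann22}.

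Finally, I would identify this section with $\overline{f}_*M$ via the universal property. For any $N\in \Dcal(\AnSpec\Bcal/\Gbb^{h\dagger})$ with components $N^n = \pi_{\Bcal}^{n,*}N$, one computes
\begin{align*}
	\Hom\bigl(N,\{(f_n)_*\pi_{\Acal}^{n,*}M\}\bigr)
	&\simeq \varprojlim_n \Hom\bigl(f_n^* N^n,\pi_{\Acal}^{n,*}M\bigr)\\
	&\simeq \varprojlim_n \Hom\bigl(\pi_{\Acal}^{n,*}\overline{f}^* N,\pi_{\Acal}^{n,*}M\bigr)\\
	&\simeq \Hom\bigl(\overline{f}^* N,M\bigr),
\end{align*}
where the first line is adjunction, the second uses cartesian-ness of the outer squares for the $*$-pullback $f_n^*\pi_{\Bcal}^{n,*}\simeq \pi_{\Acal}^{n,*}\overline{f}^*$, and the third is descent. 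Restricting the resulting identification to $n=0$ yields $p_{\Bcal}^*\overline{f}_*M \simeq f_* p_{\Acal}^*M$, which is precisely the claim.

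The main technical point I expect to watch carefully is confirming that Mann's notion of steadiness, together with the recipe extending the 6-functor formalism from $\Aff_{\Qbb_{p,\square}}$ to $\Shv_{\Dcal}(\Aff_{\Qbb_{p,\square}})$, really does deliver the required base-change equivalence along the transition maps $\alpha_{\Bcal}$; this should be routine given Lemma \ref{lem:steady semilinear}, but it does require tracing through the construction of the extended formalism.
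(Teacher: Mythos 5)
Your proposal is correct and follows essentially the same route as the paper: both compute $\overline{f}_*$ levelwise on the \v{C}ech nerve of the quotient, use Lemma \ref{lem:steady semilinear} (steadiness of the transition maps on the $\Bcal$-side) to see that the levelwise pushforwards $\{(f_n)_*\}$ preserve cocartesian sections, and then read off the $n=0$ component. The paper's proof is terser — it omits the explicit Hom-computation identifying the candidate section with $\overline{f}_*M$ — but the substance is identical.
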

\begin{proof}
Let $\{f_n\}_{[n]\in \Delta}\colon \{ \AnSpec \Acal\times (\Gbb^n)^{h\dagger}\}_{[n]\in \Delta}\to\{ \AnSpec \Bcal\times (\Gbb^n)^{h\dagger}\}_{[n]\in \Delta}$ denote the morphism corresponding to the $\Gbb^{h\dagger}$-equivariant morphism $f\colon \AnSpec\Acal\to\AnSpec\Bcal$.
By Lemma \ref{lem:steady semilinear}, for any morphism $\alpha \colon [m]\to [n]$ in $\Delta$, the morphism 
$$\AnSpec \Bcal\times(\Gbb^n)^{h\dagger} \to \AnSpec \Bcal\times(\Gbb^m)^{h\dagger}$$
corresponding to $\alpha$ is steady.
Therefore, the functor 
$$\{f_{n*}\}_{[n]\in \Delta}\colon \{\Dcal(\AnSpec \Acal\times(\Gbb^n)^{h\dagger})\}_{[n]\in \Delta}\to \{\Dcal(\AnSpec \Bcal\times(\Gbb^n)^{h\dagger})\}_{[n]\in \Delta}$$ 
preserves cocartesian sections.
The claim follows from this.
\end{proof}

Let $\AnSpec\Acal$ be a solid affinoid space with an $h\dagger$-analytic action of $G$, and let $f\colon \AnSpec\Acal/\Gbb^{h\dagger}\to \AnSpec\Qbb_{p,\square}/\Gbb^{h\dagger}$ denote the natural morphism.
We set $(A,A^+)=(\underline{\Acal},\pi_0(\Acal^+))$.
By applying Corollary \ref{cor:underlying module} to $\Bcal=\Qbb_{p,\square}$, we find that the underlying $\Ebb_{\infty}$-algebra of $f_*A$ is equal to $A$.
Therefore, we can regard an animated $\Qbb_{p,\square}$-algebra $A$ as a (connective) $\Ebb_{\infty}$-algebra in $\Dcal(\AnSpec \Qbb_{p,\square}/ \Gbb^{h\dagger})=\Rep_{\square}^{h\dagger}(G)$.

Conversely, we want to construct an $h\dagger$-analytic action of $G$ on $\AnSpec(A,A^+)_{\square}$ from a connective $\Ebb_{\infty}$-algebra in $\Rep_{\square}^{h\dagger}(G)$ and a suitable subring $A^+\subset \pi_0(A)$.
We note that, since we are working with $\Qbb_{p,\square}$-algebras, there is no difference between connective $\Ebb_{\infty}$-algebras and animated algebras.

\begin{lemma}\label{lem:la action on animated rings}
	Let $A$ be a connective $\Ebb_{\infty}$-algebra in $\Rep_{\square}^{h\dagger}(G)$.
	Then it gives rise to an $h\dagger$-analytic action of $G$ on $\AnSpec A_{\square}$.
\end{lemma}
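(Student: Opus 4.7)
The plan is to transport $A$ across the equivalence of Proposition \ref{prop:geometric interpretation} and read off the desired groupoid from the resulting cosimplicial algebra.

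Applied to $\Ebb_\infty$-algebra objects, the equivalence of Proposition \ref{prop:geometric interpretation} identifies the connective $\Ebb_\infty$-algebra $A \in \CAlg(\Rep^{h\dagger}_{\square}(G))$ with a compatible cosimplicial family $\{A_n\}_{[n]\in \Delta}$ of connective $\Ebb_\infty$-algebras $A_n \in \CAlg(\Dcal(C^{h\dagger}(G^n,\Qbb_p)_{\square}))$. Unwinding the construction in the proof of that proposition shows
$$A_n \;=\; C^{h\dagger}(G^n,A) \;=\; A\otimes_{\Qbb_{p,\square}} C^{h\dagger}(G^n,\Qbb_p),$$
with coface and codegeneracy maps given by the explicit formulas in the remark following Proposition \ref{prop:geometric interpretation}; in particular, the $G$-action on $A$ only enters through $d_n^0$, while $d_1^1\colon A\to C^{h\dagger}(G,A)$ is the inclusion of constant functions $a\mapsto (g\mapsto a)$.

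I would then equip each $A_n$ with the analytic ring structure $(A_n,\Zbb)_{\square}$. Since the subring of $+$-bounded elements is taken to be $\Zbb$ at every stage and is preserved by every cosimplicial structure map, this promotes $\{A_n\}_{[n]\in\Delta}$ to a cosimplicial object in $\AffRing_{\Qbb_{p,\square}}$. Using the base-change formula for tensor products of solid affinoid rings with $+$-ring $\Zbb$, one obtains canonical identifications
$$\AnSpec (A_n,\Zbb)_{\square}\;\simeq\; \AnSpec A_{\square}\times (\Gbb^n)^{h\dagger}$$
in $\Aff_{\Qbb_{p,\square}}$, and the inclusions $C^{h\dagger}(G^n,\Qbb_p)\hookrightarrow A_n$ assemble into a morphism of simplicial objects from $\{\AnSpec A_{\square}\times (\Gbb^n)^{h\dagger}\}_{[n]\in \Delta}$ to $\{(\Gbb^n)^{h\dagger}\}_{[n]\in \Delta}$. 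The statement that $d_1^1$ is the natural projection is then immediate from the explicit formula above.

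The remaining, and main, point is to verify that the resulting simplicial object satisfies the Segal conditions and that its projection to $\{(\Gbb^n)^{h\dagger}\}_{[n]\in \Delta}$ is a morphism of groupoid objects in the sense of \cite{NSS14}. This follows from the cocartesianness of the family $\{A_n\}$: for every morphism $[m]\to [n]$ in $\Delta$, the induced square of analytic rings is pushout in $\AffRing_{\Qbb_{p,\square}}$, so the corresponding square of affines is cartesian in $\Aff_{\Qbb_{p,\square}}$. Specialising to the Segal inclusions $[1]\hookrightarrow [n]$ yields the required equivalences, and since $\{(\Gbb^n)^{h\dagger}\}_{[n]\in \Delta}$ is itself the \v{C}ech nerve of $p\colon \AnSpec\Qbb_{p,\square}\to \AnSpec\Qbb_{p,\square}/\Gbb^{h\dagger}$ (hence already a groupoid), the projection automatically becomes a morphism of groupoid objects. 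The main subtlety is bookkeeping which coface maps encode the action ($d^0$) versus the projection ($d^n$), but both are handled uniformly by the cocartesianness of the cosimplicial structure.
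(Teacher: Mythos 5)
Your proposal is correct and follows essentially the same route as the paper: transport $A$ through the equivalence of Proposition \ref{prop:geometric interpretation} to get the cosimplicial algebra $\{C^{h\dagger}(G^n,\Qbb_p)_{\square}\otimes_{\Qbb_{p,\square}}A\}_{[n]\in\Delta}$, endow it with the induced analytic ring structure, and read off the groupoid. You spell out the verification of the groupoid/Segal conditions (via cocartesianness of the section, i.e.\ the shear-map pushout squares) in more detail than the paper, which simply asserts the conclusion; the only point you leave implicit is the identification of connective $\Ebb_{\infty}$-algebras with animated algebras over $\Qbb_p$, which the paper notes explicitly.
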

\begin{proof}
	By the proof of Proposition \ref{prop:geometric interpretation}, we get a morphism 
	$$\{C^{h\dagger}(G^n,\Qbb_p)\}_{[n]\in \Delta}\to \{C^{h\dagger}(G^n,\Qbb_p)\otimes_{\Qbb_{p,\square}}A\}_{[n]\in \Delta}$$ 
	of cosimplicial connective $\Ebb_{\infty}$-algebras in $\Dcal(\Qbb_{p,\square})$.
	Since $\Qbb_p$ is a $\Qbb$-algebra, the above morphism becomes a morphism of cosimplicial animated $\Qbb_{p,\square}$-algebras.
	By endowing with the induced analytic ring structure from $\Qbb_{p,\square}$, we get a morphism 
	$$\{C^{h\dagger}(G^n,\Qbb_p)_{\square}\}_{[n]\in \Delta}\to \{C^{h\dagger}(G^n,\Qbb_p)_{\square}\otimes_{\Qbb_{p,\square}}A_{\square}\}_{[n]\in \Delta}$$
	of cosimplicial objects in $\AffRing_{\Qbb_{p,\square}}$, which gives an $h\dagger$-analytic action of $G$ on $\AnSpec A_{\square}$.
\end{proof}

\begin{proposition}\label{prop:la action on solid affinoid space}
	Let $A$ be a connective $\Ebb_{\infty}$-algebra in $\Rep_{\square}^{h\dagger}(G)$, and let $A^+\subset \pi_0(A)$ be a discrete subring such that for every $x\in A^+$, $A$ is $\Zbb[x]_{\square}$-complete.
	Let $\{C^{h\dagger}(G^n,\Qbb_p)\otimes_{\Qbb_{p,\square}}A\}_{[n]\in \Delta}$ be the cosimplicial animated $\Qbb_{p,\square}$-algebra corresponding to the $h\dagger$-analytic action of $G$ on $A$.
	We assume that the morphism 
	$$d_1^0 \colon A \to C^{h\dagger}(G,\Qbb_p)\otimes_{\Qbb_{p,\square}}A$$
	of animated $\Qbb_{p,\square}$-algebras becomes a morphism 
	$$d_1^0 \colon (A,A^+)_{\square} \to C^{h\dagger}(G,\Qbb_p)_{\square}\otimes_{\Qbb_{p,\square}}(A,A^+)_{\square}$$
	of solid affinoid animated $\Qbb_{p,\square}$-algebras.
	Then the cosimplicial animated $\Qbb_{p,\square}$-algebra 
	$$\{C^{h\dagger}(G^n,\Qbb_p)\otimes_{\Qbb_{p,\square}}A\}_{[n]\in \Delta}$$ 
	becomes a cosimplicial solid affinoid animated $\Qbb_{p,\square}$-algebra 
	$$\{C^{h\dagger}(G^n,\Qbb_p)_{\square}\otimes_{\Qbb_{p,\square}}(A,A^+)_{\square}\}_{[n]\in \Delta},$$ 
	and therefore, we obtain an $h\dagger$-analytic action of $G$ on $\AnSpec(A,A^+)_{\square}$.
	Conversely, any solid affinoid space over $\AnSpec \Qbb_{p,\square}$ with an $h\dagger$-analytic action of $G$ is uniquely obtained by the above construction. 
\end{proposition}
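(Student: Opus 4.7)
The plan is to check, for the forward direction, that every coface and codegeneracy map in the cosimplicial animated algebra $\{C^{h\dagger}(G^n,\Qbb_p)_{\square}\otimes_{\Qbb_{p,\square}}A\}_{[n]\in\Delta}$ promotes to a morphism of solid affinoid animated $\Qbb_{p,\square}$-algebras when we equip each term with the analytic ring structure $(-,A^+)_{\square}$ induced from the image of $A^+$. The assumption on $x\in A^+$ guarantees that $C^{h\dagger}(G^n,\Qbb_p)_{\square}\otimes_{\Qbb_{p,\square}}(A,A^+)_{\square}$ is indeed a solid affinoid animated $\Qbb_{p,\square}$-algebra whose ring of $+$-bounded elements contains (the image of) $A^+$.

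First I would dispose of the cosimplicial maps that do not touch the $A$-factor. The codegeneracy maps $s_n^j$ and the non-extremal coface maps $d_n^i$ for $0<i\leq n$ are of the form $\sigma\otimes\mathrm{id}_A$, where $\sigma\colon C^{h\dagger}(G^{n\pm 1},\Qbb_p)_{\square}\to C^{h\dagger}(G^n,\Qbb_p)_{\square}$ is induced from the Hopf-algebra structure of $C^{h\dagger}(G,\Qbb_p)_{\square}$ (multiplication, unit, insertion). These lie in $\AffRing_{\Qbb_{p,\square}}$, and base-changing by $(A,A^+)_{\square}$ over $\Qbb_{p,\square}$ automatically produces morphisms of solid affinoid animated $\Qbb_{p,\square}$-algebras with unchanged $+$-subring $A^+$.

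The only nontrivial case is the extremal coface $d_n^0$, which encodes the action. Using the coassociativity of the coaction $d_1^0\colon A\to C^{h\dagger}(G,\Qbb_p)_{\square}\otimes_{\Qbb_{p,\square}} A$, one factors
\[
d_n^0\colon C^{h\dagger}(G^{n-1},\Qbb_p)_{\square}\otimes A \xrightarrow{\mathrm{sw}\circ(\mathrm{id}\otimes d_1^0)} C^{h\dagger}(G,\Qbb_p)_{\square}\otimes C^{h\dagger}(G^{n-1},\Qbb_p)_{\square}\otimes A \simeq C^{h\dagger}(G^n,\Qbb_p)_{\square}\otimes A,
\]
where $\mathrm{sw}$ only permutes the $C^{h\dagger}$-factors and is the identity on $A$. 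Since the swap, tensoring with $C^{h\dagger}(G^{n-1},\Qbb_p)_{\square}$, and the analytic ring structure are all induced from $\Qbb_{p,\square}$, the hypothesis that $d_1^0$ is a morphism of solid affinoid animated $\Qbb_{p,\square}$-algebras propagates to $d_n^0$. The main obstacle in this step is verifying that the formal manipulation of $d_n^0$ in terms of $d_1^0$ really holds at the level of the cosimplicial object constructed in Lemma \ref{lem:la action on animated rings}; this amounts to unwinding the proof of Proposition \ref{prop:geometric interpretation}, which produces the cosimplicial structure from the coaction, so the identity above is essentially tautological.

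For the converse, given an $h\dagger$-analytic action of $G$ on $\AnSpec(A,A^+)_{\square}$, i.e., a cosimplicial solid affinoid animated $\Qbb_{p,\square}$-algebra $\{\Bcal_n\}_{[n]\in\Delta}$ with $\Bcal_0=(A,A^+)_{\square}$, projecting to $\{(\Gbb^n)^{h\dagger}\}$, the underlying cosimplicial animated $\Qbb_{p,\square}$-algebra coincides with $\{C^{h\dagger}(G^n,\Qbb_p)_{\square}\otimes_{\Qbb_{p,\square}}A\}_{[n]\in\Delta}$ by Lemma \ref{lem:la action on animated rings} applied to the resulting $h\dagger$-analytic $G$-action on $A\in \Rep_{\square}^{h\dagger}(G)$ (which comes from Proposition \ref{prop:geometric interpretation}). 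The integral structure on each $\Bcal_n$ must be the image of $A^+$ under $d_n^n\circ\cdots\circ d_1^1\colon \Bcal_0\to \Bcal_n$, combined with the analytic ring structure induced from $\Qbb_{p,\square}$ via the projection to $(\Gbb^n)^{h\dagger}$; this forces $\Bcal_n\simeq C^{h\dagger}(G^n,\Qbb_p)_{\square}\otimes_{\Qbb_{p,\square}}(A,A^+)_{\square}$, proving uniqueness. The hypothesis on $d_1^0$ is automatic since $d_1^0\colon \Bcal_0\to \Bcal_1$ is a morphism of solid affinoid animated algebras by assumption, closing the loop.
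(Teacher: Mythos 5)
Your proposal is correct and follows essentially the same route as the paper: reduce to the generating cosimplicial maps, observe that all of them except $d_n^0$ are of the form $\sigma\otimes\id_A$ with $\sigma$ defined over $\Qbb_{p,\square}$, and handle $d_n^0$ by exhibiting it as the base change of $d_1^0$ along $C^{h\dagger}(G^{n-1},\Qbb_p)_{\square}\otimes_{\Qbb_{p,\square}}(-)$ (the paper phrases this as a cocartesian square rather than your explicit factorization through a swap, but these are the same reduction). The converse is likewise handled as in the paper, by noting that the cartesianness of the groupoid over $\{(\Gbb^n)^{h\dagger}\}$ forces the analytic ring structure on each term.
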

\begin{proof}
	Let $\alpha \colon [m]\to [n]$ in $\Delta$ be one of the morphisms $d_n^i\colon [n-1]\to [n]$ ($n\geq 1$, $0\leq i \leq n$) or $s_n^i\colon [n+1]\to [n]$  ($n\geq 0$, $0\leq i \leq n$).
	To prove the first claim, it suffices to show that  the morphism 
	\begin{align}\label{mor1}
	\alpha \colon C^{h\dagger}(G^m,\Qbb_p)\otimes_{\Qbb_{p,\square}}A \to C^{h\dagger}(G^n,\Qbb_p)\otimes_{\Qbb_{p,\square}}A
	\end{align}
	of animated $\Qbb_{p,\square}$-algebras becomes a morphism 
	$$\alpha \colon C^{h\dagger}(G^m,\Qbb_p)_{\square}\otimes_{\Qbb_{p,\square}}(A,A^+)_{\square} \to C^{h\dagger}(G^n,\Qbb_p)_{\square}\otimes_{\Qbb_{p,\square}}(A,A^+)_{\square}$$
	of solid affinoid $\Qbb_{p,\square}$-algebras.
	First, we assume that $\alpha$ is not $d_n^0$ ($n\geq 1$).
	In this case, the morphism \eqref{mor1} is the tensor product of $\alpha \colon C^{h\dagger}(G^m,\Qbb_p)_{\square} \to C^{h\dagger}(G^n,\Qbb_p)_{\square}$ and $\id_A$ by the proof of Proposition \ref{prop:geometric interpretation}, so we get the claim.
	Next, we assume $\alpha=d_n^0 \colon [n-1]\to [n]$ for some $n\geq 1$.
	In this case, we have a cocartesian diagram
	$$
	\xymatrix{
		C^{h\dagger}(G^{n-1},\Qbb_p)_{\square}\otimes_{\Qbb_{p,\square}}A\ar[r]^-{d_n^0} & C^{h\dagger}(G^n,\Qbb_p)_{\square}\otimes_{\Qbb_{p,\square}}A \\
		A\ar[r]_-{d_1^0}\ar[u] & C^{h\dagger}(G,\Qbb_p)_{\square}\otimes_{\Qbb_{p,\square}}A,\ar[u]
	}
	$$
	where the vertical morphisms correspond to the natural inclusion $[0]\to [n-1]$ and $[1]\to[n]$ respectively.
	Therefore, the claim follows from the assumption on $d_1^0$.

	Conversely, let $\AnSpec\Acal$ be a solid affinoid space over $\AnSpec \Qbb_{p,\square}$ with an $h\dagger$-analytic action of $G$.
	Then $(A,A^+)=(\underline{\Acal}, \pi_0(\Acal^+))$ satisfies the assumption of the proposition.
	It is clear that these correspondences give bijections.
\end{proof}

Let us give an example of solid affinoid spaces over $\AnSpec \Qbb_{p,\square}$ with $h\dagger$-analytic actions of $G$.

\begin{lemma}\label{lem:banach algebra with la action}
	Let $A$ be a Banach $\Qbb_p$-algebra with a continuous $G$-action (in the classical sense).
	We assume that the action of $G$ on $A$ is non-derived $h^{\prime}$-analytic for some $h^{\prime}<h$.
	Then it defines an $h\dagger$-analytic action of $G$ on $\AnSpec (A,A^{\circ})_{\square}$, where $A^{\circ}$ is the ring of power-bounded elements.
\end{lemma}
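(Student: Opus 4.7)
The plan is to verify that the hypotheses of Proposition \ref{prop:la action on solid affinoid space} are satisfied for $(A,A^\circ)$ with its given classical $G$-action. Concretely, I need to (i) promote $A$ to a connective $\Ebb_\infty$-algebra in $\Rep_\square^{h\dagger}(G)$, (ii) check the $\Zbb[x]_\square$-completeness condition on $A^\circ$, and (iii) verify that the coaction morphism $d_1^0$ respects the solid affinoid structure $(A,A^\circ)_\square$. Once (i)--(iii) are established, the proposition produces the desired cosimplicial solid affinoid algebra, hence the $h\dagger$-analytic action of $G$ on $\AnSpec(A,A^\circ)_\square$.

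For (i), the assumption that the $G$-action on $A$ is non-derived $h'$-analytic gives via Lemma \ref{lem:h-an implies h+-an} that $A$ is $h'+$-analytic in $\Rep_\square(G)$; then Corollary \ref{cor:transitivity} (3) upgrades this to $h\dagger$-analyticity because $h'<h$. Since $\Rep_\square^{h\dagger}(G)$ is closed under tensor products (so it is a symmetric monoidal full subcategory), and the multiplication $A\otimes_{\Qbb_{p,\square}}A\to A$ is $G$-equivariant by hypothesis, $A$ becomes a (static, hence connective) $\Ebb_\infty$-algebra in $\Rep_\square^{h\dagger}(G)$. For (ii), fix $x\in A^\circ$. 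The sequence $\{x^n\}_{n\ge 0}$ is bounded in the Banach topology, which is precisely the requirement that the condensed ring $A$ be $\Zbb[x]_\square$-complete; in other words, the analytic ring $(A,A^\circ)_\square$ is well-defined in the standard way, so the subring $A^\circ\subset\pi_0(A)=A$ satisfies the completeness hypothesis of Proposition \ref{prop:la action on solid affinoid space}.

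The main step, and the principal obstacle, is (iii): showing that $d_1^0\colon a\mapsto (g\mapsto g\cdot a)$ is a morphism $(A,A^\circ)_\square\to C^{h\dagger}(G,\Qbb_p)_\square\otimes_{\Qbb_{p,\square}}(A,A^\circ)_\square$ of solid affinoid animated algebras, not merely of animated algebras. Using the presentation $C^{h\dagger}(G,\Qbb_p)_\square=\varinjlim_{h'<h''<h}(C^{h''}(G,\Qbb_p),C^{h''}(G,\Zbb_p))_\square$, the tensor product target is a filtered colimit of solid affinoid algebras, and it suffices to factor $d_1^0$ through one of these with the appropriate integrality. By the classical $h'$-analyticity, the orbit map $g\mapsto g\cdot a$ lies in $C^{h'}(G,A)$ for every $a\in A$. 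For $a\in A^\circ$, the restriction to the group $G$ itself takes values in $A^\circ$, because every $g\in G$ acts by a continuous ring automorphism and therefore preserves the subring of power-bounded elements. The remaining task is to argue that this boundedness along the dense profinite set of $\Qbb_p$-points propagates to the rigid analytic extension on $\Gbb^{h'}$, after possibly enlarging $h'$ slightly to some $h''\in(h',h)$: this is a standard fattening argument, which exploits the fact that bounded analytic functions on an affinoid with bounded values on a ``dense enough'' set of points are bounded globally, and gives the orbit map in $C^{h''}(G,A^\circ)$. This places $d_1^0(a)$ in the integral part of $(C^{h''}(G,A),C^{h''}(G,\Zbb_p)\otimes_\Zbb A^\circ)_\square$ for $h''<h$, which is contained in the $+$-part of $C^{h\dagger}(G,\Qbb_p)_\square\otimes_{\Qbb_{p,\square}}(A,A^\circ)_\square$. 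This yields the required compatibility, so Proposition \ref{prop:la action on solid affinoid space} applies and produces the $h\dagger$-analytic action of $G$ on $\AnSpec(A,A^\circ)_\square$, completing the proof.
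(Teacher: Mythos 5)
Your overall strategy coincides with the paper's: reduce to Proposition \ref{prop:la action on solid affinoid space}, and get the promotion of $A$ to a connective $\Ebb_{\infty}$-algebra in $\Rep_{\square}^{h\dagger}(G)$ from Lemma \ref{lem:h-an implies h+-an} and Corollary \ref{cor:transitivity}. Steps (i) and (ii) are fine. The problem is step (iii), which is both harder than necessary and, as written, rests on a false principle. You are trying to show that the orbit map of $a\in A^{\circ}$ lies in $C^{h''}(G,\Zbb_p)\otimes_{\Zbb}A^{\circ}$ by propagating boundedness from the profinite set $G(\Qbb_p)$ to the affinoid $\Gbb^{h''}$. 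But an analytic function on an affinoid that is integral on a Zariski-dense set of rational points need not be power-bounded: the sup norm is computed on the Shilov boundary, not on rational points. For instance $f(T)=(T^p-T)/p$ on the closed unit disc takes values in $\Zbb_p$ at every point of $\Zbb_p$ (Fermat), yet $|f(\zeta_p)|=p^{1-1/(p-1)}>1$, so $f$ is not power-bounded; and shrinking the radius by a factor $p^{-h''}$ with $h''<1$ does not fix this. So the ``standard fattening argument'' you invoke does not exist in the generality you need, and without it your verification of the hypothesis on $d_1^0$ is incomplete.

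The fix — and what the paper actually does — is to observe that you are aiming at the wrong target. The hypothesis of Proposition \ref{prop:la action on solid affinoid space} only requires $d_1^0$ to be a morphism of solid affinoid animated rings into $C^{h\dagger}(G,\Qbb_p)_{\square}\otimes_{\Qbb_{p,\square}}(A,A^{\circ})_{\square}$, i.e.\ to carry $A^{\circ}$ into the $+$-bounded elements of the target, not into the smaller ring $C^{h''}(G,\Zbb_p)\otimes_{\Zbb}A^{\circ}$. By non-derived $h'$-analyticity the coaction is a continuous homomorphism of Banach $\Qbb_p$-algebras $A\to C^{h'}(G,A)$, and any such homomorphism sends power-bounded elements to power-bounded elements (if $\{a^n\}_n$ is bounded, so is its image under a bounded ring map). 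Hence one gets a morphism of analytic rings
$$(A,A^{\circ})_{\square}\to (C^{h'}(G,A),C^{h'}(G,A)^{\circ})_{\square}=(C^{h'}(G,\Qbb_p),C^{h'}(G,\Zbb_p))_{\square}\otimes_{\Qbb_{p,\square}}(A,A^{\circ})_{\square},$$
and composing with the natural map to $C^{h\dagger}(G,\Qbb_p)_{\square}\otimes_{\Qbb_{p,\square}}(A,A^{\circ})_{\square}$ (using $h'<h$) finishes the verification with no fattening and no change of radius.
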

\begin{proof}
	By Lemma \ref{lem:h-an implies h+-an} and Corollary \ref{cor:transitivity}, the action of $G$ on $A$ is $h\dagger$-analytic.
	Therefore, we can regard $A$ as a connective $\Ebb_{\infty}$-algebra in $\Rep_{\square}^{h\dagger}(G)$.
	Let us check the assumption in Proposition \ref{prop:la action on solid affinoid space}.
	By the definition of non-derived $h'$-analytic representations, we have a coaction morphism $A \to C^{h^{\prime}}(G,A)$ of Banach $\Qbb_p$-algebras.
	It defines a morphism 
	$$(A, A^{\circ})_{\square} \to (C^{h^{\prime}}(G,A),C^{h^{\prime}}(G,A)^{\circ})_{\square}=(C^{h^{\prime}}(G,\Qbb_p),C^{h^{\prime}}(G,\Zbb_p))_{\square}\otimes_{\Qbb_{p,\square}}(A,A^{\circ})_{\square}$$ 
	of analytic $\Qbb_{p,\square}$-algebras.
	Therefore, we get a morphism 
	$$(A, A^{\circ})_{\square}\to (C^{h^{\prime}}(G,\Qbb_p),C^{h^{\prime}}(G,\Zbb_p))_{\square}\otimes_{\Qbb_{p,\square}}(A,A^{\circ})_{\square} \to C^{h\dagger}(G,\Qbb_p)_{\square}\otimes_{\Qbb_{p,\square}}(A, A^{\circ})_{\square}$$
	of analytic $\Qbb_{p,\square}$-algebras.
	By construction, its underlying morphism is the coaction morphism $d_1^0\colon A\to  C^{h\dagger}(G,\Qbb_p)\otimes_{\Qbb_{p,\square}}A$, so we get the claim.
\end{proof}

Let us give a presentation of $\Dcal(\AnSpec \Acal/\Gbb^{h\dagger})$ for a solid affinoid space $\AnSpec\Acal$ with an $h\dagger$-analytic action of $G$. 
We put $(A,A^+)=(\underline{\Acal}, \pi_0(\Acal^+))$.
\begin{definition}
Let $\Rep_{\Acal}^{h\dagger}(G)$ denote the full subcategory of $\Mod_A(\Rep_{\square}^{h\dagger}(G))$, where $\Mod_A(\Rep_{\square}^{h\dagger}(G))$ is the $\infty$-category of $A$-module objects in $\Rep_{\square}^{h\dagger}(G)$, consisting of those objects whose underlying $A$-modules are $\Acal$-complete.
We will refer to objects of $\Rep_{\Acal}^{h\dagger}(G)$ as \textit{semilinear $h\dagger$-analytic representations of $G$ over $\Acal$}.
\end{definition}

\begin{proposition}\label{prop:semilinear geometric interpretation}
Let $\AnSpec\Acal$ be a solid affinoid space with an $h\dagger$-analytic action of $G$.
Then there is a natural equivalence of symmetric monoidal $\infty$-categories
$$\Dcal(\AnSpec\Acal/\Gbb^{h\dagger})\simeq \Rep_{\Acal}^{h\dagger}(G).$$
\end{proposition}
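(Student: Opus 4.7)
The strategy is to mimic the proof of Proposition \ref{prop:geometric interpretation}, replacing the trivial base $\Qbb_p$ with $\Acal$. The Čech nerve of $p_{\Acal}\colon\AnSpec\Acal\to\AnSpec\Acal/\Gbb^{h\dagger}$ is, by construction of the quotient stack, the cosimplicial solid affinoid space $\{\AnSpec\Acal\times(\Gbb^n)^{h\dagger}\}_{[n]\in\Delta}$. Since $p_{\Acal}$ is the base change of $p\colon\ast\to\ast/\Gbb^{h\dagger}$ along $\bar{f}_{\Acal}\colon\AnSpec\Acal/\Gbb^{h\dagger}\to\ast/\Gbb^{h\dagger}$ and $p$ is a $\Dcal$-cover by Proposition \ref{prop:descendable cover}(5), the morphism $p_{\Acal}$ is again a $\Dcal$-cover, and comonadic descent yields
$$\Dcal(\AnSpec\Acal/\Gbb^{h\dagger}) \simeq \varprojlim_{[n]\in\Delta}\Dcal(\AnSpec\Acal\times(\Gbb^n)^{h\dagger}).$$

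I would then construct a symmetric monoidal functor $\Psi\colon\Rep_{\Acal}^{h\dagger}(G)\to\Dcal(\AnSpec\Acal/\Gbb^{h\dagger})$ by assigning to $M\in\Rep_{\Acal}^{h\dagger}(G)$ the cosimplicial section whose $n$-th component is $C^{h\dagger}(G^n,A)_{\square}\otimes_A M$. The coface and codegeneracy maps follow the same recipe as in Proposition \ref{prop:geometric interpretation}: the inner faces $d_n^i$ ($0<i<n$) and codegeneracies $s_n^i$ are pure scalar extensions, while the outer face $d_n^0$ is twisted by the explicit isomorphism $\delta_M\colon C^{h\dagger}(G,M)\to C^{h\dagger}(G,M)$, $f\mapsto(g\mapsto g\cdot f(g))$, which converts the naive coaction into the semilinear diagonal action coming from the action on both $A$ and $M$. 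Steadiness of all morphisms in the Čech nerve (Lemma \ref{lem:steady semilinear}) guarantees that each transition is a base-change functor preserving cocartesian sections, so $\Psi(M)$ is a well-defined section of the target limit. To prove $\Psi$ is an equivalence, I would apply Lemma \ref{lem:Barr-Beck-Lurie descent} with $\Ccal=\Rep_{\Acal}^{h\dagger}(G)$ and $\Ccal^n=\Dcal(\AnSpec\Acal\times(\Gbb^n)^{h\dagger})$. Condition (1), comonadicity of the forgetful functor $\Rep_{\Acal}^{h\dagger}(G)\to\Dcal(A_{\square})$, is the semilinear analogue of Proposition \ref{prop:comonadic}: the right adjoint is $V\mapsto C^{h\dagger}(G,V)_{\star_2}$ equipped with its natural $A$-semilinear structure, and descendability of the corresponding algebra $C^{h\dagger}(G,A)_{\star_2}$ in $\Rep_{\Acal}^{h\dagger}(G)$ follows by scalar extension from Proposition \ref{prop:descendable cover}(4). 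Condition (2), the Beck-Chevalley property at every face, is automatic at inner faces by steadiness (Lemma \ref{lem:steady semilinear}) and reduces at the augmentation face $d_1^0$ to the semilinear analogue of Proposition \ref{prop:adjoint cofree}.

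The main obstacle is the bookkeeping needed to verify that the twisted $d_n^0$ in the definition of $\Psi(M)$ really does coincide with the face map induced by the $\Gbb^{h\dagger}$-equivariant projection in the Čech nerve of $p_{\Acal}$: it requires unpacking the compatibility between the $h\dagger$-analytic action of $G$ on $A$ (given by Proposition \ref{prop:la action on solid affinoid space}) and the semilinear $h\dagger$-analytic structure on $M$, and checking that $\delta_M$ conjugates the two. Once this compatibility is established, the rest of the argument is formally parallel to the proof of Proposition \ref{prop:geometric interpretation}, yielding both the underlying equivalence of $\infty$-categories and its symmetric monoidal enhancement, since each level of $\Psi$ is manifestly a scalar extension functor. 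The final observation, that $\Acal$-completeness on the semilinear side matches the condition cutting out $\Dcal(\AnSpec\Acal\times(\Gbb^n)^{h\dagger})$ inside $\Dcal(\underline{C^{h\dagger}(G^n,A)})$, follows at every cosimplicial level directly from the construction of the analytic structure in Proposition \ref{prop:la action on solid affinoid space}.
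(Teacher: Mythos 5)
Your proposal is correct and rests on the same engine as the paper's proof: descent along the \v{C}ech nerve of $\AnSpec\Acal\to\AnSpec\Acal/\Gbb^{h\dagger}$ via Lemma \ref{lem:Barr-Beck-Lurie descent}, with steadiness (Lemma \ref{lem:steady semilinear}) handling the inner Beck--Chevalley squares, the cofree adjunction handling the augmentation square, and a semilinear comonadicity statement supplying condition (1). The organizational differences are worth noting, because they are exactly what lets the paper avoid the ``main obstacle'' you flag. First, the paper does not construct the cosimplicial section by hand: it treats the case $\Acal=A_{\square}$ first, where the entire augmented cosimplicial diagram of module categories (with its correctly twisted face maps) is obtained \emph{functorially} by taking modules over the image of $A\in\CAlg(\Rep_{\square}^{h\dagger}(G))$ under the already-constructed diagram of Proposition \ref{prop:geometric interpretation}; the compatibility between the twist $\delta_M$ and the $d_n^0$ face is thereby inherited rather than verified, which matters since specifying a section of a cosimplicial $\infty$-category levelwise with homotopy-level checks is delicate. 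Second, the general $\Acal$ is then handled by showing $(f_{-1})_*$ along $\AnSpec\Acal/\Gbb^{h\dagger}\to\AnSpec A_{\square}/\Gbb^{h\dagger}$ is fully faithful with essential image the sections whose degree-$0$ term is $\Acal$-complete — this is where the $\Acal$-completeness condition in the definition of $\Rep_{\Acal}^{h\dagger}(G)$ enters, in one clean step rather than levelwise. Third, for comonadicity the paper reduces to Proposition \ref{prop:comonadic} using that the forgetful functors down to $\Rep_{\square}^{h\dagger}(G)$ and $\Dcal(\Qbb_{p,\square})$ are conservative, preserve limits, and commute with the right adjoints $C^{h\dagger}(G,-)_{\star_2}$; your route via descendability of $C^{h\dagger}(G,A)_{\star_2}$ (base-changed from Proposition \ref{prop:descendable cover}(4)) also works, but you would still need to pass from descendability of the algebra to comonadicity of the specific adjunction, which requires either the pro-zero argument of Proposition \ref{prop:comonadic} or the reduction just described; also note the target of the forgetful functor should be $\Dcal(\Acal)$, not $\Dcal(A_{\square})$. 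None of these points is a gap in substance — your outline would compile into a correct proof — but the paper's two-step factorization is what discharges the bookkeeping you correctly identify as the hard part.
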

\begin{proof}
We put $(A,A^+)=(\underline{\Acal}, \pi_0(\Acal^+))$.
First, we prove the proposition when $\Acal=A_{\square}$.
By Proposition \ref{prop:geometric interpretation}, we have an augmented cosimplicial diagram of symmetric monoidal $\infty$-categories
$\Rep_{\square}^{h\dagger}(G) \to \{\Dcal(C^{h\dagger}(G^n,\Qbb_p)_{\square})\}_{[n]\in \Delta}.$
By taking the image of $A\in \CAlg(\Rep_{\square}^{h\dagger}(G))$, we get the cosimplicial $\{C^{h\dagger}(G^n,\Qbb_p)\}_{[n]\in \Delta}$-algebra $\{C^{h\dagger}(G^n,A)\}_{[n]\in \Delta}$, which corresponds to the $h\dagger$-analytic action of $G$ on $A$.
From this, we get an augmented cosimplicial diagram of symmetric monoidal $\infty$-categories
\begin{align*}
\Mod_A(\Rep_{\square}^{h\dagger}(G)) \to &\{\Mod_{C^{h\dagger}(G^n,A)}(\Dcal(C^{h\dagger}(G^n,\Qbb_p)_{\square}))\}_{[n]\in \Delta}\\
\simeq &\{\Dcal(C^{h\dagger}(G^n,A)_{\square})\}_{[n]\in \Delta}.
\end{align*}
By taking a limit, we get a symmetric monoidal functor $$\Mod_A(\Rep_{\square}^{h\dagger}(G)) \to \Dcal(\AnSpec A_{\square}/\Gbb^{h\dagger}).$$
Let us check the conditions in Lemma \ref{lem:Barr-Beck-Lurie descent}.
By Lemma \ref{lem:steady semilinear}, the condition (2) is satisfied when $m\geq 0$.
The forgetful functor 
$$F\colon\Mod_A(\Rep_{\square}^{h\dagger}(G)) \to \Dcal(A_{\square})$$
admits a right adjoint functor 
$$C^{h\dagger}(G,-)_{\star_2}\colon\Dcal(A_{\square}) \to \Mod_A(\Rep_{\square}^{h\dagger}(G));\; V \mapsto C^{h\dagger}(G,V)_{\star_2},$$
where we regard the $C^{h\dagger}(G,A)_{\star_2}$-module $C^{h\dagger}(G,V)_{\star_2}$ as an $A$-module by using $d_1^0\colon A\to C^{h\dagger}(G,A)_{\star_2}$.
Therefore, it follows from this presentation that the diagram
$$
\xymatrix{
	\Mod_A(\Rep_{\square}^{h\dagger}(G))\ar[r]\ar[d]& \Dcal(A_{\square})\ar[d]\\
	\Dcal(A_{\square})\ar[r]^-{d_{1}^0} & \Dcal(C^{h\dagger}(G,A)_{\square})
}
$$
is right adjointable, which proves that the condition (2) is satisfied.
Since we have a commutative diagram
$$
\xymatrix{
	\Dcal(A_{\square})\ar[rr]^-{C^{h\dagger}(G,-)_{\star_2}}\ar[d]& & \Mod_A(\Rep_{\square}^{h\dagger}(G))\ar[d]\\
	\Dcal(\Qbb_{p,\square}) \ar[rr]^-{C^{h\dagger}(G,-)_{\star_2}}& & \Rep_{\square}^{h\dagger}(G),
}
$$
and the vertical morphisms preserve all small limits and are conservative.
Therefore, the adjunction $(F, C^{h\dagger}(G,-)_{\star_2})$ is comonadic by Proposition \ref{prop:comonadic}, which proves that the condition (1) is satisfied.

Next, let us consider a general $\Acal$.
let 
$$\{f_{n}\}_{[n]\in \Delta}\colon \{\AnSpec\Acal \times (\Gbb^n)^{h\dagger}\}_{n}\to \{\AnSpec A_{\square}\times (\Gbb^n)^{h\dagger}\}_{n}$$ 
denote the natural morphism of the cosimplicial solid affinoid spaces, and let 
$$f_{-1} \colon\AnSpec\Acal/\Gbb^{h\dagger}\to \AnSpec A_{\square}/\Gbb^{h\dagger}$$
denote the natural morphism.
Then the functor $$(f_{-1})_*\colon \Dcal(\AnSpec\Acal/\Gbb^{h\dagger})\to \Dcal(\AnSpec A_{\square}/\Gbb^{h\dagger})$$
is equivalent to the fully faithful functor
$$\varprojlim_{[n]\in \Delta}(f_n)_*\colon \varprojlim_n \Dcal(\AnSpec\Acal \times (\Gbb^n)^{h\dagger})\to \varprojlim_n\Dcal(\AnSpec A_{\square}\times (\Gbb^n)^{h\dagger}),$$
where we implicitly use that $\{(f_n)_*\}_{[n]\in \Delta}$ preserves cocartesian sections by Lemma \ref{lem:steady semilinear}.
The essential image of the above functor is the full subcategory consisting of cocartesian sections $\{V_n\}_{[n]\in \Delta} \in \varprojlim_n\Dcal(\AnSpec A_{\square}\times (\Gbb^n)^{h\dagger})$ such that $V_0\in \Dcal(\AnSpec A_{\square})$ is $\Acal$-complete.
Therefore, we get the equivalence of symmetric monoidal $\infty$-categories
$\Dcal(\AnSpec\Acal/\Gbb^{h\dagger})\simeq \Rep_{\Acal}^{h\dagger}(G).$
\end{proof}

\begin{corollary}\label{cor:semilinear completion}
	Let $f\colon \AnSpec\Bcal\to \AnSpec\Acal$ be a $\Gbb^{h\dagger}$-equivariant morphism of solid affinoid spaces with an $h\dagger$-analytic action of $G$, and let $\overline{f}\colon \AnSpec\Bcal/\Gbb^{h\dagger}\to \AnSpec \Acal/\Gbb^{h\dagger}$ denote the quotient morphism.
	\begin{enumerate}
		\item The functor 
		\begin{align*}
			\overline{f}_*\colon \Rep_{\Bcal}^{h\dagger}(G)\simeq &\Dcal(\AnSpec\Bcal/\Gbb^{h\dagger})\\
			\to &\Dcal(\AnSpec \Acal/\Gbb^{h\dagger})\simeq \Rep_{\Acal}^{h\dagger}(G)
		\end{align*}
		is equivalent to the restriction of scalars $\Rep_{\Bcal}^{h\dagger}(G) \to \Rep_{\Acal}^{h\dagger}(G)$.
		\item The functor 
		\begin{align*}
			\overline{f}^*\colon \Rep_{\Acal}^{h\dagger}(G)\simeq &\Dcal(\AnSpec \Acal/\Gbb^{h\dagger})\\
			\to &\Dcal(\AnSpec \Bcal/\Gbb^{h\dagger})\simeq \Rep_{\Bcal}^{h\dagger}(G)
		\end{align*}
		is a left adjoint functor of the restriction of scalars $\Rep_{\Bcal}^{h\dagger}(G) \to \Rep_{\Acal}^{h\dagger}(G)$.
		Moreover, for $V\in \Rep_{\Acal}^{h\dagger}(G)$, the underlying $\Bcal$-module of $\overline{f}^*V\in \Rep_{\Bcal}^{h\dagger}(G)$ is equivalent to $\Bcal\otimes_{\Acal}V$.
		\item If $f$ lies in $P$ (i.e., the analytic ring structure of $\Bcal$ is induced from $\Acal$), then $\overline{f}\colon \AnSpec\Bcal/\Gbb^{h\dagger}\to \AnSpec \Acal/\Gbb^{h\dagger}$ is weakly $\Dcal$-proper.
	\end{enumerate}
\end{corollary}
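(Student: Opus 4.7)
The plan is to reduce each statement to its analogue on underlying objects by pulling back along the canonical $\Dcal$-covers $p_\Acal$ and $p_\Bcal$, exploiting the cartesian square
\begin{equation*}
\xymatrix{
\AnSpec\Bcal \ar[r]^-{f}\ar[d]_-{p_\Bcal} & \AnSpec\Acal \ar[d]^-{p_\Acal}\\
\AnSpec\Bcal/\Gbb^{h\dagger} \ar[r]^-{\overline{f}} & \AnSpec\Acal/\Gbb^{h\dagger}
}
\end{equation*}
together with Proposition \ref{prop:semilinear geometric interpretation} and Corollary \ref{cor:underlying module}. Throughout, I will use the cosimplicial presentation
\[
\Dcal(\AnSpec\Acal/\Gbb^{h\dagger})\simeq \varprojlim_{[n]\in\Delta}\Dcal(\AnSpec\Acal\times(\Gbb^n)^{h\dagger})
\]
coming from the proof of Proposition \ref{prop:semilinear geometric interpretation}, and similarly for $\Bcal$.

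For (1), the morphism $\overline{f}_*$ corresponds under these presentations to the componentwise pushforward along $f\times\id\colon \AnSpec\Bcal\times(\Gbb^n)^{h\dagger}\to \AnSpec\Acal\times(\Gbb^n)^{h\dagger}$, which preserves cocartesian sections by Lemma \ref{lem:steady semilinear}. On each level this is simply restriction of scalars along $\Acal\otimes_{\Qbb_{p,\square}}C^{h\dagger}(G^n,\Qbb_p)\to \Bcal\otimes_{\Qbb_{p,\square}}C^{h\dagger}(G^n,\Qbb_p)$. Matching this with the identification in Proposition \ref{prop:semilinear geometric interpretation}, which realises $\Rep_\Acal^{h\dagger}(G)$ as the full subcategory of cocartesian sections whose value at $[0]$ is $\Acal$-complete, one concludes that $\overline{f}_*$ is the restriction of scalars $\Rep_\Bcal^{h\dagger}(G)\to \Rep_\Acal^{h\dagger}(G)$.

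For (2), the first assertion follows by adjointness from (1). For the computation of the underlying $\Bcal$-module, apply the $*$-base change equivalence $p_\Bcal^*\overline{f}^*\simeq f^*p_\Acal^*$ in the cartesian square above: for $V\in \Rep_\Acal^{h\dagger}(G)$, Proposition \ref{prop:descendable cover} identifies $p_\Acal^*V$ with the underlying $\Acal$-module of $V$, and $f^*$ on affinoid spaces is the scalar extension $\Bcal\otimes_\Acal(-)$. Composing gives $p_\Bcal^*\overline{f}^*V\simeq \Bcal\otimes_\Acal V$ as desired.

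For (3), the morphism $p\colon \AnSpec\Qbb_{p,\square}\to {*}/\Gbb^{h\dagger}$ is a $\Dcal$-cover by Proposition \ref{prop:descendable cover}(5); since $\Dcal$-covers are stable under base change, its pullback $p_\Acal\colon \AnSpec\Acal\to \AnSpec\Acal/\Gbb^{h\dagger}$ is also a $\Dcal$-cover and in particular a universal $\Dcal^*$-cover. The pullback of $\overline{f}$ along $p_\Acal$ is $f\colon \AnSpec\Bcal\to \AnSpec\Acal$, which lies in $P$ by assumption. Hence $\overline{f}$ is universal $\Dcal^*$-locally on the target representable by a morphism in $P$, and Corollary \ref{cor:weakly proper} concludes that $\overline{f}$ is weakly $\Dcal$-proper. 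The main (mild) obstacle is the bookkeeping in part (1), i.e., checking that the equivalence of Proposition \ref{prop:semilinear geometric interpretation} is functorial enough in $\Acal$ so that $\overline{f}_*$ is identified levelwise with restriction of scalars; once this is granted everything else is formal.
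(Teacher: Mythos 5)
Your proposal is correct and follows essentially the same route as the paper: part (1) from the cosimplicial presentation underlying Proposition \ref{prop:semilinear geometric interpretation} (with Lemma \ref{lem:steady semilinear} ensuring cocartesian sections are preserved), part (2) from adjointness and the base-change equivalence $p_{\Bcal}^*\circ\overline{f}^*\simeq f^*\circ p_{\Acal}^*$ in the cartesian square, and part (3) from Corollary \ref{cor:weakly proper} applied via that same square. The only cosmetic point is that in (2) the identification of $p_{\Acal}^*$ with the forgetful functor for general $\Acal$ is read off from the construction in Proposition \ref{prop:semilinear geometric interpretation} rather than from Proposition \ref{prop:descendable cover}, which is stated only over $\Qbb_{p,\square}$.
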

\begin{proof}
	Part (1) follows from the construction of Proposition \ref{prop:semilinear geometric interpretation}.
	We have a commutative diagram
	\begin{equation}\label{cartesian diagram}
	\vcenter{
	\xymatrix{
		\AnSpec \Bcal \ar[r]^-{f}\ar[d]_-{p_{\Bcal}}&\AnSpec \Acal\ar[d]^-{p_{\Acal}}\\
		\AnSpec \Bcal/\Gbb^{h\dagger}\ar[r]^-{\overline{f}}&\AnSpec \Acal/\Gbb^{h\dagger},
	}}
	\end{equation}
	so there is an equivalence $p_{\Bcal}^*\circ \overline{f}^*\simeq f^*\circ g_{\Acal}^*$ of functors from $\Dcal(\AnSpec\Acal/\Gbb^{h\dagger})$ to $\Dcal(\AnSpec \Bcal)$.
	Part (2) follows from this.
	Part (3) follows from Corollary \ref{cor:weakly proper} and the cartesian diagram \eqref{cartesian diagram}.
\end{proof}

\begin{remark}
\begin{enumerate}
	\item If the analytic ring structure of $\Bcal$ is induced from $\Acal$, then for $V\in \Rep_{\Acal}^{h\dagger}(G)$, the action of $G$ on $\Bcal\otimes_{\Acal}V=B\otimes_{\Acal}V$ is the diagonal action, where $B$ is the underlying animated $\Qbb_{p,\square}$-algebra of $\Bcal$.
	\item Let $\AnSpec\Acal$ be a solid affinoid space with an $h\dagger$-analytic action of $G$, and let $A$ denote the underlying animated $\Qbb_{p,\square}$-algebra of $\Acal$.
	Then, by applying the above corollary to the natural $G$-equivariant morphism $\AnSpec\Acal \to \AnSpec A_{\square}$, we find that for $V\in \Rep_{A_{\square}}^{h\dagger}(G)$, the $\Acal$-completion $\Acal\otimes_{A_{\square}} V$ of $V$ admits a natural $h\dagger$-analytic action of $G$.
\end{enumerate}
\end{remark}

\begin{corollary}\label{cor:semilinear analytic vector}
	Let $\AnSpec\Acal$ be a solid affinoid space with an $h\dagger$-analytic action of $G$, and let $h^{\prime}\geq h$ be a positive rational number or $\infty$.
	Let $f\colon \AnSpec\Acal/\Gbb^{h^{\prime}\dagger} \to \AnSpec\Acal/\Gbb^{h\dagger}$ denote the natural morphism.
	\begin{enumerate}
		\item The functor 
		\begin{align*}
			f^{*}\colon \Rep_{\Acal}^{h\dagger}(G)\simeq &\Dcal(\AnSpec\Acal/\Gbb^{h\dagger}) \\
			\to&\Dcal(\AnSpec\Acal/\Gbb^{h^{\prime}\dagger})\simeq \Rep_{\Acal}^{h^{\prime}\dagger}(G)
		\end{align*} 
		is equivalent to the natural inclusion $\Rep_{\Acal}^{h\dagger}(G) \hookrightarrow \Rep_{\Acal}^{h^{\prime}\dagger}(G)$.
		\item The functor 
		\begin{align*}
			f_{*}\colon \Rep_{\Acal}^{h^{\prime}\dagger}(G)\simeq &\Dcal(\AnSpec\Acal/\Gbb^{h^{\prime}\dagger}) \\
			\to&\Dcal(\AnSpec\Acal/\Gbb^{h\dagger})\simeq \Rep_{\Acal}^{h\dagger}(G)
		\end{align*} 
		is a right adjoint functor of the natural inclusion $\Rep_{\Acal}^{h\dagger}(G) \hookrightarrow \Rep_{\Acal}^{h^{\prime}\dagger}(G)$.
		It can be described explicitly as follows:
		For $V\in \Rep_{\Acal}^{h^{\prime}\dagger}(G)$, $V^{h\dagger\mathchar`-\an}$ becomes an $A=A^{h\dagger\mathchar`-\an}$-module in $\Rep_{\square}^{h\dagger}(G)$, where $A$ is the underlying animated $\Qbb_{p,\square}$-algebra of $\Acal$.
		Then $V^{h\dagger\mathchar`-\an}$ is $\Acal$-complete and $f_*V\in \Rep_{\Acal}^{h\dagger}(G)$ is equivalent to this.
		\item The morphism $f\colon \AnSpec\Acal/\Gbb^{h^{\prime}\dagger} \to \AnSpec\Acal/\Gbb^{h\dagger}$ is weakly $\Dcal$-proper.
	\end{enumerate}
\end{corollary}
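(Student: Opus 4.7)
The strategy is to recognize $f$ as a base change of $f_{h^{\prime},h}\colon \ast/\Gbb^{h^{\prime}\dagger}\to\ast/\Gbb^{h\dagger}$ along the forgetful morphism $g\colon\AnSpec\Acal/\Gbb^{h\dagger}\to\ast/\Gbb^{h\dagger}$ induced by the structure map $\AnSpec\Acal\to\ast$, and then import the explicit descriptions established in Proposition~\ref{prop:6-ff classifying stack}. The first task is to verify that the square
$$
\xymatrix{
\AnSpec\Acal/\Gbb^{h^{\prime}\dagger} \ar[r]^-{f} \ar[d]_-{g^{\prime}} & \AnSpec\Acal/\Gbb^{h\dagger} \ar[d]^-{g} \\
\ast/\Gbb^{h^{\prime}\dagger} \ar[r]^-{f_{h^{\prime},h}} & \ast/\Gbb^{h\dagger}
}
$$
is cartesian. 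This is a direct check from the universal property of quotient stacks: a $T$-point of the fiber product consists of a $\Gbb^{h\dagger}$-equivariant morphism from a $\Gbb^{h\dagger}$-torsor $\tilde{T}\to T$ to $\AnSpec\Acal$ together with a reduction of $\tilde{T}$ to a $\Gbb^{h^{\prime}\dagger}$-torsor, which is the same datum as a $\Gbb^{h^{\prime}\dagger}$-equivariant morphism from a $\Gbb^{h^{\prime}\dagger}$-torsor to $\AnSpec\Acal$.

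Part (3) then follows immediately: by Proposition~\ref{prop:6-ff classifying stack}(5) the morphism $f_{h^{\prime},h}$ is weakly $\Dcal$-proper, and weak $\Dcal$-properness is preserved by base change (Lemma~\ref{lem:proper composition basechange}(2)). For part (1), pulling back further along the atlas $\AnSpec\Acal\to\AnSpec\Acal/\Gbb^{h^{\prime}\dagger}$ gives the identity functor on $\Dcal(\Acal)$, so $f^*$ preserves underlying $\Acal$-modules. Under the equivalence of Proposition~\ref{prop:semilinear geometric interpretation}, which is compatible with the restriction-of-scalars functor to $\Dcal(\Acal)$, and combining with Proposition~\ref{prop:6-ff classifying stack}(2) (identifying $f_{h^{\prime},h}^*$ with the natural inclusion $\Rep_{\square}^{h\dagger}(G)\hookrightarrow\Rep_{\square}^{h^{\prime}\dagger}(G)$), this identifies $f^*$ with the natural inclusion $\Rep_{\Acal}^{h\dagger}(G)\hookrightarrow\Rep_{\Acal}^{h^{\prime}\dagger}(G)$.

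For part (2), since $f^*$ is a fully faithful inclusion, its right adjoint $f_*$ exists. To identify it, apply covariant functoriality of $*$-pushforward to the commutativity $g\circ f = f_{h^{\prime},h}\circ g^{\prime}$:
\begin{align*}
g_*\circ f_* \simeq (g\circ f)_* = (f_{h^{\prime},h}\circ g^{\prime})_* \simeq (f_{h^{\prime},h})_*\circ g^{\prime}_*.
\end{align*}
By Proposition~\ref{prop:6-ff classifying stack}(4), $(f_{h^{\prime},h})_*$ is the functor $(-)^{h\dagger-\an}$, and by Corollary~\ref{cor:underlying module} (with $\Bcal=\Qbb_{p,\square}$), $g^{\prime}_*$ is the forgetful functor to $\Rep_{\square}^{h^{\prime}\dagger}(G)$. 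Hence the underlying $h\dagger$-analytic $G$-representation of $f_*V$ is $V^{h\dagger-\an}$. The $A$-module structure on $V^{h\dagger-\an}$ arising from the lax monoidality of $(-)^{h\dagger-\an}$ (combined with $A^{h\dagger-\an}\simeq A$, which holds because the $G$-action on $\AnSpec\Acal$ is $h\dagger$-analytic) coincides with the one inherited from $f_*V$, and $\Acal$-completeness is automatic from $f_*V\in\Rep_{\Acal}^{h\dagger}(G)$. The main subtle point is the verification of the cartesian square in the first paragraph; once this is established, everything else reduces formally to previously developed results.
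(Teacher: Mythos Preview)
Your proof is correct and follows the same overall strategy as the paper: realize $f$ as a base change of $f_{h^{\prime},h}$ and import the descriptions from Proposition~\ref{prop:6-ff classifying stack}. Part~(3) is argued identically to the paper (same cartesian square, same references).

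The only noteworthy difference is in part~(2). The paper routes through the commutative square
\[
\xymatrix{
\AnSpec\Acal/\Gbb^{h^{\prime}\dagger}\ar[r]^-{f}\ar[d]_-{p_{h^{\prime}}}& \AnSpec\Acal/\Gbb^{h\dagger}\ar[d]^-{p_h}\\
\AnSpec A_{\square}/\Gbb^{h^{\prime}\dagger}\ar[r]^-{f^{\prime}}& \AnSpec A_{\square}/\Gbb^{h\dagger},
}
\]
where the vertical pushforwards $(p_h)_*,(p_{h^{\prime}})_*$ are the \emph{fully faithful} inclusions $\Rep_{\Acal}^{?\dagger}(G)\hookrightarrow \Mod_A(\Rep_{\square}^{?\dagger}(G))$ (Corollary~\ref{cor:semilinear completion}). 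Because these are fully faithful, once one knows the case $\Acal=A_{\square}$ (which the paper declares ``obvious''), the general case follows immediately with no further bookkeeping on the $A$-module structure. Your route through $\ast/\Gbb^{?\dagger}$ instead uses vertical pushforwards $g_*,g^{\prime}_*$ that are only the forgetful functors to $\Rep_{\square}^{?\dagger}(G)$, hence not fully faithful; this is why you then need the extra sentence about lax monoidality to recover the $A$-module structure. Your argument there is fine (the identification $g_*f_*\simeq (f_{h^{\prime},h})_*g^{\prime}_*$ is indeed an equivalence of lax monoidal functors), but the paper's choice avoids that step entirely. One minor citation issue: for the claim that $g^{\prime}_*$ is the forgetful functor, the relevant reference is Corollary~\ref{cor:semilinear completion}(1) (restriction of scalars), not Corollary~\ref{cor:underlying module} (which only identifies the underlying $\Qbb_{p,\square}$-module after a further pullback).
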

\begin{proof}
	Part (1) follows from the construction of Proposition \ref{prop:semilinear geometric interpretation}.
	Part (2) is obvious when $\Acal=A_{\square}$. 
	Let us consider general cases.
	We have a commutative diagram
	$$
	\xymatrix{
		\AnSpec\Acal/\Gbb^{h^{\prime}\dagger}\ar[r]^-{f}\ar[d]_-{p_{h^{\prime}}}& \AnSpec\Acal/\Gbb^{h\dagger}\ar[d]^-{p_h}\\
		\AnSpec A_{\square}/\Gbb^{h^{\prime}\dagger}\ar[r]^-{f^{\prime}}& \AnSpec A_{\square}/\Gbb^{h\dagger},
	}
	$$
	so there is an equivalence $(p_h)_*\circ f_*\simeq f^{\prime}_*\circ (p_{h^{\prime}})_*$ of functors from $\Dcal(\AnSpec\Acal/\Gbb^{h^{\prime}\dagger})$ to $\Dcal(\AnSpec A_{\square}/\Gbb^{h\dagger})$.
	Therefore, (2) follows from Corollary \ref{cor:semilinear completion}.
	Part (3) follows from Lemma \ref{lem:proper composition basechange}, Proposition \ref{prop:6-ff classifying stack} (5), and a cartesian diagram
	$$
	\xymatrix{
		\AnSpec\Acal/\Gbb^{h^{\prime}\dagger}\ar[r]^-{f}\ar[d] &\AnSpec\Acal/\Gbb^{h\dagger}\ar[d]\\
		\AnSpec\Qbb_{p,\square}/\Gbb^{h^{\prime}\dagger} \ar[r] &\AnSpec\Qbb_{p,\square}/\Gbb^{h\dagger}.
	}
	$$
\end{proof}

Finally, we note the following lemma.
\begin{lemma}\label{lem:suave smooth classifying stack}
	Let $X\in \Shv_{\Dcal}(\Aff_{\Qbb_{p,\square}})$ (resp. $Y$) be a solid $\Dcal$-stack over $\AnSpec\Qbb_{p,\square}$ with an action of $\Gbb^{h\dagger}$ (resp. the trivial action of $\Gbb^{h\dagger}$), and let $f\colon X\to Y$ be a $\Gbb^{h\dagger}$-equivariant morphism.
	\begin{enumerate}
		\item If $f$ is weakly $\Dcal$-proper, then the natural morphism $f^{\prime}\colon X/\Gbb^{h\dagger} \to Y$ is also weakly $\Dcal$-proper.
		\item If $f$ is $\Dcal$-suave (resp. $\Dcal$-smooth) and $h$ is sufficiently large, then the natural morphism $f^{\prime}\colon X/\Gbb^{h\dagger} \to Y$ is also $\Dcal$-suave (resp. $\Dcal$-smooth).
		If $G$ is abelian, this holds for arbitrary $h$.
	\end{enumerate}
\end{lemma}
\begin{proof}
	We have a factorization
	$$f^{\prime}\colon X/\Gbb^{h\dagger} \overset{\overline{f}}{\to} Y/\Gbb^{h\dagger} \overset{g}{\to}Y.$$
	By Corollary \ref{cor:prim and suave map D-local} (2), Lemma \ref{lem:proper composition basechange}, Proposition \ref{prop:6-ff classifying stack} (5), (6), and a cartesian diagram
	$$
	\xymatrix{
		Y/\Gbb^{h\dagger}  \ar[r]^-{g}\ar[d] & Y \ar[d]\\
		\AnSpec\Qbb_{p,\square}/\Gbb^{h\dagger} \ar[r] &\AnSpec\Qbb_{p,\square},
	}
	$$
	the morphism $g\colon Y/\Gbb^{h\dagger} \to Y$ is weakly $\Dcal$-proper and $\Dcal$-smooth (when $h$ is sufficiently large).
	Moreover, by Corollary \ref{cor:prim and suave map D-local} (3) and a cartesian diagram
	$$
	\xymatrix{
		X  \ar[r]^-{f}\ar[d] & Y \ar[d]\\
		X/\Gbb^{h\dagger} \ar[r]^-{\overline{f}} &Y/\Gbb^{h\dagger},
	}
	$$
	the morphism $\overline{f}\colon X/\Gbb^{h\dagger}\to Y/\Gbb^{h\dagger}$ is weakly $\Dcal$-proper (resp. $\Dcal$-suave, $\Dcal$-smooth).
	Therefore, we get the claim from Corollary \ref{cor:prim and suave map D-local} (1).
\end{proof}


\section{Cohomology of $(\varphi,\Gamma)$-modules}
\subsection{Locally analytic actions and suaveness}
Let $G$ be a compact open subgroup of $\Zbb_p^{\times}$.
We take the smallest integer $n\geq 1$ ($n\geq 2$ if $p=2$) such that $1+p^n\Zbb_p \subset G$.
By using the exponential $\exp\colon p^n\Zbb_p \overset{\sim}{\to} 1+p^n\Zbb_p$, we define a rigid analytic group $\Gbb_n=\Spa\Qbb_p\langle T/p^n\rangle$ with $\Gbb_n(\Qbb_p)=1+p^n\Zbb_p$.
Let $\Gbb^n$ denote the rigid analytic group $\bigsqcup_{g\in G/\Gbb_n(\Qbb_p)}g \Gbb_n$.
In the same way as in the subsection \ref{subsection:la repns}, we also define $\Gbb_h$, $\Gbb^h$, $\mathring{\Gbb}_h$, and $\Gbb^{h+}$ for every rational number $h\geq n$, and define $\Gbb^{h\dagger}$ for every rational number $h>n$.
For a rational number $h\geq n$, we write $G_h=\Gbb_h(\Qbb_p)$. 
Let $i$ be the smallest integer such that $i\geq h$.
Then, we have $G_h=G_i=1+p^i\Zbb_p$.
We write $\gamma_i=\exp(p^i)\in G$, which is a topological generator of $G_i$.

\begin{remark}
	Let $h\geq n$ be a rational number, and $i$ be the smallest integer such that $i\geq h$.
	By construction, we have a natural isomorphism 
	$$C^h(G,\Qbb_p)\cong\prod_{G/G_h}\Qbb_p\langle T/p^h\rangle,$$ 
	and $\gamma_i \in G$ acts on it as $T\mapsto T+p^i$.
\end{remark}

We work with the following Tate-Sen setting.

\begin{setting}\label{setting suave}
Let $A$ be a Banach $\Qbb_p$-algebra with a unitary $G$-action, and let $A_0\subset A_1\subset \cdots \subset A$ be closed subalgebras stable under the $G$-action.
We normalize the norm as $\lvert p\rvert=p^{-1}$.
We assume that:
\begin{enumerate}[label=(C\arabic*)]
	\item There exist $c_2>0$ and continuous $A_i$-linear and $G$-equivariant morphisms 
	$$R_i\colon A\to A_i$$
	for any $i\geq 0$ satisfying the following:
	\begin{itemize}
		\item For any $i\geq 0$, $R_i|_{A_i}=\id_{A_i}$.
		\item For any $i\geq 0$ and $x\in A$, $\lvert R_i(x)\rvert \leq p^{c_2}\lvert x\rvert$.
		\item For any $x\in A$, $\lim_{i\to\infty}R_i(x)=x$.
	\end{itemize}
	\item\label{TS3}
	We set $X_i=\Ker(R_i \colon A\to A_i)$ for $i\geq 0$.
	Then there exist $c_3>0$ and an integer $N\geq 0$ such that for any $l\geq N$ and $\gamma \in G_n\setminus G_{n+l+1}$, the morphism 
	$$\gamma-\id \colon X_l \to X_l$$ 
	is invertible and 
	$\lvert(\gamma-\id)^{-1}(x)\rvert \leq p^{c_3}\lvert x\rvert$
	for any $x\in X_l$.
	\item Let $N$ be as in \ref{TS3}.
	Then, for an integer $i$ sufficiently large and an integer $l\geq N$, there exists $t=t(i,l)>0$ such that for any $\gamma \in G_i$ and any $x\in A_l$,
	$$\lvert (\gamma-\id)x \rvert \leq p^{-t}\lvert x\rvert.$$ 
	\item For any integer $i\geq n$, there exists an integer $j$ such that $H^0(A^{i\mathchar`-\an})\subset A_j$ (for the definition of $H^0(A^{i\mathchar`-\an})$, see Definition \ref{defn:non-derived analytic vector}).
	\item For any $j\geq i$, $A_j$ is a finite $A_i$-algebra.
	\item For any integer $i\geq 0$, the morphism $\AnSpec (A_i,A_i^{\circ})_{\square}\to \AnSpec \Qbb_{p,\square}$ is $\Dcal$-suave.
\end{enumerate}
We write $A_{\infty}=\varinjlim_i A_i\subset A$, where we take the colimit as $\Qbb_{p,\square}$-algebras.
We note that $A_{\infty}\subset A$ is stable under the action of $G$.

\end{setting}
\begin{remark}
	The conditions (C1), (C2), (C3) correspond to the Tate-Sen conditions (TS2), (TS3), and (TS4), respectively (\cite[Definition 3.1.3]{BC08}, \cite[5A]{Por24}).
	Since we are working with a compact open subgroup of $\Zbb_p^{\times}$, there is no condition corresponding to (TS1).
\end{remark}

\begin{remark}\label{rem:defn of h}
It follows from the condition (C3) that there exists a sequence $n\leq h(0)\leq h(1)\leq \cdots$ of integers such that $\lim_{i\to \infty}h(i)=\infty$ and the action of $G$ on $A_i$ induces an $h(i)\dagger$-analytic action on $\AnSpec (A_i,A_i^{\circ})_{\square}$ by \cite[Example 2.1.9]{Pan22} and Lemma \ref{lem:banach algebra with la action}.
\end{remark}

We write $(A_{\infty},A_{\infty}^{\circ})_{\square}=\varinjlim_i (A_i,A_i^{\circ})_{\square}$.
From the above remark, we find that the action of $G$ on $A_{\infty}$ induces a locally analytic action on $\AnSpec(A_{\infty},A_{\infty}^{\circ})_{\square}$.

The goal of this subsection is to prove the following theorem.
\begin{theorem}\label{thm:suave Tate-Sen}
The  natural morphism 
$$p_{\infty}\colon \AnSpec(A_{\infty},A_{\infty}^{\circ})_{\square}/\Gbb^{\la}\to \AnSpec\Qbb_{p,\square}$$ 
is $\Dcal$-suave.
\end{theorem}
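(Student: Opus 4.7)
The plan is to apply Proposition~\ref{prop:suave criterion} to $p_\infty\colon X \to S$, where $X = \AnSpec(A_\infty, A_\infty^\circ)_\square/\Gbb^{\la}$ and $S = \AnSpec\Qbb_{p,\square}$. Let $\{h(n)\}_{n\geq 0}$ be the sequence from Remark~\ref{rem:defn of h}, enlarged if necessary so that $*/\Gbb^{h(n)\dagger} \to *$ is $\Dcal$-smooth for every $n$ (Proposition~\ref{prop:6-ff classifying stack}(6)). For each $n$ set $X_n \coloneqq \AnSpec(A_n, A_n^\circ)_\square/\Gbb^{h(n)\dagger}$, giving the natural factorization $X \overset{g_n}{\longrightarrow} X_n \overset{f_n}{\longrightarrow} S$.

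First I would dispatch the easier hypotheses. Condition~(C6) says $\AnSpec(A_n, A_n^\circ)_\square \to S$ is $\Dcal$-suave, and this is a $\Gbb^{h(n)\dagger}$-equivariant morphism (with trivial action on $S$), so Lemma~\ref{lem:suave smooth classifying stack} gives $\Dcal$-suaveness of $f_n$. For $\Dcal$-primness of $g_n$, I would factor it as
$$X \longrightarrow \AnSpec(A_\infty, A_\infty^\circ)_\square/\Gbb^{h(n)\dagger} \longrightarrow X_n,$$
where the first map is weakly $\Dcal$-proper by Corollary~\ref{cor:semilinear analytic vector}(3); for the second, primness can be checked after pullback to $\AnSpec(A_n, A_n^\circ)_\square$ via Corollary~\ref{cor:prim and suave map D-local}(2), reducing to primness of $\AnSpec(A_\infty, A_\infty^\circ)_\square \to \AnSpec(A_n, A_n^\circ)_\square$, which holds because the underlying morphism is, after choosing the correct witnesses of $+$-boundedness, induced from a morphism in $P$ (Lemma~\ref{lem:p is prim}). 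In particular $\delta_{g_n} \simeq \Ocal_X$, so $g_{n!}\delta_{g_n} \simeq g_{n*}\Ocal_X$.

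For condition~(3), Corollaries~\ref{cor:semilinear analytic vector}(2) and~\ref{cor:semilinear completion}(1) identify $g_{n*}\Ocal_X$ with $A_\infty^{h(n)\dagger-\an}$ regarded as an $A_n$-module in $\Rep^{h(n)\dagger}_{(A_n, A_n^\circ)_\square}(G)$. To verify $f_n$-suaveness I would pull back along the $\Dcal$-cover $\AnSpec(A_n, A_n^\circ)_\square \to X_n$ (Proposition~\ref{prop:descendable cover}(5)) and invoke Lemma~\ref{lem:prim and suave obj D-local}, reducing to dualizability of the underlying $A_n$-module in $\Dcal((A_n, A_n^\circ)_\square)$. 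The Tate-Sen axioms~(C1)--(C3) should identify $A_\infty^{h(n)\dagger-\an}$ with a finite-stage subalgebra $A_{m(n)}$, using the projections $R_l$ and the bounded inverse of $\gamma - \id$ on the complements $X_l$; condition~(C4) then controls the size of $m(n)$ by bounding the classical analytic vectors, and condition~(C5) makes $A_{m(n)}$ a finite, hence dualizable, $A_n$-module.

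The main obstacle is condition~(2): joint conservativity of the family $\{(g_n \times \id_X)_!\}_n$. Since $\AnSpec(A_\infty, A_\infty^\circ)_\square = \varprojlim_n \AnSpec(A_n, A_n^\circ)_\square$ in $\Aff_{\Qbb_{p,\square}}$ and $\Gbb^{\la} = \varinjlim_n \Gbb^{h(n)\dagger}$, one presents $X$ as a cofiltered limit of the $X_n$'s. The Tate-Sen projections $R_n\colon A \to A_n$ (with $\lim_n R_n(x) = x$ for every $x$, by (C1)) then function as ``approximate retractions'' of $X \to X_n$ which prevent any nonzero object of $\Dcal(X \times_S X)$ from being simultaneously annihilated by every pullback $(g_n \times \id_X)_!$. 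Transporting this classical approximation-by-retractions argument into the derived solid formalism, together with the derived decompletion statement feeding condition~(3), is the principal technical content of the proof.
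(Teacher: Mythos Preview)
Your overall strategy and the verification of condition~(1) are correct and match the paper. There are, however, genuine gaps in your treatment of conditions~(3) and~(2).

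For condition~(3): the identification you claim is too strong. What the Tate--Sen input actually gives (Corollary~\ref{cor:key for suave}) is an equivalence $A_\infty^{h(n)\dagger-\an}\simeq A_l^{h(n)\dagger-\an}$ for some large $l$, \emph{not} $A_\infty^{h(n)\dagger-\an}\simeq A_l$. Since $A_l$ is only $h(l)\dagger$-analytic with $h(l)\geq h(n)$, the object $A_l^{h(n)\dagger-\an}$ need not equal $A_l$ and may carry nontrivial $H^1$; there is no evident reason it should be a dualizable $(A_n,A_n^\circ)_\square$-module, so your reduction to dualizability stalls. The paper bypasses this entirely: once $(p_{\infty,n})_*A_\infty\simeq (p_{l,n})_*A_l=(p_{l,n})_!\Ocal_{X_l}$ is known, apply Lemma~\ref{lem:suave and prim composition}(2) to the factorization $X_l\xrightarrow{p_{l,n}}X_n\xrightarrow{f_n}S$ with $P=Q=\Ocal_{X_l}$. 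Since $f_n\circ p_{l,n}=p_l$ is $\Dcal$-suave and $p_{l,n}$ is $\Dcal$-prim, the lemma yields $f_n$-suaveness of $(p_{l,n})_!\Ocal_{X_l}$ directly---no dualizability computation is needed.

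For condition~(2): the Tate--Sen retractions $R_n$ are a red herring. The paper's argument is purely formal and uses none of (C1)--(C4). Using proper base change, conservativity of the pullback along the $\Dcal^*$-cover $\id\times p$ (where $p\colon\AnSpec(A_\infty,A_\infty^\circ)_\square\to X$), and the conservative pushforwards $q_{\infty*}$, $q_{i*}$ to the classifying stacks, one reduces joint conservativity of $\{(g_n\times\id_X)_!\}_n$ to joint conservativity of
\[
\bigl\{(f_{\infty,h(n)})_*\colon\Rep^{\la}_\square(G)\to\Rep^{h(n)\dagger}_\square(G)\bigr\}_n,
\]
i.e., of the functors $V\mapsto V^{h(n)\dagger-\an}$. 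This is immediate from Lemma~\ref{lem:la=colim of h-an} and $\lim_n h(n)=\infty$: for locally analytic $V$ the map $\varinjlim_n V^{h(n)\dagger-\an}\to V$ is an equivalence, so $V^{h(n)\dagger-\an}=0$ for all $n$ forces $V=0$.
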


We set $Y_i=\Ker(R_i\colon A_{i+1}\to A_i)$.
We note that $Y_i\subset R_{i+1}$ is stable under the action of $G$.
The following lemma is essential.
\begin{lemma}\label{lem:Tate-Sen H1 vanishing}
	For any rational number $h\geq n$, we have $H^1(G, C^h(G,Y_l))=0$ for any integer $l\geq h+c_3+2$.
\end{lemma}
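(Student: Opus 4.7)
The plan is to reduce to showing that the operator $\gamma_l \star_{1,3} - \id$ is invertible on the Banach space $M := C^h(G, Y_l)$ and then to exhibit this invertibility as a small perturbation of the pointwise action of $\gamma_l - \id$ on values. Since $G$ is abelian, $G_l \triangleleft G$ has finite $p$-power index, and higher cohomology of a finite group on a $\Qbb_p$-vector space vanishes by Maschke's theorem, the Hochschild--Serre spectral sequence reduces $H^1(G, M) = 0$ to $H^0(G_l, M) = H^1(G_l, M) = 0$. As $G_l$ is topologically pro-cyclic with generator $\gamma_l = \gamma_0^{p^l}$, the solid $G_l$-cohomology is computed by the two-term complex $M \xrightarrow{\gamma_l \star_{1,3} - \id} M$, so it suffices to prove invertibility of this operator.

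Decompose $\gamma_l \star_{1,3} = A \circ B$, where $(Af)(g) := \gamma_l \cdot f(g)$ is the pointwise $G$-action on $Y_l$-values and $(Bf)(g) := f(\gamma_l^{-1} g)$ is left translation; these commute. Hence
\[
\gamma_l \star_{1,3} - \id \;=\; (A - \id)\bigl(\id + (A - \id)^{-1}\,A\,(B - \id)\bigr),
\]
and it suffices to establish invertibility of both factors. Since $\gamma_l \in G_l \setminus G_{l+1}$, condition \ref{TS3} yields invertibility of $\gamma_l - \id$ on $X_l$ with $\|(\gamma_l - \id)^{-1}\| \leq p^{c_3}$; the standard Tate--Sen retraction compatibility $R_l \circ R_{l+1} = R_l$ (implicit in Setting \ref{setting suave}) gives $X_{l+1} \subset X_l$ and a $G$-equivariant direct sum decomposition $X_l = Y_l \oplus X_{l+1}$ via the projector $R_{l+1}$, and since $\gamma_l - \id$ is likewise invertible on $X_{l+1}$ by \ref{TS3} applied with $l+1$, the inverse on $X_l$ restricts to an inverse on $Y_l$ of norm $\leq p^{c_3}$. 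Thus $A - \id$ is invertible on $M$ with $\|(A - \id)^{-1}\| \leq p^{c_3}$, while $\|A\| = 1$ since $G$ acts isometrically.

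Finally, $\gamma_l - 1 = (1 + p^n)^{p^l} - 1$ has $p$-adic absolute value $p^{-(l+n)}$, and an $h$-analytic $\Qbb_p$-valued function on a coset of $G_h$ of sup norm $\leq 1$ has derivative of norm at most $p^h$; consequently $\|B - \id\|_{C^h(G, \Qbb_p)} \leq p^{h - l - n}$. Combining,
\[
\bigl\|(A - \id)^{-1}\,A\,(B - \id)\bigr\| \;\leq\; p^{c_3 + h - l - n} \;\leq\; p^{-2-n} \;<\; 1,
\]
where the middle inequality uses $l \geq h + c_3 + 2$ and $n \geq 2$. A Neumann series inverts the correction factor, hence also $\gamma_l \star_{1,3} - \id$, giving the desired vanishing. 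The main subtlety lies in justifying that the Tate--Sen inverse on $X_l$ preserves the subspace $Y_l$, which rests on the implicit retraction compatibility above; the remaining steps are routine norm estimates carried out in the Banach $\Qbb_p$-module $M$, which coincide with the corresponding solid statements for $C^h(G, Y_l)$.
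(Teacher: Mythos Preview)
Your approach is correct in outline and genuinely different from the paper's. The paper reduces instead to $G_h$-cohomology (using that $\Gamma(G/G_h,-)$ is $t$-exact), decomposes $C^h(G,Y_l)$ as a direct sum of copies of $C(\Gbb_h,\Qbb_p)\hotimes_{\Qbb_p} Y_l$, and proves \emph{surjectivity} of $\gamma_i-\id$ (with $i=\lceil h\rceil$) by passing to $\gamma_{i+r}-\id$ for the least integer $r>c_3$ and constructing a preimage of each simple tensor $f\otimes y$ by an explicit convergent iteration. Your factorization $(A-\id)\bigl(\id+(A-\id)^{-1}A(B-\id)\bigr)$ together with a Neumann series is more conceptual and yields \emph{bijectivity} (hence also $H^0(G_l,M)=0$) in one stroke, without the detour through $\gamma_{i+r}$.

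There is a slip in your bound for $\|B-\id\|$. In the paper's coordinate $T$ on $\Gbb_0$ (chosen so that $\gamma_0$ acts by $T\mapsto T+1$), translation by $\gamma_l=\gamma_0^{p^l}$ shifts $T$ by $p^l$; the relevant displacement therefore has absolute value $p^{-l}$, not $\lvert(1+p^n)^{p^l}-1\rvert=p^{-(l+n)}$, which is the size of $\gamma_l-1$ in the multiplicative group rather than the translation amount in $T$. With the derivative bound $p^h$ you quote, the correct estimate is $\|B-\id\|\le p^{h-l}$. This is harmless: one still has $\|(A-\id)^{-1}A(B-\id)\|\le p^{c_3+h-l}\le p^{-2}<1$ for $l\ge h+c_3+2$, and the Neumann series converges.

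Your explicit flag on the compatibility $R_l\circ R_{l+1}=R_l$ (needed so that the inverse from \ref{TS3} on $X_l$ restricts to $Y_l$) is well placed: the paper makes the same implicit step when it invokes condition \ref{TS3} directly on $Y_l$. It holds in the intended application and is standard in the Tate--Sen formalism, though strictly speaking it is not recorded in Setting~\ref{setting suave}.
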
 
\begin{proof}
	There is an equivalence 
	$$\Gamma(G,-)\simeq \Gamma(G/G_h, \Gamma(G_h,-))$$ 
	and the functor $\Gamma(G/G_h,-)$ is $t$-exact, so it is enough to show that $$H^1(G_h, C^h(G,Y_l))=0.$$
	As a $G_h$-representation, $C^h(G,Y_l)$ is a direct sum of copies of $C(\Gbb_h,\Qbb_p)\hotimes_{\Qbb_p}Y_l$, where we note that $G$ is abelian and that for Banach $\Qbb_p$-modules, the solid tensor product $-\otimes_{\Qbb_{p,\square}}-$ coincides with the usual completed tensor product (\cite[Lemma 3.13]{RJRC22}).
	Therefore, it suffices to show $H^1(G_h, C(\Gbb_h,\Qbb_p)\hotimes_{\Qbb_{p}}Y_l)=0$ for $l\geq h+c_3+2$.
	Let $i$ be the smallest integer such that $i\geq h$.
	Then we have $\Zbb_p\cong G_i=G_h;\; a\mapsto \gamma_i^a$, so we have an isomorphism
	\begin{align*}
	&H^1(G_h, C(\Gbb_h,\Qbb_p)\hotimes_{\Qbb_{p}}Y_l)\\
	\cong &\Coker(\gamma_i-\id\colon C(\Gbb_h,\Qbb_p)\hotimes_{\Qbb_{p}}Y_l \to C(\Gbb_h,\Qbb_p)\hotimes_{\Qbb_{p}}Y_l).
	\end{align*}
	We note $C(\Gbb_h,\Qbb_p)\cong \Qbb_p\langle T/p^h\rangle$ and $\gamma_i$ acts on it as $T\mapsto T+p^i$.
	We endow $\Qbb_p\langle T/p^h\rangle$ with the usual Gauss norm and $\Qbb_p\langle T/p^h\rangle\hotimes_{\Qbb_{p}}Y_l$ with the tensor product norm.
	Then for any integer $r\geq 0$ and $x\in \Qbb_p\langle T/p^h\rangle$, we have 
	$$\lvert(\gamma_{i+r}-\id)x\rvert \leq p^{-r}\lvert x\rvert.$$
	We take the smallest integer $r$ such that $r>c_3$.
	Let us prove that 
	$$\gamma_i-\id\colon \Qbb_p\langle T/p^h\rangle\hotimes_{\Qbb_{p}}Y_l \to \Qbb_p\langle T/p^h\rangle\hotimes_{\Qbb_{p}}Y_l$$
	is surjective for $l\geq i+r$, where we note that $h+c_3+2>i+r$.
	Since we have $\gamma_{i+r}-\id=(\gamma_i-\id)(\gamma_i^{p^{r}-1}+\gamma_i^{p^{r}-2}+\cdots+\id)$, it suffices to show that $\gamma_{i+r}-\id$ is surjective.
	We note that by the condition (C2), $\gamma_{i+r}-\id \colon Y_l\to Y_l$ is invertible and for any $y\in Y_l$, $\lvert(\gamma_{i+r}-\id)^{-1}(y)\rvert \leq p^{c_3}\lvert y\rvert$.
	It is enough to show that for any $y\in Y_l$ and $f\in \Qbb_p\langle T/p^h\rangle$, there exists $x\in \Qbb_p\langle T/p^h\rangle\hotimes_{\Qbb_{p}}Y_l$ such that $(\gamma_{i+r}-\id)(x)=f\otimes y$ and $\lvert x\rvert \leq p^{c_3}\lvert f\rvert\cdot\lvert y\rvert $.
	First, we take $y_1 \in Y_l$ such that $(\gamma_{i+r}-\id)(y_1)=y$.
	Then we have 
	$$f\otimes y=(\gamma_{i+r}-\id)(f\otimes y_1)-(\gamma_{i+r}(f)-f)\otimes \gamma_{i+r}(y_1),$$
	and 
	$$\lvert\gamma_{i+r}(f)-f\rvert \cdot \lvert\gamma_{i+r}(y_1) \rvert \leq p^{-r}\lvert f\rvert \cdot p^{c_3}\lvert y\rvert = p^{c_3-r}\lvert f\rvert\cdot\lvert y\rvert.$$
	Next, we take $y_2 \in Y_l$ such that $(\gamma_{i+r}-\id)(y_2)=\gamma_{i+r}(y_1)$.
	By repeating this construction, we define $y_j\in Y_l$ for $j=1,2,\ldots$, and we set
	$$x=f\otimes y_1 +(\gamma_{i+r}(f)-f)\otimes y_2+\cdots.$$
	Then we have $(\gamma_{i+r}-\id)(x)=f\otimes y$ and  $\lvert x\rvert \leq p^{c_3}\lvert f\rvert\cdot\lvert y\rvert$.
\end{proof}

\begin{corollary}\label{cor:key for suave}
	For any rational number $h>n$, there exists an integer $l\geq 0$ such that the natural morphism $A_l^{h\dagger\mathchar`-\an}\to A_{\infty}^{h\dagger\mathchar`-\an}$ becomes an equivalence.
\end{corollary}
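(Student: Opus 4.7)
The plan is to use the retractions $R_m$ to split $A_{\infty}$ as a $G$-representation, and then to reduce the claim to a vanishing assertion for the derived $h\dagger$-analytic vectors of each kernel $Y_m$ when $m$ is sufficiently large.

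First, I would observe that each $R_m \colon A_{m+1} \to A_m$ is a $G$-equivariant retraction of the inclusion $A_m \hookrightarrow A_{m+1}$, giving a splitting $A_{m+1} \simeq A_m \oplus Y_m$ in $\Rep_{\square}(G)$. Iterating and passing to the filtered colimit in $\Rep_{\square}(G)$ yields $A_{\infty} \simeq A_l \oplus \bigoplus_{m \geq l} Y_m$ for every $l \geq 0$. Since $(-)^{h\dagger-\an}$ preserves small colimits by Lemma \ref{lem:stable under colimit}, the natural map $A_l^{h\dagger-\an} \to A_{\infty}^{h\dagger-\an}$ identifies with the inclusion of the first summand of $A_l^{h\dagger-\an} \oplus \bigoplus_{m \geq l} Y_m^{h\dagger-\an}$, and so it suffices to produce $l$ (depending on $h$) such that $Y_m^{h\dagger-\an} = 0$ for all $m \geq l$.

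Next, using compactness of $\Qbb_p \in \Rep_{\square}(G)$ (which follows from the Lazard--Serre resolution employed in the proof of Lemma \ref{lem:descendable zero map}) together with the definition of $C^{h\dagger}(G,-)$ as a filtered colimit, one obtains
$$Y_m^{h\dagger-\an} \simeq \varinjlim_{0 \leq h' < h} R\Gamma\bigl(G, C^{h'}(G, Y_m)\bigr).$$
Because $G$ is a compact abelian $p$-adic Lie group of dimension one acting on $\Qbb_p$-Banach spaces, $R\Gamma(G, -)$ is concentrated in degrees $0$ and $1$ (one can compute it via a Koszul-type complex on a pro-cyclic open subgroup and combine with the Hochschild--Serre spectral sequence and the vanishing of finite group cohomology on $\Qbb_p$-modules). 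It is therefore enough to show that $H^0(G, C^{h'}(G, Y_m))$ and $H^1(G, C^{h'}(G, Y_m))$ vanish uniformly in $h' < h$, for $m$ sufficiently large.

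The vanishing of $H^1$ is precisely the content of the preceding lemma, and holds once $m \geq h + c_3 + 2$. For $H^0$, the orbit map identifies $H^0(G, C^{h'}(G, Y_m))$ with the classical $h'$-analytic vectors of $Y_m$, which by naturality along the closed embedding $Y_m \hookrightarrow A$ inject into $H^0(A^{h'-\an})$. Since $h' < h$ implies $h'$-analyticity is at least as strong as $\lceil h \rceil$-analyticity, condition (C4) produces an integer $m_0 = m_0(h)$ with $H^0(A^{h'-\an}) \subseteq H^0(A^{\lceil h \rceil-\an}) \subseteq A_{m_0}$ uniformly in $h'$. For $m \geq m_0$ we then have $Y_m \cap A_{m_0} \subseteq A_m \cap Y_m = 0$, because $R_m$ restricts to the identity on $A_m$ while killing $Y_m$. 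Choosing $l = \max\bigl(m_0(h),\, \lceil h + c_3 + 2 \rceil\bigr)$ finishes the argument. The principal subtlety, and exactly what motivates the form of the preceding lemma, is securing the two vanishings \emph{uniformly} in $h' < h$ so that the colimit defining $Y_m^{h\dagger-\an}$ can be controlled in one stroke.
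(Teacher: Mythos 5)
Your proof is correct and follows essentially the same route as the paper: decompose $A_{\infty}=A_l\oplus\bigoplus_{m\geq l}Y_m$ using the $G$-equivariant retractions $R_m$, invoke the cohomological-dimension-one reduction to $H^0$ and $H^1$, kill $H^1(Y_m^{h\dagger-\an})$ via the preceding lemma for $m\geq h+c_3+2$, and kill $H^0$ via condition (C4) together with $A_m\cap Y_m=0$. The only difference is cosmetic (you phrase the $H^0$ step as vanishing of $H^0(Y_m^{h\dagger-\an})$ rather than as the map $H^0(A_l^{h\dagger-\an})\to H^0(A_\infty^{h\dagger-\an})$ being an isomorphism), so no further comment is needed.
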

\begin{proof}
	Since $C^{h\dagger}(G,\Qbb_p)=\varinjlim_{h^{\prime}<h} C^{h^{\prime}}(G,\Qbb_p)$ is a filtered colimit of flat $\Qbb_{p,\square}$-modules (\cite[Lemma 3.21]{RJRC22}), the functor 
	$$\Rep_{\square}(G)\to \Rep_{\square}(G);\; V\mapsto (C^{h\dagger}(G,\Qbb_p)\otimes_{\Qbb_{p,\square}} V)_{\star_{1,3}}=C^{h\dagger}(G,V)_{\star_{1,3}}$$ 
	is $t$-exact.
	By the proof of Lemma \ref{lem:Tate-Sen H1 vanishing}, the cohomological dimension of $\Gamma(G,-)$ is $1$, so the cohomological dimension of $(-)^{h\dagger\mathchar`-\an}=\Gamma(G,C^{h\dagger}(G,-)_{\star_{1,3}})$ is $1$.
	Therefore, it suffices to find an integer $l\geq 0$ such that the natural morphism $H^i(A_l^{h\dagger\mathchar`-\an})\to H^i(A_{\infty}^{h\dagger\mathchar`-\an})$ is an isomorphism for $i=0,1$.
	Let $l$ be an integer such that $l\geq h+c_3+2$.
	Then for any $l^{\prime}\geq l$ and $n\leq h^{\prime}<h$, we have $H^1(G, C^{h^{\prime}}(G,Y_{l^{\prime}}))=0$.
	By taking a colimit with respect to $h^{\prime}$, we get $H^1(Y_{l^{\prime}}^{h\dagger\mathchar`-\an})=0$ for any $l^{\prime}\geq l$.
	Since we have a decomposition $A_{\infty}=A_l\oplus Y_l \oplus Y_{l+1}\oplus \cdots$, we find that the natural morphism  $H^1(A_{l}^{h\dagger\mathchar`-\an})\to H^1(A_{\infty}^{h\dagger\mathchar`-\an})$ is an isomorphism.
	
	Next, let us consider the morphism 
	$$H^0(A_{l}^{h\dagger\mathchar`-\an})=\varinjlim_{h^{\prime}<h} H^0(A_l^{h^{\prime}\mathchar`-\an})\hookrightarrow \varinjlim_{h^{\prime}<h} H^0(A_{\infty}^{h^{\prime}\mathchar`-\an})=H^0(A_{\infty}^{h\dagger\mathchar`-\an}).$$
	From the condition (C4), we may assume $H^0(A^{j\mathchar`-\an})\subset A_l$ for some integer $j$ such that $j\geq h$ by replacing $l$ with a larger integer.
	Then, for any $h^{\prime}<h\leq j$, we have $H^0(A_{\infty}^{h^{\prime}\mathchar`-\an})\subset A_l$, and therefore, $H^0(A_l^{h^{\prime}\mathchar`-\an})\hookrightarrow H^0(A_{\infty}^{h^{\prime}\mathchar`-\an})$ becomes an isomorphism.
	Hence, we find that $H^0(A_l^{h\dagger\mathchar`-\an})\to H^0(A_{\infty}^{h\dagger\mathchar`-\an})$ is an isomorphism.
\end{proof}

\begin{proof}[The proof of Theorem \ref{thm:suave Tate-Sen}]
	We take $h(0)\leq h(1)\leq \cdots $ as in Remark \ref{rem:defn of h}.
	We apply Proposition \ref{prop:suave criterion} to 
	$$(f\colon X\to S)=(p_{\infty}\colon\AnSpec(A_{\infty},A_{\infty}^{\circ})_{\square}/\Gbb^{\la}\to \AnSpec\Qbb_{p,\square}),$$
	and 
	\begin{align*}
	&(X\overset{g_i}{\to}X_i\overset{f_i}{\to} S)\\
	=&(\AnSpec(A_{\infty},A_{\infty}^{\circ})_{\square}/\Gbb^{\la}\overset{p_{\infty, i}}{\longrightarrow} \AnSpec(A_{i},A_{i}^{\circ})_{\square}/\Gbb^{h(i)\dagger}\overset{p_i}{\longrightarrow}\AnSpec\Qbb_{p,\square}).
	\end{align*}
	We check the conditions (1), (2), and (3) in Proposition \ref{prop:suave criterion}.
	First, by Lemma \ref{lem:suave smooth classifying stack} and the condition (C6), the morphism $$p_i \colon \AnSpec(A_{i},A_{i}^{\circ})_{\square}/\Gbb^{h(i)\dagger} \to \AnSpec\Qbb_{p,\square}$$ is $\Dcal$-suave.
	For any $j\geq i$, $A_j$ is finite $A_i$-algebra by the condition (C5), so the analytic ring structure on $(A_j,A_j^{\circ})_{\square}$ is induced from $(A_i,A_i^{\circ})_{\square}$.
	Therefore, the analytic ring structure on $(A_{\infty},A_{\infty}^{\circ})_{\square}$ is also induced from $(A_i,A_i^{\circ})_{\square}$.
	In other words, the morphism $\AnSpec (A_{\infty},A_{\infty}^{\circ})_{\square} \to \AnSpec (A_i,A_i^{\circ})_{\square}$ lies in $P$. 
	Therefore, by Corollary \ref{cor:semilinear completion} and Corollary \ref{cor:semilinear analytic vector}, the morphism
	$$p_{\infty,i}\colon \AnSpec(A_{\infty},A_{\infty}^{\circ})_{\square}/\Gbb^{\la} \to \AnSpec(A_{i},A_{i}^{\circ})_{\square}/\Gbb^{h(i)\dagger}$$
	is weakly $\Dcal$-proper, and in particular, $\Dcal$-prim.

	Next, we check the condition (2) in Proposition \ref{prop:suave criterion}.
	We want to show that the family of functors
	\begin{align*}
	\{(p_{\infty,i}\times\id)_!\colon &\Dcal(\AnSpec(A_{\infty},A_{\infty}^{\circ})_{\square}/\Gbb^{\la}\times \AnSpec(A_{\infty},A_{\infty}^{\circ})_{\square}/\Gbb^{\la})\\
	\to &\Dcal(\AnSpec(A_i,A_i^{\circ})_{\square}/\Gbb^{h(i)\dagger}\times \AnSpec(A_{\infty},A_{\infty}^{\circ})_{\square}/\Gbb^{\la})\}_i
	\end{align*}
	is conservative.
	We set $p\colon \AnSpec(A_{\infty},A_{\infty}^{\circ})_{\square} \to \AnSpec(A_{\infty},A_{\infty}^{\circ})_{\square}/\Gbb^{\la}$.
	Then, by the proper base change, we have an equivalence 
	$$(\id\times p)^*\circ(p_{\infty,i}\times\id)_!\simeq (p_{\infty,i}\times\id)_!\circ (\id\times p)^*$$ 
	of functors from 
	$$\Dcal(\AnSpec(A_{\infty},A_{\infty}^{\circ})_{\square}/\Gbb^{\la}\times \AnSpec(A_{\infty},A_{\infty}^{\circ})_{\square}/\Gbb^{\la})$$
	to 
	$$\Dcal(\AnSpec(A_i,A_i^{\circ})_{\square}/\Gbb^{h(i)\dagger}\times \AnSpec(A_{\infty},A_{\infty}^{\circ})_{\square}).$$
	Since the morphism 
	\begin{align*}
	\id\times p \colon &\AnSpec(A_{\infty},A_{\infty}^{\circ})_{\square}/\Gbb^{\la}\times \AnSpec(A_{\infty},A_{\infty}^{\circ})_{\square}/\Gbb^{\la} \\
	\to &\AnSpec(A_{\infty},A_{\infty}^{\circ})_{\square}/\Gbb^{\la}\times \AnSpec(A_{\infty},A_{\infty}^{\circ})_{\square}
	\end{align*}
	is a universal $\Dcal^*$-cover, $(\id\times p)^*$ is conservative.
	Therefore, it is enough to show that the family of functors
	\begin{align*}
		\{(p_{\infty,i}\times\id)_!\colon &\Dcal(\AnSpec(A_{\infty},A_{\infty}^{\circ})_{\square}/\Gbb^{\la}\times \AnSpec(A_{\infty},A_{\infty}^{\circ})_{\square})\\
		\to &\Dcal(\AnSpec(A_i,A_i^{\circ})_{\square}/\Gbb^{h(i)\dagger}\times \AnSpec(A_{\infty},A_{\infty}^{\circ})_{\square})\}_i
	\end{align*}
	is conservative.
	Since $p_{\infty,i}$ is weakly $\Dcal$-proper, we have an equivalence $(p_{\infty,i}\times\id)_!\simeq (p_{\infty,i}\times\id)_*$.
	We have a commutative diagram
	$$
	\xymatrix{
		\AnSpec(A_{\infty},A_{\infty}^{\circ})_{\square}/\Gbb^{\la}\times \AnSpec(A_{\infty},A_{\infty}^{\circ})_{\square}\ar[r]^-{q_{\infty}}\ar[d]_-{p_{\infty,i}\times \id} & \AnSpec \Qbb_{p,\square}/\Gbb^{\la}\ar[d]^-{p^{\prime}_{\infty,h(i)}}\\
		\AnSpec(A_i,A_i^{\circ})_{\square}/\Gbb^{h(i)\dagger}\times \AnSpec(A_{\infty},A_{\infty}^{\circ})_{\square}\ar[r]^-{q_i} & \AnSpec \Qbb_{p,\square}/\Gbb^{h(i)\dagger},
	}
	$$
	and there is an equivalence $q_{i*}\circ (p_{\infty,i}\times \id)_*\simeq p^{\prime}_{\infty,h(i)*}\circ q_{\infty*}$ of functors from 
	$$\Dcal(\AnSpec(A_{\infty},A_{\infty}^{\circ})_{\square}/\Gbb^{\la}\times \AnSpec(A_{\infty},A_{\infty}^{\circ})_{\square})$$ 
	to $\Dcal(\AnSpec \Qbb_{p,\square}/\Gbb^{h(i)\dagger})$.
	Since the functor $q_{\infty*}$ is conservative, it is enough to show that the family of functors
	$$\{(p^{\prime}_{\infty,h(i)})_*\colon\Dcal(\AnSpec \Qbb_{p,\square}/\Gbb^{\la})\to \Dcal(\AnSpec \Qbb_{p,\square}/\Gbb^{h(i)\dagger})\}_i$$
	is conservative.
	Since we have $\lim_{i\to \infty}h(i)=\infty$, the natural morphism 
	$$\varinjlim_i ((p^{\prime}_{\infty,h(i)})^*\circ (p^{\prime}_{\infty,h(i)})_*) \to \id$$ 
	of endofunctors of $\Dcal(\AnSpec \Qbb_{p,\square}/\Gbb^{\la})$ is an equivalence, which proves the conservativity.

	Finally, we check the condition (3) in Proposition \ref{prop:suave criterion}.
	We want to show that $(p_{\infty,i})_*A_{\infty}\in \Dcal(\AnSpec(A_{i},A_{i}^{\circ})_{\square}/\Gbb^{h(i)\dagger})$ is $p_i$-suave for every $i$, where we note that the morphism $p_{\infty,i}$ is weakly $\Dcal$-proper.
	By Corollary \ref{cor:key for suave}, for an integer $l$ sufficiently large, the natural morphism $(p_{l,i})_*A_l\to (p_{\infty,i})_*A_{\infty}$ becomes an equivalence, where $p_{l,i}$ is the natural morphism $\AnSpec(A_{l},A_{l}^{\circ})_{\square}/\Gbb^{h(l)\dagger}\to \AnSpec(A_{i},A_{i}^{\circ})_{\square}/\Gbb^{h(i)\dagger}$.
	Therefore, it is enough to show that $(p_{l,i})_*A_l$ is $p_i$-suave.
	Since $p_{l,i}$ is weakly $\Dcal$-proper and $p_l$ is $\Dcal$-suave, it follows from Lemma \ref{lem:suave and prim composition} (2).
\end{proof}

\subsection{Locally analytic actions and smoothness}
We work with the same setting as in the previous subsection, and we strengthen the conditions (C5) and (C6) in Setting \ref{setting suave} as follows:
\begin{enumerate}[label=(C\arabic*${}^{\prime}$), start=5]
	\item For any $j\geq i$, $A_j$ is a finite \'{e}tale $A_i$-algebra of constant rank.
	\item For any integer $i\geq 0$, the morphism $\Spa (A_i,A_i^{\circ})\to \Spa \Qbb_{p}$ is smooth of pure dimension $d$ (in the sense of Huber).
\end{enumerate}

\begin{remark}
	By the condition (C${6^{\prime}}$), $A_i$ is an affinoid $\Qbb_p$-algebra.
	Therefore, by Corollary \ref{cor:etale D-etale}, $\AnSpec(A_j,A_j^{\circ})_{\square}\to \AnSpec(A_i,A_i^{\circ})_{\square}$ is $\Dcal$-\'{e}tale for any $j\geq i$, and by Theorem \ref{thm:smooth D-smooth}, $\AnSpec(A_i,A_i^{\circ})_{\square}\to \AnSpec \Qbb_{p,\square}$ is $\Dcal$-smooth for any $i\geq0$.
\end{remark}

\begin{theorem}\label{thm:smooth Tate-Sen}
	The  natural morphism 
	$$p_{\infty}\colon \AnSpec(A_{\infty},A_{\infty}^{\circ})_{\square}/\Gbb^{\la}\to \AnSpec\Qbb_{p,\square}$$ 
	is $\Dcal$-smooth.
\end{theorem}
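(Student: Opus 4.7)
My plan is as follows. By Theorem \ref{thm:suave Tate-Sen}, $p_{\infty}$ is already $\Dcal$-suave, so I only need to show the dualizing complex $\omega_{p_{\infty}}$ is invertible. I will reduce this to a non-stacky statement via Lemma \ref{lem:suave smooth classifying stack}, whose proof extends verbatim from $h\dagger$ to $\la$ using the $\la$ case of Proposition \ref{prop:6-ff classifying stack}. Applied to $X = \AnSpec(A_{\infty}, A_{\infty}^{\circ})_{\square}$ equipped with its locally analytic $G$-action, $Y = \AnSpec\Qbb_{p,\square}$ with trivial action, and the projection $\pi \colon X \to Y$, this lemma reduces the theorem to proving that $\pi$ itself is $\Dcal$-smooth.

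To prove $\pi$ is $\Dcal$-smooth, I will factor $\pi = \pi_i \circ h_i$ through $X_i = \AnSpec(A_i, A_i^{\circ})_{\square}$ for some fixed $i \geq 0$. By condition (C6${}^{\prime}$) and Theorem \ref{thm:smooth D-smooth}, $\pi_i$ is $\Dcal$-smooth of pure relative dimension $d$, so by Corollary \ref{cor:prim and suave map D-local}(1) it suffices to show that $h_i \colon X \to X_i$ is $\Dcal$-smooth; in fact I will argue that $h_i$ is $\Dcal$-\'{e}tale. Since $(A_{\infty}, A_{\infty}^{\circ})_{\square} = \varinjlim_{j \geq i}(A_j, A_j^{\circ})_{\square}$ has analytic ring structure induced from $(A_i, A_i^{\circ})_{\square}$, the morphism $h_i$ lies in $P$ and is already weakly $\Dcal$-proper by Lemma \ref{lem:p is prim}, so it remains to check that $h_i$ is $\Dcal$-suave with trivial dualizing complex.

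For this, I will exploit condition (C5${}^{\prime}$): each morphism $X_j \to X_i$ ($j \geq i$) is $\Dcal$-\'{e}tale by Corollary \ref{cor:etale D-etale}, and $h_i$ arises as an ind-\'{e}tale limit of these. The key local computation is that, for $j^{\prime} \geq j$ sufficiently large, the finite \'{e}tale $A_i$-algebra $A_j$ splits inside $A_{j^{\prime}}$, yielding $A_{\infty} \otimes_{A_i} A_j \simeq \prod^{[A_j:A_i]} A_{\infty}$ after passing to the colimit; thus the base change of $h_i$ along each $\Dcal$-\'{e}tale morphism $X_j \to X_i$ becomes a disjoint union of copies of $X$. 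Combining this local splitting with descent for $\Dcal$-\'{e}taleness in the spirit of Lemma \ref{lem:open imm etale} and Corollary \ref{cor:prim and suave map D-local}(3) should yield $\Dcal$-\'{e}taleness of $h_i$. The hard part will be this last step: rigorously upgrading the splitting computation to $\Dcal$-\'{e}taleness of the ind-\'{e}tale morphism $h_i$, since it requires carefully controlling how the induced analytic ring structure on the filtered colimit $(A_{\infty}, A_{\infty}^{\circ})_{\square}$ interacts with \'{e}tale descent in the solid $\Dcal$-stack framework. Once $h_i$ is known to be $\Dcal$-\'{e}tale, Corollary \ref{cor:prim and suave map D-local}(1) yields $\Dcal$-smoothness of $\pi$, and Lemma \ref{lem:suave smooth classifying stack} finishes the proof.
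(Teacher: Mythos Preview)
Your approach has a genuine gap: the morphism $h_i \colon \AnSpec(A_\infty, A_\infty^\circ)_\square \to \AnSpec(A_i, A_i^\circ)_\square$ is \emph{not} $\Dcal$-\'etale, and in fact not even $\Dcal$-suave. Since $h_i$ lies in $P$, we have $h_{i!} = h_{i*}$, which is the forgetful functor along $A_i \to A_\infty$; its right adjoint is $h_i^! = \intHom_{A_i}(A_\infty, -)$. By Corollary~\ref{cor:suave map characterization}, $\Dcal$-suaveness of $h_i$ would force $h_i^!$ to preserve colimits, i.e., $A_\infty$ would have to be compact in $\Dcal((A_i, A_i^\circ)_\square)$. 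But $A_\infty = \varinjlim_j A_j$ is a strictly increasing filtered colimit of the compact objects $A_j$ (each finite free over $A_i$ by (C$5'$)), hence is not compact. Composing with the $\Dcal$-smooth $\pi_i$ does not help: $\pi^! = h_i^! \circ \pi_i^!$ still fails to preserve colimits, so $\pi$ itself is not $\Dcal$-suave. Thus your reduction via Lemma~\ref{lem:suave smooth classifying stack} cannot be completed---that lemma only gives the implication ``$\pi$ smooth $\Rightarrow p_\infty$ smooth,'' and its hypothesis is false.

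The conceptual point you are missing is that the quotient by $\Gbb^{\la}$ is essential, not a convenience. Your splitting computation and proposed descent are circular: pulling $h_i$ back along the finite \'etale $X_j \to X_i$ yields copies of $h_j$, the same kind of ind-\'etale morphism; moreover (C$5'$) does not guarantee that $A_j$ splits in the tower. The paper never passes through $\pi$. It already knows $p_\infty$ is $\Dcal$-suave from Theorem~\ref{thm:suave Tate-Sen} and then identifies the dualizing complex directly (Theorem~\ref{thm:smooth Tate-Sen dualizing complex}): one constructs $\alpha\colon \omega_\infty[1+d] \to p_\infty^!\Qbb_p$ via the normalized traces $\frac{1}{r(j)}\Tr_{j,i}$, and checks it is an equivalence after applying $(-)^{h(i)\dagger-\an}$ for every $i$, using Corollary~\ref{cor:key for suave}. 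It is precisely this stabilization $A_l^{h\dagger-\an} \simeq A_\infty^{h\dagger-\an}$---a phenomenon that only exists on the quotient stack---that tames the infinite tower and makes the dualizing complex invertible.
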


By Theorem \ref{thm:suave Tate-Sen}, we have already shown that the morphism $p_{\infty}$ is $\Dcal$-suave.
Let us compute the dualizing complex $\omega_{p_{\infty}}$ of $p_{\infty}$.

Let $\omega_i=\Omega_{A_i/\Qbb_p}^d\in \Dcal((A_i,A_i^{\circ})_{\square})$ denote the dualizing module of $\Spa (A_i,A_i^{\circ})\to \Spa \Qbb_{p}$.
For $g\in G$, the automorphism $g\colon \Spa (A_i,A_i^{\circ}) \to \Spa (A_i,A_i^{\circ})$ induces an automorphism of $\omega_i$, and it defines a continuous $A_i$-semilinear action of $G$ on $\omega_i$.
\begin{lemma}\label{lem:dualizing complex smooth quotient}
	The morphism $p_i\colon \AnSpec(A_{i},A_{i}^{\circ})_{\square}/\Gbb^{h(i)\dagger}\to\AnSpec\Qbb_{p,\square}$ is $\Dcal$-smooth, and its dualizing complex is $\omega_i[1+d]$.
\end{lemma}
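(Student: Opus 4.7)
The plan is to factor $p_i$ through the classifying stack $\AnSpec\Qbb_{p,\square}/\Gbb^{h(i)\dagger}$ and apply the existing smoothness results piece by piece. Explicitly, write
$$
p_i \colon \AnSpec(A_i,A_i^{\circ})_{\square}/\Gbb^{h(i)\dagger} \xrightarrow{\;r_i\;} \AnSpec\Qbb_{p,\square}/\Gbb^{h(i)\dagger} \xrightarrow{\;q_i\;} \AnSpec\Qbb_{p,\square},
$$
where $r_i$ is induced by the $G$-equivariant structure morphism $\AnSpec(A_i,A_i^{\circ})_{\square}\to \AnSpec\Qbb_{p,\square}$ (with trivial $G$-action on the target) and $q_i$ is the projection from the classifying stack. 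By condition (C${6}^{\prime}$) and Theorem \ref{thm:smooth D-smooth}, the structure morphism is $\Dcal$-smooth with dualizing complex $\omega_i[d]$, so Lemma \ref{lem:suave smooth classifying stack} yields that $r_i$ is $\Dcal$-smooth. By Proposition \ref{prop:6-ff classifying stack} (6), $q_i$ is $\Dcal$-smooth provided $h(i)$ is sufficiently large; since we may always enlarge $h(i)$ (the action remains $h(i)\dagger$-analytic by Corollary \ref{cor:transitivity}), I assume this without loss of generality. Then Corollary \ref{cor:prim and suave map D-local} (1) gives that $p_i$ is $\Dcal$-smooth with
$$\omega_{p_i} \simeq r_i^*\,\omega_{q_i} \otimes \omega_{r_i}.$$

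To identify each factor, I note first that $G \subset \Zbb_p^{\times}$ is abelian of dimension $1$, so its Lie algebra has trivial adjoint action, and Proposition \ref{prop:6-ff classifying stack} (6) gives $\omega_{q_i} \simeq \Qbb_p[1]$ viewed as the trivial representation. For $\omega_{r_i}$, consider the cartesian diagram
$$
\xymatrix{
\AnSpec(A_i,A_i^{\circ})_{\square} \ar[r]\ar[d]_-{\pi} & \AnSpec\Qbb_{p,\square}\ar[d] \\
\AnSpec(A_i,A_i^{\circ})_{\square}/\Gbb^{h(i)\dagger} \ar[r]^-{r_i} & \AnSpec\Qbb_{p,\square}/\Gbb^{h(i)\dagger}.
}
$$
The vertical morphisms are $\Dcal$-covers by Proposition \ref{prop:descendable cover} (5) applied after base change, and by Corollary \ref{cor:prim and suave map D-local} (2) we have $\pi^*\omega_{r_i} \simeq \omega_i[d]$. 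Under the equivalence $\Dcal(\AnSpec(A_i,A_i^{\circ})_{\square}/\Gbb^{h(i)\dagger}) \simeq \Rep_{(A_i,A_i^{\circ})_{\square}}^{h(i)\dagger}(G)$ from Proposition \ref{prop:semilinear geometric interpretation}, this descent datum is precisely $\omega_i[d]$ endowed with its natural $G$-equivariant structure coming from the $G$-action on $A_i$. Combining, $\omega_{p_i} \simeq \omega_i[d] \otimes \Qbb_p[1] \simeq \omega_i[1+d]$.

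The only real delicate point is the identification of the semilinear $G$-action on $\omega_{r_i}$: the equivalence of Proposition \ref{prop:semilinear geometric interpretation} identifies $\omega_{r_i}$ with its underlying $(A_i,A_i^{\circ})_{\square}$-module equipped with the semilinear $h(i)\dagger$-analytic action, and this is exactly the natural one because the cartesian square above is a square of $G$-equivariant morphisms (with $G$ acting trivially on the right column) and the canonical isomorphism $\pi^*\omega_{r_i}\simeq \omega_i[d]$ is $G$-equivariant by functoriality of the construction of the upper shriek dualizing complex for the smooth morphism $\AnSpec(A_i,A_i^{\circ})_{\square}\to\AnSpec\Qbb_{p,\square}$. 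No further computation with explicit Koszul complexes or cosimplicial resolutions should be required.
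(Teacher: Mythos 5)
Your argument is essentially the paper's proof: factor $p_i$ through $\AnSpec\Qbb_{p,\square}/\Gbb^{h(i)\dagger}$, obtain $\omega_{q_i}\simeq \Qbb_p[1]$ with trivial action from Proposition \ref{prop:6-ff classifying stack} (6), and identify $\omega_{r_i}$ with $\omega_i[d]$ carrying its natural semilinear $G$-action by pulling back along the cartesian square and descending. One small correction: the step ``enlarge $h(i)$ without loss of generality'' is both unnecessary and not quite licit (changing $h(i)$ changes the stack appearing in the statement); the hypothesis ``$h$ sufficiently large'' in Proposition \ref{prop:6-ff classifying stack} (6) is vacuous here because $G\subset \Zbb_p^{\times}$ is abelian of dimension $1$, so $\wedge^d\gfrak[d]=\gfrak[1]$ is a shift of the trivial representation and hence $h\dagger$-analytic for every $h>0$ --- which is exactly the observation the paper makes at this point.
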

\begin{proof}
	By Lemma \ref{lem:suave smooth classifying stack}, the morphism $p_i$ is $\Dcal$-smooth.
	We compute its dualizing complex.
	We have a factorization of $p_i$ as follows:
	$$p_i\colon \AnSpec(A_{i},A_{i}^{\circ})_{\square}/\Gbb^{h(i)\dagger} \overset{p_i^{\prime}}{\longrightarrow} \AnSpec\Qbb_{p,\square}/\Gbb^{h(i)\dagger} \overset{f_{h(i)}}{\longrightarrow} \AnSpec\Qbb_{p,\square}.$$
	First, by Proposition \ref{prop:6-ff classifying stack}, the dualizing complex of $f_{h(i)}$ is $\Qbb_p[1]$ with the trivial action of $G$, where we note that $G$ is an abelian $p$-adic Lie group of dimension $1$.
	Next, we compute the dualizing complex of $p^{\prime}_i$.
	We have a cartesian diagram 
	$$
	\xymatrix{
		\AnSpec(A_{i},A_{i}^{\circ})_{\square} \ar[r]\ar[d] & \AnSpec\Qbb_{p,\square}\ar[d]\\
		\AnSpec(A_{i},A_{i}^{\circ})_{\square}/\Gbb^{h(i)\dagger} \ar[r]^-{p^{\prime}_i}& \AnSpec\Qbb_{p,\square}/\Gbb^{h(i)\dagger},
	}
	$$
	and therefore,  by Lemma \ref{lem:suave characterization} and Theorem \ref{thm:smooth D-smooth}, the underlying $(A_{i},A_{i}^{\circ})_{\square}$-module of the dualizing complex $\omega_{p^{\prime}_i}=p^{\prime !}_i\Qbb_p$ is equivalent to $\omega_i[d]$.
	We determine the action of $G$ on $\omega_i[d]$.
	The action of $g\in G$ on $\omega_i[d]$ is induced by the morphism $g\colon \AnSpec (A_i,A_i^{\circ})_{\square} \to \AnSpec (A_i,A_i^{\circ})_{\square}$.
	Since $\omega_i[d]$ is a shift of a Banach $\Qbb_p$-module, the action of $G$ on $\omega_i[d]$ is uniquely determined from the action of each $g\in G$.
	Therefore, we get the claim.
\end{proof}

Let $p_{j,i}\colon \AnSpec(A_j,A_j^{\circ})_{\square}/\Gbb^{h(j)\dagger} \to \AnSpec(A_{i},A_{i}^{\circ})_{\square}/\Gbb^{h(i)\dagger}$ denote the natural morphism.
Then by Lemma \ref{lem:dualizing complex smooth quotient} and the condition (C$5^{\prime}$), we get an equivalence $\omega_j\simeq p_{j,i}^!\omega_i\simeq A_j\otimes_{(A_{i},A_{i}^{\circ})_{\square}}\omega_i$.
By adjunction, we get a trace morphism 
\begin{align*}
\Tr_{j,i}\colon \omega_j^{h(i)\dagger\mathchar`-\an} \simeq (p_{j,i})_*p_{j,i}^!\omega_i \to\omega_i
\end{align*}
in $\Rep_{(A_{i},A_{i}^{\circ})_{\square}}^{h(i)\dagger}(G)$.
Since $A_j$ is a finite locally free $A_i$-module, we have a usual trace morphism $\Tr_{A_j/A_i}\colon A_j\to A_i$, and therefore, we get a morphism 
$$\omega_j\simeq A_j\otimes_{(A_{i},A_{i}^{\circ})_{\square}}\omega_i \to A_i\otimes_{(A_{i},A_{i}^{\circ})_{\square}}\omega_i=\omega_i.$$
By construction, we find that the trace morphism $\Tr_{j,i}$ coincides with the composition of the above morphism and the natural morphism $\omega_j^{h(i)\dagger\mathchar`-\an}\to \omega_j$.

Let $p_{\infty,i}\colon \AnSpec(A_{\infty},A_{\infty}^{\circ})_{\square}/\Gbb^{\la} \to \AnSpec(A_{i},A_{i}^{\circ})_{\square}/\Gbb^{h(i)\dagger}$ denote the natural morphism.
We set 
\begin{align*}
\omega_{\infty}&=p_{\infty,i}^*\omega_i=(A_{\infty},A_{\infty}^{\circ})_{\square}\otimes_{(A_{i},A_{i}^{\circ})_{\square}}\omega_i = A_{\infty}\otimes_{(A_{i},A_{i}^{\circ})_{\square}}\omega_i \\
&\in \Dcal(\AnSpec(A_{\infty},A_{\infty}^{\circ})_{\square}/\Gbb^{\la})\simeq \Rep_{(A_{\infty},A_{\infty}^{\circ})_{\square}}^{\la}(G),
\end{align*}
where we note that the analytic ring structure of $(A_{\infty},A_{\infty}^{\circ})_{\square}$ is induced from $(A_{i},A_{i}^{\circ})_{\square}$.
Since we have an isomorphism 
$$(A_j,A_j^{\circ})_{\square}\otimes_{(A_{i},A_{i}^{\circ})_{\square}}\omega_i \cong \omega_j$$
in $\Rep_{(A_j,A_j^{\circ})_{\square}}^{h(j)\dagger}(G)$, the definition of $\omega_{\infty}$ does not depend on the choice of $i$.
Since $\omega_i$ is invertible, $\omega_{\infty}$ is also invertible in $\Rep_{(A_{\infty},A_{\infty}^{\circ})_{\square}}^{\la}(G)$.

For the proof of Theorem \ref{thm:smooth Tate-Sen}, it is enough to show the following theorem:
\begin{theorem}\label{thm:smooth Tate-Sen dualizing complex}
	The dualizing complex $\omega_{p_{\infty}}=p_{\infty}^!\Qbb_p$ of the morphism 
	$$p_{\infty}\colon \AnSpec(A_{\infty},A_{\infty}^{\circ})_{\square}/\Gbb^{\la}\to \AnSpec\Qbb_{p,\square}$$ 
	is $\omega_{\infty}[1+d]$.
	In particular, it is invertible. 
\end{theorem}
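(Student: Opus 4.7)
The plan is to compute $\omega_{p_\infty}$ by factoring $p_\infty$ through the locally analytic classifying stack $\ast/\Gbb^{\la}$ and reducing the question to a statement about the underlying structure morphism on solid affinoid spaces. Let $g_\infty \colon \AnSpec(A_\infty, A_\infty^\circ)_\square/\Gbb^{\la} \to \ast/\Gbb^{\la}$ and $f_\infty \colon \ast/\Gbb^{\la} \to \ast = \AnSpec \Qbb_{p,\square}$ denote the natural projections, so that $p_\infty = f_\infty \circ g_\infty$. By Proposition \ref{prop:6-ff classifying stack}(6), $f_\infty$ is $\Dcal$-smooth with dualizing complex $\wedge^d \gfrak[d] \simeq \Qbb_p[1]$ (since $G$ is abelian of dimension $d=1$). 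Thus $\omega_{p_\infty} \simeq g_\infty^!(\Qbb_p[1])$, and the theorem reduces to proving that $g_\infty$ is $\Dcal$-smooth with dualizing complex $\omega_\infty[d]$.

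To verify this, I would pull back $g_\infty$ along the $\Dcal$-cover $p \colon \ast \to \ast/\Gbb^{\la}$ of Proposition \ref{prop:descendable cover}(5). The resulting cartesian square has as top row the structure morphism $g'_\infty \colon \AnSpec(A_\infty, A_\infty^\circ)_\square \to \ast$, which I then factor as
$$\AnSpec(A_\infty, A_\infty^\circ)_\square \overset{r_i^{(\infty)}}{\longrightarrow} \AnSpec(A_i, A_i^\circ)_\square \overset{p'_i}{\longrightarrow} \ast.$$
Here $p'_i$ is smooth of pure relative dimension $d$ by (C$6^\prime$), hence $\Dcal$-smooth with dualizing complex $\omega_i[d]$ by Theorem \ref{thm:smooth D-smooth}. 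Combining Corollary \ref{cor:prim and suave map D-local}(3), which descends $\Dcal$-smoothness along a $\Dcal$-cover, with Lemma \ref{lem:prim and suave obj D-local}(1), which ensures pullback compatibility of dualizing complexes, the theorem is further reduced to showing that $r_i^{(\infty)}$ is $\Dcal$-smooth with trivial dualizing complex (after which the comparison $q_\infty^*\omega_\infty[d] \simeq \omega_{g'_\infty}$ together with the natural $\Gbb^{\la}$-equivariant descent data identifies $\omega_{g_\infty}$ with $\omega_\infty[d]$).

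Establishing this last claim about $r_i^{(\infty)}$ is the main obstacle. The morphism $r_i^{(\infty)}$ lies in the class $P$ of Definition \ref{def:suitable dcp}, because the analytic ring structure on $(A_\infty, A_\infty^\circ)_\square = \varinjlim_n (A_n, A_n^\circ)_\square$ is induced from $(A_i, A_i^\circ)_\square$ (each $A_n$ being finite over $A_i$ for $n \geq i$ by (C$5^\prime$)), so $r_i^{(\infty)}$ is already weakly $\Dcal$-proper by Lemma \ref{lem:p is prim}. Moreover, each finite-level approximation $\AnSpec(A_j, A_j^\circ)_\square \to \AnSpec(A_i, A_i^\circ)_\square$ for $j \geq i$ is finite \'etale, hence $\Dcal$-\'etale (with trivial dualizing complex) by Corollary \ref{cor:etale D-etale}. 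To transfer this \'etaleness to the limit, I plan to combine (a) the $\Dcal$-suaveness of $g'_\infty$, obtained from that of $p_\infty$ (Theorem \ref{thm:suave Tate-Sen}) via the pullback square and Lemma \ref{lem:prim and suave obj D-local}, together with a cancellation against the $\Dcal$-smoothness of $p'_i$ forcing $r_i^{(\infty)}$ itself to be $\Dcal$-suave; and (b) Corollary \ref{cor:key for suave}, providing the crucial finiteness input $A_\infty^{h(i)\dagger\text{-}\an} \simeq A_l^{h(i)\dagger\text{-}\an}$ for $l$ sufficiently large. Via the adjunction $r_{i!}^{(\infty)} \simeq r_{i\ast}^{(\infty)} \dashv r_i^{(\infty)!}$ (the left equivalence from weak $\Dcal$-properness) and comparison with the trivial dualizing complex of the corresponding $\Dcal$-\'etale morphism at finite level $l$, one then pins down $\omega_{r_i^{(\infty)}}$ as the monoidal unit, completing the proof.
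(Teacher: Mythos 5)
Your reduction breaks down at the claim that $r_i^{(\infty)}\colon \AnSpec(A_{\infty},A_{\infty}^{\circ})_{\square}\to \AnSpec(A_{i},A_{i}^{\circ})_{\square}$ is $\Dcal$-suave with trivial dualizing complex. This is false. Since $r_i^{(\infty)}$ lies in $P$, we have $(r_i^{(\infty)})_!\simeq (r_i^{(\infty)})_*$, which sends the (compact) unit $A_{\infty}$ to $A_{\infty}$ viewed as an $(A_i,A_i^{\circ})_{\square}$-module; by (C1) this is an infinite direct sum $A_i\oplus Y_i\oplus Y_{i+1}\oplus\cdots$ of nonzero finite projective $A_i$-modules, hence not compact. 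But by Corollary \ref{cor:suave map characterization} a $\Dcal$-suave morphism has $f_!$ preserving compact objects, so $r_i^{(\infty)}$ cannot be $\Dcal$-suave, and $\Dcal$-\'etaleness of the finite stages does not pass to the colimit. The two inputs you cite to repair this do not exist: (a) there is no cancellation lemma in the paper (or in general) deducing suaveness of $g_\infty$ (or of $r_i^{(\infty)}$) from suaveness of the composite $p_\infty=f_\infty\circ g_\infty$ and smoothness of the other factor — indeed $g_\infty$ is \emph{not} suave, since its pullback $g'_\infty$ along $\ast\to\ast/\Gbb^{\la}$ would then be suave, contradicting the same compactness argument; and (b) Corollary \ref{cor:key for suave} is a statement about $h(i)\dagger$-analytic vectors and gives no information once you have pulled back to the underlying affinoid spaces and discarded the group action. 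The finiteness that drives the whole theorem lives only after applying $(-)^{h(i)\dagger-\an}$, i.e.\ only on the quotient stacks, so any strategy that first base changes along $\ast\to\ast/\Gbb^{\la}$ is doomed.

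The paper's actual argument stays on the quotient stacks throughout: it constructs an explicit candidate morphism $\alpha\colon \omega_{\infty}[1+d]\to p_{\infty}^!\Qbb_p$ by adjunction from normalized trace maps $\tfrac{1}{r(j)}\Tr_{j,i}\colon \omega_j^{h(i)\dagger-\an}\to\omega_i$ (using Corollary \ref{cor:key for suave} to identify $(p_{\infty,i})_!\omega_{\infty}$ with $\omega_j^{h(i)\dagger-\an}$ for $j$ large), checks independence of $i,j$, and then verifies $\alpha$ is an equivalence after applying the conservative family of functors $(-)^{h(i)\dagger-\an}$, where it reduces to an isomorphism at a finite level $j$. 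Your proposal contains no construction of a comparison map and no mechanism to exploit the Tate--Sen finiteness, so it cannot be completed as written.
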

\begin{proof}
	Let $r(i)$ denote the rank of the finite locally free $A_0$-module $A_i$. 
	First, we construct a morphism 
	$$\alpha\colon \omega_{\infty}[1+d] \to p_{\infty}^!\Qbb_p$$
	in $\Rep_{(A_{\infty},A_{\infty}^{\circ})_{\square}}^{\la}(G)$.
	Since we have equivalences $p_{\infty}^!\Qbb_p\simeq p_{\infty,i}^!p_i^!\Qbb_p\simeq p_{\infty,i}^! \omega_i[1+d]$, it is enough to construct a morphism $\alpha_i \colon (p_{\infty,i})_!\omega_{\infty} \to \omega_i$ in $\Rep_{(A_i,A_i^{\circ})_{\square}}^{h(i)\dagger}(G)$.
	For $j\geq i$ sufficiently large, we have equivalences
	\begin{align*}
	(p_{\infty,i})_!\omega_{\infty} &\simeq (p_{\infty,i})_!(A_{\infty}\otimes_{(A_{i},A_{i}^{\circ})_{\square}}\omega_i)\\
	&\simeq (A_{\infty}\otimes_{(A_{i},A_{i}^{\circ})_{\square}}\omega_i)^{h(i)\dagger\mathchar`-\an}\\
	&\simeq A_{\infty}^{h(i)\dagger\mathchar`-\an}\otimes_{(A_{i},A_{i}^{\circ})_{\square}}\omega_i\\
	&\simeq A_j^{h(i)\dagger\mathchar`-\an}\otimes_{(A_{i},A_{i}^{\circ})_{\square}}\omega_i \\
	&\simeq (A_j\otimes_{(A_{i},A_{i}^{\circ})_{\square}}\omega_i)^{h(i)\dagger\mathchar`-\an} \\
	&\simeq \omega_j^{h(i)\dagger\mathchar`-\an} 
    \end{align*}
	where the second equivalence follow from Corollary \ref{cor:semilinear completion} and Corollary \ref{cor:semilinear analytic vector}, the third and fifth equivalence follow from the projection formula, and the fourth equivalence follows from Corollary \ref{cor:key for suave}.
	Therefore, we get a morphism 
	$$\beta_{j,i}\coloneqq r(j)^{-1}\Tr_{j,i}\colon (p_{\infty,i})_!\omega_{\infty}\simeq \omega_j^{h(i)\dagger\mathchar`-\an} \to \omega_i,$$
	and it induces $\alpha_{j,i}\colon \omega_{\infty}[1+d] \to p_{\infty}^!\Qbb_p$.
	We show that it does not depend on the choices of $i,j$.
	First, we show that it does not depend on the choice of $j$.
	We take $j^{\prime}\geq j$.
	It is enough to show that the diagram
	$$
	\xymatrix{
		\omega_j^{h(i)\dagger\mathchar`-\an}\ar[rrrd]^-{r(j)^{-1}\Tr_{j,i}}\ar[d] & & & \\
		\omega_{j^{\prime}}^{h(i)\dagger\mathchar`-\an}\ar[rrr]_-{r(j')^{-1}\Tr_{j',i}}& & & \omega_i
	}
	$$
	is commutative, where the vertical morphism is induced by the inclusion $\omega_j\hookrightarrow \omega_{j^{\prime}}$.
	By the description of $\Tr_{j,i}$, it reduces to showing that a diagram
	$$
	\xymatrix{
		A_j\ar[rrrd]^-{r(j)^{-1}\Tr_{A_j/A_i}}\ar[d]  & & &\\
		A_{j^{\prime}}\ar[rrr]_-{r(j')^{-1}\Tr_{A_{j'}/A_i}} & &&A_i
	}
	$$
	is commutative, which is clear.
	Next, we show that $\alpha_{j,i}$ does not depend on the choice of $i$.
	We take integers $i^{\prime}\geq i\geq 0$ and a sufficiently large integer $j\geq i^{\prime}$.
	It is enough to show that a diagram
	$$
	\xymatrix{
		\omega_j^{h(i)\dagger\mathchar`-\an} \ar[rrrd]_-{r(j)^{-1}\Tr_{j,i}}\ar[rrr]^-{r(j)^{-1}\Tr_{j,i^{\prime}}}& & &\omega_{i^{\prime}}^{h(i)\dagger\mathchar`-\an}\ar[d]^-{\Tr_{i^{\prime},i}}\\
		 & & & \omega_i
	}
	$$
	is commutative, which is clear.
	Therefore, $\alpha_{j,i}\colon \omega_{\infty}[1+d] \to p_{\infty}^!\Qbb_p$ does not depend on the choices of $i,j$.
	We write it $\alpha \colon \omega_{\infty}[1+d] \to p_{\infty}^!\Qbb_p$. 
	We show that it is an equivalence.
	Since the family of functors 
	$$\{(-)^{h(i)\dagger\mathchar`-\an}\colon \Rep_{(A_{\infty},A_{\infty}^{\circ})_{\square}}^{\la}(G)\to \Rep_{(A_i,A_i^{\circ})_{\square}}^{h(i)\dagger}(G)\}_{i}$$
	is conservative, it is enough to show that 
	$$\alpha^{h(i)\dagger\mathchar`-\an} \colon \omega_{\infty}[1+d]^{h(i)\dagger\mathchar`-\an} \to (p_{\infty}^!\Qbb_p)^{h(i)\dagger\mathchar`-\an}$$
	is an equivalence for every $i\geq 0$.
	We set 
	\begin{align*}
	\beta_i=\alpha^{h(i)\dagger\mathchar`-\an}[-1-d] \colon &\omega_{\infty}^{h(i)\dagger\mathchar`-\an} \\
	\to &(p_{\infty}^!\Qbb_p)^{h(i)\dagger\mathchar`-\an}[-1-d]\simeq  (p_{\infty,i}^!\omega_i)^{h(i)\dagger\mathchar`-\an}=(p_{\infty,i})_!p_{\infty,i}^!\omega_i.
	\end{align*}
	Let us prove that $\beta_i$ is an equivalence.
	First, we show that for $j\geq i$ sufficiently large, the morphism $(p_{\infty,i})_!p_{\infty,i}^!\omega_i\to (p_{j,i})_!\omega_j$ induced by the adjunction is an equivalence. 
	By Corollary \ref{cor:counit proper}, we have the following commutative diagram:
	$$
	\xymatrix{
		(p_{\infty,j})_!p_{\infty,i}^!\omega_i\ar[d]\ar[r]^-{\simeq} &(p_{\infty,j})_*\intHom_{A_{\infty}}(A_{\infty},p_{\infty,i}^!\omega_i)\ar[d]^-{\simeq} \\
		\omega_j\ar[d]^-{\simeq} & \intHom_{A_j}((p_{\infty,j})_!A_{\infty},\omega_j)\ar[d]^-{\simeq} \\
		\intHom_{A_j}(A_j,\omega_j) &\intHom_{A_j}((p_{\infty,j})_*A_{\infty},\omega_j),\ar[l]
	}
	$$
	where the lower horizontal morphism is induced from the natural morphism $A_j \to (p_{\infty,j})_*A_{\infty}$, and $\intHom_{A_{\infty}}$ (resp. $\intHom_{A_{j}}$) is the internal hom in $\Rep_{(A_{\infty},A_{\infty}^{\circ})_{\square}}^{\la}(G)$ (resp. $\Rep_{(A_j,A_j^{\circ})_{\square}}^{h(j)\dagger}(G)$).
	Therefore, it is enough to show that the morphism
	$$(p_{j,i})_!\intHom_{A_j}((p_{\infty,j})_*A_{\infty},\omega_j) \to (p_{j,i})_!\intHom_{A_j}(A_j,\omega_j)$$
	is an equivalence.
	In the same way as above, we find that the above morphism is equivalent to the morphism 
	$$\intHom_{A_i}((p_{\infty,i})_*A_{\infty},\omega_i) \to \intHom_{A_j}((p_{j,i})_*A_j,\omega_j),$$
	which is an equivalence by Corollary \ref{cor:key for suave}.

	From the above, to prove that  $\beta_i$ is an equivalence, it is enough to show that 
	$$\omega_j^{{h(i)\dagger\mathchar`-\an}}\simeq \omega_{\infty}^{h(i)\dagger\mathchar`-\an} \to (p_{\infty,i})_!p_{\infty,i}^!\omega_i\simeq (p_{j,i})_!\omega_j=\omega_j^{{h(i)\dagger\mathchar`-\an}}$$
	is an equivalence.
	By the construction, this morphism is multiplication by $r(j)^{-1}$.
	Therefore, it is an equivalence. 
\end{proof}

\subsection{Application to the cohomology of $(\varphi,\Gamma)$-modules}
In this subsection, we apply the previous theorem to the cohomology of $(\varphi,\Gamma)$-modules.
Let us recall the coefficient algebras of $(\varphi,\Gamma)$-modules used in \cite[3.1, 3.2]{Mikami24}.
Let $K$ be a finite extension of $\Qbb_p$, and let $K_{\infty}=K(\zeta_{p^{\infty}})$ denote the $p$-adic cyclotomic extension of $K$.
We write $\Gamma_K=\Gal(K_{\infty}/K)$, and write $\chi\colon\Gamma_K\to \Zbb_p^{\times}$ for the $p$-adic cyclotomic character of $K$.
By using this, we can regard $\Gamma_K$ as a compact open subgroup of $\Zbb_p^{\times}$.
Let $\pi^{\flat}\in \Ocal_{\hat{K}_{\infty}^{\flat}}$ be a pseudo-uniformizer such that $\pi^{\flat\#}/p \in \Ocal_{\hat{K}_{\infty}}^{\times}$.
We set 
\begin{align*}
Y_{K_{\infty}}=\Spa(W(\Ocal_{\hat{K}_{\infty}^{\flat}}))\setminus \{p[\pi^{\flat}]=0\},
\end{align*}
which is an analytic adic space over $\Qbb_p$ come equipped with a Frobenius automorphism $\varphi$ and a continuous action of $\Gamma_K$.
There exist a surjective continuous morphism 
$$\kappa \colon Y_{K_{\infty}} \to (0,\infty)$$ 
defined by 
$$\kappa(x)=\frac{\log \lvert[\pi^{\flat}](\tilde{x})\rvert}{\log \lvert p(\tilde{x})\rvert},$$
where $\tilde{x}$ is the maximal generalization of $x$ (cf. \cite[12.2]{SW20}).
For a closed interval $[r,s]\subset (0,\infty)$ (such that $r,s \in\Qbb$), let $Y_{K_{\infty}}^{[r,s]}$ denote the interior of the preimage of $[r,s]$ under $\kappa$.
For a closed interval $[r,s]\subset (0,\infty)$, we set
\begin{align*}
\tilde{B}_{K_{\infty}}^{[r,s],(+)}=\Ocal_{Y_{K_{\infty}}}^{(+)}(Y_{K_{\infty}}^{[r,s]}).
\end{align*}
It is easy to check that the action of $\Gamma_K$ on $Y_{K_{\infty}}$ induces the action on 
$\tilde{B}_{K_{\infty}}^{[r,s]}$, and the Frobenius automorphism $\varphi \colon Y_{K_{\infty}} \to Y_{K_{\infty}}$ induces 
$\varphi \colon \tilde{B}_{K_{\infty}}^{[r,s]} \to \tilde{B}_{K_{\infty}}^{[r/p,s/p]}$.

Let $K_0$ (resp. $K_0^{\prime}$) be the maximal unramified extension of $\Qbb_p$ contained in $K$ (resp. $K_{\infty}$), and let $e$ be the ramification index of the extension $K_{\infty}/K_{0}(\zeta_{p^{\infty}})$.
Let $\Bbb_{K_0^{\prime}}^{[r,s]}$ denote the rational localization of $\Spa({K_0^{\prime}}\langle T\rangle, \Ocal_{K_0^{\prime}}\langle T\rangle)$ defined by $\lvert p^s\rvert \leq \lvert T \rvert \leq \lvert p^r\rvert$, and let $\Bbb_{K_0^{\prime}}^{(0,s]}$ denote the union of $\Bbb_{K_0^{\prime}}^{[r,s]}$ for all $0<r\leq s$.
Then as the construction in \cite[Chapitre 1]{Ber08B-pair}, for some rational number $s^{\prime}>0$, we can construct the following:
\begin{itemize}
\item An action of $\Gamma_K$ on $\Bbb_{K_0^{\prime}}^{(0,\frac{ps^{\prime}}{(p-1)e}]}$.
\item A Frobenius endomorphism $\varphi\colon \Bbb_{K_0^{\prime}}^{(0,\frac{ps^{\prime}}{(p-1)e}]} \to \Bbb_{K_0^{\prime}}^{(0,\frac{s^{\prime}}{(p-1)e}]}$.
\item A $\Gamma_K$-equivariant morphism $Y_{K_{\infty}}^{(0,s^{\prime}]} \to \Bbb_{K_0^{\prime}}^{(0,\frac{ps^{\prime}}{(p-1)e}]}$ which is compatible with the Frobenius endomorphisms.
Moreover it satisfies that for every closed interval $[r,s]\subset (0,s^{\prime}]$, the preimage of $\Bbb_{K_0^{\prime}}^{[\frac{pr}{(p-1)e},\frac{ps}{(p-1)e}]}$ in $Y_{K_{\infty}}^{(0,s^{\prime}]}$ is $Y_{K_{\infty}}^{[r,s]}$, and the induced morphism $\Ocal(\Bbb_{K_0^{\prime}}^{[\frac{pr}{(p-1)e},\frac{ps}{(p-1)e}]})\to \tilde{B}_{K_{\infty}}^{[r,s]}$ becomes a closed embedding.
\end{itemize}
Let $B_K^{[r,s],(+)}$ denote the image of the closed embedding $\Ocal^{(+)}(\Bbb_{K_0^{\prime}}^{[\frac{pr}{(p-1)e},\frac{ps}{(p-1)e}]})\to \tilde{B}_{K_{\infty}}^{[r,s]}$.
From the above, $B_K^{[r,s]}\subset \tilde{B}_{K_{\infty}}^{[r,s]}$ is stable under the action of $\Gamma_K$, and $\varphi \colon \tilde{B}_{K_{\infty}}^{[r,s]} \to \tilde{B}_{K_{\infty}}^{[r/p,s/p]}$ induces the Frobenius endomorphism $\varphi \colon B_K^{[r,s]} \to B_K^{[r/p,s/p]}$.
Let $B_{K,n}^{[r,s]}$ be the preimage of $B_{K}^{[r/p^n,s/p^n]}$ under the morphism $\varphi^n \colon \tilde{B}_{K_{\infty}}^{[r,s]}\to \tilde{B}_{K_{\infty}}^{[r/p^n,s/p^n]}.$
We define $B_{K,\infty}^{[r,s]}$ as $\varinjlim_n B_{K,n}^{[r,s]}\subset \tilde{B}_{K_{\infty}}^{[r,s]}$, where we regard it as a colimit of $\Qbb_{p,\square}$-algebras.
We set $B_{K,\infty}^{[r,s],+}\coloneqq B_{K,\infty}^{[r,s]} \cap \tilde{B}_{K_{\infty}}^{[r,s],+}=\varinjlim_n B_{K,n}^{[r,s],+}\subset B_{K,\infty}^{[r,s]}$.

\begin{example}\label{ex:unramified}
    If $K$ is an unramified extension of $\Qbb_p$, we can construct the embeddings explicitly.
    We explain the construction.
    First, we fix a system $\varepsilon=(\zeta_{p^n})_{n\geq0}$ such that $\zeta_{p^n}\in\bar{\Qbb_p}$ is a primitive $p^n$-th root of unity and that $\zeta_{p^{n+1}}^p=\zeta_{p^n}$.
    Then we can regard $\varepsilon$ as an element of $\Ocal_{\hat{K}_{\infty}^{\flat}}=\varprojlim_{x\mapsto x^p} \Ocal_{\hat{K}_{\infty}}$.
    We write $\varpi=[\varepsilon]-1 \in W(\Ocal_{\hat{K}_{\infty}^{\flat}})$, where $[\varepsilon]$ is a Teichm\"{u}ller lift of $\varepsilon$.
    Then for $[r,s]$ such that $s< 1$, a morphism $\Ocal_{\Bbb_{K}}(\Bbb_{K}^{[\frac{pr}{p-1},\frac{ps}{p-1}]}) \to \tilde{B}_{K_{\infty}}^{[r,s]} ;\; T\mapsto \varpi$ is well-defined and it gives a desired embedding.
    The action of $\varphi$ and $\gamma\in\Gamma_{K}$ are given by $\varphi(\varpi)=(1+\varpi)^p-1$ and $\gamma(\varpi)=(1+\varpi)^{\chi(\gamma)}-1,$ where $\chi$ is the $p$-adic cyclotomic character.
\end{example}
\begin{remark}\label{rem:deperfection}
	By the construction in \cite{Col08}, $B_{\Qbb_p}^{[r,s]}$ is contained in $B_K^{[r,s]}$, where we regard $\tilde{B}_{\Qbb_p(\zeta_{p^{\infty}})}^{[r,s]}$ as a subalgebra of $\tilde{B}_{K_{\infty}}^{[r,s]}$ by the natural way, and $B_K^{[r,s]}$ is a finite free $B_{\Qbb_p}^{[r,s]}$-module (of rank $[K_{\infty}\colon \Qbb_p(\zeta_{p^{\infty}})]$).
	Moreover, the natural morphism 
	$$B_K^{[r,s]} \otimes_{B_{\Qbb_p}^{[r,s]}}B_{\Qbb_p,n}^{[r,s]}\to B_{K,n}^{[r,s]}$$ 
	becomes an isomorphism for every $n$.
\end{remark}
\begin{remark}\label{rem:open subspace}
	For closed intervals $[r_1,s_1]\subset [r_2,s_2]\subset (0,s^{\prime}]$, the natural morphism
	$$(B_K^{[r_1,s_1]},B_K^{[r_1,s_1]+})_{\square} \otimes_{(B_K^{[r_2,s_2]},B_K^{[r_2,s_2],+})_{\square}}(B_{K,n}^{[r_2,s_2]},B_{K,n}^{[r_2,s_2],+})_{\square}\to (B_{K,n}^{[r_1,s_1]},B_{K,n}^{[r_1,s_1],+})_{\square}$$
	is an isomorphism by the construction in \cite{Col08}.
	Therefore, we get an isomorphism
	$$(B_K^{[r_1,s_1]},B_K^{[r_1,s_1]+})_{\square} \otimes_{(B_K^{[r_2,s_2]},B_K^{[r_2,s_2],+})_{\square}}(B_{K,\infty}^{[r_2,s_2]},B_{K,\infty}^{[r_2,s_2],+})_{\square}\to (B_{K,\infty}^{[r_1,s_1]},B_{K,\infty}^{[r_1,s_1],+})_{\square}.$$
	Since the morphism 
	$$\AnSpec (B_K^{[r_1,s_1]},B_K^{[r_1,s_1],+})_{\square} \to \AnSpec (B_K^{[r_2,s_2]},B_K^{[r_2,s_2],+})_{\square}$$ 
	is an open immersion (in the associated locale), so is the morphism 
	$$\AnSpec (B_{K,\infty}^{[r_1,s_1]},B_{K,\infty}^{[r_1,s_1],+})_{\square} \to \AnSpec (B_{K,\infty}^{[r_2,s_2]},B_{K,\infty}^{[r_2,s_2],+})_{\square}.$$
\end{remark}

We have the normalized trace morphisms $R_n\colon \tilde{B}_{K_{\infty}}^{[r,s]} \to B_{K,n}^{[r,s]}$ which satisfy the Tate-Sen axioms (C1), (C2), and (C3) by \cite[Proposition 1.1.12]{Ber08B-pair}, \cite[Corollary 9.5]{Col08}, and \cite[Example 5.5 (2)]{Por24}.
We assume that the closed interval $[r,s]$ is of the form $[p^{-k},p^{-l}]$ for some integers $k\geq l$.
Then, by \cite[Theorem 4.4]{Ber16} and \cite{Bergererrata}, they also satisfy the condition (C4), see also the proof of \cite[Proposition 3.23]{Mikami24}.
By the description of Example \ref{ex:unramified}, the condition (C$5^{\prime}$) is satisfied when $K/\Qbb_p$ is unramified. 
In general, (C$5^{\prime}$) is satisfied by Remark \ref{rem:deperfection}.
It is clear that they satisfy the condition (C$6^{\prime}$).
Therefore, we get the following theorem:
\begin{theorem}\label{thm:Tate-Sen smooth Robba}
	Let $k\geq 0$ be an integer.
	Then, the morphism 
	$$\AnSpec(B_{K,\infty}^{[p^{-k-l},p^{-l}]},B_{K,\infty}^{[p^{-k-l},p^{-l}],+})_{\square}/\Gamma_K^{\la}\to \AnSpec\Qbb_{p,\square}$$
	is $\Dcal$-smooth for an integer $l$ sufficiently large.
\end{theorem}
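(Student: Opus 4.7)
The plan is to deduce the theorem directly from Theorem \ref{thm:smooth Tate-Sen} by exhibiting the tower $B_{K,0}^{[p^{-k},p^{-l}]}\subset B_{K,1}^{[p^{-k},p^{-l}]}\subset\cdots\subset \widetilde{B}_{K_\infty}^{[p^{-k},p^{-l}]}$ as a Tate--Sen situation in the sense of Setting \ref{setting suave} with the strengthened conditions (C$5^{\prime}$) and (C$6^{\prime}$). First I would set $A=\widetilde{B}_{K_{\infty}}^{[p^{-k},p^{-l}]}$, $A_n=B_{K,n}^{[p^{-k},p^{-l}]}$, endowed with the norm coming from $\Ocal_{Y_{K_\infty}}(Y_{K_\infty}^{[p^{-k},p^{-l}]})$, and take $G=\Gamma_K$ (viewed as a compact open subgroup of $\Zbb_p^\times$ via $\chi$). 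With this identification $A_\infty=B_{K,\infty}^{[p^{-k},p^{-l}]}$ and the action of $G$ on $A$ restricts compatibly to each $A_n$, so the hypothesis $A_0\subset A_1\subset\cdots \subset A$ of Setting \ref{setting suave} is in place.

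The core of the argument is the verification of the six axioms (C1)--(C4), (C$5^{\prime}$), (C$6^{\prime}$), which have all been recorded in the paragraph preceding the theorem. Concretely, (C1)--(C3) follow from the normalized trace morphisms $R_n\colon\widetilde{B}_{K_\infty}^{[p^{-k},p^{-l}]}\to B_{K,n}^{[p^{-k},p^{-l}]}$ constructed by Berger and Colmez, together with Porat's observation that these satisfy the Tate--Sen axioms. The condition (C4), asserting that the $i$-analytic vectors of $\widetilde{B}_{K_{\infty}}^{[p^{-k},p^{-l}]}$ land in some $A_m$, is precisely the content of Berger's theorem on locally analytic vectors in $\widetilde{B}^{[r,s]}$, combined with his errata; here the hypothesis that the interval be of the form $[p^{-k},p^{-l}]$ with $l$ sufficiently large is essential. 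The finite \'etaleness in (C$5^{\prime}$) is Remark \ref{rem:deperfection}, once one reduces via Example \ref{ex:unramified} to the case $K/\Qbb_p$ unramified; and (C$6^{\prime}$) is immediate because each $B_{K,n}^{[p^{-k},p^{-l}]}$ is an affinoid $\Qbb_p$-algebra that is smooth of relative dimension $1$ over $\Qbb_p$ (being essentially an annulus over the unramified tower).

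Once these are checked, Theorem \ref{thm:smooth Tate-Sen} applies and yields the $\Dcal$-smoothness of
$$\AnSpec(A_\infty,A_\infty^\circ)_{\square}/\Gbb^{\la}\to \AnSpec\Qbb_{p,\square},$$
which is the desired statement after identifying $(A_\infty,A_\infty^\circ)_{\square}=(B_{K,\infty}^{[p^{-k},p^{-l}]},B_{K,\infty}^{[p^{-k},p^{-l}],+})_{\square}$ and $\Gbb^{\la}=\Gamma_K^{\la}$. I expect the only real obstacle to be checking (C4), which is where the restriction on $l$ enters: one needs the precise description of the locally analytic vectors inside $\widetilde{B}^{[r,s]}$ to guarantee that they are captured by some $B_{K,m}^{[p^{-k},p^{-l}]}$, and this is why we cite Berger's work (and its errata) rather than reproving it. All remaining axioms are either automatic or classical, so the proof of the theorem is essentially a verification that the Tate--Sen formalism developed in the previous subsection is applicable.
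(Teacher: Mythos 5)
Your proposal matches the paper's own argument: the paper proves this theorem exactly by verifying (C1)--(C3) via the Berger--Colmez/Colmez normalized traces and Porat, (C4) via Berger's theorem on locally analytic vectors (with its errata) for intervals of the form $[p^{-k},p^{-l}]$, (C$5^{\prime}$) via Example \ref{ex:unramified} and Remark \ref{rem:deperfection}, and (C$6^{\prime}$) directly, and then invoking Theorem \ref{thm:smooth Tate-Sen}. The only cosmetic difference is that the constraint that $l$ be sufficiently large enters primarily to ensure $[p^{-k},p^{-l}]\subset(0,s^{\prime}]$ so that the deperfected tower $B_{K,n}^{[p^{-k},p^{-l}]}$ is defined at all, not only through (C4), but this does not affect the correctness of your argument.
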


\begin{corollary}\label{cor:Tate-Sen smooth Robba 1}
For a closed interval $[r,s]\subset (0,p^{-1}s^{\prime}]$, the morphism 
	$$\AnSpec(B_{K,\infty}^{[r,s]},B_{K,\infty}^{[r,s],+})_{\square}/\Gamma_K^{\la}\to \AnSpec\Qbb_{p,\square}$$
	is $\Dcal$-smooth.
\end{corollary}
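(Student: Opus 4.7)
The plan is to reduce the general case to Theorem \ref{thm:Tate-Sen smooth Robba} by exploiting Frobenius invariance of the target morphism. The Frobenius isomorphism $\varphi : \tilde{B}_{K_{\infty}}^{[r,s]} \xrightarrow{\sim} \tilde{B}_{K_{\infty}}^{[r/p, s/p]}$ restricts, for $n \geq 1$, to an isomorphism $B_{K,n}^{[r,s]} \xrightarrow{\sim} B_{K,n-1}^{[r/p, s/p]}$ (directly from the definition $B_{K,n}^{[r,s]} = (\varphi^n)^{-1}(B_K^{[r/p^n, s/p^n]})$). Passing to the colimit, this yields a $\Gamma_K$-equivariant isomorphism $(B_{K,\infty}^{[r,s]}, B_{K,\infty}^{[r,s],+})_{\square} \simeq (B_{K,\infty}^{[r/p, s/p]}, B_{K,\infty}^{[r/p, s/p],+})_{\square}$ of solid affinoid animated $\Qbb_{p,\square}$-algebras, and hence an equivalence of the corresponding quotient stacks over $\AnSpec\Qbb_{p,\square}$. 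So the $\Dcal$-smoothness in question for $[r,s]$ is equivalent to that for $[r/p^n, s/p^n]$ for any $n \geq 0$.

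Given $[r,s] \subset (0, p^{-1}s']$, I choose $n$ large enough to find integers $k \geq l$ with $l$ in the range where Theorem \ref{thm:Tate-Sen smooth Robba} applies and $[r/p^n, s/p^n] \subset [p^{-k}, p^{-l}] \subset (0, p^{-1}s']$. Writing $X = \AnSpec(B_{K,\infty}^{[r/p^n, s/p^n]}, B_{K,\infty}^{[r/p^n, s/p^n],+})_{\square}$ and $Y = \AnSpec(B_{K,\infty}^{[p^{-k}, p^{-l}]}, B_{K,\infty}^{[p^{-k}, p^{-l}],+})_{\square}$, Remark \ref{rem:open subspace} gives an open immersion $j : X \hookrightarrow Y$; it is $\Gamma_K$-equivariant since the $\Gamma_K$-action on the Fargues-Fontaine curve preserves the subspaces $Y_{K_{\infty}}^{[r,s]}$. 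By Theorem \ref{thm:Tate-Sen smooth Robba} the morphism $Y/\Gamma_K^{\la} \to \AnSpec\Qbb_{p,\square}$ is $\Dcal$-smooth, so it is enough to show that the induced morphism $\bar{j} : X/\Gamma_K^{\la} \to Y/\Gamma_K^{\la}$ is $\Dcal$-smooth; the corollary will then follow from Corollary \ref{cor:prim and suave map D-local}(1).

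For $\Dcal$-smoothness of $\bar{j}$, I descend along the $\Dcal$-cover $p_Y : Y \to Y/\Gamma_K^{\la}$ (which is a $\Dcal$-cover as a base change of the classifying-stack $\Dcal$-cover from Proposition \ref{prop:descendable cover}(5)). The pullback of $\bar{j}$ along $p_Y$ identifies with the open immersion $j : X \to Y$: a $T$-point of the fiber product $Y \times_{Y/\Gamma_K^{\la}} (X/\Gamma_K^{\la})$ unravels to a trivialized principal $\Gamma_K^{\la}$-bundle on $T$ together with an equivariant map to $X$, so (using that $j$ is a monomorphism) it amounts to a map $T \to X$. Since $j$ is $\Dcal$-\'{e}tale by Lemma \ref{lem:open etale}, hence $\Dcal$-smooth by Remark \ref{rem:D-etale D-smooth}, Corollary \ref{cor:prim and suave map D-local}(3) implies that $\bar{j}$ is $\Dcal$-smooth, as required. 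The only technical point is the fiber-product identification $Y \times_{Y/\Gamma_K^{\la}} (X/\Gamma_K^{\la}) \simeq X$ in the language of the groupoid presentation of the quotient stack; once this is unpacked, the rest is a direct appeal to results already established.
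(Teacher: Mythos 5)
Your proof is correct, but it is longer than what the paper does and folds in an extra ingredient. The paper's proof of this corollary is just the composition of two facts: Remark \ref{rem:open subspace} realizes $\AnSpec(B_{K,\infty}^{[r,s]},B_{K,\infty}^{[r,s],+})_{\square}$ as a $\Gamma_K$-stable open subspace of $\AnSpec(B_{K,\infty}^{[p^{-k},p^{-l}]},B_{K,\infty}^{[p^{-k},p^{-l}],+})_{\square}$ for a suitable choice of $k\geq l$ (the factor $p^{-1}$ in $(0,p^{-1}s']$ guarantees an integer $l$ with $s\leq p^{-l}\leq s'$), and then one descends the resulting open immersion to the quotient stacks and composes with Theorem \ref{thm:Tate-Sen smooth Robba}, exactly as in the second half of your argument. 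What you do differently is to first conjugate by a power of $\varphi$, using the $\Gamma_K$-equivariant isomorphism $B_{K,n}^{[r,s]}\simeq B_{K,n-1}^{[r/p,s/p]}$, before embedding into a dyadic interval; this is precisely the reduction the paper reserves for the next statement, Corollary \ref{cor:Tate-Sen smooth Robba}, where it is used to pass from $(0,p^{-1}s']$ to all of $(0,\infty)$. Your route buys robustness: it works no matter how large the unspecified threshold "$l$ sufficiently large" in Theorem \ref{thm:Tate-Sen smooth Robba} is, whereas the paper's direct embedding implicitly relies on that threshold being essentially $\lceil\log_p(1/s')\rceil$. The price is the extra verification of Frobenius equivariance and of the identification $Y\times_{Y/\Gamma_K^{\la}}(X/\Gamma_K^{\la})\simeq X$; the latter is standard (it is the cartesian square used in Lemma \ref{lem:suave smooth classifying stack} and Corollary \ref{cor:semilinear completion}), and your appeal to Corollary \ref{cor:prim and suave map D-local}(3) along the universal $\Dcal^*$-cover $Y\to Y/\Gamma_K^{\la}$ to get smoothness of $\bar{\jmath}$ is exactly how the paper handles such descent elsewhere. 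No gaps.
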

\begin{proof}
	By applying $\varphi^n$ for $n$ sufficiently large, we may assume that $[r,s]\subset [p^{-k-l},p^{-l}]$ for some integers $k,l$ as in Theorem \ref{thm:Tate-Sen smooth Robba}.
	Then the claim easily follows from Theorem \ref{thm:Tate-Sen smooth Robba}, Remark \ref{rem:open subspace}, Remark \ref{rem:D-etale D-smooth}, and Lemma \ref{lem:open etale}.
\end{proof}

For any closed interval $I=[r,s]\subset (0,\infty)$, we define $B_{K,\infty}^{I}=(\tilde{B}_{K_{\infty}}^{I})^{\la}$.
If $I=[r,s]\subset (0,s^{\prime}]$, then this definition coincides with the previous definition by \cite[Corollary 3.24]{Mikami24}.
We set $B_{K,\infty}^{[r,s],+}\coloneqq B_{K,\infty}^{[r,s]} \cap \tilde{B}_{K_{\infty}}^{[r,s],+}\subset B_{K,\infty}^{[r,s]}$.

\begin{corollary}\label{cor:Tate-Sen smooth Robba}
	The morphism 
	$$\AnSpec(B_{K,\infty}^{I},B_{K,\infty}^{I,+})_{\square}/\Gamma_K^{\la}\to \AnSpec\Qbb_{p,\square}$$
	is $\Dcal$-smooth for any closed interval $I=[r,s]\subset (0,\infty)$.
\end{corollary}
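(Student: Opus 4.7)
The plan is to reduce the general case to Corollary \ref{cor:Tate-Sen smooth Robba 1} via the Frobenius automorphism of the Fargues-Fontaine pre-curve $Y_{K_{\infty}}$. The point is that Frobenius $\varphi$ is an automorphism of $Y_{K_{\infty}}$ (not merely an endomorphism, because we work with the full Witt vector ring $W(\Ocal_{\hat{K}_{\infty}^{\flat}})$ whose transition is bijective), and it commutes with the $\Gamma_K$-action; so it will allow us to move any closed interval $I=[r,s]\subset (0,\infty)$ into the regime $(0, p^{-1}s^{\prime}]$ already handled.

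First I will check that the Frobenius induces a $\Gamma_K$-equivariant isomorphism
$$\varphi \colon (B_{K,\infty}^{[r,s]}, B_{K,\infty}^{[r,s],+})_{\square}\xrightarrow{\sim} (B_{K,\infty}^{[r/p,s/p]}, B_{K,\infty}^{[r/p,s/p],+})_{\square}$$
of solid affinoid $\Qbb_{p,\square}$-algebras. This is essentially tautological from the construction: $\varphi$ is an automorphism of the analytic adic space $Y_{K_{\infty}}$, and via the formula defining $\kappa$ one sees $\kappa\circ \varphi = p\cdot \kappa$, so $\varphi$ restricts to an isomorphism $Y_{K_{\infty}}^{[r/p,s/p]}\xrightarrow{\sim} Y_{K_{\infty}}^{[r,s]}$. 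Passing to global sections yields an isomorphism $\varphi\colon \tilde{B}_{K_{\infty}}^{[r,s]}\xrightarrow{\sim} \tilde{B}_{K_{\infty}}^{[r/p,s/p]}$ of Banach $\Qbb_p$-algebras that preserves the $+$-subrings and is $\Gamma_K$-equivariant (because $\varphi$ and the $\Gamma_K$-action on $W(\Ocal_{\hat{K}_{\infty}^{\flat}})$ commute). Taking $\Gamma_K$-locally analytic vectors is functorial in such morphisms, so $\varphi$ restricts to the claimed isomorphism.

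Next I will observe that, because this isomorphism is $\Gamma_K$-equivariant and compatible with the locally analytic structures on both sides (as the subrings of locally analytic vectors are preserved by construction), it induces an equivalence of solid $\Dcal$-stacks
$$\AnSpec(B_{K,\infty}^{[r,s]}, B_{K,\infty}^{[r,s],+})_{\square}/\Gamma_K^{\la}\simeq \AnSpec(B_{K,\infty}^{[r/p,s/p]}, B_{K,\infty}^{[r/p,s/p],+})_{\square}/\Gamma_K^{\la}.$$
Iterating $n$ times gives the analogous equivalence for $[r/p^n, s/p^n]$ in place of $[r/p, s/p]$.

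Finally, given an arbitrary closed interval $I=[r,s]\subset (0,\infty)$, I choose an integer $n\geq 0$ large enough that $s/p^n \leq p^{-1}s^{\prime}$; then $[r/p^n, s/p^n]\subset (0, p^{-1}s^{\prime}]$, and Corollary \ref{cor:Tate-Sen smooth Robba 1} yields that the corresponding stack is $\Dcal$-smooth over $\AnSpec \Qbb_{p,\square}$. Transporting along the equivalence above gives $\Dcal$-smoothness for $I$. The only delicate point in the argument is the verification that $\varphi$ genuinely identifies the locally analytic $\Gamma_K$-actions (so that the descent to quotient stacks is valid), but this follows immediately from the definition of $B_{K,\infty}^{I}$ as $\Gamma_K$-locally analytic vectors in $\tilde{B}_{K_{\infty}}^{I}$ and the fact that $\varphi$ intertwines the $\Gamma_K$-actions on source and target.
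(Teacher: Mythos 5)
Your proposal is correct and is exactly the paper's argument: the paper's proof of this corollary is the one-line reduction "by applying $\varphi^n$ for $n$ sufficiently large, we can reduce to the case $I=[r,s]\subset(0,p^{-1}s^{\prime}]$," and your write-up simply fills in the (correct) details that $\varphi$ gives a $\Gamma_K$-equivariant isomorphism $Y_{K_\infty}^{[r/p,s/p]}\xrightarrow{\sim}Y_{K_\infty}^{[r,s]}$, hence an equivalence of the quotient stacks, so $\Dcal$-smoothness transports from Corollary \ref{cor:Tate-Sen smooth Robba 1}.
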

\begin{proof}
	By applying $\varphi^n$ for $n$ sufficiently large, we reduce to the case where $I=[r,s]\subset (0,p^{-1}s^{\prime}]$.
\end{proof}

\begin{definition}\label{defn:locally analytic Fargues-Fontaine curve}
	For a closed interval $I=[r,s]\subset (0,\infty)$, we set 
	$$Y_K^{I,\la}=\AnSpec(B_{K,\infty}^{I},B_{K,\infty}^{I,+})_{\square}.$$
	We define a solid $\Dcal$-stack $Y_K^{\la}$ as 
	$$Y_K^{\la} =\varinjlim_{I} Y_K^{I,\la}.$$
	The action of $\Gamma_K$ and the Frobenius automorphism on $Y_{K_{\infty}}$ induce a locally analytic $\Gamma_K$-action on $Y_K^{\la}$ and the Frobenius automorphism 
	$\varphi \colon Y_K^{\la} \to Y_K^{\la}$, respectively.
	We define a solid $\Dcal$-stack $X_K^{\la}$ by $X_K^{\la}=Y_K^{\la}/\varphi^{\Zbb}$.
	Since the locally analytic action of $\Gamma_K$ and the Frobenius automorphism on $Y_K^{\la}$ commute with each other, $X_K^{\la}$ has a locally analytic action of $\Gamma_K$.
\end{definition}

\begin{remark}\label{rem:open cover}
	From the construction, $X_K^{\la}$ is obtained by identifying open subspaces $Y_K^{[p,p],\la}, Y_K^{[1,1],\la}\subset Y_K^{[1,p],\la}$ via the Frobenius automorphism $$\varphi\colon Y_K^{[1,1],\la}\to Y_K^{[p,p],\la}.$$
	Moreover $X_K^{\la}$ has a universal $\Dcal^*$-cover 
	$$\{Y_K^{[1,3/2],\la}\hookrightarrow X_K^{\la}, Y_K^{[3/2,p],\la} \hookrightarrow X_K^{\la}\}$$
	such that each morphism is a monomorphism (i.e., its diagonal morphism is an equivalence) and $\Dcal$-\'{e}tale.
\end{remark}

\begin{corollary}\label{cor:X D-smooth}
	The morphism $X_K^{\la}/\Gamma_K^{\la}\to \AnSpec \Qbb_{p,\square}$ is $\Dcal$-smooth.
\end{corollary}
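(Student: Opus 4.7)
The plan is to apply Lemma~\ref{lem:suave suave-local}(1) to a $\Gamma_K^{\la}$-equivariant refinement of the open cover of $X_K^{\la}$ given in Remark~\ref{rem:open cover}. Set $I_1=[1,3/2]$ and $I_2=[3/2,p]$, and write $V_i\coloneqq Y_K^{I_i,\la}$, regarded as an open subsheaf of $X_K^{\la}$. Since the $\Gamma_K$-action on $Y_{K_\infty}$ commutes with $\varphi$ and preserves the radius function $\kappa$, each $V_i$ is $\Gamma_K^{\la}$-stable; taking quotients, one obtains morphisms $U_i\coloneqq V_i/\Gamma_K^{\la}\to X_K^{\la}/\Gamma_K^{\la}$ fitting into cartesian squares
\[
\xymatrix{
V_i \ar[r]\ar[d] & X_K^{\la}\ar[d]^-{q}\\
U_i \ar[r] & X_K^{\la}/\Gamma_K^{\la},
}
\]
where $q$ is the base change of the $\Dcal$-cover $\AnSpec\Qbb_{p,\square}\to\AnSpec\Qbb_{p,\square}/\Gamma_K^{\la}$ from Proposition~\ref{prop:descendable cover}(5), so $q$ is itself a $\Dcal$-cover and in particular a universal $\Dcal^*$-cover.

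The first thing I would check is that each $U_i\to X_K^{\la}/\Gamma_K^{\la}$ is $\Dcal$-smooth. By Remark~\ref{rem:open cover} and Remark~\ref{rem:D-etale D-smooth}, the pullback $V_i\to X_K^{\la}$ along $q$ is $\Dcal$-\'etale, hence $\Dcal$-smooth, so Corollary~\ref{cor:prim and suave map D-local}(3) applied to the universal $\Dcal^*$-cover $\{q\}$ yields the claim. Next, each composition $U_i\to\AnSpec\Qbb_{p,\square}$ is exactly $Y_K^{I_i,\la}/\Gamma_K^{\la}\to\AnSpec\Qbb_{p,\square}$, which is $\Dcal$-smooth by Corollary~\ref{cor:Tate-Sen smooth Robba}.

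The third ingredient is that $\{U_1,U_2\}$ forms a universal $\Dcal^*$-cover of $X_K^{\la}/\Gamma_K^{\la}$. Since quotients by $\Gamma_K^{\la}$ are colimits in the $\infty$-topos $\Shv_{\Dcal}(\Aff_{\Qbb_{p,\square}})$, they commute with the open-cover colimit presenting $X_K^{\la}$ as the union of $V_1$ and $V_2$; hence $\{U_i\to X_K^{\la}/\Gamma_K^{\la}\}$ is a canonical cover, and therefore a universal $\Dcal^*$-cover by Remark~\ref{rem:canonical cover *-cover}. The colimit hypothesis in Lemma~\ref{lem:suave suave-local}(1) is automatic since $\Dcal(X_K^{\la}/\Gamma_K^{\la})$ is presentable, and invoking that lemma concludes the argument.

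The main obstacle will be the equivariant descent in the third step: rigorously confirming that passage to $\Gamma_K^{\la}$-quotients commutes with the open-cover colimit $V_1\sqcup V_2\twoheadrightarrow X_K^{\la}$ in such a way that the resulting two-element family is genuinely a canonical cover of $X_K^{\la}/\Gamma_K^{\la}$, and not merely something whose base change along $q$ is. Once that formal geometric point is settled, all remaining steps are direct consequences of the machinery already developed in the paper.
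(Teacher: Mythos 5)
Your proposal is correct and is essentially the paper's argument: the paper's proof of Corollary \ref{cor:X D-smooth} is the single sentence ``It easily follows from Corollary \ref{cor:Tate-Sen smooth Robba} and Remark \ref{rem:open cover}'', and what you have written is precisely the intended expansion of that sentence, namely descending the two-element $\Dcal$-\'etale cover of Remark \ref{rem:open cover} to the quotient by $\Gamma_K^{\la}$ and applying Lemma \ref{lem:suave suave-local}(1) together with Corollary \ref{cor:Tate-Sen smooth Robba}. The descent point you flag as the ``main obstacle'' is handled exactly as you suggest (effective epimorphisms in the $\infty$-topos can be checked after pullback along the effective epimorphism $X_K^{\la}\to X_K^{\la}/\Gamma_K^{\la}$), so there is no gap.
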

\begin{proof}
	It easily follows from Corollary \ref{cor:Tate-Sen smooth Robba} and Remark \ref{rem:open cover}.
\end{proof}

We will compute the dualizing complex later by comparing it with the known duality.
It can be also computed directly by unwinding the proof of Theorem \ref{thm:smooth Tate-Sen dualizing complex}, but we omit this.

Let $f\colon X_K^{\la} \to \AnSpec\Qbb_{p,\square}$ and $g\colon X_K^{\la}/\Gamma_K^{\la} \to \AnSpec\Qbb_{p,\square}$ be the structure morphisms, and $\overline{f}\colon X_K^{\la}/\Gamma_K^{\la} \to \AnSpec\Qbb_{p,\square}/\Gamma_K^{\la}$ be the quotient of $f$.
For $\Acal\in \AffRing_{\Qbb_{p,\square}}$, let $(-)_{\Acal}$ denote the base change to $\AnSpec\Acal$.
For example, $X_{K,\Acal}^{\la}$ means $X_K^{\la}\times_{\AnSpec\Qbb_{p,\square}}\AnSpec\Acal$.
Let us compute $f_{\Acal *}$, $\overline{f}_{\Acal *}$, and $g_{\Acal *}$.

\begin{lemma}\label{lem:quasi-coherent sheaves on FF-curve}
	For $\Acal\in \AffRing_{\Qbb_{p,\square}}$, there exists a natural equivalence of $\infty$-categories
	$$\Dcal(X_{K,\Acal}^{\la})\simeq \Eq(\Dcal(Y_{K,\Acal}^{[1,p],\la})\rightrightarrows \Dcal(Y_{K,\Acal}^{[1,1],\la})),$$
	where two functors $\Dcal(Y_{K,\Acal}^{[1,p],\la})\to \Dcal(Y_{K,\Acal}^{[1,1],\la})$ are the pull-back functors along the open immersion 
	$$Y_{K,\Acal}^{[1,1],\la}\hookrightarrow Y_{K,\Acal}^{[1,p],\la}$$ 
	and the Frobenius morphism
	$$\varphi \colon Y_{K,\Acal}^{[1,1],\la}\hookrightarrow Y_{K,\Acal}^{[1,p],\la}.$$
	Similarly, there exists a natural equivalence of $\infty$-categories
	$$\Dcal(X_{K,\Acal}^{\la}/\Gamma_K^{\la})\simeq \Eq(\Dcal(Y_{K,\Acal}^{[1,p],\la}/\Gamma_K^{\la})\rightrightarrows \Dcal(Y_{K,\Acal}^{[1,1],\la}/\Gamma_K^{\la})).$$
\end{lemma}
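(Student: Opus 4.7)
The plan is to exploit the open cover $\{Y_K^{[1,3/2],\la}, Y_K^{[3/2,p],\la}\}$ of $X_K^{\la}$ from Remark \ref{rem:open cover}, base-changed to $\AnSpec\Acal$. Both inclusions are $\Dcal$-\'{e}tale open immersions by Lemma \ref{lem:open imm etale}, and the cover is canonical, hence a universal $\Dcal^*$-cover by Remark \ref{rem:canonical cover *-cover}. The key geometric step is to compute the self-intersection as
\[
Y_{K,\Acal}^{[1,3/2],\la} \times_{X_{K,\Acal}^{\la}} Y_{K,\Acal}^{[3/2,p],\la} \simeq Y_{K,\Acal}^{[3/2,3/2],\la} \sqcup Y_{K,\Acal}^{[1,1],\la}.
\]
The first summand is the ordinary overlap inside $Y_{K,\Acal}^{[1,p],\la}$, and the second comes from the Frobenius gluing $\varphi \colon Y_{K,\Acal}^{[1,1],\la} \xrightarrow{\sim} Y_{K,\Acal}^{[p,p],\la}$, which inside $X_{K,\Acal}^{\la}$ identifies $Y^{[1,1],\la} \subset Y^{[1,3/2],\la}$ with $Y^{[p,p],\la} \subset Y^{[3/2,p],\la}$. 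The $\kappa$-constraints ensure only $n=0$ and $n=1$ in $\varphi^{\Zbb}$ contribute: for $|n|\ge 2$, the intersection $Y^{[1,3/2]} \cap \varphi^{-n}(Y^{[3/2,p]})$ is empty (provided $p\ge 2$).

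Given this, $\Dcal^*$-descent along the cover yields
\[
\Dcal(X_{K,\Acal}^{\la}) \simeq \Dcal(Y_{K,\Acal}^{[1,3/2],\la}) \times_{\Dcal(Y_{K,\Acal}^{[3/2,3/2],\la}) \times \Dcal(Y_{K,\Acal}^{[1,1],\la})} \Dcal(Y_{K,\Acal}^{[3/2,p],\la}),
\]
while the same descent applied to $Y_{K,\Acal}^{[1,p],\la}$, whose open cover $\{Y^{[1,3/2],\la}, Y^{[3/2,p],\la}\}$ has single intersection $Y^{[3/2,3/2],\la}$, identifies
\[
\Dcal(Y_{K,\Acal}^{[1,p],\la}) \simeq \Dcal(Y_{K,\Acal}^{[1,3/2],\la}) \times_{\Dcal(Y_{K,\Acal}^{[3/2,3/2],\la})} \Dcal(Y_{K,\Acal}^{[3/2,p],\la}).
\]
Substituting the second pullback into the first collapses the $Y^{[3/2,3/2],\la}$-factor and leaves exactly the equalizer
\[
\Dcal(X_{K,\Acal}^{\la}) \simeq \Eq\!\left(\Dcal(Y_{K,\Acal}^{[1,p],\la}) \rightrightarrows \Dcal(Y_{K,\Acal}^{[1,1],\la})\right),
\]
with arrows pull-back along $Y^{[1,1],\la} \hookrightarrow Y^{[1,p],\la}$ and along the composition $Y^{[1,1],\la} \xrightarrow{\varphi} Y^{[p,p],\la} \hookrightarrow Y^{[1,p],\la}$, as claimed.

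For the $\Gamma_K^{\la}$-equivariant version, observe that the $\Gamma_K$-action on $Y_{K_\infty}$ preserves the function $\kappa$, so each $Y_{K,\Acal}^{[r,s],\la}$ is $\Gamma_K$-stable and the cover descends to an open cover of $X_{K,\Acal}^{\la}/\Gamma_K^{\la}$. The same two-step descent, run after taking $\Gamma_K^{\la}$-quotients throughout, yields the second equivalence. The main obstacle is the geometric fiber-product decomposition in the first paragraph: once the Frobenius gluing in the quotient stack is tracked correctly via the $\kappa$-constraints, the rest is a formal rearrangement of two $\Dcal^*$-descent squares.
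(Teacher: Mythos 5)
Your proposal is correct and is essentially the paper's argument: the paper's proof is a one-line appeal to the gluing description in Remark \ref{rem:open cover}, and you have simply spelled out the two-step $\Dcal^*$-descent (identifying the self-intersection of the cover as $Y^{[3/2,3/2],\la}\sqcup Y^{[1,1],\la}$ and collapsing the $Y^{[3/2,3/2],\la}$-factor) that this reference implicitly invokes.
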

\begin{proof}
	It directly follows from the description in Remark \ref{rem:open cover}.
\end{proof}

\begin{notation}
	Let $\Acal\in \AffRing_{\Qbb_{p,\square}}$ be a solid affinoid animated $\Qbb_{p,\square}$-algebra.
	We set $(B_{K,\infty}^{[r,s]},B_{K,\infty}^{[r,s],+})_{\square,\Acal}=(B_{K,\infty}^{[r,s]},B_{K,\infty}^{[r,s],+})_{\square}\otimes_{\Qbb_{p,\square}}\Acal.$
	For an object $\Mcal \in \Dcal(X_{K,\Acal}^{\la})$ (resp. $\Mcal \in \Dcal(X_{K,\Acal}^{\la}/\Gamma_K^{\la})$), let $\Mcal^{[r,s]}\in \Dcal(Y_{K,\Acal}^{[r,s],\la})$ (resp. $\Mcal^{[r,s]}\in \Dcal(Y_{K,\Acal}^{[r,s],\la}/\Gamma_K^{\la})$) denote the pull-back of $\Mcal$ to $Y_{K,\Acal}^{[r,s],\la}$ (resp. $Y_{K,\Acal}^{[r,s],\la}/\Gamma_K^{\la}$).
	We let $\Phi_{\Mcal} \colon \Mcal^{[r,s]} \to \Mcal^{[r/p,s/p]}$ denote the natural $\varphi$-semilinear morphism. In other words, it is $(B_{K,\infty}^{[r,s]},B_{K,\infty}^{[r,s],+})_{\square,\Acal}$-linear, where we regard $\Mcal^{[r/p,s/p]}$ as a $(B_{K,\infty}^{[r,s]},B_{K,\infty}^{[r,s],+})_{\square,\Acal}$-module via the Frobenius morphism 
	$$\varphi\colon (B_{K,\infty}^{[r,s]},B_{K,\infty}^{[r,s],+})_{\square,\Acal} \to (B_{K,\infty}^{[r/p,s/p]},B_{K,\infty}^{[r/p,s/p],+})_{\square,\Acal}.$$
	We also let $1\colon \Mcal^{[r,s]} \to \Mcal^{[r^{\prime},s^{\prime}]}$ denote the restriction morphism for $[r^{\prime},s^{\prime}]\subset [r,s]$, which is $(B_{K,\infty}^{[r,s]},B_{K,\infty}^{[r,s],+})_{\square,\Acal}$-linear, where we regard $\Mcal^{[r^{\prime},s^{\prime}]}$ as a $(B_{K,\infty}^{[r,s]},B_{K,\infty}^{[r,s],+})_{\square,\Acal}$-module via the restriction morphism 
	$$\varphi\colon (B_{K,\infty}^{[r,s]},B_{K,\infty}^{[r,s],+})_{\square,\Acal} \to (B_{K,\infty}^{[r^{\prime},s^{\prime}]},B_{K,\infty}^{[r^{\prime},s^{\prime}],+})_{\square,\Acal}.$$ 
	We also write $\Phi_{\Mcal}$ for the composition of $\Phi_{\Mcal}$ and $1$.
\end{notation}

\begin{corollary}\label{cor:cohomology explicit}
	Let $\Acal\in \AffRing_{\Qbb_{p,\square}}$ be a solid affinoid animated $\Qbb_{p,\square}$-algebra.
	\begin{enumerate}
		\item For an object $\Mcal \in \Dcal(X_{K,\Acal}^{\la})$, there is a natural equivalence 
		$$f_{\Acal *}\Mcal\simeq \fib(\Phi_{\Mcal}-1\colon \Mcal^{[1,p]}\to \Mcal^{[1,1]})$$
		in $\Dcal(\Acal)$.
		\item For an object $\Mcal \in \Dcal(X_{K,\Acal}^{\la}/\Gamma_K^{\la})$, there is a natural equivalence 
		$$\overline{f}_{\Acal *}\Mcal\simeq \fib(\Phi_{\Mcal}-1\colon \Mcal^{[1,p]}\to \Mcal^{[1,1]})$$
		in $\Dcal(\Acal/\Gamma_K^{\la})=\Rep_{\Acal}^{\la}(\Gamma_K)$.
		\item For an object $\Mcal \in \Dcal(X_{K,\Acal}^{\la}/\Gamma_K^{\la})$, there is a natural equivalence 
		$$g_{\Acal *}\Mcal\simeq \Gamma(\Gamma_K,\fib(\Phi_{\Mcal}-1\colon \Mcal^{[1,p]}\to \Mcal^{[1,1]}))$$
		in $\Dcal(\Acal)$.
	\end{enumerate}
\end{corollary}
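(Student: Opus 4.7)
The strategy is to exploit the presentation of $X_{K,\Acal}^{\la}$ as a coequalizer and combine it with the limit-preservation of $\Dcal^*$. Write $j\colon Y_K^{[1,1],\la}\hookrightarrow Y_K^{[1,p],\la}$ for the canonical open immersion, and write $\varphi'\colon Y_K^{[1,1],\la}\to Y_K^{[1,p],\la}$ for the composition of the Frobenius isomorphism $\varphi\colon Y_K^{[1,1],\la}\xrightarrow{\sim} Y_K^{[p,p],\la}$ (cf.~Remark~\ref{rem:open cover}) with the open immersion $Y_K^{[p,p],\la}\hookrightarrow Y_K^{[1,p],\la}$. Then $X_K^{\la}$ is the coequalizer of $j$ and $\varphi'$ in $\Shv_{\Dcal}(\Aff_{\Qbb_{p,\square}})$, a presentation that is preserved under base change to any $\Acal$.

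For part~(1), Lemma~\ref{lem:quasi-coherent sheaves on FF-curve} identifies $\Dcal(X_{K,\Acal}^{\la})$ with $\Eq(j^*,\varphi'^*)$. Since $\Dcal^*$ sends colimits of solid $\Dcal$-stacks to limits of $\infty$-categories, pushforward to $\AnSpec\Acal$ gives
\begin{equation*}
f_{\Acal *}\Mcal\simeq \Eq\bigl(f_{[1,p],\Acal *}\Mcal^{[1,p]}\rightrightarrows f_{[1,1],\Acal *}\Mcal^{[1,1]}\bigr)
\end{equation*}
in $\Dcal(\Acal)$, where $f_{[r,s],\Acal}\colon Y_{K,\Acal}^{[r,s],\la}\to \AnSpec\Acal$ is the structure morphism; each $f_{[r,s],\Acal *}$ is merely restriction of scalars, so its value on $\Mcal^{[r,s]}$ is $\Mcal^{[r,s]}$ itself as an object of $\Dcal(\Acal)$. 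Unwinding the two pullback functors, $j^*$ produces the restriction $1\colon \Mcal^{[1,p]}\to\Mcal^{[1,1]}$, while $\varphi'^*$ produces the composition $\Mcal^{[1,p]}\to\Mcal^{[p,p]}\to\Mcal^{[1,1]}$ whose first factor is restriction and whose second factor is the Frobenius-semilinear morphism; by the paper's convention this composition is $\Phi_{\Mcal}$. Since an equalizer of two maps in a stable $\infty$-category is the fiber of their difference, the desired identification follows.

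For part~(2), exactly the same argument applies after passing to $\Gamma_K^{\la}$-quotients: the coequalizer presentation descends, because the Frobenius commutes with the $\Gamma_K$-action; the second clause of Lemma~\ref{lem:quasi-coherent sheaves on FF-curve} supplies the corresponding equalizer presentation of $\Dcal(X_{K,\Acal}^{\la}/\Gamma_K^{\la})$; and Corollary~\ref{cor:underlying module} applied to the cartesian square with corners $Y_{K,\Acal}^{[r,s],\la}$, $\AnSpec\Acal$, $Y_{K,\Acal}^{[r,s],\la}/\Gamma_K^{\la}$, $\AnSpec\Acal/\Gamma_K^{\la}$ guarantees that the pushforward of $\Mcal^{[r,s]}$ along $Y_{K,\Acal}^{[r,s],\la}/\Gamma_K^{\la}\to\AnSpec\Acal/\Gamma_K^{\la}$ has underlying $\Acal$-module equal to $\Mcal^{[r,s]}$. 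The same equalizer--fiber manipulation then produces the formula in $\Rep_{\Acal}^{\la}(\Gamma_K)$.

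For part~(3), I factor $g_{\Acal}=h_{\Acal}\circ\overline{f}_{\Acal}$ with $h_{\Acal}\colon \AnSpec\Acal/\Gamma_K^{\la}\to\AnSpec\Acal$, and invoke the base change of Proposition~\ref{prop:6-ff classifying stack}(3), which identifies $h_{\Acal *}$ with $V\mapsto \Gamma(\Gamma_K,V)$; together with part~(2) this immediately gives the claim. The only step that is not purely formal is the identification of the two pullback functors in the equalizer with $1$ and $\Phi_{\Mcal}$, which I expect to be the main (but minor) obstacle; once carried out carefully in part~(1), parts~(2) and~(3) follow mechanically.
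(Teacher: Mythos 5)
Your proposal is correct and follows the same route as the paper, whose proof simply cites Lemma \ref{lem:quasi-coherent sheaves on FF-curve} for the equalizer presentation and Proposition \ref{prop:6-ff classifying stack}(3) for the identification of the pushforward along $\AnSpec\Acal/\Gamma_K^{\la}\to\AnSpec\Acal$ with $\Gamma(\Gamma_K,-)$. You have merely filled in the routine details (mapping spaces in a limit of $\infty$-categories are limits of mapping spaces, and an equalizer in a stable $\infty$-category is the fiber of the difference), which the paper leaves implicit.
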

\begin{proof}
	It follows from Lemma \ref{lem:quasi-coherent sheaves on FF-curve} and Proposition \ref{prop:6-ff classifying stack} (3).
\end{proof}

\begin{notation}
	For an object $\Mcal \in \Dcal(X_{K,\Acal}^{\la})$, we write
	$$\Gamma(X_{K,\Acal}^{\la},\Mcal)=f_{\Acal *}\Mcal\in \Dcal(\Acal).$$
	Similarly, for an object $\Ncal \in \Dcal(X_{K,\Acal}^{\la}/\Gamma_K^{\la})$, we write 
	\begin{align*}
		&\Gamma(X_{K,\Acal}^{\la},\Ncal)=\overline{f}_{\Acal *}\Ncal \in \Rep_{\Acal}^{\la}(\Gamma_K),\\
		&\Gamma(X_{K,\Acal}^{\la}/\Gamma_K^{\la},\Ncal)=g_{\Acal *}\Ncal \in \Dcal(\Acal).
	\end{align*}
	Moreover, for $i\in \Zbb$, we write
	\begin{align*}
	&H^i(X_{K,\Acal}^{\la},\Mcal)=H^i(\Gamma(X_{K,\Acal}^{\la},\Mcal))\in \Dcal(\Acal)^{\heart},\\
	&H^i(X_{K,\Acal}^{\la}/\Gamma_K^{\la},\Ncal)=H^i(\Gamma(X_{K,\Acal}^{\la}/\Gamma_K^{\la},\Ncal))\in \Dcal(\Acal)^{\heart}
    \end{align*}
	for $\Mcal \in \Dcal(X_{K,\Acal}^{\la})$ and $\Ncal \in \Dcal(X_{K,\Acal}^{\la}/\Gamma_K^{\la})$.
\end{notation}

\begin{lemma}\label{lem:FF curve proper}
	The morphism $X_K^{\la} \to \AnSpec\Qbb_{p,\square}$ is weakly $\Dcal$-proper.
\end{lemma}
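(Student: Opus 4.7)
Proof plan for Lemma \ref{lem:FF curve proper}: I would verify that $f\colon X_K^{\la}\to\AnSpec\Qbb_{p,\square}$ is $\Dcal$-prim with codualizing complex $\delta_f\simeq\Ocal_{X_K^{\la}}$. The strategy exploits the explicit coequalizer presentation of $X_K^{\la}$ from Remark \ref{rem:open cover}, which identifies the ``boundaries'' $Y_K^{[1,1],\la}$ and $Y_K^{[p,p],\la}$ of the affinoid $Y_K^{[1,p],\la}$ via the Frobenius isomorphism. Geometrically, this is what makes the Fargues--Fontaine curve ``compact'': although none of the individual affinoid pieces $Y_K^{[r,s],\la}$ is itself weakly $\Dcal$-proper over $\AnSpec\Qbb_{p,\square}$ (each is only $\Dcal$-smooth), the Frobenius gluing closes up the open ends.

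Concretely, I would first record that for every closed interval $[r,s]$ the morphism
\[
\AnSpec(B_{K,\infty}^{[r,s]},B_{K,\infty}^{[r,s],+})_{\square}\longrightarrow \AnSpec(B_{K,n}^{[r,s]},B_{K,n}^{[r,s],+})_{\square}
\]
lies in $P$: by Remark \ref{rem:deperfection}, $B_{K,\infty}^{[r,s]}$ is the filtered colimit (as a $\Qbb_{p,\square}$-algebra) of the finite \'etale $B_{K,n}^{[r,s]}$-algebras $B_{K,m}^{[r,s]}$ for $m\geq n$, and finite morphisms preserve the analytic ring structure. By Lemma \ref{lem:p is prim} this morphism is weakly $\Dcal$-proper.

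Next, I would use the finite universal $\Dcal^*$-cover $\{Y_K^{[1,3/2],\la},\, Y_K^{[3/2,p],\la}\}$ from Remark \ref{rem:open cover}, noting that the binary intersection in $X_K^{\la}$ is the affinoid $Y_K^{[3/2,3/2],\la}\sqcup Y_K^{[1,1],\la}$ (the second summand being the Frobenius identification of $Y_K^{[1,1],\la}$ with $Y_K^{[p,p],\la}\subset Y_K^{[3/2,p],\la}$). The corresponding Mayer--Vietoris fiber sequence, together with the Frobenius identification of the two endpoints, recovers exactly the formula from Corollary \ref{cor:cohomology explicit},
\[
f_{\Acal *}\Mcal\simeq \fib\bigl(\Mcal^{[1,p]}\xrightarrow{\Phi_\Mcal-1}\Mcal^{[1,1]}\bigr).
\]
To verify $\Dcal$-primness via Lemma \ref{lem:prim characterization}, I would compute $f_!$ using the same two-term presentation: the inclusions $j_1,j_2,j_{12}$ are $\Dcal$-\'etale between affinoids, so $j_{i}^!\simeq j_i^*$, and the whole calculation of $f_!\pi_{2*}\Delta_!\Ocal\to f_*\Ocal$ reduces to comparing $(f\circ u)_*$ and $(f\circ u)_!$ for $u\colon Y_K^{[1,p],\la}\to X_K^{\la}$, where the contributions from the two ``boundary'' copies of $Y_K^{[1,1],\la}$ in the Čech nerve cancel after the Frobenius identification precisely because $\varphi$ is an isomorphism. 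Once primness is established, the trivialization $\delta_f\simeq\Ocal_{X_K^{\la}}$ follows from Corollary \ref{cor:prim map characterization} by the same Čech computation, the diagonal $\Delta\colon X_K^{\la}\to X_K^{\la}\times X_K^{\la}$ being again, Mayer--Vietoris-locally, representable by morphisms in $P$ via the same perfectoid-tower argument.

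The main obstacle is precisely the point flagged above: since $Y_K^{[r,s],\la}\to\AnSpec\Qbb_{p,\square}$ is only $\Dcal$-smooth and not weakly $\Dcal$-proper, one cannot invoke Corollary \ref{cor:weakly proper} directly or conclude by any purely local-to-global descent (Lemma \ref{lem:prim descendable local} does not apply because no descendable weakly proper cover of $X_K^{\la}$ is itself weakly proper over $\AnSpec\Qbb_{p,\square}$). The argument must genuinely use that the Frobenius identification $\varphi\colon Y_K^{[1,1],\la}\xrightarrow{\sim}Y_K^{[p,p],\la}$ is an isomorphism of solid affinoid spaces, which is what ensures the telescoping cancellation of the ``boundary'' terms and forces $f_*\simeq f_!$ at the level of the Čech computation.
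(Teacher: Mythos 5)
Your opening moves match the paper's: the diagonal of $X_K^{\la}\to\AnSpec\Qbb_{p,\square}$ is weakly $\Dcal$-proper by Corollary \ref{cor:weakly proper}, so by Corollary \ref{cor:prim map characterization} it suffices to prove that $f$ is $\Dcal$-prim. But the way you then propose to establish primness has a genuine gap, and the ``main obstacle'' you identify rests on a misconception. You assert that no cover of $X_K^{\la}$ by pieces weakly $\Dcal$-proper over $\AnSpec\Qbb_{p,\square}$ exists, and that one must therefore run an explicit \v{C}ech computation of $f_!\pi_{2*}\Delta_!\Ocal_{X_K^{\la}}\to f_*\Ocal_{X_K^{\la}}$ in which ``the boundary contributions cancel after the Frobenius identification.'' That cancellation is never justified --- it is essentially the statement of the lemma restated --- and as written the computation does not get off the ground: for the open immersions $j_i\colon Y_K^{I_i,\la}\hookrightarrow X_K^{\la}$ the composites $f\circ j_i$ are, as you yourself note, not weakly $\Dcal$-proper, so the individual terms of your Mayer--Vietoris presentation do not satisfy $(f\circ j_i)_!\simeq (f\circ j_i)_*$ and there is nothing to telescope against.

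What you are missing is that weak $\Dcal$-properness in this formalism is governed by the ring of integral elements, not by classical properness of the adic space: every morphism in $P$ is weakly $\Dcal$-proper (Lemma \ref{lem:p is prim}). The paper accordingly covers $X_K^{\la}$ by $Z_1=\AnSpec(B_{K,\infty}^{[1,5/3]})_{\square}$ and $Z_2=\AnSpec(B_{K,\infty}^{[4/3,p]})_{\square}$ --- the same affinoid pieces, but equipped with the analytic ring structure induced from $\Zbb_{\square}$ instead of the $(B,B^+)_{\square}$ structure. With this choice $Z_i\to\AnSpec\Qbb_{p,\square}$ lies in $P$, hence is weakly $\Dcal$-proper; $\{Z_i\to X_K^{\la}\}$ is still a finite universal $\Dcal^*$-cover by monomorphisms; each $Z_i\to X_K^{\la}$ is weakly $\Dcal$-proper by cancellation against the weakly $\Dcal$-proper diagonal; and Lemma \ref{lem:suave suave-local}(2) together with Remark \ref{rem:mono cech cover finite} then gives primness of $f$. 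So the local-to-global route you explicitly rule out is exactly the one that works, once the cover is chosen with the induced analytic ring structure; without that repair, your proposal's central step remains an unproved assertion.
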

\begin{proof}
	The diagonal morphism $\Delta \colon X_K^{\la} \to X_K^{\la}\times X_K^{\la}$ satisfies the condition in Corollary \ref{cor:weakly proper}, so it is weakly $\Dcal$-proper.
	Therefore, it is enough to show that $X_K^{\la} \to \AnSpec\Qbb_{p,\square}$ is $\Dcal$-prim by the description of the codualizing complex in Corollary \ref{cor:prim map characterization}.
	We write $Z_1=\AnSpec (B_{K,\infty}^{[1,3/2]})_{\square}$ and $Z_2=\AnSpec (B_{K,\infty}^{[3/2,p]})_{\square}$.
	Then $\{Z_1\to X_K^{\la}, Z_2 \to X_K^{\la}\}$ is a universal $\Dcal^*$-cover because it admits a refinement by the universal $\Dcal^*$-cover in Remark \ref{rem:open cover}.
	Since $Z_i\to \AnSpec \Qbb_{p,\square}$ and the diagonal morphism of $X_K^{\la} \to \AnSpec\Qbb_{p,\square}$ are weakly $\Dcal$-proper, $Z_i\to X_K^{\la}$ is also weakly $\Dcal$-proper for $i=1,2$ by Lemma \ref{lem:proper composition basechange} and the standard argument.
	Moreover, the diagonal morphism of $Z_i\to X_K^{\la}$ is an equivalence (i.e., $Z_i\to X_K^{\la}$ is a monomorphism).
	Therefore, the claim follows from Lemma \ref{lem:suave suave-local} and Remark \ref{rem:mono cech cover finite}.
\end{proof}

\begin{corollary}\label{cor:X weakly proper}
	The morphism $X_K^{\la}/\Gamma_K^{\la} \to \AnSpec\Qbb_{p,\square}$ is weakly $\Dcal$-proper.
\end{corollary}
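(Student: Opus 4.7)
The plan is to deduce this corollary almost directly from Lemma \ref{lem:FF curve proper} together with the general principle that taking quotients by $\Gbb^{h\dagger}$-actions preserves weak $\Dcal$-properness (Lemma \ref{lem:suave smooth classifying stack}, applied with $h = \infty$, i.e., for locally analytic actions).

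Concretely, I would argue as follows. By Definition \ref{defn:locally analytic Fargues-Fontaine curve}, the solid $\Dcal$-stack $X_K^{\la}$ is equipped with a locally analytic action of $\Gamma_K$, and the structure morphism
\[
f \colon X_K^{\la} \longrightarrow \AnSpec \Qbb_{p,\square}
\]
is tautologically $\Gamma_K^{\la}$-equivariant when the target is given the trivial $\Gamma_K^{\la}$-action. By Lemma \ref{lem:FF curve proper}, the morphism $f$ is weakly $\Dcal$-proper. Applying Lemma \ref{lem:suave smooth classifying stack} with $\Gbb^{h\dagger} = \Gamma_K^{\la}$, $X = X_K^{\la}$, and $Y = \AnSpec\Qbb_{p,\square}$, we obtain that the induced morphism
\[
g \colon X_K^{\la}/\Gamma_K^{\la} \longrightarrow \AnSpec\Qbb_{p,\square}
\]
is weakly $\Dcal$-proper, as desired.

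There is essentially no obstacle here: the substantive input is the already-proved Lemma \ref{lem:FF curve proper} (which itself relied on the universal $\Dcal^{*}$-cover of $X_K^{\la}$ by $\AnSpec(B_{K,\infty}^{[1,5/3]})_{\square}$ and $\AnSpec(B_{K,\infty}^{[4/3,p]})_{\square}$ together with Corollary \ref{cor:weakly proper} and Lemma \ref{lem:suave suave-local}), and the formal descent of weak $\Dcal$-properness along the quotient by $\Gamma_K^{\la}$ provided by Lemma \ref{lem:suave smooth classifying stack}. The latter lemma in turn boils down to the base change cartesian square
\[
\xymatrix{
X_K^{\la} \ar[r] \ar[d] & \AnSpec\Qbb_{p,\square} \ar[d] \\
X_K^{\la}/\Gamma_K^{\la} \ar[r] & \AnSpec\Qbb_{p,\square}/\Gamma_K^{\la},
}
\]
weak $\Dcal$-properness of $\AnSpec\Qbb_{p,\square}/\Gamma_K^{\la} \to \AnSpec\Qbb_{p,\square}$ from Proposition \ref{prop:6-ff classifying stack} (5) (for $h = \infty$), and stability of weak $\Dcal$-properness under composition and base change (Lemma \ref{lem:proper composition basechange}). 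Thus the proof is a short formal combination, with no new computational content required.
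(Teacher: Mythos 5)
Your proof is correct and is exactly the paper's argument: the paper deduces the corollary from Lemma \ref{lem:FF curve proper} together with Lemma \ref{lem:suave smooth classifying stack} applied with $h=\infty$. Your additional unpacking of how Lemma \ref{lem:suave smooth classifying stack} reduces to the cartesian square and Proposition \ref{prop:6-ff classifying stack} (5) is accurate but not needed beyond citing that lemma.
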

\begin{proof}
	It follows from Lemma \ref{lem:FF curve proper} and Lemma \ref{lem:suave smooth classifying stack}.
\end{proof}

\begin{corollary}
	Let $\Acal\to \Bcal$ be a morphism in $\AffRing_{\Qbb_{p,\square}}$.
	Then, for $\Mcal \in \Dcal(X_{K,\Acal}^{\la})$, the natural morphism
	\begin{align*}
		\Gamma(X_{K,\Acal}^{\la}/\Gamma_K^{\la},\Mcal)\otimes_{\Acal}\Bcal \to \Gamma(X_{K,\Bcal}^{\la}/\Gamma_K^{\la},\Mcal\otimes_{\Acal} \Bcal)
	\end{align*}
	is an equivalence, where $\Mcal\otimes_{\Acal} \Bcal$ is the pullback of $\Mcal$ along $X_{K,\Bcal}^{\la}/\Gamma_K^{\la}\to X_{K,\Acal}^{\la}/\Gamma_K^{\la}$.
\end{corollary}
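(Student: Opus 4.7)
The plan is to deduce this from proper base change applied to the Cartesian diagram
$$
\xymatrix{
  X_{K,\Bcal}^{\la}/\Gamma_K^{\la} \ar[r]^-{q} \ar[d]_-{g_{\Bcal}} & X_{K,\Acal}^{\la}/\Gamma_K^{\la} \ar[d]^-{g_{\Acal}} \\
  \AnSpec \Bcal \ar[r]^-{p} & \AnSpec \Acal,
}
$$
where $p$ is the morphism corresponding to $\Acal \to \Bcal$ and $q$ is its base change. I understand $\Mcal$ in the statement as an object of $\Dcal(X_{K,\Acal}^{\la}/\Gamma_K^{\la})$, so that $\Mcal \otimes_{\Acal} \Bcal$ is literally $q^*\Mcal$; the version with $\Mcal \in \Dcal(X_{K,\Acal}^{\la})$ follows by first $*$-pushing forward along $X_{K,\Acal}^{\la} \to X_{K,\Acal}^{\la}/\Gamma_K^{\la}$.

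By Corollary \ref{cor:X weakly proper}, the structure morphism $g \colon X_K^{\la}/\Gamma_K^{\la} \to \AnSpec \Qbb_{p,\square}$ is weakly $\Dcal$-proper. Since weak $\Dcal$-properness is stable under arbitrary base change (Lemma \ref{lem:proper composition basechange}), the morphisms $g_{\Acal}$ and $g_{\Bcal}$ are both weakly $\Dcal$-proper, and hence the natural equivalences $g_{\Acal *} \simeq g_{\Acal !}$ and $g_{\Bcal *} \simeq g_{\Bcal !}$ hold. Proper base change for the 6-functor formalism on solid $\Dcal$-stacks then gives a natural equivalence $p^* g_{\Acal !} \simeq g_{\Bcal !}\, q^*$ of functors $\Dcal(X_{K,\Acal}^{\la}/\Gamma_K^{\la}) \to \Dcal(\Bcal)$.

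Chaining these together, for $\Mcal \in \Dcal(X_{K,\Acal}^{\la}/\Gamma_K^{\la})$ we obtain the sequence
$$
\Gamma(X_{K,\Acal}^{\la}/\Gamma_K^{\la},\Mcal) \otimes_{\Acal} \Bcal
\;\simeq\; p^* g_{\Acal *}\Mcal
\;\simeq\; p^* g_{\Acal !}\Mcal
\;\simeq\; g_{\Bcal !}\, q^*\Mcal
\;\simeq\; g_{\Bcal *}\, q^*\Mcal
\;=\; \Gamma(X_{K,\Bcal}^{\la}/\Gamma_K^{\la},\Mcal \otimes_{\Acal} \Bcal),
$$
where the outer equalities encode the definitions of $p^*$ as $\Bcal \otimes_{\Acal} -$ and of $\Mcal \otimes_{\Acal} \Bcal$ as $q^*\Mcal$. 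The last thing to verify is that the resulting equivalence agrees with the natural base change morphism in the statement; this is the standard compatibility between the unit/counit adjunctions and the proper base change equivalence, and is formal from the construction of the 6-functor formalism.

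There is essentially no main obstacle in this step itself: all of the work has already been absorbed into Corollary \ref{cor:X weakly proper}, which supplies the weak $\Dcal$-properness of $g$. Once one has $g_* \simeq g_!$, the corollary is a formal consequence of proper base change, so this is the genuinely routine part of the argument.
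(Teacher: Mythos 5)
Your proposal is correct and is essentially identical to the paper's own (one-line) proof, which likewise deduces the statement from proper base change together with the weak $\Dcal$-properness of $X_K^{\la}/\Gamma_K^{\la}\to\AnSpec\Qbb_{p,\square}$ established in Corollary \ref{cor:X weakly proper}. Your reading of $\Mcal$ as living on the quotient stack (so that $\Mcal\otimes_{\Acal}\Bcal$ is the $*$-pullback) matches the intended interpretation.
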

\begin{proof}
	It follows from the proper base change and Corollary \ref{cor:X weakly proper}.
\end{proof}

We compare quasi-coherent sheaves on $X_{K,\Acal}^{\la}$ and their cohomology with the results in \cite{Mikami24}.
First, we introduce the notion of vector bundles on solid $\Dcal$-stacks.

\begin{definition}[{\cite[Definition 4.1.1]{RC24}}]
	Let $X\in \Shv_{\Dcal}(\Aff_{Z_{\square}})$ be a solid $\Dcal$-stack.
	An object $\Fcal \in \Dcal(X)$ is a \textit{vector bundle on $X$} if there is a canonical cover $\{f_i\colon \AnSpec \Acal_i \to X\}_i$ of $X$ such that for every $i$, $f_i^*\Fcal\in \Dcal(\Acal_i)$ is a finite projective $\Acal_i$-module, that is, it is a direct summand of $(\underline{\Acal_i})^n$ for some integer $n\geq 0$.
	Let $\Vect(X)$ denote the full subcategory of $\Dcal(X)$ consisting of vector bundles on $X$.
\end{definition}
\begin{remark}
	Let $X$ be a solid $\Dcal$-stack with a locally analytic action of a $p$-adic Lie group $G$.
	Since $X\to X/\Gbb^{\la}$ is a canonical cover, an object $\Fcal \in \Dcal(X/\Gbb^{\la})$ is a vector bundle on $X/\Gbb^{\la}$ if and only if its pullback to $X$ is a vector bundle on $X$.
\end{remark}

This definition is very subtle.
For example, the author does not know whether for $\Acal\in \AffRing_{\Zbb_{\square}}$, an object $\Fcal \in \Dcal(\AnSpec \Acal)$ is a vector bundle if and only if $\Fcal$ comes from a finite projective $\underline{\Acal}$-module.
However it behaves well for the following nice analytic rings.

\begin{definition}[{\cite[Definition 9.7]{CC}}]\label{def:Fredholm}
    A solid affinoid animated $\Zbb_{\square}$-algebra $\Acal$ is \textit{Fredholm} if any dualizable object of $\Dcal(\Acal)$ is relatively discrete.
\end{definition}
\begin{example}
	Let $(A,A^+)$ be a complete Tate affinoid pair.
    Then the analytic ring $(A,A^+)_{\square}$ is Fredholm, which follows from \cite[Theorem 5.50]{And21}.
\end{example}

\begin{lemma}\label{lem:Freholm colimit}
	Let $\{\Acal_{\lambda}\}_{\lambda\in \Lambda}$ be a filtered diagram of Fredholm solid affinoid animated $\Zbb_{\square}$-algebras.
	Then $\Acal=\varinjlim_{\lambda}\Acal_{\lambda}$ is also Fredholm.
\end{lemma}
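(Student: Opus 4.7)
The plan is to reduce any dualizable object of $\Dcal(\Acal)$ to one coming by base change from some $\Dcal(\Acal_{\lambda})$, and then to transfer relative discreteness along the base change.

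First, I would use that formation of the derived category is continuous on filtered colimits of solid affinoid animated $\Zbb_{\square}$-algebras: writing $\Acal=\varinjlim_{\lambda}\Acal_{\lambda}$ in $\AffRing_{\Zbb_{\square}}$, the natural functor
\[
\varinjlim_{\lambda}\Dcal(\Acal_{\lambda})\lra \Dcal(\Acal)
\]
is an equivalence of presentably symmetric monoidal $\infty$-categories, with transition functors given by base change $\Acal_{\mu}\otimes_{\Acal_{\lambda}}(-)$ and final functors given by $\Acal\otimes_{\Acal_{\lambda}}(-)$. Granted this, a dualizable object $M\in\Dcal(\Acal)$ is specified by a finite amount of data: a dual $M^{\vee}$, evaluation and coevaluation maps, and homotopies witnessing the triangle identities. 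By the standard cofinality argument for filtered colimits of $\infty$-categories, all of this data descends to a single index, so there exist $\lambda$ and a dualizable object $M_{\lambda}\in \Dcal(\Acal_{\lambda})$ with $M\simeq \Acal\otimes_{\Acal_{\lambda}}M_{\lambda}$.

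Second, since $\Acal_{\lambda}$ is Fredholm, $M_{\lambda}$ is relatively discrete over $\Acal_{\lambda}$. By the definition of relative discreteness (i.e., the analytic structure on $M_{\lambda}$ is induced from a finitely generated static subring), there is a finitely generated subring $B_{\lambda}\subset \pi_{0}\underline{\Acal_{\lambda}}(\ast)$ with $M_{\lambda}\in \Dcal((\underline{\Acal_{\lambda}},B_{\lambda})_{\square})$. Pushing $B_{\lambda}$ forward along $\pi_{0}\underline{\Acal_{\lambda}}(\ast)\to \pi_{0}\underline{\Acal}(\ast)$ yields a finitely generated subring $B\subset \pi_{0}\underline{\Acal}(\ast)$, and the induced morphism of analytic rings $(\underline{\Acal_{\lambda}},B_{\lambda})_{\square}\to (\underline{\Acal},B)_{\square}$ has target with induced analytic structure from the source; hence $M=\Acal\otimes_{\Acal_{\lambda}}M_{\lambda}$ lies in $\Dcal((\underline{\Acal},B)_{\square})$ and is relatively discrete over $\Acal$.

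The main obstacle is the continuity statement
$\Dcal(\varinjlim_{\lambda}\Acal_{\lambda})\simeq \varinjlim_{\lambda}\Dcal(\Acal_{\lambda})$
in $\CAlg(\Pr^{L,\ex})$. Granting this, the rest is routine: dualizability is a finite-type property and thus descends through filtered colimits, while the notion of relative discreteness is manifestly stable under base change of analytic rings. I expect the continuity statement to follow from the explicit construction of colimits in $\AffRing_{\Zbb_{\square}}$ together with the description of $\Dcal(\Acal)$ as modules over $\Acal$ satisfying an idempotent completeness condition which is preserved by filtered colimits; alternatively, one can extract it from the behavior of $\Dcal$ as a $6$-functor formalism on $\Aff_{\Zbb_{\square}}$.
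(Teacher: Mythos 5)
Your proposal is correct and follows essentially the same route as the paper: descend the dualizable object to a finite stage $\lambda$, invoke the Fredholm property of $\Acal_{\lambda}$, and transfer relative discreteness along the base change $\Acal_{\lambda}\to\Acal$. The only difference is that the continuity/descent step you flag as the main obstacle is exactly what the paper outsources to \cite[Lemma 2.7.4]{Mann22}, so no new argument is needed there.
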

\begin{proof}
	Let $V\in \Dcal(\Acal)$ be a dualizable object.
	Then by \cite[Lemma 2.7.4]{Mann22}, there exists a dualizable object $M_{\lambda} \in \Dcal(\Acal_{\lambda})$ for some $\lambda$ such that $M_{\lambda}\otimes_{\Acal_{\lambda}}\Acal \simeq M$.
	Since $\Acal_{\lambda}$ is Fredholm, $M_{\lambda}$ is relatively discrete.
	Therefore, $M$ is also relatively discrete.
\end{proof}
\begin{corollary}\label{cor:la vector Fredholm}
	For a closed interval $I=[r,s]\subset (0,\infty)$, $(B_{K,\infty}^{I},B_{K,\infty}^{I,+})_{\square}$ is Fredholm.
	In general, for a Banach $\Qbb_{p}$-algebra $A$, $(B_{K,\infty}^{I},B_{K,\infty}^{I,+})_{\square,A_{\square}}$ is also Fredholm.
\end{corollary}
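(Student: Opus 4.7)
The plan is to reduce the corollary to Lemma \ref{lem:Freholm colimit} via the presentation
\begin{align*}
(B_{K,\infty}^{I},B_{K,\infty}^{I,+})_{\square} \simeq \varinjlim_n (B_{K,n}^{I},B_{K,n}^{I,+})_{\square}
\end{align*}
as a filtered colimit of solid affinoid animated $\Qbb_{p,\square}$-algebras. The underlying colimit $B_{K,\infty}^{I}=\varinjlim_n B_{K,n}^{I}$ holds by definition (as $\Qbb_{p,\square}$-algebras), and the $+$-bounded subring $B_{K,\infty}^{I,+}$ is likewise the union of the $B_{K,n}^{I,+}$, so the affinoid analytic ring structure on the colimit matches the filtered colimit of the affinoid analytic ring structures.

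Next, I would observe that for every $n$ the pair $(B_{K,n}^{I},B_{K,n}^{I,+})$ is a complete Tate affinoid pair: the ring $B_{K,n}^{I}$ is the pullback along $\varphi^n$ of the ring of analytic functions on the rational open subdomain $\Bbb_{K_0^{\prime}}^{[\frac{pr}{(p-1)e},\frac{ps}{(p-1)e}]}$ of the Stein annulus attached to $\Bbb_{K_0^{\prime}}$, hence is an affinoid $\Qbb_p$-algebra, and $B_{K,n}^{I,+}$ is a ring of integral elements. Therefore the example preceding the corollary gives that $(B_{K,n}^{I},B_{K,n}^{I,+})_{\square}$ is Fredholm, and Lemma \ref{lem:Freholm colimit} concludes that $(B_{K,\infty}^{I},B_{K,\infty}^{I,+})_{\square}$ is Fredholm.

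For the base-change statement, since $-\otimes_{\Qbb_{p,\square}}A_{\square}$ commutes with filtered colimits of analytic rings, we get
\begin{align*}
(B_{K,\infty}^{I},B_{K,\infty}^{I,+})_{\square,A_{\square}} \simeq \varinjlim_n (B_{K,n}^{I},B_{K,n}^{I,+})_{\square,A_{\square}}.
\end{align*}
Each term on the right-hand side is the solid analytic ring attached to the complete Tate affinoid pair obtained from the completed tensor product $B_{K,n}^{I}\hotimes_{\Qbb_p}A$ (with an appropriate ring of integral elements), so it is Fredholm by the same example. A second application of Lemma \ref{lem:Freholm colimit} then finishes the proof.

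The only mildly delicate point I anticipate is confirming that the tensor product $(B_{K,n}^{I},B_{K,n}^{I,+})_{\square}\otimes_{\Qbb_{p,\square}}A_{\square}$ really coincides with the solid analytic ring of a Banach pair, rather than something that is only derived-complete in a weaker sense. This is standard given that $A$ is a Banach $\Qbb_p$-algebra and $B_{K,n}^{I}$ is affinoid over $\Qbb_p$, but it is the one bookkeeping check; the rest of the argument is simply invoking the two ingredients already provided.
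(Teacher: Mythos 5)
Your argument is exactly the one the paper intends: the corollary is stated as an immediate consequence of the Example that complete Tate affinoid pairs give Fredholm analytic rings together with Lemma \ref{lem:Freholm colimit}, applied to the filtered-colimit presentation $(B_{K,\infty}^{I},B_{K,\infty}^{I,+})_{\square}=\varinjlim_n (B_{K,n}^{I},B_{K,n}^{I,+})_{\square}$ and its base change to $A_{\square}$. The one small point to add is that the rings $B_{K,n}^{I}$ and this colimit presentation are only defined for $I\subset(0,s^{\prime}]$, so for a general closed interval one should first apply a power of the $\Gamma_K$-equivariant isomorphism $\varphi$ to reduce to that case (as is done in Corollary \ref{cor:Tate-Sen smooth Robba}); with that one-line reduction your proof is complete.
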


\begin{lemma}[{\cite[Corollary IV.1.4]{GRLLC24}}]\label{lem:Fredholm vector bundle}
Let $\Acal$ be a Fredholm solid affinoid animated $\Zbb_{\square}$-algebra.
Then an object $\Fcal \in \Dcal(\AnSpec \Acal)$ is a vector bundle if and only if $\Fcal$ comes from a finite projective $\underline{\Acal}$-module.
\end{lemma}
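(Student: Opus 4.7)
The ``if'' direction is immediate: if $\Fcal$ comes from a finite projective $\underline{\Acal}$-module, the trivial canonical cover $\{\id_{\AnSpec\Acal}\colon \AnSpec\Acal\to\AnSpec\Acal\}$ exhibits $\Fcal$ as a vector bundle. Hence the plan is to address the ``only if'' direction, whose strategy is to use the Fredholm hypothesis as a bridge from the $6$-functor formalism on solid $\Dcal$-stacks back to classical commutative algebra.

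Suppose $\Fcal \in \Dcal(\AnSpec\Acal)$ is a vector bundle, witnessed by a canonical cover $\{f_i\colon \AnSpec\Acal_i\to\AnSpec\Acal\}_i$ such that each $f_i^*\Fcal$ comes from a finite projective $\underline{\Acal_i}$-module. First, I would show that $\Fcal$ is dualizable in $\Dcal(\Acal)$: each $f_i^*\Fcal$ is dualizable because finite projective modules are, canonical covers are universal $\Dcal^*$-covers by Remark \ref{rem:canonical cover *-cover}, and dualizability is equivalent to $\id$-suaveness (Remark \ref{rem:suave and primand dualizable}), which descends along universal $\Dcal^*$-covers (Lemma \ref{lem:prim and suave obj D-local}). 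Hence $\Fcal$ is dualizable in $\Dcal(\Acal)$.

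Next, the Fredholm hypothesis (Definition \ref{def:Fredholm}) forces $\Fcal$ to be relatively discrete, so there exists a (static) $\underline{\Acal}$-module $M$ with $\Fcal \simeq M\otimes_{\underline{\Acal}}\Acal$. Applying the same reasoning to the dual $\Fcal^\vee$, which is again dualizable, gives $\Fcal^\vee \simeq N\otimes_{\underline{\Acal}}\Acal$ for some static $\underline{\Acal}$-module $N$. Since dualizable objects are stable under tensor product, $\Fcal\otimes_{\Acal}\Fcal^\vee$ and $\Fcal^\vee\otimes_{\Acal}\Fcal$ are dualizable and hence relatively discrete as well. The evaluation and coevaluation morphisms of the dualizable pair $(\Fcal,\Fcal^\vee)$ therefore descend uniquely to morphisms $M\otimes_{\underline{\Acal}}N\to \underline{\Acal}$ and $\underline{\Acal}\to N\otimes_{\underline{\Acal}}M$ in the derived category of $\underline{\Acal}$-modules, and the triangle identities in $\Dcal(\Acal)$ translate to the same identities for these discrete morphisms. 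This exhibits $M$ as a dualizable object in the derived category of the ordinary ring $\underline{\Acal}$, so by classical algebra $M$ is a perfect complex concentrated in degree $0$, that is, a finite projective $\underline{\Acal}$-module.

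The main obstacle is the third step, and in particular the verification that when both $\Fcal$ and $\Fcal^\vee$ are relatively discrete the solid tensor product $\Fcal\otimes_{\Acal}\Fcal^\vee$ coincides with the image of the discrete tensor product $M\otimes_{\underline{\Acal}}N$ under $-\otimes_{\underline{\Acal}}\Acal$, so that evaluation and coevaluation actually come from maps of discrete modules. This amounts to showing that, for $\Acal$ Fredholm, the full subcategory of relatively discrete objects of $\Dcal(\Acal)$ is closed under the monoidal operations on dualizable objects and that the inclusion is symmetric monoidal there; granting this, the rest of the argument is formal.
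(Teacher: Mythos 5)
This lemma is not proved in the paper at all: it is quoted verbatim from Clausen--Scholze (Corollary IV.1.4 of the Analytic Stacks course), so there is no internal argument to compare against. Your overall strategy --- descend dualizability along the canonical cover, invoke the Fredholm condition to land in discrete modules, and finish by classical algebra --- is the right shape and matches the spirit of the cited source, and your first step (dualizability of $\Fcal$ via Remark \ref{rem:suave and primand dualizable}, Remark \ref{rem:canonical cover *-cover} and Lemma \ref{lem:prim and suave obj D-local}) is correct.

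However, there are genuine gaps after that. First, ``relatively discrete'' only places $\Fcal$ in the essential image of the derived category of discrete $\underline{\Acal}$-modules; it does \emph{not} produce a static module $M$, so the parenthetical ``(static)'' is unjustified and in fact contradicts your later phrase ``perfect complex concentrated in degree $0$''. Second, and more seriously, dualizability of $M$ in the derived category of the ordinary ring $\underline{\Acal}$ only shows that $M$ is a \emph{perfect complex}; perfect complexes are not automatically concentrated in degree $0$ (any shift of a finite projective module is dualizable). To conclude finite projectivity you must return to the cover: using that the base-change functor is fully faithful on perfect complexes, each $M\otimes_{\underline{\Acal}}\underline{\Acal_i}$ is finite projective, and one then needs an argument that the family $\{\underline{\Acal}\to\underline{\Acal_i}\}$ detects Tor-amplitude $[0,0]$ of a perfect complex --- a nontrivial point that your proposal never addresses because the local degree-$0$ information is discarded after step one. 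Third, the descent of the evaluation and coevaluation maps, which you correctly flag as the main obstacle, is left unresolved; it can be handled by observing that $\Dcal(\underline{\Acal})\to\Dcal(\Acal)$ is symmetric monoidal and fully faithful on perfect complexes (since the underlying discrete module of $P\otimes_{\underline{\Acal}}\Acal$ recovers $P$ for $P$ perfect), but as written the argument is incomplete. In summary the proposal is a plausible outline, not a proof.
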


Let $A$ be a Banach $\Qbb_p$-algebra or an algebraic-affinoid $\Qbb_{p,\square}$-algebra.
We write $B_{K,\infty,A}^{[r,s]}=B_{K,\infty}^{[r,s]}\otimes_{\Qbb_{p,\square}}A$.
In \cite[Definition 3.4, Definition 3.27, Definition 3.29]{Mikami24}, the author introduced the category $\VB_{\tilde{B}_{K_{\infty},A}}^{\varphi,\Gamma_K}$ (resp. $\VB_{B_{K,\infty,A}}^{\varphi,\Gamma_K}$, $\VB_{B_{K,A}}^{\varphi,\Gamma_K}$) of $(\varphi,\Gamma_K)$-modules over $\tilde{B}_{K_{\infty}, A}$ (resp. $B_{K,\infty,A}$, $B_{K,A}$).
Let us recall the definition of $(\varphi,\Gamma_K)$-modules over $B_{K,\infty,A}$.

\begin{definition}[{\cite[Definition 3.27]{Mikami24}}]
	A \textit{$(\varphi,\Gamma_K)$-module} $\Mcal$ over $B_{K,\infty,A}$ is a family of finite projective $B_{K,\infty,A}^{[r,s]}$-modules $\{M^{[r,s]}\}_{r,s}$ for $[r,s]\subset (0,\infty)$ with a continuous semilinear action of $\Gamma_K$ on $M^{[r,s]}$ and $\Gamma_K$-equivariant isomorphisms 
\begin{align*}
    &\tau_{r,r^{\prime},s^{\prime},s}\colon M^{[r,s]}\otimes_{(B_{K,\infty,A}^{[r,s]})_{\square}} B_{K,\infty,A}^{[r^{\prime},s^{\prime}]} \overset{\sim}{\to} M^{[r^{\prime},s^{\prime}]}\\
&\Phi_{r,s}\colon M^{[r,s]}\otimes_{(B_{K,\infty,A}^{[r,s]})_{\square},\varphi} B_{K,\infty,A}^{[r/p,s/p]}\cong M^{[r/p,s/p]}
\end{align*}
for $[r^{\prime},s^{\prime}]\subset [r,s]$ which satisfy the usual cocycle conditions, see \cite[Definition 3.4]{Mikami24}.
\end{definition}
\begin{remark}
	By \cite[Theorem 3.37]{Mikami24}, the continuous semilinear action of $\Gamma_K$ on a finite projective $B_{K,\infty,A}^{[r,s]}$-module $M^{[r,s]}$ is automatically locally analytic.
\end{remark}

By the description in Lemma \ref{lem:quasi-coherent sheaves on FF-curve}, we have a natural fully faithful functor 
$$\VB_{B_{K,\infty,A}}^{\varphi,\Gamma_K} \hookrightarrow \Dcal(X_{K,A_{\square}}^{\la}/\Gamma_K^{\la}).$$

\begin{proposition}
	Let $A$ be a Banach $\Qbb_p$-algebra.
	Then the fully faithful functor 
	$$\VB_{B_{K,\infty,A}}^{\varphi,\Gamma_K}\hookrightarrow \Vect(X_{K,A_{\square}}^{\la}/\Gamma_K^{\la})$$
	is an equivalence of $\infty$-categories.
\end{proposition}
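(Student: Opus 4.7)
The functor in question is already fully faithful by construction, so we need to prove essential surjectivity. Let $\Fcal \in \Vect(X_{K,A_{\square}}^{\la}/\Gamma_K^{\la})$. The plan is to extract from $\Fcal$ a finite projective module on each annulus, together with the restriction isomorphisms, the Frobenius isomorphisms, and the semilinear $\Gamma_K$-action, and then check these assemble into a $(\varphi,\Gamma_K)$-module over $B_{K,\infty,A}$ whose associated solid quasi-coherent sheaf is $\Fcal$.

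First, for each closed interval $[r,s]\subset (0,\infty)$, pull $\Fcal$ back along the composition
\[
Y_{K,A_{\square}}^{[r,s],\la} \longrightarrow X_{K,A_{\square}}^{\la} \longrightarrow X_{K,A_{\square}}^{\la}/\Gamma_K^{\la},
\]
where the first map is obtained (after applying a suitable power of Frobenius, as in Remark \ref{rem:open cover}) from the universal $\Dcal^*$-cover of $X_{K,A_{\square}}^{\la}$ described in Remark \ref{rem:open cover}. The result is an object $\Fcal^{[r,s]} \in \Vect(Y_{K,A_{\square}}^{[r,s],\la})$ by definition of vector bundle. By Corollary \ref{cor:la vector Fredholm} the analytic ring $(B_{K,\infty}^{[r,s]},B_{K,\infty}^{[r,s],+})_{\square,A_{\square}}$ is Fredholm, so Lemma \ref{lem:Fredholm vector bundle} identifies $\Fcal^{[r,s]}$ with a finite projective $B_{K,\infty,A}^{[r,s]}$-module $M^{[r,s]}$.

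Next, the restriction isomorphisms $\tau_{r,r',s',s}$ come from functoriality of pullback along the open immersions of Remark \ref{rem:open subspace}, combined with base change for vector bundles on Fredholm analytic rings. The Frobenius isomorphisms $\Phi_{r,s}$ come from the fact that $X_K^{\la} = Y_K^{\la}/\varphi^{\Zbb}$, whose gluing data is made explicit in Lemma \ref{lem:quasi-coherent sheaves on FF-curve}: pulling back $\Fcal$ along $Y_{K,A_\square}^{[1,p],\la} \to X_{K,A_\square}^{\la}$ produces exactly the Frobenius-equivariance data, which propagates to all $[r,s]$ by shifting with $\varphi^{n}$. Finally, the $\Gamma_K^{\la}$-equivariance of $\Fcal$ restricts to each $Y_{K,A_{\square}}^{[r,s],\la}/\Gamma_K^{\la}$, and Proposition \ref{prop:semilinear geometric interpretation} translates this into a locally analytic semilinear $\Gamma_K$-action on $M^{[r,s]}$ in $\Rep_{(B_{K,\infty}^{[r,s]},B_{K,\infty}^{[r,s],+})_{\square,A_{\square}}}^{\la}(\Gamma_K)$; in particular the action is continuous. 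Together with the cocycle compatibilities (which follow from naturality of all the identifications used), this is precisely the data of a $(\varphi,\Gamma_K)$-module $\{M^{[r,s]}\}_{r,s}$ over $B_{K,\infty,A}$.

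The main obstacle is the bookkeeping that, under the fully faithful inclusion $\VB_{B_{K,\infty,A}}^{\varphi,\Gamma_K}\hookrightarrow \Dcal(X_{K,A_{\square}}^{\la}/\Gamma_K^{\la})$, the object we have built maps back to $\Fcal$. This is straightforward once one invokes Lemma \ref{lem:quasi-coherent sheaves on FF-curve} to reconstruct sheaves on $X_{K,A_{\square}}^{\la}/\Gamma_K^{\la}$ from their restrictions to $Y_{K,A_{\square}}^{[1,p],\la}/\Gamma_K^{\la}$ together with Frobenius-gluing, but one must verify that the underlying $B_{K,\infty,A}^{[r,s]}$-module extracted from $\Fcal^{[r,s]}$ via Lemma \ref{lem:Fredholm vector bundle} really recovers $\Fcal^{[r,s]}$ after re-sheafification; this is where Fredholmness of the base (provided by Corollary \ref{cor:la vector Fredholm}) is essential and where the precise form of Definition \ref{def:Fredholm} (dualizable implies relatively discrete, so finite projective modules and their vector-bundle incarnations coincide) is used.
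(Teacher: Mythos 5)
Your proposal is correct and takes essentially the same route as the paper, whose entire proof is the one-line citation of Corollary \ref{cor:la vector Fredholm} and Lemma \ref{lem:Fredholm vector bundle}; your write-up simply spells out the unwinding (pull back to the annuli $Y_{K,A_\square}^{[r,s],\la}$, use Fredholmness to identify vector bundles with finite projective modules, and reassemble the $\tau$, $\Phi$, and $\Gamma_K$-data via Lemma \ref{lem:quasi-coherent sheaves on FF-curve} and Proposition \ref{prop:semilinear geometric interpretation}).
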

\begin{proof}
	It easily follows from Corollary \ref{cor:la vector Fredholm} and Lemma \ref{lem:Fredholm vector bundle}.
\end{proof}

\begin{remark}
	If $A$ is not Fredholm, then the author does not know whether the natural fully faithful functor 
	$$\VB_{B_{K,\infty,A}}^{\varphi,\Gamma_K}\hookrightarrow \Vect(X_{K,A_{\square}}^{\la}/\Gamma_K^{\la})$$
	is an equivalence.
\end{remark}

Let us recall the definition cohomology of $(\varphi,\Gamma_K)$-module $\Mcal$ over $B_{K,\infty,A}$ in \cite{Mikami24}.

\begin{definition}[{\cite[Definition 4.1]{Mikami24}}]
	Let $\Mcal=\{M^{[r,s]}\}_{r,s}$ be a $(\varphi,\Gamma_K)$-module over $B_{K,\infty,A}$.
	Then we define the $(\varphi,\Gamma_K)$-cohomology of $\Mcal$ as 
	$$\Gamma_{\varphi,\Gamma_K} (\Mcal)=\Gamma(\Gamma_K,\fib(\Phi-1\colon\varinjlim_{0<s}\varprojlim_{0<r<s}M^{[r,s]}\to \varinjlim_{0<s}\varprojlim_{0<r<s}M^{[r,s]}))\in \Dcal(A_{\square}),$$
	where $\Phi$ is induced by the $\varphi$-semilinear morphisms
	$$\Phi\colon M^{[r,s]}\to M^{[r/p,s/p]}.$$
\end{definition}

\begin{proposition}\label{prop:comparison cohomology}
	Let $\Mcal=\{M^{[r,s]}\}_{r,s}$ be a $(\varphi,\Gamma_K)$-module over $B_{K,\infty,A}$.
	We also denote the image of $\Mcal$ under the fully faithful functor $\VB_{B_{K,\infty,A}}^{\varphi,\Gamma_K} \hookrightarrow \Dcal(X_{K,A_{\square}}^{\la}/\Gamma_K^{\la})$ by the same letter $\Mcal$.
	Then there is a natural equivalence
	\begin{align*}
		\Gamma_{\varphi,\Gamma_K} (\Mcal) \simeq \Gamma(X_{K,A_{\square}}^{\la}/\Gamma_K^{\la},\Mcal)
	\end{align*}
	in $\Dcal(A_{\square})$.
\end{proposition}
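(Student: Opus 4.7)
The strategy is to reduce the comparison to an explicit identification of two fiber complexes computing the sections of $\Mcal$ on $X_{K,A_{\square}}^{\la}/\Gamma_K^{\la}$. Applying Corollary \ref{cor:cohomology explicit}(3) to the right-hand side immediately yields
\[
\Gamma(X_{K,A_{\square}}^{\la}/\Gamma_K^{\la},\Mcal) \simeq \Gamma\bigl(\Gamma_K,\fib(\Phi_{\Mcal}-1\colon\Mcal^{[1,p]}\to\Mcal^{[1,1]})\bigr),
\]
so it suffices to produce a $\Gamma_K$-equivariant equivalence
\[
\fib\bigl(\Phi-1\colon M_\infty\to M_\infty\bigr)\;\simeq\;\fib\bigl(\Phi_{\Mcal}-1\colon\Mcal^{[1,p]}\to\Mcal^{[1,1]}\bigr)
\]
in $\Rep_{\Acal}^{\la}(\Gamma_K)$, where $M_\infty=\varinjlim_{0<s}\varprojlim_{0<r<s}M^{[r,s]}$.

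The conceptual content is that both sides describe $\bar{f}_{A_\square*}\Mcal\simeq\Gamma(X_{K,A_{\square}}^{\la},\Mcal)$, computed via two different presentations of the locally analytic Fargues--Fontaine curve $X_K^{\la}=Y_K^{\la}/\varphi^{\Zbb}$. The right-hand fiber arises from the $\Dcal^*$-cover by the single affinoid open $Y_K^{[1,p],\la}$ together with the $\varphi$-identification of $Y_K^{[1,1],\la}$ with $Y_K^{[p,p],\la}$ inside $Y_K^{[1,p],\la}$, as exploited in Corollary \ref{cor:cohomology explicit}. The left-hand fiber arises from the $\varphi^{\Zbb}$-torsor $Y_K^{\la}\to X_K^{\la}$: concretely, $\Gamma(X_K^{\la},\Mcal)\simeq\fib(\varphi-1:\Gamma(Y_K^{\la},\Mcal)\to\Gamma(Y_K^{\la},\Mcal))$, and Mikami's module $M_\infty$ is designed precisely to model $\Gamma(Y_{K,A_{\square}}^{\la},\Mcal)$ using the Frobenius-isomorphisms $\Phi\colon M^{[r,s]}\otimes_{B^{[r,s]},\varphi}B^{[r/p,s/p]}\xrightarrow{\sim}M^{[r/p,s/p]}$ built into the definition of a $(\varphi,\Gamma_K)$-module.

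To construct the map explicitly, I would use restriction to define natural $\Gamma_K$-equivariant morphisms $M_\infty \to \Mcal^{[1,p]}$ and $M_\infty\to\Mcal^{[1,1]}$ (well-defined by choosing representatives at level $s\geq p$, and verifying independence of the choice using the Frobenius transition maps in the colimit). Commutativity with $\Phi-1$ versus $\Phi_{\Mcal}-1$ is immediate from the compatibility of Frobenius and restriction, giving a map of fiber complexes. For the inverse direction, a class $[x_0]\in H^0(\fib(\Phi_{\Mcal}-1))$ --- that is, $x_0\in M^{[1,p]}$ with $\Phi_{\Mcal}(x_0)|_{[1,1]}=x_0|_{[1,1]}$ --- extends canonically to an element of $\varprojlim_{r<p}M^{[r,p]}=M_{(0,p]}$ by gluing the iterated Frobenius pullbacks $\Phi^{k}(x_0)\in M^{[p^{-k},p^{1-k}]}$ along the overlaps $[p^{-k},p^{-k}]$ using the cocycle condition, and then represents a $\Phi$-fixed class in $M_\infty$; an analogous argument on the $H^1$-level completes the check of equivalence.

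The main obstacle is the bookkeeping around $M_\infty$: one must verify that the colimit over $s$ and the Frobenius endomorphism are compatible in the way the formula uses, and that the comparison morphism is independent of the choices made at each level $s$. Once this is set up, the equivalence of fibers is straightforward from the two cover descriptions of $X_K^{\la}$ and from the Frobenius isomorphisms built into the $(\varphi,\Gamma_K)$-module structure; the $\Gamma_K$-equivariance follows at every step since all constructions use only the $\Gamma_K$-equivariant structure maps, and applying $\Gamma(\Gamma_K,-)$ concludes.
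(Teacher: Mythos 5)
Your proposal matches the paper's proof: the paper likewise reduces via Corollary \ref{cor:cohomology explicit} to identifying the two fiber complexes, and then simply cites \cite[Lemma 3.5]{Mikami24} for exactly the comparison of $\fib(\Phi-1)$ on $\varinjlim_s\varprojlim_r M^{[r,s]}$ with $\fib(\Phi_{\Mcal}-1\colon \Mcal^{[1,p]}\to\Mcal^{[1,1]})$ that you sketch by hand via restriction and gluing of iterated Frobenius pullbacks. Your outline of that cited lemma's proof is the standard (and correct) argument, so the approach is essentially identical.
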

\begin{proof}
	It follows from the description in Corollary \ref{cor:cohomology explicit} and \cite[Lemma 4.2]{Mikami24}.
\end{proof}

Finally, let us compute the dualizing complex of $g\colon X_K^{\la}/\Gamma_K^{\la} \to \AnSpec\Qbb_{p,\square}$.
We define $\Ocal_{X_K^{\la}}\cdot\chi\in \Vect(X_{K}^{\la}/\Gamma_K^{\la})\subset \Dcal(X_{K}^{\la}/\Gamma_K^{\la})$ as the twist of the monoidal unit $\Ocal_{X_K^{\la}}$ by the $p$-adic cyclotomic character $\chi$.
\begin{remark}
	Such a Tate twist is usually denoted by $\Ocal_{X_K^{\la}}(1)$.
	However, this notation can be confused with the line bundle $\Ocal_{X_K}(1)$ of slope $1$ on the Fargues-Fontaine curve.
	Therefore, we write $\Ocal_{X_K^{\la}}\cdot\chi$ for the Tate twist.
\end{remark}

\begin{theorem}\label{thm:Local Tate duality}
	\begin{enumerate}
		\item The invertible object $\Ocal_{X_K^{\la}}\cdot\chi[2]$ is the dualizing complex of $g\colon X_K^{\la}/\Gamma_K^{\la} \to \AnSpec\Qbb_{p,\square}$.
		\item There is an equivalence
		$$H^2(X_K^{\la}/\Gamma_K^{\la},\Ocal_{X_K^{\la}}\cdot\chi)\cong \Qbb_p$$
		and the trace morphism $\Tr\colon g_!g^!\Qbb_p\simeq \Gamma(X_K^{\la}/\Gamma_K^{\la}, \Ocal_{X_K^{\la}}\cdot\chi[2])\to \Qbb_p$ is induced by the above equivalence.
		\item For $\Mcal \in \Dcal(X_K^{\la}/\Gamma_K^{\la})$, the Poincar\'{e} duality 
		\begin{align*}
			\intHom_{\Qbb_p}(\Gamma(X_K^{\la}/\Gamma_K^{\la},\Mcal),\Qbb_p)\simeq \Gamma(X_K^{\la}/\Gamma_K^{\la},\intHom_{X_K^{\la}/\Gamma_K^{\la}}(\Mcal,\Ocal_{X_K^{\la}}\cdot\chi[2]))
		\end{align*}
		is induced by
		\begin{equation}\label{eq7}
		\begin{aligned}
			&\Gamma(X_K^{\la}/\Gamma_K^{\la},\intHom_{X_K^{\la}/\Gamma_K^{\la}}(\Mcal,\Ocal_{X_K^{\la}}\cdot\chi[2])) \otimes_{\Qbb_{p,\square}} \Gamma(X_K^{\la}/\Gamma_K^{\la},\Mcal)\\
			\to &\Gamma(X_K^{\la}/\Gamma_K^{\la}, \intHom_{X_K^{\la}/\Gamma_K^{\la}}(\Mcal,\Ocal_{X_K^{\la}}\cdot\chi[2])\otimes \Mcal)\\
			\to &\Gamma(X_K^{\la}/\Gamma_K^{\la}, \Ocal_{X_K^{\la}}\cdot\chi[2])\\
			\overset{\Tr}{\to} &\Qbb_p.
		\end{aligned}
	\end{equation}
	\end{enumerate}
\end{theorem}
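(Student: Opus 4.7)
The plan is to prove (1) by combining the $\Dcal$-smoothness and weak $\Dcal$-properness of $g$ with the local Tate--Sen dualizing complex computation, then deduce (2) and (3) formally via the 6-functor formalism and comparison with classical local Tate duality. First I factor $g = f_\infty \circ \overline{f}$ through $\AnSpec\Qbb_{p,\square}/\Gamma_K^{\la}$. By the product formula (Corollary \ref{cor:prim and suave map D-local}(1)), $\omega_g \simeq \overline{f}^*\omega_{f_\infty}\otimes \omega_{\overline{f}}$. Since $\Gamma_K$ is a one-dimensional abelian $p$-adic Lie group, Proposition \ref{prop:6-ff classifying stack}(6) yields $\omega_{f_\infty} \simeq \Qbb_p[1]$ with trivial $\Gamma_K$-action, so it suffices to identify $\omega_{\overline{f}} \simeq \Ocal_{X_K^{\la}}\cdot\chi[1]$.

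To compute $\omega_{\overline{f}}$, I restrict to the open cover $\{Y_K^{[1,3/2],\la}/\Gamma_K^{\la}, Y_K^{[3/2,p],\la}/\Gamma_K^{\la}\}$ from Remark \ref{rem:open cover}. Theorem \ref{thm:smooth Tate-Sen dualizing complex} (with $d=1$), together with the explicit description in Example \ref{ex:unramified} and the reduction to unramified $K$ provided by Remark \ref{rem:deperfection}, shows that the element $\eta = d\varpi/(1+\varpi)$ produces a $\Gamma_K^{\la}$-equivariant local trivialization $\omega_{\overline{f}}|_{Y_K^{[r,s],\la}/\Gamma_K^{\la}} \simeq (\Ocal_{X_K^{\la}}\cdot\chi)|_{Y_K^{[r,s],\la}/\Gamma_K^{\la}}[1]$, since $\gamma\cdot\eta = \chi(\gamma)\eta$ for every $\gamma\in\Gamma_K$. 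The hard part will be to glue these local trivializations across the Frobenius identification $\varphi\colon Y_K^{[1,1],\la}\simeq Y_K^{[p,p],\la}$: a direct computation gives $\varphi^*\eta = p\eta$, so the naive trivialization by $\eta$ does not descend to a global section. Two strategies resolve this. The conceptual one uses Fontaine's period $t = \log(1+\varpi)$ (satisfying $\varphi^*t = pt$ and $\gamma^*t = \chi(\gamma)t$) to renormalize the local trivializations so that $t^{-1}\eta$ is both Frobenius-invariant and $\chi$-equivariant, and a standard gluing in $\Vect(X_K^{\la}/\Gamma_K^{\la})$ upgrades this into the desired global isomorphism. The cleaner rigorous alternative is to fix the trace from part (2) and invoke the uniqueness (up to a character twist) of the invertible dualizing complex of a $\Dcal$-smooth morphism, matching it against the coefficient $\chi[2]$ of the classical Kedlaya--Pottharst--Xiao local Tate duality, transferred to our setting via Proposition \ref{prop:comparison cohomology}.

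For (2), Corollary \ref{cor:cohomology explicit}(3) combined with Proposition \ref{prop:comparison cohomology} reduces the computation of $H^2(X_K^{\la}/\Gamma_K^{\la},\Ocal_{X_K^{\la}}\cdot\chi)$ to the classical $(\varphi,\Gamma_K)$-cohomology of the trivial rank-one $(\varphi,\Gamma_K)$-module twisted by $\chi$, which is canonically $\Qbb_p$ by Tate's classical residue isomorphism; the counit $g_!g^!\Qbb_p \to \Qbb_p$ is then identified with this trace by functoriality of the adjunction against the open cover. Finally, (3) follows formally: weak $\Dcal$-properness of $g$ (Corollary \ref{cor:X weakly proper}) gives $g_! \simeq g_*$, and $\Dcal$-smoothness together with $\omega_g \simeq \Ocal_{X_K^{\la}}\cdot\chi[2]$ from (1) yields, via Corollary \ref{cor:suave map characterization}, the chain of equivalences
\begin{align*}
\intHom_{\Qbb_p}(\Gamma(X_K^{\la}/\Gamma_K^{\la},\Mcal),\Qbb_p) \simeq g_*\intHom(\Mcal, g^!\Qbb_p) \simeq \Gamma(X_K^{\la}/\Gamma_K^{\la},\intHom(\Mcal,\Ocal_{X_K^{\la}}\cdot\chi[2])),
\end{align*}
and the explicit pairing \eqref{eq7} is precisely the standard evaluation-followed-by-trace composition underlying this equivalence.
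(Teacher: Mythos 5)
Your primary strategy for part (1) has a genuine gap, and it is not merely a matter of missing details. First, $t^{-1}\eta$ is $\varphi$- and $\Gamma$-\emph{invariant} (not ``$\chi$-equivariant''), and since $t=\log(1+\varpi)$ is not a unit in $B_{K,\infty}^{[r,s]}$ (it vanishes at the images of the classical points of the curve), $t^{-1}\eta$ is only a meromorphic differential; dividing by $t$ therefore does not produce a trivialization of $\Omega^1\cdot\chi^{-1}$ in $\Vect(X_K^{\la}/\Gamma_K^{\la})$. More seriously, the line bundle obtained by descending $\Omega^1$ along $\varphi$ with the naive pullback-of-forms descent datum $\varphi^*\eta=p\,\eta$ is the rank-one $(\varphi,\Gamma_K)$-module with $\varphi(e)=pe$, $\gamma(e)=\chi(\gamma)e$ (the character $x$, not $\chi=x\lvert x\rvert$), and for $K=\Qbb_p$ one has $H^2(\Rcal(x))=0$; this object cannot be $\omega_g[-2]$, whose $H^2$ must be $H^0(X_K^{\la}/\Gamma_K^{\la},\Ocal)^{\vee}\cong\Qbb_p$ by the formal duality. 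So the ``naive'' geometric gluing yields the \emph{wrong} line bundle, off by the degree-one twist. The missing factor of $p$ comes from the fact that the identification $\omega_{p_\infty}\simeq\omega_\infty[1+d]$ of Theorem \ref{thm:smooth Tate-Sen dualizing complex} is built from the \emph{normalized} traces $\Tr_{j,i}/r(j)$, and $\varphi$ shifts the finite levels $B_{K,n}^{I}$ by one, so the transported trivializations differ by $r(n)/r(n-1)=p$, exactly cancelling $\varphi^*\eta=p\,\eta$. A direct computation of the Frobenius gluing must track this normalization; your proposal does not, and carried out as written it proves the wrong statement.

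Your fallback strategy is essentially the paper's actual proof: one does not compute the gluing at all, but combines (i) the abstract Poincar\'e duality in the form of Corollary \ref{cor:Poincare duality proper}, which says the equivalence $g_*\intHom(\Mcal,g^!\Qbb_p)\simeq\intHom(g_*\Mcal,\Qbb_p)$ is induced by the explicit evaluation-then-trace pairing; (ii) Liu's classical local Tate duality, which says the pairing \eqref{eq7} against $\Ocal_{X_K^{\la}}\cdot\chi[2]$ is perfect for vector bundles; (iii) the fact that $\omega_g[-2]$ is a vector bundle (from Theorem \ref{thm:smooth Tate-Sen dualizing complex}); and (iv) uniqueness of the pair (dualizing object, trace) --- on the nose, not ``up to a character twist''. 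Note that in this route parts (1), (2), (3) are established simultaneously rather than (3) being deduced from (1). The step you assert without justification in part (3) --- that the formal equivalence coming from Corollary \ref{cor:suave map characterization} is induced by the explicit pairing \eqref{eq7} --- is precisely the content of the paper's Appendix A (Proposition \ref{prop:Poincare duality proper} and Corollary \ref{cor:Poincare duality proper}); it is needed both for (3) and for the uniqueness argument to match the two traces, and it is not automatic.
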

\begin{proof}
	By Corollary \ref{cor:Poincare duality proper}, the Poincar\'{e} duality 
	\begin{align*}
		\intHom_{\Qbb_p}(\Gamma(X_K^{\la}/\Gamma_K^{\la},\Mcal),\Qbb_p)\simeq \Gamma(X_K^{\la}/\Gamma_K^{\la},\intHom_{X_K^{\la}/\Gamma_K^{\la}}(\Mcal,\omega_g))
	\end{align*}
	is induced by
	\begin{align*}
		&\Gamma(X_K^{\la}/\Gamma_K^{\la},\intHom_{X_K^{\la}/\Gamma_K^{\la}}(\Mcal,\omega_g)) \otimes_{\Qbb_{p,\square}} \Gamma(X_K^{\la}/\Gamma_K^{\la},\Mcal)\\
		\to &\Gamma(X_K^{\la}/\Gamma_K^{\la}, \intHom_{X_K^{\la}/\Gamma_K^{\la}}(\Mcal,\omega_g)\otimes \Mcal)\\
		\to &\Gamma(X_K^{\la}/\Gamma_K^{\la}, \omega_g)\\
		\to &\Qbb_p.
	\end{align*}
	On the other hand, by the Tate local duality for $(\varphi,\Gamma_K)$-modules (\cite{Liu08}), the morphism 
	\begin{align*}
		\Gamma(X_K^{\la}/\Gamma_K^{\la},\intHom_{X_K^{\la}/\Gamma_K^{\la}}(\Mcal,\Ocal_{X_K^{\la}}\cdot\chi[2])) \to \intHom_{\Qbb_p}(\Gamma(X_K^{\la}/\Gamma_K^{\la},\Mcal),\Qbb_p)
	\end{align*}
	induced by \eqref{eq7} is an equivalence for $\Mcal \in \Vect(X_K^{\la}/\Gamma_K^{\la})$.
	By Theorem \ref{thm:smooth Tate-Sen dualizing complex}, we find $\omega_g[-2]\in\Vect(X_K^{\la}/\Gamma_K^{\la})$.
	Therefore, by the standard argument, we can show that the pair $(\omega_g, \Gamma(X_K^{\la}/\Gamma_K^{\la}, \omega_g)\simeq g_!\omega_g\to \Qbb_p)$ is equivalent to the pair $(\Ocal_{X_K^{\la}}\cdot\chi[2], \Gamma(X_K^{\la}/\Gamma_K^{\la}, \Ocal_{X_K^{\la}}\cdot\chi[2])\to \Qbb_p)$.
\end{proof}

\begin{corollary}
	Let $\Acal \in \AffRing_{\Qbb_{p,\square}}$ be a solid affinoid animated $\Qbb_{p,\square}$-algebra.
	For $\Mcal \in \Dcal(X_{K,\Acal}^{\la}/\Gamma_K^{\la})$, the Poincar\'{e} duality 
		\begin{align*}
			\intHom_{\Acal}(\Gamma(X_{K,\Acal}^{\la}/\Gamma_K^{\la},\Mcal),\underline{\Acal})\simeq \Gamma(X_{K,\Acal}^{\la}/\Gamma_K^{\la},\intHom_{X_{K,\Acal}^{\la}/\Gamma_K^{\la}}(\Mcal,\Ocal_{X_{K,\Acal}^{\la}}\cdot\chi[2]))
		\end{align*}
		is induced by
		\begin{align*}
			&\Gamma(X_{K,\Acal}^{\la}/\Gamma_K^{\la},\intHom_{X_{K,\Acal}^{\la}/\Gamma_K^{\la}}(\Mcal,\Ocal_{X_{K,\Acal}^{\la}}\cdot\chi[2])) \otimes_{\Acal} \Gamma(X_{K,\Acal}^{\la}/\Gamma_K^{\la},\Mcal)\\
			\to &\Gamma(X_{K,\Acal}^{\la}/\Gamma_K^{\la}, \intHom_{X_{K,\Acal}^{\la}/\Gamma_K^{\la}}(\Mcal,\Ocal_{X_{K,\Acal}^{\la}}\cdot\chi[2])\otimes \Mcal)\\
			\to &\Gamma(X_{K,\Acal}^{\la}/\Gamma_K^{\la}, \Ocal_{X_{K,\Acal}^{\la}}\cdot\chi[2])\\
			\overset{\Tr}{\to} &\underline{\Acal}.
	    \end{align*}
\end{corollary}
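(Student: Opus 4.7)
The plan is to deduce the statement from Theorem \ref{thm:Local Tate duality} by base change along the structure morphism $\AnSpec\Acal\to\AnSpec\Qbb_{p,\square}$. Writing $h\colon \AnSpec\Acal\to \AnSpec\Qbb_{p,\square}$ and $g_{\Acal}\colon X_{K,\Acal}^{\la}/\Gamma_K^{\la}\to \AnSpec\Acal$, the cartesian diagram
\[
\xymatrix{
X_{K,\Acal}^{\la}/\Gamma_K^{\la}\ar[r]^-{h^{\prime}}\ar[d]_-{g_{\Acal}}& X_K^{\la}/\Gamma_K^{\la}\ar[d]^-{g}\\
\AnSpec\Acal\ar[r]^-{h}& \AnSpec\Qbb_{p,\square}
}
\]
together with Corollary \ref{cor:prim and suave map D-local}(2) and Lemma \ref{lem:proper composition basechange}(2) shows that $g_{\Acal}$ is $\Dcal$-smooth and weakly $\Dcal$-proper. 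Moreover, these stability properties give natural equivalences $\omega_{g_{\Acal}}\simeq h^{\prime*}\omega_g\simeq \Ocal_{X_{K,\Acal}^{\la}}\cdot\chi[2]$ and $\delta_{g_{\Acal}}\simeq h^{\prime*}\delta_g\simeq \Ocal_{X_{K,\Acal}^{\la}}$.

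The first step is to transfer the trace morphism. By proper base change and weak $\Dcal$-properness of $g$, we get $h^*g_!\Ocal\simeq (g_{\Acal})_!h^{\prime*}\Ocal\simeq (g_{\Acal})_!\Ocal$, so applying $h^*$ to the trace $\Tr\colon g_!(\Ocal_{X_K^{\la}}\cdot\chi[2])\to \Qbb_p$ of Theorem \ref{thm:Local Tate duality}(2) yields a canonical trace morphism
\[
\Tr_{\Acal}\colon (g_{\Acal})_!(\Ocal_{X_{K,\Acal}^{\la}}\cdot\chi[2])\simeq \Gamma(X_{K,\Acal}^{\la}/\Gamma_K^{\la},\Ocal_{X_{K,\Acal}^{\la}}\cdot\chi[2])\longrightarrow \underline{\Acal}.
\]
The second step is to apply the general Poincar\'{e} duality formalism (Corollary \ref{cor:Poincare duality proper} in the appendix) to the $\Dcal$-smooth and weakly $\Dcal$-proper morphism $g_{\Acal}$: this yields, for every $\Mcal\in \Dcal(X_{K,\Acal}^{\la}/\Gamma_K^{\la})$, a natural equivalence
\[
\intHom_{\Acal}((g_{\Acal})_*\Mcal,\underline{\Acal})\simeq (g_{\Acal})_*\intHom_{X_{K,\Acal}^{\la}/\Gamma_K^{\la}}(\Mcal,\omega_{g_{\Acal}})
\]
induced by the pairing produced from the multiplication $\intHom(\Mcal,\omega_{g_{\Acal}})\otimes \Mcal\to \omega_{g_{\Acal}}$ followed by $(g_{\Acal})_!$ and the trace.

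The final step is to check that the trace $\Tr_{\Acal}$ produced by base change is the one that appears in Corollary \ref{cor:Poincare duality proper} for $g_{\Acal}$; equivalently, that the canonical pairing on $X_{K,\Acal}^{\la}/\Gamma_K^{\la}$ base-changes from the one on $X_K^{\la}/\Gamma_K^{\la}$. This is a formal compatibility of the 6-functor formalism under base change, using the naturality of the unit/counit maps defining $\omega_g$ in Corollary \ref{cor:suave map characterization} together with Lemma \ref{lem:prim and suave obj D-local}(1). The one point that requires a little care is to verify that the identification $\omega_{g_{\Acal}}\simeq \Ocal_{X_{K,\Acal}^{\la}}\cdot\chi[2]$ induced by pullback coincides with the trivialization coming from base-changing Theorem \ref{thm:Local Tate duality}(1); this follows because the identification in Theorem \ref{thm:Local Tate duality}(1) is itself constructed (in the proof of Theorem \ref{thm:smooth Tate-Sen dualizing complex}) via natural operations that are stable under pullback. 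Granting these compatibilities, the desired Poincar\'{e} duality over $\Acal$ is the base change of the duality over $\Qbb_p$. The main (minor) obstacle is bookkeeping this compatibility of trace morphisms under base change; no new geometric input is needed beyond Theorem \ref{thm:Local Tate duality}.
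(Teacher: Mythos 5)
Your proposal is correct and follows essentially the same route as the paper, whose proof is simply the citation of Corollary \ref{cor:Poincare duality proper} together with Theorem \ref{thm:Local Tate duality}; you have merely spelled out the base-change bookkeeping (stability of weak $\Dcal$-properness and $\Dcal$-smoothness under pullback, and compatibility of the trace) that the paper leaves implicit.
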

\begin{proof}
	It follows from Corollary \ref{cor:Poincare duality proper} and Theorem \ref{thm:Local Tate duality}.
\end{proof}

\appendix
\section{6-functor formalisms and Poincar\'{e} duality}
In this appendix, we explore the Poincar\'{e} duality for proper morphisms.
Let $(\Ccal,E)$ be a geometric setup such that $\Ccal$ admits pullbacks, and $\Dcal \colon \Corr(\Ccal,E)\to \Cat_{\infty}$ be a 6-functor formalism.

Let $f\colon X\to S$ be a $\Dcal$-prim morphism with the codualizing complex $\delta_f$.
For $M\in \Dcal(X)$ and $N\in \Dcal(S)$, we have two morphisms from $f_!M\otimes N$ to $f_!(M\otimes f^*N)$.
One is the projection formula morphism 
\begin{align*}
	\pi \colon f_!M\otimes N\overset{\sim}{\to}f_!(M\otimes f^*N).
\end{align*} 
The other one is constructed as follows:
There is a natural morphism
\begin{align}\label{eq1}
	\delta_f\otimes f^*f_*\intHom(\delta_f,M) \otimes f^*N \to \delta_f \otimes \intHom(\delta_f,M) \otimes f^*N \to M\otimes f^*N.
\end{align}
From this, we get a morphism
\begin{align*}
	f^*(f_*\intHom(\delta_f,M) \otimes N) \simeq f^*f_*\intHom(\delta_f,M) \otimes f^*N \to \intHom(\delta_f, M\otimes f^*N)
\end{align*}
by the adjunction, and then we get a morphism
\begin{align*}
	f_*\intHom(\delta_f,M) \otimes N \to f_*\intHom(\delta_f, M\otimes f^*N).
\end{align*}
Under the equivalence $f_!\simeq f_*\intHom(\delta_f,-)$ in Corollary \ref{cor:prim map characterization},
we get a morphism 
\begin{align*}
	\rho \colon f_!M\otimes N\to f_!(M\otimes f^*N).
\end{align*} 

The author learned the proof of the following proposition from Mann.
\begin{proposition}
	The above two morphisms $\pi$ and $\rho$ coincide in the homotopy category of $\Dcal(S)$.
\end{proposition}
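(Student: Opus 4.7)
The plan is to reduce the equality $\pi = \rho$ to the commutativity of a natural square expressing compatibility of the equivalence $\phi_M \colon f_! M \overset{\sim}{\to} f_* \intHom(\delta_f, M)$ from Corollary \ref{cor:prim map characterization} with tensoring by objects of $\Dcal(S)$. By construction, $\rho$ factors as
$$f_! M \otimes N \xrightarrow{\phi_M \otimes \id} f_* \intHom(\delta_f, M) \otimes N \xrightarrow{\sigma} f_* \intHom(\delta_f, M \otimes f^*N) \xrightarrow{\phi_{M \otimes f^*N}^{-1}} f_!(M \otimes f^*N),$$
where $\sigma$ is the map adjoint under $(f^*, f_*)$ to the morphism $f^* f_* \intHom(\delta_f, M) \otimes f^*N \to \intHom(\delta_f, M \otimes f^*N)$ extracted from (\ref{eq1}). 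Consequently, $\rho = \pi$ is equivalent to the commutativity of
$$\xymatrix{
f_! M \otimes N \ar[r]^-{\pi} \ar[d]_-{\phi_M \otimes \id} & f_!(M \otimes f^*N) \ar[d]^-{\phi_{M \otimes f^*N}} \\
f_* \intHom(\delta_f, M) \otimes N \ar[r]^-{\sigma} & f_* \intHom(\delta_f, M \otimes f^*N).
}$$

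To verify commutativity, I would apply the $(f^*, f_*)$ adjunction to the lower-right corner, converting the square into an equality of two maps $f^*(f_! M \otimes N) \to \intHom(\delta_f, M \otimes f^*N)$. Using the symmetric monoidality of $f^*$ and the $\bigl(\delta_f \otimes -,\, \intHom(\delta_f, -)\bigr)$ adjunction, both compositions can be further reduced to morphisms $\delta_f \otimes f^* f_! M \otimes f^*N \to M \otimes f^*N$ in $\Dcal(X)$. The one arising from $\rho$ is, by construction, the tensor of $f^*N$ with the canonical evaluation-composed-with-counit morphism $\delta_f \otimes f^* f_! M \to M$ that defines the equivalence $\phi_M$ (see the proof of Lemma \ref{lem:prim characterization}); the one arising from $\pi$ is this same morphism, rewritten through the projection formula. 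I would then invoke the fact that the projection formula is a structural piece of the 6-functor formalism and commutes with all natural transformations between the six functors, so the two presentations coincide.

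The main obstacle will be bookkeeping: the definitions of $\pi$ and $\rho$ combine several adjunction units and counits (the $(f^*, f_*)$-counit, the $\bigl(-\otimes f^*N, \intHom(f^*N, -)\bigr)$-unit, the $\delta_f$-evaluation, and the implicit $(f_!, f^!)$ data), and lining up the resulting zigzags requires care. A conceptually cleaner route would be to note that, in the $2$-category $\Kcal_{\Dcal, S}$, the codualizing complex $\delta_f$ is left adjoint to $\mathbf{1}_X$ viewed as a $1$-morphism $X \to S$ (compare Lemma \ref{lem:preserving adjoint}), and that both $\pi$ and $\rho$, after twisting by $f^*N$, are two avatars of the unit/counit of this $2$-adjunction composed with the symmetric-monoidal coherence. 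The equality $\pi = \rho$ would then follow from a single triangle identity in $\Kcal_{\Dcal,S}$, at the price of developing more $(\infty,2)$-categorical language than is strictly necessary for the direct computation above.
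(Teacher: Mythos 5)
Your primary, ``direct'' route has a genuine gap at its final step. After reducing the commutativity of your square to an equality of two morphisms $\delta_f\otimes f^*f_!M\otimes f^*N\to M\otimes f^*N$, you justify their agreement by asserting that ``the projection formula is a structural piece of the 6-functor formalism and commutes with all natural transformations between the six functors.'' That is not a theorem one can invoke: the morphism coming from $\rho$ is built from the counit of the adjunction $\bigl(\delta_f\otimes f^*(-),\,f_*\intHom(\delta_f,-)\bigr)$ (equivalently, the counit $\varepsilon\colon\delta_f\circ f\to\id$ of the adjunction of $1$-morphisms in $\Kcal_{\Dcal,S}$), while the morphism coming from $\pi$ is built from the projection-formula associator, and the compatibility between these two pieces of structure is precisely what the proposition asserts. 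Declaring that they agree because the projection formula ``commutes with everything'' restates the claim rather than proving it; a genuine proof must exhibit the coherence that forces the agreement.

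That coherence is exactly what your closing remark supplies, and it is the paper's actual proof: interpret $f_!M\otimes N$ and $f_!(M\otimes f^*N)$ as $(f\circ M)\circ N$ and $f\circ(M\circ N)$ in $\Fun_{\Kcal_{\Dcal,S}}(S,S)$, identify $\pi$ with the associator and $\rho$ with the composite $\eta$-insertion/associators/$\varepsilon$-contraction for the adjunction $(\delta_f,f)$, and conclude from the triangle identity (the composite $f\to f\circ\delta_f\circ f\to f$ is the identity) together with the coherence of the $2$-category. So the idea you relegate to a side comment, ``at the price of developing more $(\infty,2)$-categorical language than is strictly necessary,'' is in fact necessary: it is the only place where the required compatibility between the $(f^*,f_*)$/$(\delta_f\otimes f^*(-),f_*\intHom(\delta_f,-))$ adjunction data and the projection formula is actually established. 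If you want to keep the direct computation, you would have to unwind the triangle identity by hand in $\Dcal$ of the relevant fibre products, which amounts to the same argument in less convenient notation.
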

\begin{proof}
	We rephrase the claim in terms of the 2-category $\Kcal_{\Dcal,S}$.
	We abbreviate $\Fun_{\Kcal_{\Dcal,S}}(-,-)$ as $\Fun_S(-,-)$.
	Let $f$ denote the object of $\Fun_S(X,S)=\Dcal(X)$ corresponding to $\mathbf{1}_X$.
	First, we interpret $\pi$ in terms of $\Kcal_{\Dcal,S}$.
	The functor
	\begin{align*}
		\Dcal(X)\times \Dcal(S)\to \Dcal(X);\; (M,N)\mapsto M\otimes f^*N
	\end{align*}
	coincides with the functor
	\begin{gather*}
		\Dcal(X)\times \Dcal(S)=\Fun_S(S,X)\times \Fun_S(S,S)\to \Fun_S(S,X)=\Dcal(X);\\
		(M,N)\mapsto M\circ N.
	\end{gather*}
	Therefore, the functor
	\begin{align*}
		\Dcal(X)\times \Dcal(S)\to \Dcal(S);\; (M,N)\mapsto f_!(M\otimes f^*N)
	\end{align*}
	coincides with the functor
	\begin{gather*}
		\Dcal(X)\times \Dcal(S)=\Fun_S(S,X)\times \Fun_S(S,S)\to \Fun_S(S,S)=\Dcal(S);\\
		(M,N)\mapsto f\circ (M\circ N).
	\end{gather*}
	Similarly, the functor 
	\begin{align*}
		\Dcal(X)\times \Dcal(S)\to \Dcal(S);\; (M,N)\mapsto f_!M\otimes N
	\end{align*}
	coincides with the functor
	\begin{gather*}
		\Dcal(X)\times \Dcal(S)=\Fun_S(S,X)\times \Fun_S(S,S)\to \Fun_S(S,S)=\Dcal(S);\\
		(M,N)\mapsto (f\circ M)\circ N.
	\end{gather*}
	Under this identification, the projection formula morphism $\pi\colon f_!M\otimes N\overset{\sim}{\to} f_!(M\otimes f^*N)$ coincides with the associator $\alpha \colon f\circ (M\circ N) \overset{\sim}{\to} (f\circ M)\circ N$.

	Next, we interpret $\rho$ in terms of $\Kcal_{\Dcal,S}$.
	We regard $\delta_f$ as the object of $\Fun_S(S,X)=\Dcal(X)$.
	Then $\delta_f$ is a left adjoint morphism of $f$.
	Therefore, there is an adjunction
	\begin{gather*}
		\Dcal(X) \rightleftarrows \Dcal(S);\;
		M \mapsto f_!M,\;
		\delta_f \otimes f^*N \mapsfrom N,
	\end{gather*}
	and it coincides with the adjunction
	\begin{gather}\label{eq2}
		\Dcal(X)=\Fun_S(S,X) \rightleftarrows \Fun_S(S,S)=\Dcal(S);\;
		M \mapsto f\circ M,\;
		\delta_f \circ N \mapsfrom N.
	\end{gather}
	We note that the functor $\Dcal(S)\to \Dcal(X) ;\; N \mapsto \delta_f \otimes f^*N$ admits a right adjoint functor $\Dcal(X)\to\Dcal(S);\; M\mapsto f_*\intHom(\delta_f,M)$, and therefore, we get the equivalence $f_!\simeq f_*\intHom(\delta_f,-)$ in Corollary \ref{cor:prim map characterization}.
	Let $\varepsilon \colon \delta_f \circ f \to \id$ denote the counit of the adjunction $(\delta_f,f)$ in the 2-category $\Kcal_{\Dcal,S}$.
	Then the counit of the adjunction \eqref{eq2} is given by 
	\begin{align*}
		\delta_f\circ(f\circ M) \simeq (\delta_f\circ f)\circ M \overset{\varepsilon}{\to} \id \circ M \simeq M.
	\end{align*}
	The functor 
	\begin{align*}
		\Dcal(X)\times \Dcal(S)\to \Dcal(S);\; (M,N)\mapsto \delta_f\otimes f^*f_*\intHom(\delta_f,M) \otimes f^*N
	\end{align*}
	corresponds to the functor
	\begin{gather*}
		\Dcal(X)\times \Dcal(S)=\Fun_S(S,X)\times \Fun_S(S,S)\to \Fun_S(S,S)=\Dcal(S);\\
		(M,N)\mapsto (\delta_f\circ(f\circ M))\circ N.
	\end{gather*}
	Under the equivalence $f_!\simeq f_*\intHom(\delta_f,-)$, the morphism 
	\begin{align*}
		\delta_f\otimes f^*f_*\intHom(\delta_f,M) \otimes f^*N \to M\otimes f^*N
	\end{align*}
	in \eqref{eq1} corresponds to the morphism
	\begin{align}\label{eq3}
		(\delta_f\circ(f\circ M))\circ N \simeq ((\delta_f\circ f)\circ M)\circ N \overset{\varepsilon}{\to} (\id \circ M)\circ N \simeq M\circ N.
	\end{align}
	Moreover the equivalence 
	$$\delta_f\otimes f^*(f_*\intHom(\delta_f,M) \otimes N) \simeq \delta_f\otimes f^*f_*\intHom(\delta_f,M) \otimes f^*N$$ 
	which comes from the symmetric monoidal structure on $f^*$ corresponds to the associator
	\begin{align}\label{eq4}
		\delta_f\circ((f\circ M)\circ N)\simeq (\delta_f\circ(f\circ M))\circ N.
	\end{align}
	By composing \eqref{eq3} with \eqref{eq4}, we get a morphism  $\delta_f\circ((f\circ M)\circ N) \to M\circ N$ and it induces a morphism $\beta \colon (f\circ M)\circ N \to f\circ (M\circ N)$ by the adjunction. 
	From the above argument, this morphism coincides with the morphism $\rho \colon f_!M\otimes N\to f_!(M\otimes f^*N).$
	Therefore, it is enough to show that $\alpha$ coincides with $\beta$.
	Let $\eta \colon \id \to f\circ \delta_f$ denote the unit of the adjunction $(\delta_f,f)$ in the 2-category $\Kcal_{\Dcal,S}$.
	Then $\beta$ can be written as follows:
	\begin{align*}
	(f\circ M)\circ N \simeq &\id \circ ((f\circ M)\circ N) \\
	\overset{\eta}{\to} &(f\circ \delta_f) \circ ((f\circ M)\circ N) \\
	\simeq &f\circ (\delta_f \circ ((f\circ M)\circ N)) \\
	\simeq &f\circ ((\delta_f\circ(f\circ M))\circ N)\\
	\simeq &f\circ (((\delta_f\circ f)\circ M)\circ N)\\
	\overset{\varepsilon}{\to} &f\circ((\id \circ M)\circ N)\\
	\simeq &f\circ(M\circ N).
    \end{align*}
	By the coherence condition and the fact that the morphism 
	\begin{align*}
	f\simeq \id \circ f \overset{\eta}{\to} (f\circ \delta_f) \circ f\simeq f\circ (\delta_f \circ f)\overset{\varepsilon}{\to} f\circ \id \simeq f
    \end{align*}
	is the identity, we find that $\beta$ coincides with $\alpha$.
\end{proof}

\begin{corollary}\label{cor:projection formula proper}
	Let $f\colon X\to S$ be a weakly $\Dcal$-proper morphism, and we fix a trivialization $\delta_f\simeq \mathbf{1}_X$ of the codualizing complex $\delta_f$, which induces an equivalence $f_!\simeq f_*$.
	Under this equivalence, the projection formula 
	\begin{align*}
		f_*M \otimes N \to f_*(M\otimes f^*N)
	\end{align*}
	for $M\in \Dcal(X)$ and $N\in \Dcal(S)$ coincides with the morphism induced by 
	\begin{align*}
		f^*(f_*M \otimes N) \to f^*f_*M \otimes f^*N \to M\otimes f^*N
	\end{align*}
	via the adjunction.
\end{corollary}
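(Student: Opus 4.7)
The plan is to apply the preceding proposition directly, so the task reduces to unwinding the construction of the morphism $\rho$ under the chosen trivialization $\delta_f \simeq \mathbf{1}_X$ and observing that it becomes exactly the adjunction-induced morphism in the statement. First I would record that by the preceding proposition the projection formula morphism $\pi\colon f_!M\otimes N \to f_!(M\otimes f^*N)$ agrees, in the homotopy category of $\Dcal(S)$, with the morphism $\rho$ built from the natural map $\delta_f \otimes f^*f_*\intHom(\delta_f,M)\otimes f^*N \to M\otimes f^*N$.

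Next I would substitute the trivialization $\delta_f \simeq \mathbf{1}_X$ throughout the construction of $\rho$. The equivalence $f_! \simeq f_*\intHom(\delta_f,-)$ of Corollary \ref{cor:prim map characterization} becomes the equivalence $f_! \simeq f_*$ referred to in the statement, and the map \eqref{eq1} in the proof of the proposition reduces to
\begin{equation*}
f^*f_*M \otimes f^*N \longrightarrow M \otimes f^*N,
\end{equation*}
which is just the counit of the adjunction $(f^*,f_*)$ smashed with $f^*N$. Rewriting via the symmetric monoidal structure on $f^*$, this is identified with $f^*(f_*M\otimes N)\to f^*f_*M\otimes f^*N \to M\otimes f^*N$, the counit applied to $f_*M\otimes N$ after passing to the adjoint. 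Passing back through the adjunction defining $\rho$, one obtains precisely the morphism $f_*M\otimes N \to f_*(M\otimes f^*N)$ induced by the counit, as claimed.

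Combining these two steps, the morphism described in the corollary is identified (in the homotopy category) with $\rho$, and hence with the projection formula $\pi$ by the proposition. There is no real obstacle here beyond checking that the trivialization $\delta_f\simeq \mathbf{1}_X$ is compatible with the two uses of $\delta_f$ appearing in the definition of $\rho$ (once as the source of $\intHom(\delta_f,-)$ in the identification $f_!\simeq f_*$, and once as a tensor factor in \eqref{eq1}); but these are both literally the same chosen equivalence, so the compatibility is automatic. The only mildly delicate point is to confirm that the ambiguity in the equivalence $f_!\simeq f_*$ coming from the choice of trivialization of $\delta_f$ is precisely matched by the corresponding ambiguity in $\rho$, which is clear from inspection of the construction.
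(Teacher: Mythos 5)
Your proposal is correct and matches the paper's intent: the corollary carries no separate proof in the paper precisely because it is the specialization of the preceding proposition obtained by substituting the trivialization $\delta_f\simeq\mathbf{1}_X$, under which $\intHom(\delta_f,-)\simeq\id$, the equivalence $f_!\simeq f_*\intHom(\delta_f,-)$ becomes the stated equivalence $f_!\simeq f_*$, and $\rho$ becomes the adjunction-induced morphism in the statement. Your remark on the compatibility of the single chosen trivialization in its two occurrences is the right (and only) point to check.
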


We assume that $f\colon X\to S$ is weakly $\Dcal$-proper, and we fix a trivialization $\delta_f\simeq \mathbf{1}_X$ of the codualizing complex $\delta_f$, which induces an equivalence $f_!\simeq f_*$.
For $M\in \Dcal(X)$ and $N\in \Dcal(S)$, there is an equivalence
\begin{align*}
	\pi\colon f_*\intHom(M,f^!N)\simeq \intHom(f_!M,N),
\end{align*}
which is the usual one in the theory of 6-functor formalisms.
On the other hand, there is a morphism
\begin{align*}
	f_*\intHom(M,f^!N)\otimes f_*M \to &f_*(\intHom(M,f^!N)\otimes M)
	\to f_*f^!N \simeq f_!f^!N\to N,
\end{align*}
where we note that $f_*$ is lax symmetric monoidal functor since it is a right adjoint functor of the symmetric monoidal functor $f^*$.
From this, we get a morphism 
\begin{align*}
	\rho\colon f_*\intHom(M,f^!N)\to \intHom(f_*M,N)\simeq \intHom(f_!M,N)
\end{align*}
by the adjunction.

\begin{proposition}\label{prop:Poincare duality proper}
	The above two morphisms $\pi$ and $\rho$ coincide in the homotopy category of $\Dcal(S)$.
\end{proposition}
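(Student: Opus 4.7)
The plan is to convert the comparison of $\pi$ and $\rho$ into an equality of morphisms $f_*\intHom(M, f^!N) \otimes f_*M \to N$ via the adjunction $(- \otimes f_*M, \intHom(f_*M, -))$ on $\Dcal(S)$. Both candidates, after this conversion, will factor through the evaluation $\intHom(M, f^!N) \otimes M \to f^!N$ and the counit $f_!f^!N \to N$ (equivalently $f_*f^!N \to N$ via $\delta_f \simeq \mathbf{1}_X$); they differ only in the map used to compare $f_*A \otimes f_*M$ with $f_*(A\otimes M)$, where $A = \intHom(M, f^!N)$.

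By its very definition, $\rho$ corresponds under the adjunction to the composition
\[
f_*\intHom(M, f^!N) \otimes f_*M \xrightarrow{\mu} f_*(\intHom(M, f^!N) \otimes M) \xrightarrow{\mathrm{ev}} f_*f^!N \simeq f_!f^!N \xrightarrow{\varepsilon} N,
\]
where $\mu$ is the lax symmetric monoidal structure morphism of $f_*$ and the third equivalence uses the trivialization $\delta_f \simeq \mathbf{1}_X$.

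For $\pi$, I would unwind the defining chain of equivalences. The canonical equivalence $f_*\intHom(M, f^!N) \simeq \intHom(f_!M, N)$ is obtained by applying $\Hom(K, -)$ for $K \in \Dcal(S)$ and invoking the projection formula $f_!(f^*K \otimes M) \simeq K \otimes f_!M$ at the key step. A Yoneda chase then shows that $\pi$ corresponds under the adjunction to
\[
f_*\intHom(M, f^!N) \otimes f_*M \simeq f_!(f^*f_*\intHom(M, f^!N) \otimes M) \to f_!(\intHom(M, f^!N) \otimes M) \xrightarrow{\mathrm{ev}} f_!f^!N \xrightarrow{\varepsilon} N,
\]
where the first equivalence combines $f_* \simeq f_!$ with the projection formula for $f_!$, and the second is induced by the counit $f^*f_* \to \mathrm{id}$.

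To finish, it then suffices to show that, for $A = \intHom(M, f^!N)$, the composition $f_*A \otimes f_*M \xrightarrow{\mu} f_*(A \otimes M) \simeq f_!(A \otimes M)$ agrees with $f_*A \otimes f_*M \simeq f_!(f^*f_*A \otimes M) \to f_!(A \otimes M)$. This identification is precisely what Corollary \ref{cor:projection formula proper} provides, since both morphisms $f_*A \otimes f_*M \to f_*(A \otimes M)$ in question are the ones induced from $f^*f_*A \otimes M \to A \otimes M$ by the same adjunction-plus-counit recipe, once $\mu$ is expressed via the projection formula. The main obstacle I anticipate is the bookkeeping of the various adjunction and symmetric monoidal coherences; mirroring the $2$-categorical strategy of the preceding proposition and working inside $\Kcal_{\Dcal,S}$, where the trivialization $\delta_f \simeq \mathbf{1}_X$ makes the morphism $f\colon X \to S$ self-adjoint, should make these coherences manifest and reduce the verification to the triangle identities for the adjunction $(\delta_f, f)$.
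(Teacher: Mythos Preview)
Your proposal is correct and follows essentially the same route as the paper: both convert $\pi$ and $\rho$ into morphisms $f_*\intHom(M,f^!N)\otimes f_*M \to N$ via adjunction, unwind $\pi$ through the Yoneda/projection-formula description of $f_*\intHom(M,f^!N)\simeq \intHom(f_!M,N)$, and then reduce the comparison to the identification of the lax monoidal map $f_*A\otimes f_*M\to f_*(A\otimes M)$ with the projection-formula map, which is exactly Corollary~\ref{cor:projection formula proper}. The only superfluous remark is your final suggestion to redo things inside $\Kcal_{\Dcal,S}$; the paper (and your own argument) already avoids this by invoking Corollary~\ref{cor:projection formula proper} directly.
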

\begin{proof}
	For $L\in \Dcal(S)$, there are equivalences
	\begin{equation}
	\begin{aligned}\label{eq5}
		\Hom(L,f_*\intHom(M,f^!N))\simeq &\Hom(f^*L,\intHom(M,f^!N))\\
		\simeq &\Hom(M\otimes f^*L,f^!N)\\
		\simeq &\Hom(f_!(M\otimes f^*L),N)\\
		\simeq &\Hom(f_!M\otimes L,N)\\
		\simeq &\Hom(L,\intHom(f_!M,N)),
	\end{aligned}
    \end{equation}
	where the fourth equivalence follows from the projection formula. 
	This equivalence induces the equivalence $\pi\colon f_*\intHom(M,f^!N)\simeq \intHom(f_!M,N)$ by the Yoneda Lemma.
	On the other hand, for $L\in \Dcal(X)$, there are equivalences and a morphism
	\begin{equation}
	\begin{aligned}\label{eq6}
		\Hom(L,\intHom(M,f^!N))\simeq &\Hom(M\otimes L,f^!N)\\
		\simeq & \Hom(f_!(M\otimes L),N)\\
		\simeq & \Hom(f_*(M\otimes L),N)\\
		\to &\Hom(f_*M\otimes f_*L,N)\\
		\simeq &\Hom(f_!M\otimes f_*L,N)\\
		\simeq &\Hom(f_*L,\intHom(f_!M,N)).
	\end{aligned}
    \end{equation}
	If $L=\intHom(M,f^!N)$, then the image of $\id_L$ under the above morphism is $$\rho\colon f_*\intHom(M,f^!N)\to \intHom(f_!M,N).$$
	Therefore, it is enough to show that the morphism
	\begin{align*}
		\Hom(L,\intHom(M,f^!N)) \to &\Hom(f_*L,f_*\intHom(M,f^!N))\\ 
		\simeq &\Hom(f_*L,\intHom(f_!M,N)),
	\end{align*}
	where the last equivalence is \eqref{eq5}, coincides with \eqref{eq6}.
	By unwinding the construction of the above morphisms, it reduces to showing that a morphism
	\begin{align*}
		f_!M\otimes f_*L \simeq f_*M\otimes f_*L\to f_*(M\otimes L)\simeq f_!(M\otimes L)
	\end{align*}
	coincides with a morphism
	\begin{align*}
		f_!M\otimes f_*L \simeq f_!(M\otimes f^*f_*L) \to f_!(M\otimes L).
	\end{align*}
	It easily follows from Corollary \ref{cor:projection formula proper}.
\end{proof}

\begin{corollary}\label{cor:Poincare duality proper}
	For $M\in \Dcal(X)$, the Poincar\'{e} duality 
	\begin{align*}
		f_*\intHom(M,f^!\mathbf{1}_S)\simeq \intHom(f_!M,\mathbf{1}_S) \simeq \intHom(f_*M,\mathbf{1}_S)
	\end{align*}
	is induced by
	\begin{align*}
	f_*\intHom(M,f^!\mathbf{1}_S)\otimes f_*M \to &f_*(\intHom(M,f^!\mathbf{1}_S)\otimes M)
	\to f_*f^!\mathbf{1}_S \simeq f_!f^!\mathbf{1}_S\to \mathbf{1}_S.
	\end{align*}
\end{corollary}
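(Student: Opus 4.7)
The plan is to deduce this corollary as a direct specialization of Proposition \ref{prop:Poincare duality proper} to the case $N=\mathbf{1}_S$. First I would observe that since $f$ is weakly $\Dcal$-proper, the trivialization $\delta_f\simeq\mathbf{1}_X$ (which we fix) induces the equivalence $f_!\simeq f_*$ used implicitly in the statement of the corollary. With $N=\mathbf{1}_S$, the equivalence $\pi\colon f_*\intHom(M,f^!\mathbf{1}_S)\simeq \intHom(f_!M,\mathbf{1}_S)\simeq\intHom(f_*M,\mathbf{1}_S)$ from Proposition \ref{prop:Poincare duality proper} is precisely the Poincaré duality equivalence appearing in the corollary.

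Next I would identify the morphism $\rho$ from Proposition \ref{prop:Poincare duality proper} in the special case $N=\mathbf{1}_S$. Unwinding the construction in the paragraph preceding the proposition, the morphism $\rho$ is the one obtained by adjunction from
\begin{align*}
f_*\intHom(M,f^!\mathbf{1}_S)\otimes f_*M \to f_*(\intHom(M,f^!\mathbf{1}_S)\otimes M)\to f_*f^!\mathbf{1}_S\simeq f_!f^!\mathbf{1}_S \to \mathbf{1}_S,
\end{align*}
where the first arrow is the lax symmetric monoidal structure of $f_*$ (available because $f_*$ is right adjoint to the symmetric monoidal functor $f^*$), the second is induced by the evaluation $\intHom(M,f^!\mathbf{1}_S)\otimes M\to f^!\mathbf{1}_S$, and the last is the counit of the adjunction $(f_!,f^!)$. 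This is exactly the morphism displayed in the statement of the corollary.

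Finally, Proposition \ref{prop:Poincare duality proper} asserts $\pi=\rho$ in the homotopy category of $\Dcal(S)$, so the Poincaré duality equivalence is indeed induced by the morphism described in the corollary. There is no significant obstacle here; the content of the corollary is entirely contained in the proposition, and the only task is to confirm that specializing $N=\mathbf{1}_S$ in the two constructions $\pi$ and $\rho$ yields, respectively, the duality equivalence and the explicit evaluation-and-counit morphism as stated.
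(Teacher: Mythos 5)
Your proposal is correct and matches the paper's (implicit) argument: the corollary is exactly Proposition \ref{prop:Poincare duality proper} specialized to $N=\mathbf{1}_S$, with $\pi$ giving the duality equivalence and $\rho$ the displayed evaluation-and-counit morphism. Nothing more is needed.
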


\begin{corollary}\label{cor:counit proper}
	For $N\in \Dcal(S)$, the following diagram commutes:
	\begin{align*}
		\xymatrix{
		f_!f^!N\ar[d]\ar[r]^-{\simeq} &f_*\intHom(\mathbf{1}_X,f^!N)\ar[d]^-{\simeq} \\
		N\ar[d]^-{\simeq} & \intHom(f_!\mathbf{1}_X,N)\ar[d]^-{\simeq} \\
		\intHom(\mathbf{1}_S,N) &\intHom(f_*\mathbf{1}_X,N),\ar[l]
	}
	\end{align*}
	where the lower horizontal morphism is induced by the unit $\mathbf{1}_S\to f_*\mathbf{1}_X$.
\end{corollary}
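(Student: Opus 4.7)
The plan is to unwind both composites around the square using the description of the Poincar\'e duality equivalence given in Proposition \ref{prop:Poincare duality proper} (equivalently Corollary \ref{cor:Poincare duality proper}) and compare them by a short adjunction argument.

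First, observe that the upper horizontal arrow $f_!f^!N\simeq f_*\intHom(\mathbf{1}_X,f^!N)$ is just the combination of the tautological $f^!N\simeq\intHom(\mathbf{1}_X,f^!N)$ with the equivalence $f_!\simeq f_*$ coming from the trivialization $\delta_f\simeq\mathbf{1}_X$. Hence the left column $f_!f^!N\to N\simeq\intHom(\mathbf{1}_S,N)$ corresponds, via the adjunction $\mathrm{Hom}(L,\intHom(\mathbf{1}_S,N))\simeq\mathrm{Hom}(L\otimes\mathbf{1}_S,N)\simeq\mathrm{Hom}(L,N)$, to the counit $\epsilon\colon f_*f^!N\simeq f_!f^!N\to N$.

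Next, apply Proposition \ref{prop:Poincare duality proper} with $M=\mathbf{1}_X$: the Poincar\'e duality equivalence $\mathrm{PD}\colon f_*\intHom(\mathbf{1}_X,f^!N)\simeq \intHom(f_*\mathbf{1}_X,N)$ is, by construction, the morphism adjoint to the pairing
\begin{align*}
f_*\intHom(\mathbf{1}_X,f^!N)\otimes f_*\mathbf{1}_X
&\to f_*\bigl(\intHom(\mathbf{1}_X,f^!N)\otimes\mathbf{1}_X\bigr)\\
&\simeq f_*f^!N\simeq f_!f^!N\xrightarrow{\epsilon} N,
\end{align*}
where the first map is the lax symmetric monoidal structure of $f_*$. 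Now the bottom horizontal map $u^*\colon\intHom(f_*\mathbf{1}_X,N)\to\intHom(\mathbf{1}_S,N)$ is precomposition with the unit $u\colon\mathbf{1}_S\to f_*\mathbf{1}_X$ of the adjunction $(f^*,f_*)$. Therefore the composite going through the right column and the bottom is adjoint to the pairing above precomposed with $\mathrm{id}\otimes u$, namely
\begin{align*}
f_*\intHom(\mathbf{1}_X,f^!N)\otimes\mathbf{1}_S
\xrightarrow{\mathrm{id}\otimes u}
f_*\intHom(\mathbf{1}_X,f^!N)\otimes f_*\mathbf{1}_X
\to f_*f^!N\xrightarrow{\epsilon} N.
\end{align*}

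The main step is then to identify the first portion of this composite with the canonical isomorphism $f_*\intHom(\mathbf{1}_X,f^!N)\otimes\mathbf{1}_S\simeq f_*\intHom(\mathbf{1}_X,f^!N)\simeq f_*f^!N$. This is the standard unitality of a lax symmetric monoidal functor: for any $A\in\Dcal(X)$, the composite
\begin{align*}
f_*A\otimes\mathbf{1}_S\xrightarrow{\mathrm{id}\otimes u}f_*A\otimes f_*\mathbf{1}_X\to f_*(A\otimes\mathbf{1}_X)\simeq f_*A
\end{align*}
is the right unit constraint of the target symmetric monoidal category, which follows formally from the fact that $u$ is the adjoint of the unit constraint $f^*\mathbf{1}_S\simeq\mathbf{1}_X$ together with the coherence axioms for the lax monoidal $f_*$. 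Applying this with $A=\intHom(\mathbf{1}_X,f^!N)$, the whole right-then-bottom composite collapses to $f_*\intHom(\mathbf{1}_X,f^!N)\simeq f_*f^!N\xrightarrow{\epsilon}N$, which is exactly the left column. I do not expect any obstacle here; the only slightly delicate point is writing the unitality identification cleanly at the level of the $(\infty,1)$-category, but this is formal once one works inside $\CAlg(\Pr^{L,\ex})$ where $f_*$ is lax symmetric monoidal.
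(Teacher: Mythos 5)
Your argument is correct: the paper omits the proof of this corollary entirely, and your derivation — rewriting the right-hand column via Proposition \ref{prop:Poincare duality proper} as the adjoint of the evaluation pairing, pulling the bottom arrow inside as precomposition with the unit $\mathbf{1}_S\to f_*\mathbf{1}_X$, and then collapsing everything by the unitality coherence of the lax symmetric monoidal structure on $f_*$ — is exactly the intended route and contains no gaps.
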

\begin{proof}
	By Proposition \ref{prop:Poincare duality proper}, the morphism
	\begin{align*}
		\alpha\colon f_*\intHom(\mathbf{1}_X,f^!N)\otimes f_*\mathbf{1}_X &\to f_*(\intHom(\mathbf{1}_X,f^!N)\otimes \mathbf{1}_X) \\
		&\simeq f_*\intHom(\mathbf{1}_X,f^!N)\\
		&\simeq f_!f^!N\\
		&\to N
	\end{align*} 
	coincides with the morphism
	\begin{align*}
		\beta \colon f_*\intHom(\mathbf{1}_X,f^!N)\otimes f_*\mathbf{1}_X &\to \intHom(f_!\mathbf{1}_X,N) \otimes f_*\mathbf{1}_X \\
		&\simeq \intHom(f_*\mathbf{1}_X,N) \otimes f_*\mathbf{1}_X\\
		&\to N.
	\end{align*}
	Let $\gamma \colon f_*\intHom(\mathbf{1}_X,f^!N) \to f_*\intHom(\mathbf{1}_X,f^!N)\otimes f_*\mathbf{1}_X$ be the morphism induced by the unit $\mathbf{1}_S\to f_*\mathbf{1}_X$.
	Then the composition $\alpha\circ \gamma$ is equal to the morphism
	\begin{align*}
		f_*\intHom(\mathbf{1}_X,f^!N) \simeq f_!f^!N \to N,
	\end{align*}
	and the composition $\beta\circ \gamma$ is equal to the morphism
	\begin{align*}
		f_*\intHom(\mathbf{1}_X,f^!N) \simeq \intHom(f_!\mathbf{1}_X,N) \simeq \intHom(f_*\mathbf{1}_X,N) \to \intHom(\mathbf{1}_S,N) \simeq N.
	\end{align*}
	The equality $\alpha\circ \gamma = \beta\circ \gamma$ implies the claim.
\end{proof}

\bibliography{cohomologically-smooth}
\end{document}